\newcommand{\baseRing}[1]{\ensuremath{\mathbb{#1}}}
\newcommand{\R}{\baseRing{R}}
\newcommand{\C}{\baseRing{C}}
\newcommand{\N}{\baseRing{N}}
\newcommand{\jdef}[1]{\index{#1}\emph{#1}}
\newcommand{\stext}[1]{\ensuremath{\quad\text{#1}\quad}}
\newcommand{\pd}[2]{{\frac{\partial {#1}}{\partial {#2}}}}
\renewcommand{\iff}{\ensuremath{\Leftrightarrow}\xspace}
\newcommand{\CC}{\ensuremath{{\mathcal{A}}}}
\newcommand{\CCp}[1]{\ensuremath{{\mathcal{A}^{#1}}}}
\newcommand{\DC}{\ensuremath{{\mathcal{A}_d}}}
\newcommand{\DCp}[1]{\ensuremath{{\mathcal{A}_d^{#1}}}}
\newcommand{\DCT}{\ensuremath{\DCp{T_2}}}
\newcommand{\HL}{\ensuremath{h_d}}
\newcommand{\HLds}[1]{\ensuremath{\overline{\HL^{{#1}}}}}
\newcommand{\HLd}[1]{\ensuremath{{\HL^{{#1}}}}}
\newcommand{\SG}{\ensuremath{G}\xspace}
\newcommand{\jgsg}{\ensuremath{\mathfrak{g}}\xspace}
\numberwithin{equation}{section}
\providecommand{\abs}[1]{\ensuremath{\left\lvert{#1}\right\rvert}}
\providecommand{\norm}[1]{\ensuremath{\left\lVert{#1}\right\rVert}}
\DeclareMathOperator{\re}{Re}
\DeclareMathOperator{\im}{Im}
\DeclareMathOperator{\Ob}{ob}
\DeclareMathOperator{\Mor}{mor}
\DeclareMathOperator{\Diff}{Diff}
\DeclareMathOperator{\FBTS}{\textit{E}}
\DeclareMathOperator{\FBBS}{\textit{M}}
\DeclareMathOperator{\FBM}{\phi}
\DeclareMathOperator{\DLPSC}{{\mathfrak{LP}_\textit{d}}}
\DeclareMathOperator{\DLPSMor}{\Upsilon}
\DeclareMathOperator{\IVCM}{\mathcal{P}}
\DeclareMathOperator{\Lagr}{\textit{L}_\textit{d}}
\DeclareMathOperator{\DLPS}{\mathcal{M}}
\newcommand{\ie}{\textsl{i.e.}\xspace}
\newcommand{\ti}[1]{\widetilde{#1}}
\newcommand{\conj}{\overline}
\newtheorem{theorem}{Theorem}[section]
\newtheorem{corollary}[theorem]{Corollary}
\newtheorem{lemma}[theorem]{Lemma}
\newtheorem{proposition}[theorem]{Proposition}
\theoremstyle{definition}
\newtheorem{definition}[theorem]{Definition}
\newtheorem{remark}[theorem]{Remark}
\newtheorem{example}[theorem]{Example}
\title[Discrete lagrangian reduction by stages]{Lagrangian reduction of\\
  discrete mechanical systems by stages}
\author[Javier Fern\'andez, Cora Tori and Marcela Zuccalli]{}
\subjclass{Primary: 37J15, 70G45; Secondary: 70G75.}
\keywords{Geometric mechanics, discrete mechanical systems, symmetry
  and reduction.}
\email{jfernand@ib.edu.ar}
\email{cora@mate.unlp.edu.ar}
\email{marce@mate.unlp.edu.ar}
\thanks{This research was partially supported by grants from the
   Universidad Nacional de Cuyo, Universidad Nacional de La Plata and
   CONICET}
\begin{document}

\bibliographystyle{amsplain}

\maketitle

\centerline{\scshape Javier Fern\'andez}

\medskip {\footnotesize

  \centerline{Instituto Balseiro, Universidad Nacional de Cuyo --
    C.N.E.A.}

  \centerline{ Av. Bustillo 9500, San Carlos de
    Bariloche, R8402AGP, Rep\'ublica Argentina} }

\medskip 

\centerline{\scshape Cora Tori and Marcela Zuccalli} \medskip
{\footnotesize
  \centerline{Departamento de Matem\'atica, Facultad de Ciencias Exactas}
      \centerline{Universidad Nacional de La Plata}
  \centerline{50 y 115, La Plata, Buenos Aires, 1900, Rep\'ublica
    Argentina}
}


\bigskip

\begin{abstract}
  In this work we introduce a category of discrete Lagrange--Poincar\'e
  systems $\DLPSC$ and study some of its properties. In particular, we
  show that the discrete mechanical systems and the discrete
  mechanical systems obtained by the Lagrangian reduction of symmetric
  discrete mechanical systems are objects in $\DLPSC$. We introduce a
  notion of symmetry groups for objects of $\DLPSC$ and introduce a
  reduction procedure that is closed in the category
  $\DLPSC$. Furthermore, under some conditions, we show that the
  reduction in two steps (first by a closed normal subgroup of the
  symmetry group and then by the residual symmetry group) is
  isomorphic in $\DLPSC$ to the reduction by the full symmetry group.
\end{abstract}


\section{Introduction}
\label{sec:introduction}

The study of mechanical systems with symmetries is a classical
subject. A standard technique used in the area is the construction of
a certain dynamical system ---the \jdef{reduced system}--- where some
or all of the original symmetries have been eliminated and whose
trajectories can be used to obtain the trajectories of the original
system. This general idea has been developed and used in many
different contexts. In the Lagrangian formulation of Classical
Mechanics, one such approach is given by
E. Routh~\cite{bo:routh-stability_of_a_given_state_of_motion},
although it was implicit in Lagrange's original ideas. A modern
treatment, including nonholonomic constraints, is given by H. Cendra,
J. Marsden and T. Ratiu
in~\cite{ar:cendra_marsden_ratiu-geometric_mechanics_lagrangian_reduction_and_nonholonomic_systems}. In
the modern Hamiltonian case, there are the original works of
V. Arnold~\cite{ar:arnlod-sur_la_geometrie_differentielle_des_groupes_de_lie_de_dimension_infinie_et_ses_applications_a_l'hydrodynamique_des_fluides_parfaits},
S. Smale~\cite{ar:smale-topology_and_mechanics_1,ar:smale-topology_and_mechanics_2},
K. Meyer~\cite{ar:meyer-symmetries_and_integrals_in_mechanics},
J. Marsden and
A. Weinstein~\cite{ar:marsden_weinstein-reduction_of_symplectic_manifolds_with_symmetry}
and, among the recent literature, J. Marsden et
al.~\cite{bo:marsden_misiolek_ortega_perlmutter_ratiu-hamiltonian_reduction_by_stages}. There
is also a field-theoretic version as explained, for instance, by
M. Castrill\'on Lopez and T. Ratiu
in~\cite{ar:castrillonlopez_ratiu-reduction_of_principal_bundles_covariant_lagrange_poincare_equations}. In
the case of discrete mechanical systems (DMS), different versions of
reduction theory have been considered, among others, by S. Jalnapurkar
et
al. in~\cite{ar:jalnapurkar_leok_marsden_west-discrete_routh_reducion},
R. McLachlan and M. Perlmutter
in~\cite{ar:mclachlan_perlmutter-integrators_for_nonholonomic_mechanical_systems}
and the authors
in~\cite{ar:fernandez_tori_zuccalli-lagrangian_reduction_of_discrete_mechanical_systems}. Reduction
theory has also been developed in the context of Lie groupoids and Lie
algebroids as discussed by J. C. Marrero et al
in~\cite{ar:marrero_martin_martinez-discrete_lagrangian_and_hamiltonian_mechanics_on_lie_groupoids}
and by D. Iglesias et al
in~\cite{ar:iglesias_marrero_martin_martinez_padron-reduction_of_symplectic_lie_algebroids_by_a_lie_subalgebroid_and_a_symmetry_lie_group}.

In some cases, if $\SG$ is a symmetry group of a mechanical system, it
may be convenient to consider a partial reduction, that is, the
reduction of the system by a subgroup $H\subset \SG$ and, eventually,
as a second step, the reduction of any remaining symmetries in the
associated reduced system. This process is called \jdef{reduction by
  (two) stages}. A problem is that, in general, the reduced system
associated to a symmetric mechanical system is a dynamical system that
is not a mechanical system. Therefore, a second reduction cannot be
performed in the framework of mechanical systems. For continuous time
systems, the solution found by different authors consists of enlarging
the class of systems considered beyond the mechanical ones, developing
a reduction theory for those generalized systems that extends the
original reduction of mechanical systems and, eventually, considering
the reduction by stages in this generalized framework. This is the
case, for instance, in the Lagrangian context,
of~\cite{bo:cendra_marsden_ratiu-lagrangian_reduction_by_stages}, and,
for Lagrangian systems with nonholonomic constraints,
of~\cite{ar:cendra_diaz-lagrange_dalembert_poincare_equations_by_several_stages,phd:diaz-tesis_doctorado}. In
the Hamiltonian case, it is extensively analyzed
in~\cite{bo:marsden_misiolek_ortega_perlmutter_ratiu-hamiltonian_reduction_by_stages}. It
should be remarked that in the Lie algebroid or Lie groupoid contexts
the problem described in this paragraph does not arise as the
reduction of an object in one of these categories lies within the same
category.

The purpose of the present work is to introduce a generalized
framework to study the reduction of DMS by stages. In this sense, it
parallels~\cite{bo:cendra_marsden_ratiu-lagrangian_reduction_by_stages}
for discrete time mechanical systems. More precisely, a category
$\DLPSC$ of \jdef{discrete Lagrange--Poincar\'e systems} (DLPS) is
introduced and it is shown that DMSs are among its objects in a
natural way. Also, the reduced systems associated to symmetric DMSs
are in $\DLPSC$. The dynamics of a DLPS is defined via a variational
principle that, for a DMS, reduces to the discrete Hamilton
Principle. Then, a reduction theory for symmetric DLPSs is
developed. It is shown that this theory, when applied to DMS,
coincides with the one defined
in~\cite{ar:fernandez_tori_zuccalli-lagrangian_reduction_of_discrete_mechanical_systems}. At
this stage, we can prove the main result of the paper,
Theorem~\ref{th:isomorphism_reduced_spaces_in_stages}, showing that,
under appropriate conditions, the reduction in two stages is feasible
and isomorphic in $\DLPSC$ to the full reduction in one step.

The construction of reduced systems considered
in~\cite{bo:cendra_marsden_ratiu-lagrangian_reduction_by_stages},
\cite{ar:cendra_diaz-lagrange_dalembert_poincare_equations_by_several_stages,phd:diaz-tesis_doctorado},
\cite{ar:leok_marsden_weinstein-a_discrete_theory_of_connections_on_principal_bundles},
\cite{ar:fernandez_tori_zuccalli-lagrangian_reduction_of_discrete_mechanical_systems}
and here, all require additional data: a connection or a
\jdef{discrete connection} on a certain principal bundle. We prove
that, even though the reduced DLPSs depend on the specific discrete
connection used, any two choices lead to DLPSs that are isomorphic in
$\DLPSC$. Last, we prove that under fairly general conditions discrete
connections on the appropriate principal bundles satisfying the
conditions required by the reduction by stages results exist.

It is well known and very useful that DMSs carry natural symplectic
structures. But, in general, their associated reduced systems are not
symplectic; instead, they carry Poisson structures. In contrast,
general DLPSs do not carry a natural symplectic or Poisson structure;
we prove that, when a Poisson structure is added to a DLPS, then it
descends to any reduced system associated to it. A consequence of this
fact is that all DLPSs obtained by a finite number of reductions from
a symmetric DMS, have a ``natural'' Poisson structure coming from the
symplectic structure of the original DMS.

The plan for the paper is as
follows. Section~\ref{sec:general_recollections} reviews the notion of
discrete connection on a principal bundle and some of its basic
properties. Section~\ref{sec:DLPS} introduces discrete
Lagrange--Poincar\'e systems, their dynamics and explores some
examples. Section~\ref{sec:categorical_formulation} defines a category
whose objects are the
DLPSs. Section~\ref{sec:reduction_of_generalized_discrete_mechanical_systems}
introduces the notion of symmetry group of a DLPS and, then,
constructs a ``reduced'' DLPS associated to any symmetric DLPS and
discrete connection; it also compares the dynamics of the reduced DLPS
and that of the original DLPS, proving that the trajectories of one
system can be obtained from those of the other. An example of
reduction process is analyzed in
Section~\ref{sec:example_T2}. Section~\ref{sec:reduction_in_two_stages}
considers the reduction of DLPSs in two
stages. Section~\ref{sec:poisson_structures} studies some aspects of
Poisson structures on DLPSs. The paper closes with
Section~\ref{sec:appendix}, where we list some basic and general
results on group actions on manifolds and on principal bundles; most
of this material is standard and it is included to have a unified
notation and reference point.

\vskip .3cm

Finally, we wish to thank Hern\'an Cendra for his continuous interest in
this work and many very useful discussions.


\section{General recollections}
\label{sec:general_recollections}

Let $\SG$ be a Lie group acting on the left on the manifold $Q$ by
$l^Q$ in such a way that the quotient map
$\pi^{Q,\SG}:Q\rightarrow Q/\SG$ be a principal $\SG$-bundle; we also
consider the induced diagonal $\SG$-action $l^{Q\times Q}$ on
$Q\times Q$. Leok, Marsden and Weinstein introduced
in~\cite{ar:leok_marsden_weinstein-a_discrete_theory_of_connections_on_principal_bundles}
a notion of discrete connection on principal $\SG$-bundles. The
following definition comes
from~\cite{ar:fernandez_zuccalli-a_geometric_approach_to_discrete_connections_on_principal_bundles},
which the reader should refer to for further details on discrete
connections.

\begin{definition}
  \label{def:discrete_connection}
  Let $Hor\subset Q\times Q$ be an $l^{Q\times Q}$-invariant
  submanifold containing the diagonal $\Delta_Q\subset Q\times Q$.  We
  say that $Hor$ defines the \jdef{discrete connection} $\DC$ on the
  principal bundle $\pi^{Q,\SG}:Q\rightarrow Q/\SG$ if $(id_Q\times
  \pi^{Q,\SG})|_{Hor}:Hor\rightarrow Q\times (Q/\SG)$ is an injective
  local diffeomorphism. We denote $Hor$ by $Hor_{\DC}$.
\end{definition}

When $Hor_\DC$ is a discrete connection on
$\pi^{Q,\SG}:Q\rightarrow Q/\SG$, it is easy to see that for any
$(q_0,q_1)\in Q\times Q$, there is a unique $g\in \SG$ such that
$(q_0,l^Q_{g^{-1}}(q_1))\in Hor_\DC$, where $l^Q_g(q) := l^Q(g,q)$. In
this case, the \jdef{discrete connection form}
$\DC:Q\times Q\rightarrow \SG$ is defined by $\DC(q_0,q_1):=g$.

\begin{remark}\label{rem:Ad_not_everywhere_but_ignored}
  It is well known that when the principal $\SG$-bundle is not
  trivial, the existence of $g$ stated above cannot be assured in
  general. It is true, though, when $(q_0,q_1)$ is in a certain open
  subset of $Q\times Q$ containing the diagonal, known as the
  \jdef{domain} of the discrete connection. Still, in what follows, we
  omit this technical detail in order to keep the notation simple.
\end{remark}

As in the case of connections on a principal $\SG$-bundle, discrete
connections define a notion of discrete horizontal lift, that we
introduce below.

\begin{definition}
  Let $\DC$ be a discrete connection on the principal $\SG$-bundle
  $\pi^{Q,\SG}:Q\rightarrow Q/\SG$. The \jdef{discrete horizontal
    lift} $\HLd{} : Q\times (Q/\SG) \rightarrow Q\times Q$ is the
  inverse map of the injective local diffeomorphism
  $(id_Q\times \pi)|_{Hor_\DC}:Hor_\DC\rightarrow Q\times
  (Q/\SG)$. Explicitly
  \begin{equation*}
    \HLd{q_0}(r_1) \:= \HL(q_0,r_1) := (q_0,q_1) \quad \iff \quad (q_0,q_1)\in
    Hor_{\DC} \ \text{ and }\  \pi^{Q,\SG}(q_1)=r_1.
  \end{equation*}
  We define $\HLds{} :=p_2\circ \HLd{}$ and
  $\HLds{q_0} :=p_2\circ \HLd{q_0}$, where
  $p_2:Q\times Q\rightarrow Q$ is the projection on the second
  variable. More generally,
  $p_j:X_1\times\cdots\times X_k\rightarrow X_j$ is the projection
  from the Cartesian product onto the $j$-th component.
\end{definition}

\begin{remark}
  In the same spirit of
  Remark~\ref{rem:Ad_not_everywhere_but_ignored}, $\HLd{}$ may not be
  defined on all of $Q\times (Q/\SG)$ but only on an open subset. We
  ignore this fact in what follows.
\end{remark}

Discrete connections and their lifts satisfy a number of
properties. The next result reviews some of them.
\begin{proposition}\label{prop:dcs_and_lifts}
  Let $\DC$ be a discrete connection on the principal $\SG$-bundle
  $\pi^{Q,\SG}:Q\rightarrow Q/\SG$. Then,
  \begin{enumerate}
  \item \label{it:dcs_and_lifts-smooth} the discrete connection form
    $\DC$ and the discrete horizontal lift $\HLd{}$ are smooth
    functions and,
  \item \label{it:dcs_and_lifts-equivariance} if we consider the left
    $\SG$-actions on $\SG$ and $Q\times (Q/\SG)$ given by
    \begin{equation*}
      l^\SG_g(g'):= g g' g^{-1} \stext{ and } l^{Q\times
        (Q/\SG)}_g(q_0,r_1) := (l^Q_g(q_0),r_1)
    \end{equation*}
    as well as the diagonal action on $Q\times Q$ then $\DC$, $\HLd{}$
    and $\HLds{}$ are $\SG$-equivariant.
  \item \label{it:dcs_and_lifts-GxG} More generally, for any
    $g_0,g_1\in\SG$,
    \begin{equation*}
      \DC(l^Q_{g_0}(q_0),l^Q_{g_1}(q_1)) = g_1 \DC(q_0,q_1) g_0^{-1} 
      \stext{ for all } q_0,q_1\in Q.
    \end{equation*}
  \end{enumerate}
\end{proposition}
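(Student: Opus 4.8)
The plan is to prove the three parts of Proposition~\ref{prop:dcs_and_lifts} by exploiting the defining property of $Hor_\DC$ as an injective local diffeomorphism onto $Q\times(Q/\SG)$ and the $l^{Q\times Q}$-invariance of $Hor_\DC$. For part~\eqref{it:dcs_and_lifts-smooth}, I would begin with $\HLd{}$: since $(id_Q\times\pi^{Q,\SG})|_{Hor_\DC}$ is by Definition~\ref{def:discrete_connection} an injective \emph{local diffeomorphism}, its inverse $\HLd{}$ is smooth on its (open) domain essentially by the inverse function theorem, so no real work is needed there. The smoothness of the connection form $\DC$ is the more substantive point. The idea is to express $\DC$ in terms of $\HLds{}$ and the smooth group operations. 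Given $(q_0,q_1)$, the element $g=\DC(q_0,q_1)$ is characterized by $(q_0,l^Q_{g^{-1}}(q_1))\in Hor_\DC$, equivalently by $l^Q_{g^{-1}}(q_1)=\HLds{q_0}(\pi^{Q,\SG}(q_1))$. Thus $l^Q_{g^{-1}}(q_1)$ depends smoothly on $(q_0,q_1)$ through the smooth map $(q_0,q_1)\mapsto\HLds{q_0}(\pi^{Q,\SG}(q_1))$, and recovering $g$ from this relation is a smooth operation because the $\SG$-action is free and smooth on the principal bundle (locally one can invert using a smooth local section or the implicit function theorem applied to the action map, which is a submersion onto fibers). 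I expect this extraction of $g$ to be the one place requiring care.

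For part~\eqref{it:dcs_and_lifts-GxG}, which I would actually prove \emph{before} part~\eqref{it:dcs_and_lifts-equivariance} since the latter is a special case, the strategy is a direct verification from the defining property together with the invariance of $Hor_\DC$. Write $g:=\DC(q_0,q_1)$, so that by definition $(q_0,l^Q_{g^{-1}}(q_1))\in Hor_\DC$. I want to identify $\DC(l^Q_{g_0}(q_0),l^Q_{g_1}(q_1))$, call it $h$; by definition $h$ is the unique element with $(l^Q_{g_0}(q_0),l^Q_{h^{-1}}(l^Q_{g_1}(q_1)))\in Hor_\DC$. The plan is to apply the diagonal action $l^{Q\times Q}_{g_0}$ to the known horizontal pair $(q_0,l^Q_{g^{-1}}(q_1))$: invariance of $Hor_\DC$ gives $(l^Q_{g_0}(q_0),l^Q_{g_0}l^Q_{g^{-1}}(q_1))\in Hor_\DC$. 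Rewriting the second slot as $l^Q_{h^{-1}}(l^Q_{g_1}(q_1))$ and using that the action is a left action ($l^Q_a l^Q_b=l^Q_{ab}$) forces $g_0 g^{-1}=h^{-1}g_1$, hence $h=g_1 g^{-1} g_0^{-1}=g_1\DC(q_0,q_1)g_0^{-1}$, which is exactly the claimed formula. The uniqueness of the element guaranteed by Definition~\ref{def:discrete_connection} is what makes this matching argument rigorous.

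Finally, part~\eqref{it:dcs_and_lifts-equivariance} follows by specialization. The equivariance of $\DC$ under $l^\SG$ is the case $g_0=g_1=g$ of part~\eqref{it:dcs_and_lifts-GxG}: substituting gives $\DC(l^Q_g(q_0),l^Q_g(q_1))=g\DC(q_0,q_1)g^{-1}=l^\SG_g(\DC(q_0,q_1))$. For the equivariance of $\HLd{}$ and $\HLds{}$, I would argue directly from the definition of $\HLd{}$ via $Hor_\DC$: if $\HLd{q_0}(r_1)=(q_0,q_1)$, meaning $(q_0,q_1)\in Hor_\DC$ with $\pi^{Q,\SG}(q_1)=r_1$, then applying $l^{Q\times Q}_g$ and using invariance of $Hor_\DC$ gives $(l^Q_g(q_0),l^Q_g(q_1))\in Hor_\DC$; since $\pi^{Q,\SG}(l^Q_g(q_1))=\pi^{Q,\SG}(q_1)=r_1$ (the projection is $\SG$-invariant), uniqueness identifies this pair as $\HLd{}(l^Q_g(q_0),r_1)=\HLd{}(l^{Q\times(Q/\SG)}_g(q_0,r_1))$, which is precisely $\SG$-equivariance of $\HLd{}$. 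Projecting onto the second factor yields the equivariance of $\HLds{}$. The only genuine obstacle in the whole proposition is the smoothness extraction of $g$ in part~\eqref{it:dcs_and_lifts-smooth}; the remaining parts are formal consequences of invariance and the uniqueness built into the definition of a discrete connection.
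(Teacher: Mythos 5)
Your argument is correct, but it is worth noting that the paper itself does not prove this proposition at all: it simply cites Lemma 3.2 and Theorems 3.4 and 4.4 of the Fern\'andez--Zuccalli paper on discrete connections. What you have written is a self-contained proof along the lines one would expect to find in that reference. The two substantive points are exactly where you put them. First, smoothness of $\DC$: your reduction to ``extract $g$ from $l^Q_{g^{-1}}(q_1)=\HLds{q_0}(\pi^{Q,\SG}(q_1))$'' is the right move, and the extraction is legitimate because the division map $\kappa$ of a principal bundle (sending a pair of points in the same fiber to the unique group element carrying one to the other) is smooth, as one sees from local triviality; it would strengthen the write-up to name that map explicitly rather than gesture at the implicit function theorem. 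Second, part~(3): proving the two-parameter identity first and deducing the conjugation equivariance of $\DC$ as the special case $g_0=g_1=g$ is clean and is how these statements are usually organized; the key mechanism --- exhibit one horizontal pair via the $l^{Q\times Q}$-invariance of $Hor_\DC$ and invoke the uniqueness in the definition of $\DC$ --- is exactly right. One small slip: from $g_0g^{-1}=h^{-1}g_1$ you get $h=g_1\,g\,g_0^{-1}$, not $g_1\,g^{-1}\,g_0^{-1}$; your final expression $g_1\DC(q_0,q_1)g_0^{-1}$ is correct, so this is only a typo in the intermediate line. Finally, everything should be understood on the domain of the discrete connection (cf.\ Remark~\ref{rem:Ad_not_everywhere_but_ignored}), which you implicitly and acceptably do.
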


\begin{proof}
  See Lemma 3.2 and Theorems 3.4 and 4.4
  in~\cite{ar:fernandez_zuccalli-a_geometric_approach_to_discrete_connections_on_principal_bundles}.
\end{proof}

In what follows we use several notions that are reviewed in the
Appendix (Section~\ref{sec:appendix}). For instance, fiber bundles and
their maps are introduced in Definitions~\ref{def:fiber_bundle}
and~\ref{def:bundle_map}, while the action of a Lie group on a fiber
bundle is introduced in
Definition~\ref{def:group_acts_on_fiber_bundle}.

When a Lie group $\SG$ acts on the fiber bundle $(E,M,\phi,F)$ and
$F_2$ is a right $\SG$-manifold, it is possible to construct an
\jdef{associated bundle} on $M/\SG$ with total space
$(E\times F_2)/\SG$ and fiber $F\times F_2$. The special case when
$F_2 = \SG$ acting on itself by $r_g(h):=g^{-1} h g$ is known as the
\jdef{conjugate bundle} and is denoted by $\ti{\SG}_E$ (see
Example~\ref{ex:extended_associated_bundle-fiber_bundle}).

\begin{proposition}\label{prop:generalized_isomorphisms_associated_to_DC}
  Let $\SG$ be a Lie group that acts on the fiber bundle
  $(E,M,\phi,F)$ and $\DC$ be a discrete connection on the principal
  $\SG$-bundle $\pi^{M,\SG}:M\rightarrow M/\SG$. Define\footnote{As we
    mentioned in Remark~\ref{rem:Ad_not_everywhere_but_ignored}, the
    discrete connection $\DC$ need not be defined on $M\times M$ but,
    rather, on an open subset. This restricts the domain of
    $\ti{\Phi}_\DC$ and $\ti{\Psi}_\DC$ to appropriate open subsets,
    where the results of
    Proposition~\ref{prop:generalized_isomorphisms_associated_to_DC}
    hold. Still, we ignore this point and keep working as if $\DC$
    were globally defined in order to avoid a more involved notation.}
  $\ti{\Phi}_\DC:E\times M\rightarrow E\times \SG\times (M/\SG)$ and
  $\ti{\Psi}_\DC:E\times\SG\times(M/\SG)\rightarrow E\times M$ by
  \begin{gather*}
    \ti{\Phi}_\DC(\epsilon,m):=(\epsilon,\DC(\phi(\epsilon),m),\pi^{M,\SG}(m)),\\
    \ti{\Psi}_\DC(\epsilon,w,r):=(\epsilon,l^M_w(\HLds{\phi(\epsilon)}(r))).
  \end{gather*}
  Then, $\ti{\Phi}_\DC$ and $\ti{\Psi}_\DC$ are smooth functions,
  inverses of each other. If we view $E\times M$ and
  $E\times \SG\times (M/\SG)$ as fiber bundles over $M$ via
  $\phi\circ p_1$, then $\ti{\Phi}_\DC$ and $\ti{\Psi}_\DC$ are bundle
  maps (over the identity). In addition, if we consider the left
  $\SG$-actions $l^{E\times M}$ and $l^{E\times\SG\times(M/\SG)}$
  defined by
  \begin{equation*}
    l^{E\times M}_g(\epsilon,m):=(l^E_g(\epsilon),l^M_g(m)) \stext{ and } 
    l^{E\times\SG\times(M/\SG)}_g(\epsilon,w,r):=(l^E_g(\epsilon),l^\SG_g(w),r),
  \end{equation*}
  then, $\ti{\Phi}_\DC$ and $\ti{\Psi}_\DC$ are $\SG$-equivariant, so
  they induce diffeomorphisms $\Phi_\DC:(E\times M)/\SG\rightarrow
  \ti{\SG}_E \times (M/\SG)$ and $\Psi_\DC:\ti{\SG}_E \times (M/\SG)
  \rightarrow (E\times M)/\SG$.
\end{proposition}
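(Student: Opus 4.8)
The plan is to prove the assertions in sequence --- smoothness, that the maps are mutually inverse, that they are bundle maps over $\mathrm{id}_M$, and that they are $\SG$-equivariant and hence descend to the quotients --- relying throughout on Proposition~\ref{prop:dcs_and_lifts} and on the two structural facts that $(q_0,q_1)\in Hor_\DC$ exactly when $\DC(q_0,q_1)$ equals the identity $e\in\SG$, and that the $\SG$-action on $M$ preserves the fibers of $\pi^{M,\SG}$, i.e. $\pi^{M,\SG}\circ l^M_g=\pi^{M,\SG}$. Smoothness is immediate: $\ti{\Phi}_\DC$ and $\ti{\Psi}_\DC$ are assembled from $\phi$, $\pi^{M,\SG}$, $l^M$ and from $\DC$ and $\HLds{}$, and the last two are smooth by item~(\ref{it:dcs_and_lifts-smooth}) of Proposition~\ref{prop:dcs_and_lifts}. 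The bundle-map claim is also quick: since $\phi\circ p_1\circ\ti{\Phi}_\DC(\epsilon,m)=\phi(\epsilon)=\phi\circ p_1(\epsilon,m)$, and likewise for $\ti{\Psi}_\DC$, both maps fix the $E$-component and therefore cover $\mathrm{id}_M$, so by Definition~\ref{def:bundle_map} they are bundle maps over the identity.

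The main computational step, which I expect to be the core of the argument, is to verify that the two maps are inverse to each other by unwinding the defining properties of $\DC$ and $\HLds{}$. For $\ti{\Psi}_\DC\circ\ti{\Phi}_\DC(\epsilon,m)$, I would set $g:=\DC(\phi(\epsilon),m)$; by definition $(\phi(\epsilon),l^M_{g^{-1}}(m))\in Hor_\DC$, while fiber-preservation gives $\pi^{M,\SG}(l^M_{g^{-1}}(m))=\pi^{M,\SG}(m)$, so the uniqueness of the horizontal lift forces $\HLds{\phi(\epsilon)}(\pi^{M,\SG}(m))=l^M_{g^{-1}}(m)$; applying $l^M_g$ then returns $m$, and the $E$-component is untouched, giving $(\epsilon,m)$. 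For $\ti{\Phi}_\DC\circ\ti{\Psi}_\DC(\epsilon,w,r)$, I would put $q_1:=\HLds{\phi(\epsilon)}(r)$, so that $(\phi(\epsilon),q_1)\in Hor_\DC$ and $\pi^{M,\SG}(q_1)=r$; fiber-preservation yields $\pi^{M,\SG}(l^M_w(q_1))=r$ in the last slot, and item~(\ref{it:dcs_and_lifts-GxG}) of Proposition~\ref{prop:dcs_and_lifts} with $g_0=e$, $g_1=w$ gives $\DC(\phi(\epsilon),l^M_w(q_1))=w\,\DC(\phi(\epsilon),q_1)=w$, since $\DC(\phi(\epsilon),q_1)=e$. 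Thus the composite is $(\epsilon,w,r)$, and both maps are inverse diffeomorphisms.

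Finally, for equivariance of $\ti{\Phi}_\DC$ I would combine the equivariance $\phi\circ l^E_g=l^M_g\circ\phi$ of the bundle projection (Definition~\ref{def:group_acts_on_fiber_bundle}) with item~(\ref{it:dcs_and_lifts-GxG}) of Proposition~\ref{prop:dcs_and_lifts} in the diagonal case $g_0=g_1=g$, which gives $\DC(l^M_g(\phi(\epsilon)),l^M_g(m))=g\,\DC(\phi(\epsilon),m)\,g^{-1}=l^\SG_g(\DC(\phi(\epsilon),m))$; together with $\pi^{M,\SG}\circ l^M_g=\pi^{M,\SG}$ this is exactly the identity $\ti{\Phi}_\DC\circ l^{E\times M}_g=l^{E\times\SG\times(M/\SG)}_g\circ\ti{\Phi}_\DC$. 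Equivariance of $\ti{\Psi}_\DC$ is then automatic, as it is the inverse of an equivariant diffeomorphism. The one genuinely conceptual point, which I would flag as the place to be careful, is the descent to the quotients: the action $l^{E\times\SG\times(M/\SG)}$ is trivial on the $M/\SG$ factor, and on $E\times\SG$ it is $g\cdot(\epsilon,w)=(l^E_g(\epsilon),gwg^{-1})$, which is precisely the action defining the conjugate bundle $\ti{\SG}_E=(E\times\SG)/\SG$ of Example~\ref{ex:extended_associated_bundle-fiber_bundle}. Hence $(E\times\SG\times(M/\SG))/\SG$ is canonically identified with $\ti{\SG}_E\times(M/\SG)$, and the equivariant inverse diffeomorphisms $\ti{\Phi}_\DC$, $\ti{\Psi}_\DC$ pass to the quotients to yield the asserted inverse diffeomorphisms $\Phi_\DC$ and $\Psi_\DC$. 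Throughout, as noted in the footnote to the statement, I work as though $\DC$ is defined on all of $M\times M$, restricting to the appropriate open sets where necessary.
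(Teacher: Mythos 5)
Your proposal is correct and follows essentially the same route as the paper's (much more condensed) proof: smoothness from Proposition~\ref{prop:dcs_and_lifts}(\ref{it:dcs_and_lifts-smooth}), the mutual-inverse computation via the defining property of $\HLds{}$ and part~(\ref{it:dcs_and_lifts-GxG}), equivariance via the same proposition, and descent to the quotients via Corollary~\ref{cor:quotient_maps-G_equivariant_map} together with the identification of $(E\times\SG\times(M/\SG))/\SG$ with $\ti{\SG}_E\times(M/\SG)$. Your explicit verifications (in particular the use of $g_0=e$, $g_1=w$ and of the characterization of $Hor_\DC$ as $\DC^{-1}(e)$) correctly fill in the details the paper leaves as ``direct computations.''
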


\begin{proof}
  Being composition of smooth functions (see
  part~\ref{it:dcs_and_lifts-smooth} in
  Proposition~\ref{prop:dcs_and_lifts}), $\ti{\Phi}_\DC$ and
  $\ti{\Psi}_\DC$ are smooth; direct computations involving
  part~\ref{it:dcs_and_lifts-GxG} of
  Proposition~\ref{prop:dcs_and_lifts} show that $\ti{\Phi}_\DC$ and
  $\ti{\Psi}_\DC$ are inverses of each other. Using
  part~\ref{it:dcs_and_lifts-equivariance} from
  Proposition~\ref{prop:dcs_and_lifts} it is easy to verify the
  $\SG$-equivariance of $\ti{\Phi}_\DC$. Being
  $\ti{\Psi}_\DC = (\ti{\Phi}_\DC)^{-1}$, its $\SG$-equivariance
  follows.  The last part of the statement is derived from
  Corollary~\ref{cor:quotient_maps-G_equivariant_map}.
\end{proof}

\begin{remark}
  Notice that $(E\times M)/\SG$ can be seen as a fiber bundle over
  $M/\SG$ via $\check{\phi\circ p_1}$, corresponding to the associated
  fiber bundle $\ti{M}_E$ constructed in
  Example~\ref{ex:extended_associated_bundle-fiber_bundle}. Similarly
  $\ti{\SG}_E\times(M/\SG)$ is a fiber bundle over $M/\SG$ via
  $\check{\phi\circ p_1}$. In this context, $\Phi_\DC$ and $\Psi_\DC$
  are bundle maps (over the identity).
\end{remark}

After
Proposition~\ref{prop:generalized_isomorphisms_associated_to_DC}, we
have the commutative diagram
\begin{equation}
  \label{eq:diagram_ExM_to_reduced}
  \xymatrix{ {E\times M} \ar[r]^(.4){\ti{\Phi_\DC}}_(.4){\sim} \ar[d]_{\pi^{E\times M,\SG}} 
    \ar[dr]^(.55){\Upsilon_\DC} & 
    {(E\times \SG) \times (M/\SG)} \ar[d]^{(\pi^{E\times \SG,\SG}\circ p_1)\times p_2} \\
    {(E\times M)/\SG} \ar[r]_(.45){\Phi_\DC}^(.45){\sim} & {\ti{\SG}_E\times (M
      /\SG)}
  }
\end{equation}
consisting of manifolds and smooth maps (or bundle maps, understanding
that the top row are bundles over $M$ and the bottom row are bundles
over $M/\SG$). In~\eqref{eq:diagram_ExM_to_reduced}, we have defined
\begin{equation}
  \label{eq:Upsilon_DC-def}
  \Upsilon_\DC:=\Phi_\DC\circ \pi^{E\times M,\SG}
  = ((\pi^{E\times \SG,\SG}\circ p_1)\times p_2)\circ \ti{\Phi_\DC}.
\end{equation}

\begin{lemma}\label{le:Upsilon_DC_is_ppal_bundle}
  Let $\SG$ act on the fiber bundle $(E,M,\phi,F)$ and $\DC$ be a
  discrete connection on the principal $\SG$-bundle
  $\pi^{M,\SG}:M\rightarrow M/\SG$. Then, $\Upsilon_\DC:E\times
  M\rightarrow (\ti{\SG}_E\times (M/\SG))$ defined
  by~\eqref{eq:Upsilon_DC-def} is a principal $\SG$-bundle.
\end{lemma}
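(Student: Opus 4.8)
The plan is to exploit the factorization $\Upsilon_\DC = \Phi_\DC \circ \pi^{E\times M,\SG}$ recorded in~\eqref{eq:Upsilon_DC-def}, together with the fact, established in Proposition~\ref{prop:generalized_isomorphisms_associated_to_DC}, that $\Phi_\DC:(E\times M)/\SG\rightarrow \ti{\SG}_E\times(M/\SG)$ is a diffeomorphism. Since composing the projection of a principal bundle with a diffeomorphism of its base yields again a principal bundle (with the same total space, action, and fibers, only the base relabeled), the assertion reduces to showing that the quotient map $\pi^{E\times M,\SG}: E\times M \to (E\times M)/\SG$ is a principal $\SG$-bundle for the diagonal action $l^{E\times M}$.

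To verify this I would appeal to the standard criterion (collected in Section~\ref{sec:appendix}) that the quotient map of a $\SG$-action is a principal $\SG$-bundle precisely when the action is free and proper. Freeness of the diagonal action on $E\times M$ is immediate: if $l^{E\times M}_g(\epsilon,m) = (\epsilon,m)$, then in particular $l^M_g(m)=m$, and since $\pi^{M,\SG}:M\to M/\SG$ is a principal $\SG$-bundle the action $l^M$ is free, forcing $g$ to be the identity. For properness I would use that the projection $p_2: E\times M \to M$ is $\SG$-equivariant and continuous, and that the target action $l^M$ is proper (again because $\pi^{M,\SG}$ is principal); properness of an action pulls back along an equivariant continuous map, so the diagonal action on $E\times M$ is proper as well.

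Having shown the diagonal action is free and proper, $\pi^{E\times M,\SG}$ is a principal $\SG$-bundle. Because $\Phi_\DC$ is a bijection, the fibers of $\Upsilon_\DC = \Phi_\DC\circ\pi^{E\times M,\SG}$ coincide with those of $\pi^{E\times M,\SG}$, namely the $\SG$-orbits in $E\times M$; passing from the base $(E\times M)/\SG$ to its diffeomorphic image $\ti{\SG}_E\times(M/\SG)$ therefore leaves the principal bundle structure intact, and $\Upsilon_\DC$ is a principal $\SG$-bundle.

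The step I expect to require the most care is the properness claim. The transfer of properness along the equivariant map $p_2$ is the crux, and I would invoke the precise formulation from Section~\ref{sec:appendix} rather than treat it as self-evident, since properness is exactly what guarantees that the quotient is well behaved and that local trivializations exist. By comparison, freeness and the final diffeomorphic relabeling of the base are routine.
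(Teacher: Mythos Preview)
Your argument is correct and follows the same overall strategy as the paper: both exploit the factorization $\Upsilon_\DC = \Phi_\DC \circ \pi^{E\times M,\SG}$ together with the fact that $\Phi_\DC$ is a diffeomorphism. The one difference is in which appendix criterion is invoked. You verify that the diagonal $\SG$-action on $E\times M$ is free and proper and then appeal to Corollary~\ref{cor:quotient_manifolds-fiber_bundle} to conclude that $\pi^{E\times M,\SG}$ is principal, after which composition with $\Phi_\DC$ finishes the job. The paper instead works directly with $\Upsilon_\DC$: it notes that $\Upsilon_\DC$ is a surjective submersion (being the composite of one with a diffeomorphism), checks that its fibers are exactly the $\SG$-orbits, and applies Theorem~\ref{thm:ppal_bundles_via_embedding}. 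The paper's route is a line shorter since it sidesteps an explicit properness argument, but your properness check is precisely what justifies treating $(E\times M)/\SG$ as a smooth manifold in the first place, a point the paper leaves implicit from Proposition~\ref{prop:generalized_isomorphisms_associated_to_DC}. Either way the content is the same.
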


\begin{proof}
  Since $\Phi_\DC$ is a diffeomorphism and $\pi^{E\times M,\SG}$ is a
  surjective submersion, $\Upsilon_\DC$ is also a surjective
  submersion. Also, as
  $\Upsilon_\DC^{-1}(\Upsilon_\DC(\epsilon_0,m_1))=l^{E\times
    M}_\SG(\epsilon_0,m_1)$, by
  Theorem~\ref{thm:ppal_bundles_via_embedding}, we conclude that
  $(E\times M,\ti{\SG}_E\times (M/\SG),\Upsilon_\DC,\SG)$ is a
  principal $\SG$-bundle.
\end{proof}

When $\SG$ acts on the fiber bundle $(E,M,\phi,F)$ and $\DC$ is a
discrete connection on the principal $\SG$-bundle
$\pi^{M,\SG}:M\rightarrow M/\SG$, we have the following commutative
diagram involving the conjugate bundle $\ti{\SG}_E$.
\begin{equation}\label{eq:ExM_and_tiGxM/G}
  \xymatrix{
    {E} \ar[d]_{\pi^{M,\SG}\circ \phi} & 
    {E\times M} \ar[l]_{p_1} \ar[dr]^{\Upsilon_\DC} \ar[r]^(.35){\ti{\Phi_\DC}}
    & {(E\times \SG)\times (M/\SG)} \ar[d]^{(\pi^{E\times\SG,\SG} \circ p_1)\times p_2}\\
    {M/\SG} & {} & {\ti{\SG}_E \times (M/\SG)} \ar[ll]^{p^{M/\SG}\circ p_1}\\
  }
\end{equation}


\section{Discrete Lagrange--Poincar\'e systems}
\label{sec:DLPS}

A discrete mechanical system (DMS) as
in~\cite{ar:marsden_west-discrete_mechanics_and_variational_integrators}
is a pair $(Q,L_d)$ where $Q$ is a finite dimensional manifold known
as the \jdef{configuration space} and $L_d:Q\times Q\rightarrow \R$ is
a smooth function called the \jdef{discrete lagrangian}. Trajectories
of such a system are critical points of an action function determined
by $L_d$.

In this section we introduce an extended notion of DMS as a dynamical
system whose dynamics arises from a variational principle. In
addition, we find the corresponding equations of motion. In
Section~\ref{sec:categorical_formulation}, we formulate a categorical
framework that contains the extended systems.


\subsection{Discrete Lagrange--Poincar\'e systems and dynamics}
\label{sec:generalized_systems_and_dynamics}

The reduction procedure introduced
in~\cite{ar:fernandez_tori_zuccalli-lagrangian_reduction_of_discrete_mechanical_systems}
and reviewed in the unconstrained situation later, in
Section~\ref{sec:reduced_system_associated_to_a_symmetric_mechanical_system},
has a shortcoming in that, in most cases, when applied to a DMS, the
resulting dynamical system is not a DMS. The main objective of this
paper is to overcome this problem by considering a larger class of
discrete mechanical systems that is closed by the reduction
procedure. In order to define the larger class of DMSs we will
consider more general ``discrete velocity'' phase spaces than $Q\times
Q$; concretely, we will consider spaces of the form $E\times M$, where
$\phi:E\rightarrow M$ is a fiber bundle. Furthermore, we will consider
discrete time dynamical systems on such spaces, whose dynamics will be
defined using a lagrangian function and a variational principle. In
this section we study the extended discrete velocity phase spaces and
discrete lagrangian systems on them.

The motivation for the notion of extended discrete velocity phase
space that we consider in this paper comes from the type of space
obtained by the reduction process introduced
in~\cite{ar:fernandez_tori_zuccalli-lagrangian_reduction_of_discrete_mechanical_systems}. There,
the reduced space associated to a discrete system on $Q\times Q$ with
symmetry group $\SG$ is the space
$(Q/\SG)\times (Q/\SG)\times_{Q/\SG} \ti{\SG}$, that is a fibered
product of the pair bundle $(Q/\SG)\times(Q/\SG)$ and the fiber bundle
$\ti{\SG}\rightarrow Q/\SG$, where $\ti{\SG}=\ti{\SG}_E$ for $E$ the
fiber bundle $id_Q:Q\rightarrow Q$ (see
Example~\ref{ex:extended_associated_bundle-fiber_bundle}). This space
is not a standard space for a DMS due to the presence of
$\ti{\SG}$. Therefore, it seems reasonable to enlarge the class of
spaces to be considered by looking at spaces that are the fibered
product of a pair bundle $M\times M$ and a fiber bundle
$E\rightarrow M$. In fact, for continuous mechanical systems, this is
the approach
of~\cite{bo:cendra_marsden_ratiu-lagrangian_reduction_by_stages},
where their extended velocity phase space is of the form $TQ\oplus V$
and $V$ is a vector bundle over $Q$. Yet, we will consider a minor
variation of the preceding idea: instead of $(M\times M)\times_M E$,
we will consider $E\times M$ that, as fiber bundles over $M$ (by
$\phi\circ p_1$ in the second space) are isomorphic. The technical
advantage of using this last space is that it is easier to work with a
product manifold rather than with a fibered product.

Given a fiber bundle $\phi:E\rightarrow M$ we will denote
$C'(E):=E\times M$, seen as a fiber bundle over $M$ by
$\phi\circ p_1$. Similarly, we define the \jdef{discrete second order
  manifold}
$C''(E):=(E\times M)\times_{p_2,(\phi\circ p_1)}(E\times M)$ that we
view as a fiber bundle over $M$ via the map induced by $p_2$.

\begin{remark}\label{rem:isomorphism_C''(E)_with_ExExM}
  Given a fiber bundle $\phi:E\rightarrow M$, the second order
  manifold $C''(E)\rightarrow M$ is isomorphic as a fiber bundle to
  the fiber bundle $\phi\circ p_2:E\times E\times M\rightarrow M$ via
  $F_E((\epsilon_0,m_1),(\epsilon_1,m_2)) :=
  (\epsilon_0,\epsilon_1,m_2)$.
\end{remark}

\begin{definition}
  Let $\phi:E\rightarrow M$ be a fiber bundle. A \jdef{discrete path}
  in $C'(E)$ is a collection $(\epsilon_\cdot,m_\cdot) =
  ((\epsilon_0,m_1),\ldots,(\epsilon_{N-1},m_N))$ where $((\epsilon_k,
  m_{k+1}),(\epsilon_{k+1},m_{k+2}))\in C''(E)$ for $k=0,\ldots,N-2$.
\end{definition}

\begin{definition}
  Let $\phi:E\rightarrow M$ be a fiber bundle. An \jdef{infinitesimal
    variation chaining map} $\IVCM$ on $E$ is a homomorphism of vector
  bundles over $\ti{p_1}$, according to the following commutative
  diagram (of vector bundles)
  \begin{equation*}
    \xymatrix{
      {TE} \ar[d] & {\ti{p_3}^*(TE)} \ar[l] \ar[d] \ar[r]^{\IVCM} & 
      {\ker(d\phi)} \ar[d] \ar@{^{(}->}[r] & {TE} \ar[dl]\\
      {E} & {C''(E)} 
      \ar[l]^{\ti{p_3}} \ar[r]_{\ti{p_1}} & {E} & {}
    }
  \end{equation*}
  where $\ti{p_1}((\epsilon_0,m_1),(\epsilon_1,m_2)):=\epsilon_0$ and
  $\ti{p_3}((\epsilon_0,m_1),(\epsilon_1,m_2)):=\epsilon_1$. Notice
  that since $\phi:E\rightarrow M$ is a fiber bundle, $\ker(d\phi)$
  has constant rank, so it defines a vector subbundle of
  $TE\rightarrow E$.
\end{definition}

\begin{definition}
  Let $\phi:E\rightarrow M$ be a fiber bundle. A \jdef{discrete
    Lagrange--Poincar\'e system} (DLPS) over $E$ is a triple
  $\mathcal{M} := (E,L_d,\IVCM)$ where $L_d:C'(E)\rightarrow \R$ is a
  smooth function and $\IVCM$ is an infinitesimal variation chaining
  map on $E$.
\end{definition}

\begin{definition}\label{def:infinitesimal_variation_with_fixed_endpoints}
  Let $(E,L_d,\IVCM)$ be a DLPS and
  $(\epsilon_\cdot,m_\cdot) =
  ((\epsilon_0,m_1),\ldots,(\epsilon_{N-1},m_N))$
  be a discrete path in $C'(E)$. An \jdef{infinitesimal variation} on
  $(\epsilon_\cdot,m_\cdot)$ is a tangent vector
  $(\delta \epsilon_\cdot, \delta m_\cdot) = ((\delta \epsilon_0,
  \delta m_1),\ldots,(\delta \epsilon_{N-1}, \delta m_N))\in
  T_{(\epsilon_\cdot,m_\cdot)} C'(E)^N$ such that
  \begin{equation}\label{eq:gdms_variation_free-def}
    \delta m_{k} = d\phi(\epsilon_{k})(\delta \epsilon_{k}) \stext{ for }
    k=1,\ldots,N-1 
  \end{equation}
  or, equivalently, that
  $((\delta \epsilon_{k-1},\delta m_k),(\delta \epsilon_k, \delta
  m_{k+1})) \in T C''(E)$
  for $ k=1,\ldots,N-1$.  An \jdef{infinitesimal variation on
    $(\epsilon_\cdot,m_\cdot)$ with fixed endpoints} is an
  infinitesimal variation $(\delta \epsilon_\cdot, \delta m_\cdot)$ on
  $(\epsilon_\cdot,m_\cdot)$ such that
  \begin{equation}\label{eq:gdms_variation_fep-def}
    \begin{split}
      \delta m_N =& 0,\\
      \delta \epsilon_{N-1} =& \ti{\delta \epsilon_{N-1}},\\
      \delta \epsilon_k =& \ti{\delta \epsilon_k} +
      \IVCM((\epsilon_{k},m_{k+1}),(\epsilon_{k+1},m_{k+2}))(\ti{\delta
      \epsilon_{k+1}}), \stext{ if } k=1,\ldots,N-2,\\
      \delta \epsilon_0 =&
      \IVCM((\epsilon_{0},m_{1}),(\epsilon_{1},m_{2}))(\ti{\delta
      \epsilon_{1}}),
    \end{split}
  \end{equation}
  where $\ti{\delta \epsilon_k} \in T_{\epsilon_k}E$ is arbitrary for
  $k=1,\ldots, N-1$.
\end{definition}

\begin{remark}
  The name ``infinitesimal variation with fixed endpoints'' is not
  entirely accurate in
  Definition~\ref{def:infinitesimal_variation_with_fixed_endpoints}. Certainly,
  $\delta m_N=0$ means that $m_N$ remains fixed. On the other hand,
  $\delta \epsilon_0$ does not necessarily vanish, but neither it is
  arbitrary, as it is determined by $\ti{\delta \epsilon_1}$ through
  $\IVCM$. As the $\ti{\delta \epsilon_k}$ are arbitrary for
  $k=1,\ldots,N-1$, given $\delta \epsilon_k$ for $k=1,\ldots, N-1$, it
  is possible to find $\ti{\delta \epsilon_k}$ for $k=1,\ldots,N-1$
  such that $\delta \epsilon_{N-1} = \ti{\delta \epsilon_{N-1}}$ and
  $\delta \epsilon_k = \ti{\delta \epsilon_k} +
  \IVCM((\epsilon_{k},m_{k+1}),(\epsilon_{k+1},m_{k+2}))(\ti{\delta
    \epsilon_{k+1}})$
  for all $k=1,\ldots,N-2$. In this case, $\delta \epsilon_0$ turns
  out to be a function of all
  $\delta \epsilon_1,\ldots,\delta \epsilon_{N-1}$.
\end{remark}

\begin{definition}\label{def:dynamics-generalized}
  Let $\mathcal{M}=(E,L_d,\IVCM)$ be a DLPS. The \jdef{discrete
    action} of $\mathcal{M}$ is a function from the space of all
  discrete curves on $C'(E)$ to $\R$ defined by
  $S_d(\epsilon_\cdot,m_\cdot):=\sum_{k=0}^{N-1}
  L_d(\epsilon_k,m_{k+1})$. A \jdef{trajectory} of $\mathcal{M}$ is a
  discrete curve $(\epsilon_\cdot,m_\cdot)$ in $C'(E)$ such that
  \begin{equation}\label{eq:critical_action_condition}
    dS_d(\epsilon_\cdot,m_\cdot)(\delta \epsilon_\cdot,\delta m_\cdot) =0
  \end{equation}
  for all infinitesimal variations $(\delta \epsilon_\cdot,\delta
  m_\cdot)$ on $(\epsilon_\cdot,m_\cdot)$ with fixed endpoints, that
  is, satisfying~\eqref{eq:gdms_variation_free-def}
  and~\eqref{eq:gdms_variation_fep-def}.
\end{definition}

The following Proposition characterizes the trajectories of a DLPS
in terms of (algebraic) equations.

\begin{proposition}\label{prop:eqs_of_motion_gdms}
  Let $\mathcal{M} = (E,L_d,\IVCM)$ be a DLPS and
  $(\epsilon_\cdot,m_\cdot)$ be a discrete path in $C'(E)$. Then,
  $(\epsilon_\cdot,m_\cdot)$ is a trajectory of $\mathcal{M}$ if and
  only if for all $k=1,\ldots,N-1$,
  \begin{equation}
    \label{eq:equation_of_motion-generalized-in_epsilon_m}
    \begin{split}
      D_1L_d(\epsilon_k,m_{k+1}) +& D_2L_d(\epsilon_{k-1},m_k)\circ 
      d\phi(\epsilon_k) \\+&D_1L_d(\epsilon_{k-1},m_{k}) \circ
      \IVCM((\epsilon_{k-1},m_{k}),(\epsilon_{k},m_{k+1})) = 0
    \end{split}
  \end{equation}
  in $T^*_{\epsilon_k}E$, where $D_j$ denotes the restriction to the
  $j$-th component of the exterior differential on a Cartesian
  product.
\end{proposition}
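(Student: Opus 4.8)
The plan is to carry out the standard discrete variational computation: expand $dS_d$ by the chain rule, substitute the defining relations of an infinitesimal variation with fixed endpoints, and then use the arbitrariness and independence of the free data $\ti{\delta\epsilon_k}$ to read off the equations. First I would write, for an arbitrary infinitesimal variation $(\delta\epsilon_\cdot,\delta m_\cdot)$,
\begin{equation*}
  dS_d(\epsilon_\cdot,m_\cdot)(\delta\epsilon_\cdot,\delta m_\cdot)
  = \sum_{k=0}^{N-1} \bigl( D_1 L_d(\epsilon_k,m_{k+1})(\delta\epsilon_k)
  + D_2 L_d(\epsilon_k,m_{k+1})(\delta m_{k+1}) \bigr).
\end{equation*}
Using the free-variation constraint \eqref{eq:gdms_variation_free-def}, namely $\delta m_{k}=d\phi(\epsilon_{k})(\delta\epsilon_{k})$ for $k=1,\ldots,N-1$, together with $\delta m_N=0$, the second sum drops its $k=N-1$ term and, after the index shift $j=k+1$, becomes a sum over $k=1,\ldots,N-1$, giving
\begin{equation*}
  dS_d = \sum_{k=0}^{N-1} D_1 L_d(\epsilon_k,m_{k+1})(\delta\epsilon_k)
  + \sum_{k=1}^{N-1} D_2 L_d(\epsilon_{k-1},m_k)\bigl(d\phi(\epsilon_k)(\delta\epsilon_k)\bigr).
\end{equation*}

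Next I would substitute the fixed-endpoint parametrization \eqref{eq:gdms_variation_fep-def}, expressing each $\delta\epsilon_k$ in terms of the arbitrary vectors $\ti{\delta\epsilon_k}\in T_{\epsilon_k}E$, $k=1,\ldots,N-1$. The crucial structural fact is that $\IVCM$ takes values in $\ker(d\phi)$, so that $d\phi(\epsilon_k)\circ\IVCM(\cdots)=0$; this annihilates every chaining contribution in the second sum, leaving only $D_2 L_d(\epsilon_{k-1},m_k)\circ d\phi(\epsilon_k)$ acting on $\ti{\delta\epsilon_k}$. In the first sum, each $\ti{\delta\epsilon_k}$ collects two contributions: a direct one, $D_1 L_d(\epsilon_k,m_{k+1})$ from the $k$-th summand, and a chained one, $D_1 L_d(\epsilon_{k-1},m_k)\circ\IVCM((\epsilon_{k-1},m_k),(\epsilon_k,m_{k+1}))$, arising from the $(k-1)$-st summand through $\IVCM$. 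The two boundary clauses of \eqref{eq:gdms_variation_fep-def} (for $\delta\epsilon_0$ and $\delta\epsilon_{N-1}$) are precisely what make these two families close up into a single uniform formula valid for all $k=1,\ldots,N-1$.

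Collecting the coefficient of each $\ti{\delta\epsilon_k}$ then yields
\begin{equation*}
  \begin{split}
    dS_d = \sum_{k=1}^{N-1}\Bigl( & D_1 L_d(\epsilon_k,m_{k+1})
      + D_2 L_d(\epsilon_{k-1},m_k)\circ d\phi(\epsilon_k) \\
    & + D_1 L_d(\epsilon_{k-1},m_k)\circ\IVCM((\epsilon_{k-1},m_k),(\epsilon_k,m_{k+1})) \Bigr)(\ti{\delta\epsilon_k}),
  \end{split}
\end{equation*}
whose bracketed factor is exactly the left-hand side of \eqref{eq:equation_of_motion-generalized-in_epsilon_m}, viewed in $T^*_{\epsilon_k}E$. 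Since the $\ti{\delta\epsilon_k}$ range independently over all of $T_{\epsilon_k}E$ for $k=1,\ldots,N-1$, the vanishing of $dS_d$ on every variation with fixed endpoints is equivalent to the vanishing of each bracketed covector, which is \eqref{eq:equation_of_motion-generalized-in_epsilon_m}. I expect the only delicate bookkeeping to be the correct treatment of the two endpoints in \eqref{eq:gdms_variation_fep-def}, making sure the direct and chained terms combine into one formula over the interior indices, and the clean use of $\operatorname{im}\IVCM\subset\ker(d\phi)$ to discard the mixed terms; both are straightforward once the index shift has been performed.
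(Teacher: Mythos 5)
Your computation is correct and is precisely the ``standard variational computation'' that the paper's proof invokes without writing out: expand $dS_d$, shift the index in the $D_2L_d$ sum using $\delta m_N=0$ and $\delta m_k=d\phi(\epsilon_k)(\delta\epsilon_k)$, substitute the parametrization of fixed-endpoint variations, use $\operatorname{im}\IVCM\subset\ker(d\phi)$ to kill the mixed terms, and collect the coefficient of each independent $\ti{\delta\epsilon_k}$. Same approach as the paper, just with the details filled in.
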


\begin{proof}
  Equation~\eqref{eq:equation_of_motion-generalized-in_epsilon_m} is
  obtained from the standard variational computation of
  $dS_d(\epsilon_\cdot, m_\cdot)(\delta \epsilon_\cdot, \delta
  m_\cdot)$, taking into account that the fixed endpoint infinitesimal
  variations $(\delta \epsilon_\cdot, \delta m_\cdot)$ over
  $(\epsilon_\cdot, m_\cdot)$
  satisfy~\eqref{eq:gdms_variation_fep-def} for arbitrary $\ti{\delta
    \epsilon_k}\in T_{\epsilon_k}E$.
\end{proof}

Next, we introduce sufficient conditions for the existence of a flow
on a DLPS $\mathcal{M} = (E,L_d,\IVCM)$. Consider the commutative
diagram (of smooth maps)
\begin{equation*}
  \xymatrix{
    {} & {T^*E} \ar[d]\\
    {E\times E\times M} \ar[ur]^{\mathcal{E}} \ar[r]_(.65){p_2} & {E}
  }
\end{equation*}
where
\begin{equation*}
  \begin{split}
    \mathcal{E}(\epsilon_0,\epsilon_1,m_2) := &D_1L_d(\epsilon_1,m_{2}) +
    D_1L_d(\epsilon_{0},\phi(\epsilon_1)) \circ 
    \IVCM((\epsilon_{0},\phi(\epsilon_1)),(\epsilon_{1},m_{2})) \\
    &+ D_2L_d(\epsilon_{0},\phi(\epsilon_1)) \circ d\phi(\epsilon_1).
  \end{split}
\end{equation*}
Notice that all trajectories $((\epsilon_0,m_1),(\epsilon_1,m_2))$ of
$\mathcal{M}$ satisfy $\mathcal{E}(\epsilon_0,\epsilon_1,m_2) =
0_{\epsilon_1} \in T_{\epsilon_1}^*E$. Conversely, given
$(\epsilon_0,\epsilon_1,m_2)\in E\times E\times M$ such that
$\mathcal{E}(\epsilon_0,\epsilon_1,m_2)=0_{\epsilon_1}$, then
$((\epsilon_0,\phi(\epsilon_1)),(\epsilon_1,m_2))$ is a trajectory of
$\mathcal{M}$.

Let $\mathcal{Z}\subset T^*E$ be the image of the zero section of the
canonical projection $T^*E\rightarrow E$. It is easy to check that
$\mathcal{Z}\subset T^*E$ is an embedded submanifold.

\begin{proposition}\label{prop:existence_trajectories_DLPS}
  Let $\mathcal{M}$, $\mathcal{E}$ and $\mathcal{Z}$ be as above.
  \begin{enumerate}
  \item \label{it:existence_trajectories_DLPS-submanifold} Assume that
    $(\epsilon_0,\epsilon_1,m_2)\in E\times E\times M$ is such that
    $\mathcal{E}(\epsilon_0,\epsilon_1,m_2)=0_{\epsilon_1}$ and that
    $\im(d\mathcal{E}(\epsilon_0,\epsilon_1,m_2)) +
    T_{0_{\epsilon_1}}\mathcal{Z} = T_{0_{\epsilon_1}} T^*E$. Then,
    there is an open subset $U\subset E\times E\times M$ with
    $(\epsilon_0,\epsilon_1,m_2)\in U$ and such that $\mathcal{E}_U :=
    U\cap \mathcal{E}^{-1}(\mathcal{Z})$ is an embedded submanifold of
    $E\times E\times M$ with $\dim(\mathcal{E}_U) = \dim(E) +
    \dim(M)$.
  \item \label{it:existence_trajectories_DLPS-flow} Consider the
    smooth map $p_1\times (\phi\circ p_2) : E\times E\times
    M\rightarrow C'(E)$. In addition to what was assumed in
    part~\ref{it:existence_trajectories_DLPS-submanifold}, suppose that
    \begin{enumerate}[(i)]
    \item \label{it:existence_trajectories_DLPS-1st_proj} $d(p_1\times
      (\phi\circ p_2))|_{\mathcal{E}_U}(\epsilon_0,\epsilon_1,m_2)
      \in\hom(T_{(\epsilon_0,\epsilon_1,m_2)}\mathcal{E}_U,
      T_{(\epsilon_0,\phi(\epsilon_1))}C'(E))$ is injective and 
    \item \label{it:existence_trajectories_DLPS-2nd_proj} $d(p_2\times
      p_3)|_{\mathcal{E}_U}(\epsilon_0,\epsilon_1,m_2)
      \in\hom(T_{(\epsilon_0,\epsilon_1,m_2)}\mathcal{E}_U,T_{(\epsilon_1,m_2)}C'(E))$
      is injective.
    \end{enumerate}
    Then, there are open sets $V_1,V_2\subset C'(E)$ such that
    $(\epsilon_0,\phi(\epsilon_1)) \in V_1$ and $(\epsilon_1,m_2)\in
    V_2$ and a diffeomorphism $F_{\DLPS}:V_1\rightarrow V_2$ such that
    $F_{\DLPS}(\epsilon_0,\phi(\epsilon_1)) = (\epsilon_1,m_2)$ and,
    for all $(\epsilon_0',m_1')\in V_1$,
    $((\epsilon_0',m_1'),F_{\DLPS}(\epsilon_0',m_1'))$ is a trajectory
    of $\mathcal{M}$.
  \end{enumerate}
\end{proposition}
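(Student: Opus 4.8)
The plan is to deduce part~\ref{it:existence_trajectories_DLPS-submanifold} from the transversality form of the preimage theorem and part~\ref{it:existence_trajectories_DLPS-flow} from the inverse function theorem, using throughout the observation recorded just before the statement: a triple $(\epsilon_0,\epsilon_1,m_2)$ lies in $\mathcal{E}^{-1}(\mathcal{Z})$ precisely when $\mathcal{E}(\epsilon_0,\epsilon_1,m_2)=0_{\epsilon_1}$, and this happens if and only if $((\epsilon_0,\phi(\epsilon_1)),(\epsilon_1,m_2))$ is a trajectory of $\mathcal{M}$. For part~\ref{it:existence_trajectories_DLPS-submanifold}, I would first record that $\mathcal{Z}$ is an embedded submanifold of $T^*E$ of codimension $\dim(E)$, hence near $0_{\epsilon_1}$ it is the zero set of a submersion $\psi\colon W\rightarrow\R^{\dim(E)}$ on an open neighborhood $W\subset T^*E$ of $0_{\epsilon_1}$. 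The hypothesis $\im(d\mathcal{E}(\epsilon_0,\epsilon_1,m_2))+T_{0_{\epsilon_1}}\mathcal{Z}=T_{0_{\epsilon_1}}T^*E$ says exactly that $\mathcal{E}$ is transverse to $\mathcal{Z}$ at $(\epsilon_0,\epsilon_1,m_2)$, equivalently that $d(\psi\circ\mathcal{E})(\epsilon_0,\epsilon_1,m_2)$ is surjective. Since surjectivity of a differential is an open condition, it persists on some open neighborhood $U$ of $(\epsilon_0,\epsilon_1,m_2)$ with $\mathcal{E}(U)\subset W$; then $\mathcal{E}_U=U\cap\mathcal{E}^{-1}(\mathcal{Z})=(\psi\circ\mathcal{E}|_U)^{-1}(0)$ is an embedded submanifold by the regular value theorem, of codimension $\dim(E)$ in $E\times E\times M$. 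Since $\dim(E\times E\times M)=2\dim(E)+\dim(M)$, this gives $\dim(\mathcal{E}_U)=\dim(E)+\dim(M)$.

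For part~\ref{it:existence_trajectories_DLPS-flow}, the point I would exploit is that $\dim(\mathcal{E}_U)=\dim(E)+\dim(M)=\dim(C'(E))$, so the two restricted projections $\alpha:=(p_1\times(\phi\circ p_2))|_{\mathcal{E}_U}$ and $\beta:=(p_2\times p_3)|_{\mathcal{E}_U}$ are maps between manifolds of equal dimension. Consequently the injectivity hypotheses in~\ref{it:existence_trajectories_DLPS-1st_proj} and~\ref{it:existence_trajectories_DLPS-2nd_proj} make $d\alpha$ and $d\beta$ at $(\epsilon_0,\epsilon_1,m_2)$ into isomorphisms, and the inverse function theorem yields a neighborhood of $(\epsilon_0,\epsilon_1,m_2)$ in $\mathcal{E}_U$ that $\alpha$ maps diffeomorphically onto an open set $V_1\ni(\epsilon_0,\phi(\epsilon_1))$ and $\beta$ maps diffeomorphically onto an open set $V_2\ni(\epsilon_1,m_2)$, both in $C'(E)$. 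After shrinking to a common such neighborhood I would define $F_{\DLPS}:=\beta\circ\alpha^{-1}\colon V_1\rightarrow V_2$, a diffeomorphism; by construction $\alpha^{-1}(\epsilon_0,\phi(\epsilon_1))=(\epsilon_0,\epsilon_1,m_2)$, so $F_{\DLPS}(\epsilon_0,\phi(\epsilon_1))=\beta(\epsilon_0,\epsilon_1,m_2)=(\epsilon_1,m_2)$.

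It then remains to verify the trajectory property. Given $(\epsilon_0',m_1')\in V_1$, I would write $\alpha^{-1}(\epsilon_0',m_1')=(\epsilon_0',\epsilon_1',m_2')\in\mathcal{E}_U$, so that $\phi(\epsilon_1')=m_1'$ and $F_{\DLPS}(\epsilon_0',m_1')=\beta(\epsilon_0',\epsilon_1',m_2')=(\epsilon_1',m_2')$. Because $(\epsilon_0',\epsilon_1',m_2')\in\mathcal{E}^{-1}(\mathcal{Z})$ forces $\mathcal{E}(\epsilon_0',\epsilon_1',m_2')=0_{\epsilon_1'}$, the observation preceding the proposition shows that $((\epsilon_0',\phi(\epsilon_1')),(\epsilon_1',m_2'))=((\epsilon_0',m_1'),F_{\DLPS}(\epsilon_0',m_1'))$ is a trajectory of $\mathcal{M}$, as required. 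I expect the only genuinely delicate step to lie in part~\ref{it:existence_trajectories_DLPS-submanifold}: rephrasing the pointwise transversality hypothesis as a regular-value condition for a local defining submersion of $\mathcal{Z}$, so as to upgrade transversality at the single point $(\epsilon_0,\epsilon_1,m_2)$ to a submanifold structure on a whole neighborhood. Everything afterwards is bookkeeping with the inverse function theorem and the defining relation between zeros of $\mathcal{E}$ and trajectories.
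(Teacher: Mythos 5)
Your proposal is correct and follows essentially the same route as the paper: part (1) is the standard transversality--preimage argument (which the paper simply cites from Guillemin--Pollack, and which you spell out via a local defining submersion for $\mathcal{Z}$), and part (2) is exactly the paper's construction $F_{\DLPS}=(p_2\times p_3)\circ\big((p_1\times(\phi\circ p_2))|_{\mathcal{E}_U}\big)^{-1}$ obtained from the inverse function theorem applied to both restricted projections, with the trajectory property read off from the equivalence between zeros of $\mathcal{E}$ and trajectories.
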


\begin{proof}
  Part~\ref{it:existence_trajectories_DLPS-submanifold} follows
  immediately from the transversality argument on page 28
  of~\cite{bo:Guillemin-Pollack-differential_topology}, applied to the
  point $(\epsilon_0,\epsilon_1,m_2)$. Notice that, as
  $(\epsilon_0,\epsilon_1,m_2) \in \mathcal{E}_U$, it is not the empty
  set.

  Let
  $P:=(p_1\times(\phi\circ p_2))|_{\mathcal{E}_U}
  :\mathcal{E}_U\rightarrow C'(E)$.
  As $\dim(\mathcal{E}_U) = \dim(C'(E))$,
  condition~\ref{it:existence_trajectories_DLPS-1st_proj} implies that
  $dP(\epsilon_0,\epsilon_1,m_2)$ is an isomorphism and, consequently,
  $P$ is a local diffeomorphism at
  $(\epsilon_0,\epsilon_1,m_2)$. Hence, there are open sets
  $V_1\subset C'(E)$ and $V_2'\subset \mathcal{E}_U$ such that
  $(\epsilon_0,\phi(\epsilon_1))\in V_1$ and
  $(\epsilon_0,\epsilon_1,m_2)\in V_2'$ where $P|_{V_2'}$ is a
  diffeomorphism onto $V_1$. In addition, as
  $\dim(\mathcal{E}_U) = \dim(C'(E))$,
  condition~\ref{it:existence_trajectories_DLPS-2nd_proj} implies that
  $d(p_2\times p_3)|_{\mathcal{E}_U}(\epsilon_0,\epsilon_1,m_2)$ is a
  local diffeomorphism at $(\epsilon_0,\epsilon_1,m_2)$ so that
  (eventually shrinking $V_2'$)
  $V_2:=(p_2\times p_3)(V_2')\subset C'(E)$ is open and
  $(p_2\times p_3)|_{V_2'}:V_2'\rightarrow V_2$ is a diffeomorphism.

  Let $F_{\DLPS}:V_1\rightarrow V_2$ be the diffeomorphism
  $F_{\DLPS}:=(p_2\times p_3)\circ (P|_{V_2'})^{-1}$. By construction,
  $F_{\DLPS}(\epsilon_0,\phi(\epsilon_1)) = (\epsilon_1,
  m_2)$.
  Furthermore, for $(\epsilon_0',m_1')\in V_1$, if we let
  $(\epsilon_1',m_2'):=F_{\DLPS}(\epsilon_0',m_1')$, it follows
  readily that $((\epsilon_0',m_1'),(\epsilon_1',m_2')) \in C''(E)$ is
  a trajectory of $\mathcal{M}$.
\end{proof}

\begin{definition}
  The function $F_{\mathcal{M}}$ that appears in
  part~\ref{it:existence_trajectories_DLPS-flow} of
  Proposition~\ref{prop:existence_trajectories_DLPS} is the
  \jdef{discrete Lagrangian flow} of $\mathcal{M}$.
\end{definition}

\begin{remark}\label{rem:sub_trajectories}
  If $(\epsilon_\cdot, m_\cdot) = ((\epsilon_0,m_1),\ldots,
  (\epsilon_{N-1},m_N))$ is a trajectory of the DLPS $\mathcal{M}$,
  then it
  satisfies~\eqref{eq:equation_of_motion-generalized-in_epsilon_m} for
  $k=0, \ldots, N-1$. But then, if $j=0,\ldots,N-2$,
  $((\epsilon_j,m_{j+1}),(\epsilon_{j+1},m_{j+2}))$ also
  satisfies~\eqref{eq:equation_of_motion-generalized-in_epsilon_m}
  (for $k=j,j+1$) and, by Proposition~\ref{prop:eqs_of_motion_gdms},
  is also a trajectory of $\mathcal{M}$. That is, contiguous points of
  a trajectory of $\mathcal{M}$, form a trajectory of $\mathcal{M}$.
\end{remark}

The following example shows how a DMS can be seen as a DLPS.

\begin{example}\label{ex:mechanical_system_as_generalized_mechanical_system}
  Let $(Q,L_d)$ be a DMS. Define the fiber bundle
  $\FBM:\FBTS\rightarrow\FBBS$ by $id_Q:Q\rightarrow Q$, so that
  $\Lagr$ defines a lagrangian function on $C'(id_Q:Q\rightarrow Q) =
  Q\times Q$. Next, let $\IVCM((q_{k-1},q_k),(q_k,q_{k+1}))(\delta
  q_k) := 0$ for all $\delta q_k\in T_{q_k}Q$.  We define the DLPS
  $\DLPS:= (\FBTS, \Lagr, \IVCM)$.

  Discrete paths $(\epsilon_\cdot,m_\cdot)$ of $\DLPS$ are, in the
  current context, the same as discrete paths $q_\cdot$ in
  $Q$\footnote{\label{it:discrete_path_in_manifold}A discrete path
    $x_\cdot$ in a manifold $X$ is an element of the Cartesian product
    $X^N$, for some $N\in\N$.}. Such discrete paths are trajectories
  of $\DLPS$ if and only if they
  satisfy~\eqref{eq:equation_of_motion-generalized-in_epsilon_m} that,
  in this case, becomes 
  \begin{equation}
    \label{eq:DEL_for_DMS}
    D_1\Lagr(q_k,q_{k+1}) + D_2\Lagr(q_{k-1},q_{k})=0
  \end{equation}
  for all $k$, that is the usual discrete Euler--Lagrange equation
  (see equation (1.3.3)
  in~\cite{ar:marsden_west-discrete_mechanics_and_variational_integrators})
  that characterizes the trajectories of $(Q,L_d)$. Hence, all DMSs
  can be seen as DLPSs whose dynamics coincide with those of the
  original systems.
\end{example}

\begin{remark}
  As, by
  Example~\ref{ex:mechanical_system_as_generalized_mechanical_system},
  all DMSs are DLPSs, we can specialize
  Proposition~\ref{prop:existence_trajectories_DLPS} to the case of a
  DMS $(Q,L_d)$. A simple analysis provides the following
  statement. Let $(q_0,q_1,q_2)\in Q\times Q\times Q$ be a solution
  of~\eqref{eq:DEL_for_DMS} (for $k=1$) such that $L_d$ is
  regular\footnote{Regularity at $(q_0,q_1)$ means that, with respect
    to local coordinates $q_j^a$ (for $j=0,1$ and
    $a=1,\ldots,n:=\dim(Q)$) neat $q_0$ and $q_1$, the matrix
    $\frac{\partial^2 L_d(q_0,q_1)}{\partial q_0^j \partial q_1^k} \in
    \R^{n\times n}$
    be invertible.} at $(q_0,q_1)$ and $(q_1,q_2)$. Then there are
  open sets $V_1, V_2\subset Q\times Q$ with $(q_0,q_1)\in V_1$ and
  $(q_1,q_2)\in V_2$ and a diffeomorphism $F_{L_d}:V_1\rightarrow V_2$
  such that $F_{L_d}(q_0,q_1) = (q_1,q_2)$ and that
  $(q_0',F_{L_d}(q_0',q_1'))$ is a solution of~\eqref{eq:DEL_for_DMS}
  (for $k=1$) for all $(q_0',q_1')\in V_1$.  We emphasize that the
  existence of a trajectory $(q_0,q_1,q_2)$ as a staring point cannot
  be avoided. For example, when $Q=\R$ and
  $L_d(q_0,q_1):= \frac{1}{2}(q_1-q_0)^2 - \eta (q_0+q_1)^3$ for
  $\eta>0$, we have that $L_d$ is regular at $(q_0,q_1)$ and
  $(q_1,q_2)$ for $q_0,q_1,q_2 < -\frac{1}{6\eta}$. But, it is easy to
  check that, if $q_1< -\frac{1}{24\eta}$, there is no trajectory of
  the form $(q_0,q_1,q_2)$.
\end{remark}

The dynamical system obtained by the reduction process of a symmetric
DMS can be seen as a DLPS, as we describe in the following section.

\subsection{Reduced system associated to a symmetric discrete
  mechanical system}
\label{sec:reduced_system_associated_to_a_symmetric_mechanical_system}

We say that the Lie group $\SG$ is a symmetry group of the DMS
$(Q,L_d)$ if $\SG$ acts on $Q$ in such a way that the quotient mapping
$\pi^{Q,\SG}:Q\rightarrow Q/\SG$ is a principal $\SG$-bundle and
$L_d\circ l^{Q\times Q}_g = L_d$ for all $g\in \SG$. Given such a
system we can construct a discrete time dynamical system called the
\jdef{reduced system} whose dynamics captures the essential behavior
of the original dynamics. First we review the construction of the
reduced system and, then, compare the dynamics of the reduced to that
of the unreduced system. After that, we prove that the reduced system
can be seen as a DLPS with the same trajectories.


Given a discrete connection $\DC$ on the principal $\SG$-bundle
$\pi^{Q,\SG}:Q\rightarrow Q/\SG$, we can specialize the commutative
diagram~\eqref{eq:diagram_ExM_to_reduced} to the case where
$\phi:E\rightarrow M$ is $id_Q:Q\rightarrow Q$:
\begin{equation*}
  \xymatrix{
    {Q\times Q} \ar[r]^(.4){\ti{\Phi_\DC}} \ar[d]_{\pi^{Q\times Q,\SG}} 
    \ar[rd]^{\Upsilon_\DC} &
    {(Q\times \SG)\times (Q/\SG)} \ar[d]^{(\pi^{Q\times \SG,\SG} \circ p_1)\times p_2}\\
    {(Q\times Q)/\SG} \ar[r]_{\Phi_{\DC}} & {\ti{\SG}\times (Q/\SG)}
  }
\end{equation*}
where $\ti{\SG} = (Q\times \SG)/\SG$ with $\SG$ acting on $Q$ by $l^Q$
and on $\SG$ by conjugation and, explicitly,
\begin{equation}\label{eq:Upsilon_reduced_dms}
  \Upsilon_\DC(q_0,q_1) = (\pi^{Q\times
    \SG,\SG}(q_0,\DC(q_0,q_1)),\pi^{Q,\SG}(q_1)).
\end{equation}

By the $\SG$-invariance of $L_d$, there is a well defined map
$\check{L}_d:\ti{\SG}\times (Q/\SG)\rightarrow \R$ such that
$\check{L}_d(v_0,r_1) = L_d(q_0,q_1)$ whenever $(q_0,q_1)\in Q\times
Q$ satisfies $(v_0,r_1) = \Upsilon_\DC(q_0,q_1)$. The action
associated to $\check{L}_d$ is $\check{S}_d(v_\cdot,r_\cdot) := \sum_k
\check{L}_d(v_k,r_{k+1})$.

The following result
from~\cite{ar:fernandez_tori_zuccalli-lagrangian_reduction_of_discrete_mechanical_systems}\footnote{In
  fact, Theorem~\ref{thm:four_point_theorem-unconstrained-QxQ} here is
  part of Theorem 5.11
  in~\cite{ar:fernandez_tori_zuccalli-lagrangian_reduction_of_discrete_mechanical_systems}, specialized to the unconstrained case, and where we have adapted
  the notation slightly to match the one used in the present
  paper. } relates the dynamics of the original system to a
variational principle for a system on $\ti{\SG}\times (Q/\SG)$.

\begin{theorem} \label{thm:four_point_theorem-unconstrained-QxQ} Let
  $\SG$ be a symmetry group of the DMS $(Q,L_d)$. Fix a discrete
  connection $\DC$ on the principal $\SG$-bundle
  $\pi^{Q,\SG}:Q\rightarrow Q/\SG$. Let $q_\cdot$ be a discrete path
  in $Q$, $r_k:=\pi^{Q,\SG}(q_k)$, $w_k:=\DC(q_k,q_{k+1})$ and
  $v_k:=\pi^{Q\times\SG,\SG}(q_k,w_k)$ be the corresponding discrete
  paths in $Q/\SG$, $\SG$ and $\ti{\SG}$ (see
  footnote~\ref{it:discrete_path_in_manifold}). Then, the
  following statements are equivalent.
  \begin{enumerate}
  \item \label{it:var_pple-general} $q_\cdot$ satisfies the
    variational principle $dS_d(q_\cdot)(\delta q_\cdot) = 0$ for all
    vanishing endpoints variations $\delta q_\cdot$ over $q_\cdot$.

  \item \label{it:red_var_pple-general} $d\check{S}_d(r_\cdot,v_\cdot)
    (\delta r_\cdot, \delta v_\cdot) = 0$ for all variations $(\delta
    v_\cdot, \delta r_\cdot)$ such that
    \begin{equation}
      \label{eq:delta_vk_with_delta_qk-def}
      \begin{split}
        (\delta v_k,\delta r_{k+1}) :=
        d\Upsilon_\DC(q_k,q_{k+1})(\delta q_k,\delta q_{k+1})
      \end{split}
    \end{equation}
    for $k=0,\ldots,N-1$ and where $\delta q_\cdot$ is a fixed
    endpoints variation over $q_\cdot$. 
  \end{enumerate}
\end{theorem}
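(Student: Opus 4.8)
The plan is to show that the variational conditions in statements~\eqref{it:var_pple-general} and~\eqref{it:red_var_pple-general} are literally the same condition, transported across the map $\Upsilon_\DC$. The key observation is that $\Upsilon_\DC$ relates the unreduced action $S_d$ to the reduced action $\check{S}_d$ termwise: by the definition of $\check{L}_d$ and equation~\eqref{eq:Upsilon_reduced_dms}, we have $\check{L}_d(v_k, r_{k+1}) = L_d(q_k, q_{k+1})$ whenever $(v_k, r_{k+1}) = \Upsilon_\DC(q_k, q_{k+1})$, so summing over $k$ gives $\check{S}_d(v_\cdot, r_\cdot) = S_d(q_\cdot)$. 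Thus the two action functions agree along paths related by $\Upsilon_\DC$.

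First I would make precise the relationship between the variations. Given a fixed-endpoints variation $\delta q_\cdot$ over $q_\cdot$, equation~\eqref{eq:delta_vk_with_delta_qk-def} defines the corresponding $(\delta v_\cdot, \delta r_\cdot)$ as the image under the various $d\Upsilon_\DC(q_k, q_{k+1})$. Since each summand of the two actions is intertwined by $\Upsilon_\DC$, the chain rule gives
\begin{equation*}
  d\check{L}_d(v_k, r_{k+1})(\delta v_k, \delta r_{k+1}) =
  dL_d(q_k, q_{k+1})(\delta q_k, \delta q_{k+1}),
\end{equation*}
and summing over $k$ yields $d\check{S}_d(r_\cdot, v_\cdot)(\delta r_\cdot, \delta v_\cdot) = dS_d(q_\cdot)(\delta q_\cdot)$. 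This identity immediately shows that if $q_\cdot$ satisfies the unreduced variational principle, then the reduced condition holds for every variation of the form~\eqref{eq:delta_vk_with_delta_qk-def}, giving the implication \eqref{it:var_pple-general}~$\Rightarrow$~\eqref{it:red_var_pple-general}.

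For the converse, the main obstacle is surjectivity of the correspondence between variations: I must argue that every admissible variation $(\delta v_\cdot, \delta r_\cdot)$ appearing in statement~\eqref{it:red_var_pple-general} actually arises via~\eqref{eq:delta_vk_with_delta_qk-def} from some fixed-endpoints variation $\delta q_\cdot$, and, conversely, that the constraints linking consecutive $(\delta v_k, \delta r_k)$ are exactly captured by the requirement that they come from a genuine variation $\delta q_\cdot$ with vanishing endpoints. Here I would use that $\Upsilon_\DC$ is a principal $\SG$-bundle (Lemma~\ref{le:Upsilon_DC_is_ppal_bundle}), so $d\Upsilon_\DC(q_k, q_{k+1})$ is surjective with kernel the tangent to the $\SG$-orbit; the compatibility condition that $(\delta v_k, \delta r_{k+1})$ and $(\delta v_{k+1}, \delta r_{k+2})$ share the node $r_{k+1}$ is precisely the second-order (chaining) constraint, which lifts back to the requirement $\delta q_{k+1}$ be well-defined along $q_\cdot$. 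Taking care that the boundary conditions match --- the vanishing of the endpoint variations $\delta q_0, \delta q_N$ upstairs corresponds to the appropriate fixed-endpoint condition on $(\delta r_\cdot, \delta v_\cdot)$ downstairs --- then gives \eqref{it:red_var_pple-general}~$\Rightarrow$~\eqref{it:var_pple-general}.

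The delicate step, and the one I expect to require the most care, is precisely this bijection between the two classes of admissible variations, including the correct translation of the boundary terms. Since the reduced space $\ti{\SG} \times (Q/\SG)$ is not a pair space, the variations $(\delta v_\cdot, \delta r_\cdot)$ are constrained in a way that is not obviously symmetric in the two endpoints, so I would verify directly that~\eqref{eq:delta_vk_with_delta_qk-def}, together with the fixed-endpoints condition on $\delta q_\cdot$, produces exactly the class of variations over which the reduced principle is tested. Given this identification, the equivalence follows from the action-intertwining identity established above, and no further computation of the reduced Euler--Lagrange equations is needed.
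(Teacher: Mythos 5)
Your argument is correct, but note first that the paper does not actually prove Theorem~\ref{thm:four_point_theorem-unconstrained-QxQ}: as the footnote indicates, it is imported from Theorem 5.11 of the earlier Fern\'andez--Tori--Zuccalli paper, specialized to the unconstrained case, so yours is an independent derivation rather than a reconstruction of an argument appearing in this text. The substance of your proof --- the termwise identity $\check{L}_d\circ\Upsilon_\DC=L_d$, hence $\check{S}_d(v_\cdot,r_\cdot)=S_d(q_\cdot)$, together with the chain-rule identity $d\check{S}_d(r_\cdot,v_\cdot)(\delta r_\cdot,\delta v_\cdot)=dS_d(q_\cdot)(\delta q_\cdot)$ for variations related by~\eqref{eq:delta_vk_with_delta_qk-def} --- is exactly what is needed and already yields \emph{both} implications. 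Your third paragraph, however, manufactures a difficulty that the statement does not contain: point~\ref{it:red_var_pple-general} quantifies only over variations that are \emph{by definition} pushforwards under $d\Upsilon_\DC$ of fixed-endpoints variations $\delta q_\cdot$, so no surjectivity or bijection between variation classes has to be established in either direction. Given a fixed-endpoints $\delta q_\cdot$, its pushforward is admissible for point~\ref{it:red_var_pple-general}, and the chain-rule identity then gives $dS_d(q_\cdot)(\delta q_\cdot)=0$; that is the entire converse. The lifting argument you sketch (using that $\Upsilon_\DC$ is a principal $\SG$-bundle with $\ker d\Upsilon_\DC$ tangent to the orbits) becomes genuinely necessary only when the reduced variational principle is restated over an \emph{intrinsically} defined class of variations on $\ti{\SG}\times(Q/\SG)$ --- which is exactly what the paper does afterwards in Proposition~\ref{prop:inf_var_compar-QxQ_reduced}, using point~\ref{it:definition_of_ivcm-QxQ-iso} of Lemma~\ref{le:definition_of_ivcm-QxQ} to lift $\ti{\delta v_k}$ to $\delta q_k$ --- so your instinct is the right one for that later step, just not needed for the theorem as stated.
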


\begin{remark}
  The more general Theorem 5.11
  in~\cite{ar:fernandez_tori_zuccalli-lagrangian_reduction_of_discrete_mechanical_systems}
  requires the additional data of a connection $\CC$ on the principal
  $\SG$-bundle $\pi^{Q,\SG}:Q\rightarrow Q/\SG$. With this additional
  information the variations $\delta q_\cdot$ are decomposed in
  $\CC$-horizontal and $\CC$-vertical parts.
\end{remark}

The reduced system associated to $(Q,L_d)$ is the discrete dynamical
system on $\ti{\SG}\times (Q/\SG)$ whose trajectories are the discrete
paths that satisfy the variational principle stated in
point~\ref{it:red_var_pple-general} of
Theorem~\ref{thm:four_point_theorem-unconstrained-QxQ}.  A DLPS
$\DLPS := (\FBTS, \check{L}_d, \IVCM)$ is associated to this reduced
system; we prove later that the trajectories of both systems
coincide. Define the fiber bundle $\FBM:\FBTS\rightarrow \FBBS$ as
the \jdef{conjugate bundle} $p^{Q/\SG}:\ti{\SG}\rightarrow Q/\SG$,
where $p^{Q/\SG}(\pi^{Q\times \SG,\SG}(q,w)):= \pi^{Q,\SG}(q)$.  The
reduced Lagrangian $\check{L}_d:\ti{\SG}\times (Q/\SG)\rightarrow\R$
defines a real valued function on $C'(\FBTS) = \FBTS\times \FBBS$.

In order to define the infinitesimal variation chaining function, we
consider $\Upsilon_\DC:Q\times Q\rightarrow \ti{\SG}\times (Q/\SG)$
defined by~\eqref{eq:Upsilon_reduced_dms}. Then define
$\IVCM \in\hom(p_3^*(T\ti{\SG}),\ker(dp^{Q/\SG}))$ by
\begin{equation}\label{eq:definition_of_ivcm-QxQ}
  \IVCM((v_{0},r_1),(v_1,r_{2}))(\delta v_1) :=
  D_2(p_1\circ \Upsilon_\DC)(q_{0},q_1)(\delta q_1) \in T_{v_{0}}\ti{\SG}
\end{equation}
where $(q_{0},q_{1},q_{2})$ are such that
$(v_{0},r_{1})=\Upsilon_\DC(q_{0},q_{1})$ and
$(v_1,r_{2})=\Upsilon_\DC(q_1,q_{2})$, and $\delta q_1 \in T_{q_1}Q$
is such that
$D_1(p_1\circ \Upsilon_\DC)(q_{1},q_{2})(\delta q_1) = \delta
v_1$.
Lemma~\ref{le:definition_of_ivcm-QxQ} proves that $\IVCM$ is well
defined.

\begin{lemma}\label{le:definition_of_ivcm-QxQ}
  Let $Q$, $\DC$ and $\Upsilon_\DC$ be as before. Then, the following
  assertions are true.
  \begin{enumerate}
  \item \label{it:definition_of_ivcm-QxQ-iso} For
    $(q_0,q_1)\in Q\times Q$,
    $D_1(p_1\circ \Upsilon_\DC)(q_0,q_1) :
    T_{(q_0,q_1)}(Q\times\{q_1\}) \rightarrow
    T_{\Upsilon_\DC(q_0,q_1)}\ti{\SG}$
    is an isomorphism of vector spaces.
  \item \label{it:definition_of_ivcm-QxQ-well_def} For
    $((v_0,r_{1}),(v_{1},r_{2})) \in C''(\FBTS)$ and
    $\delta v_{1}\in T_{v_{1}}\ti{\SG}$ define
    $\IVCM((v_{0},r_{1}),(v_{1},r_{2}))(\delta v_{1})$
    using~\eqref{eq:definition_of_ivcm-QxQ}. Then, $\IVCM$ is well
    defined. In addition, $\IVCM$ is linear in $\delta v_{1}$.
  \item \label{it:definition_of_ivcm-QxQ-in_ker} For
    $((v_0,r_{1}),(v_{1},r_{2})) \in C''(\FBTS)$ and
    $\delta v_{1}\in T_{v_{1}}\ti{\SG}$ we have
    \begin{equation*}
      dp^{Q/\SG}(v_0)(\IVCM((v_0,r_1),(v_1,r_2))(\delta v_1)) = 0.
    \end{equation*}
  \end{enumerate}
\end{lemma}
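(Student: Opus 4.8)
The plan is to prove the three parts in turn, using throughout that, by Lemma~\ref{le:Upsilon_DC_is_ppal_bundle}, $\Upsilon_\DC$ is the projection of a principal $\SG$-bundle whose fibres are the diagonal orbits $l^{Q\times Q}_\SG(q_0,q_1) = \{(l^Q_g(q_0),l^Q_g(q_1)):g\in\SG\}$. In particular $\Upsilon_\DC$ is constant on these orbits, and hence so is its first component $\Upsilon_1 := p_1\circ\Upsilon_\DC$, giving the invariance $\Upsilon_1\circ l^{Q\times Q}_g = \Upsilon_1$ for all $g\in\SG$. Both the equality of dimensions $\dim\ti{\SG} = \dim Q$ (since $\ti{\SG} = (Q\times\SG)/\SG$ for a free action) and this invariance will be the two workhorses.

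For part~\ref{it:definition_of_ivcm-QxQ-iso}, since $D_1\Upsilon_1(q_0,q_1)$ maps $T_{q_0}Q$ into $T_{v_0}\ti{\SG}$ and these spaces have equal dimension, it suffices to prove injectivity. The kernel of $d\Upsilon_\DC(q_0,q_1)$ is the vertical space of the principal bundle, namely the tangent to the diagonal orbit, spanned by vectors $(\xi_Q(q_0),\xi_Q(q_1))$ for infinitesimal generators $\xi_Q$. If such a vector lies in $T_{q_0}Q\times\{0\}$ then $\xi_Q(q_1)=0$, and freeness of the $\SG$-action forces $\xi_Q\equiv 0$, hence $\xi_Q(q_0)=0$. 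Thus $d\Upsilon_\DC(q_0,q_1)$ is injective on $T_{q_0}Q\times\{0\}$; since its second component $\pi^{Q,\SG}\circ p_2$ annihilates this subspace, the injectivity reduces to that of $D_1\Upsilon_1(q_0,q_1)$, which is therefore an isomorphism.

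For part~\ref{it:definition_of_ivcm-QxQ-well_def} I first isolate the ambiguity in~\eqref{eq:definition_of_ivcm-QxQ}. Given $(v_0,r_1)$ and $(v_1,r_2)$, any two admissible triples $(q_0,q_1,q_2)$ and $(q_0',q_1',q_2')$ satisfy $(q_0',q_1')=l^{Q\times Q}_g(q_0,q_1)$ and $(q_1',q_2')=l^{Q\times Q}_h(q_1,q_2)$ for some $g,h\in\SG$, because these are the fibres of $\Upsilon_\DC$; matching the shared middle point via $l^Q_g(q_1)=q_1'=l^Q_h(q_1)$ and invoking freeness yields $g=h$, so the triples differ by a single diagonal element $g$. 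Moreover, by part~\ref{it:definition_of_ivcm-QxQ-iso}, once a triple is fixed the vector $\delta q_1$ is the unique preimage of $\delta v_1$ under the isomorphism $D_1\Upsilon_1(q_1,q_2)$, so no further choice remains. Differentiating the invariance at $(q_1,q_2)$ in the first slot gives $D_1\Upsilon_1(q_1',q_2')\circ dl^Q_g(q_1)=D_1\Upsilon_1(q_1,q_2)$, whence the $\delta q_1'$ attached to the primed triple equals $dl^Q_g(q_1)(\delta q_1)$; differentiating the invariance at $(q_0,q_1)$ in the second slot gives $D_2\Upsilon_1(q_0',q_1')\circ dl^Q_g(q_1)=D_2\Upsilon_1(q_0,q_1)$, so $D_2\Upsilon_1(q_0',q_1')(\delta q_1')=D_2\Upsilon_1(q_0,q_1)(\delta q_1)$. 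This is exactly independence of the choices, establishing well-definedness; linearity in $\delta v_1$ is then immediate, since $\delta q_1=(D_1\Upsilon_1(q_1,q_2))^{-1}(\delta v_1)$ is linear in $\delta v_1$ and $D_2\Upsilon_1(q_0,q_1)$ is linear.

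Finally, part~\ref{it:definition_of_ivcm-QxQ-in_ker} follows from the identity $p^{Q/\SG}\circ\Upsilon_1(q_0,q_1)=\pi^{Q,\SG}(q_0)$, a direct consequence of the definition of $p^{Q/\SG}$ and~\eqref{eq:Upsilon_reduced_dms}. Since the right-hand side is independent of $q_1$, its $q_1$-derivative vanishes, i.e. $dp^{Q/\SG}(v_0)\circ D_2\Upsilon_1(q_0,q_1)=0$; evaluating on $\delta q_1$ yields $dp^{Q/\SG}(v_0)(\IVCM((v_0,r_1),(v_1,r_2))(\delta v_1))=0$. I expect the main obstacle to be the bookkeeping in part~\ref{it:definition_of_ivcm-QxQ-well_def}: correctly recognizing that the residual freedom in the representatives is a \emph{single} diagonal group element rather than an independent pair $(g,h)$, and then tracking how $\delta q_1$ transforms so that the two applications of the invariance of $\Upsilon_1$ cancel.
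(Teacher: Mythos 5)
Your proof is correct and follows essentially the same route the paper takes when it establishes the general versions of these statements (point~\ref{it:properties_Upsilon^2-isomorphism} of Lemma~\ref{le:properties_Upsilon^2} for the isomorphism, via identifying $\ker d\Upsilon_\DC(q_0,q_1)$ with the tangent space to the diagonal $\SG$-orbit plus a dimension count, and Lemma~\ref{le:reduced_IVCM_is_well_def} for well-definedness and the kernel condition, via the facts that the lifted data is unique up to a single diagonal group element and that $p_1\circ\Upsilon_\DC$ is $\SG$-invariant with $p^{Q/\SG}\circ p_1\circ \Upsilon_\DC$ independent of the second argument). The only point you pass over is the existence of at least one admissible triple $(q_0,q_1,q_2)$ with matching middle point, which the paper obtains in point~\ref{it:properties_Upsilon^2-lift} of Lemma~\ref{le:properties_Upsilon^2}; this is a minor omission, since any lift of $(v_1,r_2)$ can be translated by the unique group element matching its first component to $q_1$.
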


We skip the proof of Lemma~\ref{le:definition_of_ivcm-QxQ} as we will
be proving more general statements later: see
point~\ref{it:properties_Upsilon^2-isomorphism} in
Lemma~\ref{le:properties_Upsilon^2} for
point~\ref{it:definition_of_ivcm-QxQ-iso} and
Lemma~\ref{le:reduced_IVCM_is_well_def} for
points~\ref{it:definition_of_ivcm-QxQ-well_def}
and~\ref{it:definition_of_ivcm-QxQ-in_ker}.

Next, we compare discrete trajectories of $\DLPS$ with the reduced
trajectories given by part~\ref{it:red_var_pple-general} of
Theorem~\ref{thm:four_point_theorem-unconstrained-QxQ}. We denote
points in $\FBTS=\ti{\SG}$ with $v$ and in $\FBBS = Q/\SG$ with
$r$. The following result proves that all discrete paths in
$C'(\FBTS)$ arise from discrete paths in $Q$.

\begin{lemma}\label{le:lifiting_of_reduced_paths-DMS}
  Let $(v_\cdot,r_\cdot)$ be a discrete path in
  $C'(\FBTS)$ and $q_0\in Q$ such that $p^{Q/\SG}(v_0)
  = \pi^{Q,\SG}(q_0)$. Then, there exists a unique discrete path in
  $C'(id_Q:Q\rightarrow Q)$ such that $\Upsilon_\DC(q_k,q_{k+1}) =
  (v_k,r_{k+1})$ for all $k=0,\ldots,N-1$.
\end{lemma}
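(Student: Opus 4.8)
The plan is to build the lift $q_\cdot$ one index at a time, using the inverse $\ti{\Psi_\DC}$ (equivalently, the discrete horizontal lift) to solve for each $q_{k+1}$ from the datum $(v_k,r_{k+1})$ and the previously constructed $q_k$. Recall from~\eqref{eq:Upsilon_reduced_dms} that $\Upsilon_\DC(q_k,q_{k+1}) = (\pi^{Q\times\SG,\SG}(q_k,\DC(q_k,q_{k+1})),\pi^{Q,\SG}(q_{k+1}))$, so the equation $\Upsilon_\DC(q_k,q_{k+1}) = (v_k,r_{k+1})$ encodes two conditions: that $\pi^{Q,\SG}(q_{k+1}) = r_{k+1}$, and that the class $\pi^{Q\times\SG,\SG}(q_k,\DC(q_k,q_{k+1}))$ equals $v_k\in\ti{\SG}$.

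First I would set up the induction. The base of the induction is handled by the hypothesis $p^{Q/\SG}(v_0) = \pi^{Q,\SG}(q_0)$, which guarantees $q_0$ is compatible with $v_0$ in the sense that the fiber of $\ti{\SG}$ over $\pi^{Q,\SG}(q_0)$ is exactly where $v_0$ lives. For the inductive step, assume $q_0,\ldots,q_k$ have been constructed so that $\Upsilon_\DC(q_{j},q_{j+1}) = (v_j,r_{j+1})$ for $j<k$ and so that $p^{Q/\SG}(v_k) = \pi^{Q,\SG}(q_k)$. Given $q_k$ and the pair $(v_k,r_{k+1})$, I would produce $q_{k+1}$ as follows. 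Since $v_k$ lies in the fiber of $\ti{\SG}$ over $\pi^{Q,\SG}(q_k)$, there is a unique $w_k\in\SG$ with $v_k = \pi^{Q\times\SG,\SG}(q_k,w_k)$; indeed the map $w\mapsto \pi^{Q\times\SG,\SG}(q_k,w)$ from $\SG$ onto that fiber is a bijection because $\SG$ acts freely by conjugation after fixing the first coordinate $q_k$. Then set $q_{k+1} := l^Q_{w_k}(\HLds{q_k}(r_{k+1}))$; this is exactly the second-component formula appearing in $\ti{\Psi_\DC}$ in Proposition~\ref{prop:generalized_isomorphisms_associated_to_DC}.

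It then remains to verify that this $q_{k+1}$ does the job, i.e.\ that $\Upsilon_\DC(q_k,q_{k+1}) = (v_k,r_{k+1})$, and that the induction hypothesis propagates, i.e.\ $p^{Q/\SG}(v_{k+1}) = \pi^{Q,\SG}(q_{k+1})$. The first verification is the content of the identity $\ti{\Phi_\DC}\circ\ti{\Psi_\DC} = \mathrm{id}$ established in Proposition~\ref{prop:generalized_isomorphisms_associated_to_DC}, applied with $E = M = Q$ and $\phi = id_Q$: computing $\DC(q_k,q_{k+1})$ recovers $w_k$ by equivariance (part~\ref{it:dcs_and_lifts-GxG} of Proposition~\ref{prop:dcs_and_lifts}) together with $\DC(q_k,\HLds{q_k}(r_{k+1})) = \hat{e}$ (the horizontal lift is horizontal), so that $\pi^{Q\times\SG,\SG}(q_k,\DC(q_k,q_{k+1})) = \pi^{Q\times\SG,\SG}(q_k,w_k) = v_k$, while $\pi^{Q,\SG}(q_{k+1}) = r_{k+1}$ since the horizontal lift projects to $r_{k+1}$ and left translation is fiber-preserving. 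The second verification, needed to continue the induction, holds because $(v_{k+1},r_{k+2})\in C''(\FBTS)$ forces the compatibility $p^{Q/\SG}(v_{k+1}) = r_{k+1} = \pi^{Q,\SG}(q_{k+1})$ by the definition of $C''(\FBTS)$ as a fibered product over $p_2$ and $\phi\circ p_1$.

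Uniqueness runs along the same induction and is where the argument is cleanest: at each step $q_{k+1}$ is forced. The condition $\pi^{Q,\SG}(q_{k+1}) = r_{k+1}$ confines $q_{k+1}$ to a single fiber, and then $\DC(q_k,q_{k+1}) = w_k$ pins down the point within that fiber, since for fixed $q_k$ the pair $(\pi^{Q,\SG}(q_{k+1}),\DC(q_k,q_{k+1}))$ determines $q_{k+1}$ uniquely — this is precisely the injectivity built into $\ti{\Phi_\DC}$ being a diffeomorphism. The main obstacle I anticipate is purely bookkeeping rather than conceptual: one must be careful that the fiber-bundle identification $\ti{\SG} = \ti{\SG}_E$ with $E = (id_Q:Q\to Q)$ is used consistently, and that the recovery of $w_k$ from $v_k$ uses the correct conjugation action (the $\SG$-action on $\SG$ by conjugation) so that the class is a single orbit; once the uniqueness of $w_k$ in the fiber is granted, everything else is a direct application of Proposition~\ref{prop:generalized_isomorphisms_associated_to_DC}.
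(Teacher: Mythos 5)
Your argument is correct and follows essentially the same route as the paper, which proves this lemma by deferring to Proposition~\ref{prop:lifting_reduced_discrete_paths}: an induction on the path length in which each step lifts $(v_k,r_{k+1})$ uniquely once $q_k$ is fixed, using that the fibers of $\Upsilon_\DC$ are single $\SG$-orbits. Your explicit formula $q_{k+1}=l^Q_{w_k}(\HLds{q_k}(r_{k+1}))$ coming from $\ti{\Psi}_\DC$ is just a concrete rendering of the one-step lift in point~\ref{it:properties_Upsilon^2-lift} of Lemma~\ref{le:properties_Upsilon^2}, so no essential difference.
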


\begin{proof}
  See Proposition~\ref{prop:lifting_reduced_discrete_paths}, that is
  the same result, in a more general context.
\end{proof}

A trajectory $(v_\cdot,r_\cdot) = ((v_0,r_1),\ldots, (v_{N-1},r_N))$
of $\DLPS$ is a pair of discrete paths $v_\cdot$ and $r_\cdot$ such
that $\FBM(v_k) = p^{Q/\SG}(v_k) = r_k$ for $k=1,\ldots,N-1$, and
satisfies $dS_d(v_\cdot,r_\cdot)(\delta v_\cdot,\delta r_\cdot) = 0$
for all infinitesimal variations $(\delta v_\cdot, \delta r_\cdot)$ on
$(v_\cdot, r_\cdot)$ with fixed endpoints. Those infinitesimal
variations are given by~\eqref{eq:gdms_variation_free-def}
and~\eqref{eq:gdms_variation_fep-def}.

In what follows, we fix discrete paths $(v_\cdot,r_\cdot)$ in
$\DLPS$ and $q_\cdot$ in $Q$ such that $(v_k,r_{k+1})
= \Upsilon_\DC(q_k, q_{k+1})$ for all $k$. The following result
compares the infinitesimal variations over $(v_\cdot,r_\cdot)$ in
$\DLPS$ to those coming
from~\eqref{eq:delta_vk_with_delta_qk-def}.

\begin{proposition}\label{prop:inf_var_compar-QxQ_reduced}
  With the notation as above, the following statements are true.
  \begin{enumerate}
  \item \label{it:inf_var_compar-QxQ_reduced-lowering_variations}
    Given a fixed endpoint variation $\delta q_\cdot$ over the
    discrete path $q_\cdot$ in $Q$, the infinitesimal variation
    $(\delta v_\cdot, \delta r_\cdot)$ defined
    by~\eqref{eq:delta_vk_with_delta_qk-def} is an infinitesimal
    variation with fixed endpoints over $(v_\cdot, r_\cdot)$ in
    $\DLPS$.
  \item \label{it:inf_var_compar-QxQ_reduced-lift_variations} Given a
    discrete variation $(\delta v_\cdot, \delta r_\cdot)$ over
    $(v_\cdot, r_\cdot)$ with fixed endpoints, there is a fixed
    endpoints variation $\delta q_\cdot$ over the discrete path
    $q_\cdot$ such that~\eqref{eq:delta_vk_with_delta_qk-def} holds
    for all $k$.
  \end{enumerate}
\end{proposition}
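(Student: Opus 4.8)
The plan is to establish the two statements by directly comparing the definitions of the two kinds of infinitesimal variations, using the relation $(v_k, r_{k+1}) = \Upsilon_\DC(q_k, q_{k+1})$ as a dictionary. The key technical tool throughout will be the decomposition of $d\Upsilon_\DC$ via the projections $p_1 \circ \Upsilon_\DC$ (landing in $\ti{\SG}$) and $p_2 \circ \Upsilon_\DC = \pi^{Q,\SG} \circ p_2$ (landing in $Q/\SG$), together with the isomorphism property in point~\ref{it:definition_of_ivcm-QxQ-iso} of Lemma~\ref{le:definition_of_ivcm-QxQ} and the very definition~\eqref{eq:definition_of_ivcm-QxQ} of the chaining map $\IVCM$.

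For part~\ref{it:inf_var_compar-QxQ_reduced-lowering_variations}, I would start from a fixed-endpoint variation $\delta q_\cdot$ over $q_\cdot$ and define $(\delta v_k, \delta r_{k+1})$ by~\eqref{eq:delta_vk_with_delta_qk-def}. First I would verify~\eqref{eq:gdms_variation_free-def}: since $r_{k+1} = \pi^{Q,\SG}(q_{k+1})$ and $\FBM = p^{Q/\SG}$ with $p^{Q/\SG}(v_k) = r_k$, the compatibility $\delta r_k = d\FBM(v_k)(\delta v_k)$ should follow by applying $dp^{Q/\SG}$ to the definition of $\delta v_k$ and using that $p^{Q/\SG} \circ p_1 \circ \Upsilon_\DC = \pi^{Q,\SG} \circ p_2$ on the relevant component. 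Then I would verify the fixed-endpoint conditions~\eqref{eq:gdms_variation_fep-def}. The condition $\delta q_N = 0$ (part of $\delta q_\cdot$ being fixed-endpoint) gives $\delta r_N = 0$ directly. The more delicate part is producing the auxiliary vectors $\ti{\delta \epsilon_k} = \ti{\delta v_k} \in T_{v_k}\ti{\SG}$ and checking the telescoping formula. Here I would set $\ti{\delta v_k} := D_1(p_1 \circ \Upsilon_\DC)(q_k, q_{k+1})(\delta q_k)$, which is well-defined by point~\ref{it:definition_of_ivcm-QxQ-iso}, and then split $\delta v_k = d(p_1 \circ \Upsilon_\DC)(q_k, q_{k+1})(\delta q_k, \delta q_{k+1})$ into its $D_1$ and $D_2$ pieces; the $D_2$ piece is exactly $\IVCM((v_{k-1},r_k),(v_k,r_{k+1}))(\ti{\delta v_k})$ by~\eqref{eq:definition_of_ivcm-QxQ}, which reconstructs the chaining relations in~\eqref{eq:gdms_variation_fep-def}.

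For part~\ref{it:inf_var_compar-QxQ_reduced-lift_variations}, I would invert this construction. Given a fixed-endpoint variation $(\delta v_\cdot, \delta r_\cdot)$ over $(v_\cdot, r_\cdot)$, it carries its own auxiliary data $\ti{\delta v_k}$ through~\eqref{eq:gdms_variation_fep-def}. Using the isomorphism from point~\ref{it:definition_of_ivcm-QxQ-iso}, I can define $\delta q_k \in T_{q_k}Q$ (for $k = 1, \ldots, N-1$) as the unique preimage of $\ti{\delta v_k}$ under $D_1(p_1 \circ \Upsilon_\DC)(q_k, q_{k+1})$, and set $\delta q_N := 0$. I would then check that this $\delta q_\cdot$ is indeed a fixed-endpoint variation over $q_\cdot$ and that feeding it into~\eqref{eq:delta_vk_with_delta_qk-def} recovers the given $(\delta v_\cdot, \delta r_\cdot)$; this last verification amounts to running the computation of part~\ref{it:inf_var_compar-QxQ_reduced-lowering_variations} in reverse and invoking uniqueness of the decomposition.

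The main obstacle I anticipate is bookkeeping the index shifts correctly: the chaining map $\IVCM((v_{k-1},r_k),(v_k,r_{k+1}))$ takes an argument in $T_{v_k}\ti{\SG}$ and outputs a vector in $T_{v_{k-1}}\ti{\SG}$, so the contribution of $\ti{\delta v_{k+1}}$ to $\delta v_k$ comes from evaluating $\IVCM$ at the adjacent pair—matching this precisely against the telescoping pattern in~\eqref{eq:gdms_variation_fep-def} (with the boundary cases $k=0$ and $k=N-1$ treated separately) is where errors are easiest to make. The underlying linear algebra is forced once the decomposition of $d\Upsilon_\DC$ and the definition of $\IVCM$ are in hand, so the difficulty is entirely in aligning the two index conventions rather than in any genuinely hard estimate.
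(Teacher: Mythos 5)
Your proposal follows essentially the same route as the paper's proof: the same auxiliary vectors $\ti{\delta v_k} := D_1(p_1\circ \Upsilon_\DC)(q_k,q_{k+1})(\delta q_k)$, the same use of the isomorphism in point~\ref{it:definition_of_ivcm-QxQ-iso} of Lemma~\ref{le:definition_of_ivcm-QxQ} to pass between $\delta q_k$ and $\ti{\delta v_k}$ in both directions, and the same $D_1$/$D_2$ splitting of $d(p_1\circ\Upsilon_\DC)$ that identifies the cross term with $\IVCM$ via~\eqref{eq:definition_of_ivcm-QxQ}. When you write it out, fix two slips that your own closing paragraph already anticipates: the identity you need is $p^{Q/\SG}\circ p_1\circ \Upsilon_\DC = \pi^{Q,\SG}\circ p_1$ (not $\pi^{Q,\SG}\circ p_2$), so that $dp^{Q/\SG}(v_k)(\delta v_k)=d\pi^{Q,\SG}(q_k)(\delta q_k)=\delta r_k$; the $D_2$ piece of $\delta v_k$ is $\IVCM((v_k,r_{k+1}),(v_{k+1},r_{k+2}))(\ti{\delta v_{k+1}})$ rather than $\IVCM((v_{k-1},r_k),(v_k,r_{k+1}))(\ti{\delta v_k})$; and in part~\ref{it:inf_var_compar-QxQ_reduced-lift_variations} you must also set $\delta q_0:=0$ so that $\delta q_\cdot$ has vanishing endpoints and~\eqref{eq:delta_vk_with_delta_qk-def} holds for $k=0$.
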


\begin{proof}
  \begin{enumerate}
  \item Let $(\delta v_\cdot, \delta r_\cdot)$ be the variation
    defined by~\eqref{eq:delta_vk_with_delta_qk-def} in terms of
    $\delta q_\cdot$. Let
    $\ti{\delta v_k} := D_1(p_1\circ \Upsilon_\DC)(q_k,q_{k+1})(\delta
    q_k) \in T_{v_k}\ti{\SG}$
    for $k=0,\ldots,N-1$. Direct computations
    using~\eqref{eq:delta_vk_with_delta_qk-def} prove that
    $(\delta v_\cdot, \delta r_\cdot)$
    satisfies~\eqref{eq:gdms_variation_free-def}
    and~\eqref{eq:gdms_variation_fep-def}. Thus, it is an
    infinitesimal variation with fixed endpoints in $\DLPS$ on
    $(v_\cdot,r_\cdot)$.

  \item Write $(\delta v_\cdot, \delta r_\cdot)$ according
    to~\eqref{eq:gdms_variation_free-def}
    and~\eqref{eq:gdms_variation_fep-def} for some vectors
    $\ti{\delta v_k}\in T_{v_k}\ti{\SG}$ and $k=1,\ldots,N-1$. Let
    $\delta q_N:=0 \in T_{q_N}Q$, $\delta q_0:=0 \in T_{q_0}Q$ and,
    for each $k=1,\ldots, N-1$, using
    point~\ref{it:definition_of_ivcm-QxQ-iso} in
    Lemma~\ref{le:definition_of_ivcm-QxQ}, let
    $\delta q_k \in T_{q_{k}}Q$ be such that
    $D_1(p_1\circ \Upsilon_\DC)(q_k,q_{k+1})(\delta q_k) = \ti{\delta
      v_k}$.
    Straightforward computations
    using~\eqref{eq:gdms_variation_free-def}
    and~\eqref{eq:gdms_variation_fep-def} now show that
    $\delta q_\cdot$ as constructed is an infinitesimal variation over
    $q_\cdot$ with fixed endpoints and
    that~\eqref{eq:delta_vk_with_delta_qk-def} holds.
  \end{enumerate}
\end{proof}

\begin{corollary}
  A discrete path $(v_\cdot,r_\cdot)$ is a trajectory of $\DLPS$ if
  and only if it is a trajectory of the reduced system according to
  point~\ref{it:red_var_pple-general} in
  Theorem~\ref{thm:four_point_theorem-unconstrained-QxQ}.
\end{corollary}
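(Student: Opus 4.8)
The plan is to reduce the statement to the comparison of infinitesimal variations already carried out in Proposition~\ref{prop:inf_var_compar-QxQ_reduced}. The starting observation is that the discrete action $S_d$ of the DLPS $\DLPS = (\FBTS,\check{L}_d,\IVCM)$ (Definition~\ref{def:dynamics-generalized}) and the reduced action $\check{S}_d$ are literally the same function evaluated on matching data: with $\epsilon_k = v_k$ and $m_{k+1} = r_{k+1}$, both equal $\sum_k \check{L}_d(v_k,r_{k+1})$. Consequently, whenever two infinitesimal variations correspond we have $dS_d(v_\cdot,r_\cdot)(\delta v_\cdot,\delta r_\cdot) = d\check{S}_d(r_\cdot,v_\cdot)(\delta r_\cdot,\delta v_\cdot)$, so the two variational principles differ \emph{only} in the class of variations over which the differential is required to vanish. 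I would fix, once and for all, a lift $q_\cdot$ of $(v_\cdot,r_\cdot)$ to a discrete path in $Q$ with $\Upsilon_\DC(q_k,q_{k+1}) = (v_k,r_{k+1})$, whose existence is guaranteed by Lemma~\ref{le:lifiting_of_reduced_paths-DMS}; all variations below are referred to this lift.

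For the implication that a trajectory of $\DLPS$ is a trajectory of the reduced system, I would argue as follows. A discrete path is a reduced trajectory (point~\ref{it:red_var_pple-general} of Theorem~\ref{thm:four_point_theorem-unconstrained-QxQ}) exactly when $d\check{S}_d$ vanishes on every variation $(\delta v_\cdot,\delta r_\cdot)$ of the form~\eqref{eq:delta_vk_with_delta_qk-def} induced by a fixed-endpoint variation $\delta q_\cdot$ over $q_\cdot$. By part~\ref{it:inf_var_compar-QxQ_reduced-lowering_variations} of Proposition~\ref{prop:inf_var_compar-QxQ_reduced}, each such $(\delta v_\cdot,\delta r_\cdot)$ is an infinitesimal variation with fixed endpoints over $(v_\cdot,r_\cdot)$ in $\DLPS$. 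Since $(v_\cdot,r_\cdot)$ is a trajectory of $\DLPS$, $dS_d$ vanishes on all such variations, and by the identity of the actions noted above, $d\check{S}_d$ vanishes on all variations of the form~\eqref{eq:delta_vk_with_delta_qk-def}. Hence $(v_\cdot,r_\cdot)$ is a reduced trajectory.

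Conversely, suppose $(v_\cdot,r_\cdot)$ is a reduced trajectory. To show it is a trajectory of $\DLPS$, I must verify that $dS_d(v_\cdot,r_\cdot)$ vanishes on every infinitesimal variation with fixed endpoints over $(v_\cdot,r_\cdot)$ in $\DLPS$. Here part~\ref{it:inf_var_compar-QxQ_reduced-lift_variations} of Proposition~\ref{prop:inf_var_compar-QxQ_reduced} supplies exactly what is needed: every such fixed-endpoint variation $(\delta v_\cdot,\delta r_\cdot)$ arises, via~\eqref{eq:delta_vk_with_delta_qk-def}, from some fixed-endpoint variation $\delta q_\cdot$ over $q_\cdot$. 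The reduced-trajectory hypothesis gives $d\check{S}_d = 0$ on precisely these variations, and again by the identity of the actions, $dS_d(v_\cdot,r_\cdot)(\delta v_\cdot,\delta r_\cdot) = 0$. As the variation was an arbitrary fixed-endpoint variation in $\DLPS$, the path $(v_\cdot,r_\cdot)$ is a trajectory of $\DLPS$.

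The computation itself is essentially trivial once the two auxiliary results are in place; there is no delicate estimate or transversality argument hiding here. The only point that genuinely requires care---and which is entirely absorbed into Proposition~\ref{prop:inf_var_compar-QxQ_reduced}---is that the correspondence between the two classes of variations be onto in both directions: the forward implication needs that reduced-type variations land among the fixed-endpoint variations of $\DLPS$ (part~\ref{it:inf_var_compar-QxQ_reduced-lowering_variations}), while the converse needs that no fixed-endpoint variation of $\DLPS$ is missed, i.e.\ that all of them descend from some $\delta q_\cdot$ (part~\ref{it:inf_var_compar-QxQ_reduced-lift_variations}). Thus the main obstacle, were the Proposition not already available, would be establishing this two-sided surjectivity, which in turn rests on the isomorphism property of $D_1(p_1\circ\Upsilon_\DC)$ and the well-definedness of $\IVCM$.
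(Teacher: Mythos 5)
Your proposal is correct and follows exactly the route the paper intends: the corollary is stated without proof precisely because it is the immediate consequence of the equality of the two actions under $\Upsilon_\DC$ together with the two-way correspondence of variation classes established in Proposition~\ref{prop:inf_var_compar-QxQ_reduced}. Your explicit spelling out of both implications, including the use of Lemma~\ref{le:lifiting_of_reduced_paths-DMS} to fix the lift $q_\cdot$, matches the setup the paper puts in place just before the Proposition.
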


Hence, the family of DLPSs contains in a natural way all DMSs as well
as all the dynamical systems obtained by reduction of symmetric DMSs.


\section{Categorical formulation}
\label{sec:categorical_formulation}

In many circumstances it is useful to be able to consider ``maps''
between mechanical systems. One example in the area of interest of
this paper is the reduction process, seen as a map from a symmetric
system to a reduced one. Another example is the comparison of
different reductions of the same symmetric system. More generally, a
symmetry could be seen as a map from a system to itself. A common
framework for considering spaces together with their maps is provided
by constructing a category (see, for
instance,~\cite{bo:cendra_marsden_ratiu-lagrangian_reduction_by_stages}). In
this section we study the basic properties of DLPSs and their
morphisms in this categorical context.

\begin{definition}\label{def:DLPSC}
  We define the \jdef{category of discrete Lagrange--Poincar\'e
    systems} as the category $\DLPSC$ whose objects are DLPSs. Given
  $\mathcal{M}, \mathcal{M}' \in \Ob_{\DLPSC}$ with
  $\mathcal{M} = (E,L_d,\IVCM)$ and $\mathcal{M}' = (E',L_d',\IVCM')$,
  a map $\Upsilon:C'(E)\rightarrow C'(E')$ is in
  $\Mor_{\DLPSC}(\mathcal{M},\mathcal{M}')$ if
  \begin{enumerate}
  \item \label{it:DLPSC_mor-surj_subm} $\Upsilon$ is a surjective submersion,
  \item \label{it:DLPSC_mor-D_1_onto} $D_1(p_1\circ
    \Upsilon)(\epsilon_0,m_1):T_{(\epsilon_0,m_1)}(E\times\{m_1\})\rightarrow
    T_{p_1(\Upsilon(\epsilon_0,m_1))}E'$ is onto for all
    $(\epsilon_0,m_1)\in C'(E)$,
  \item \label{it:DLPSC_mor-D_1_null} $D_1(p_2\circ
    \Upsilon)(\epsilon_0,m_1)=0$ for all $(\epsilon_0,m_1)\in C'(E)$
  \item \label{it:DLPSC_mor-C''_well_def} as maps from $C''(E)$ to $M'$, 
    \begin{equation}
      \label{eq:DLPSC_mor-C''_well_def}
      p_2^{C'(E'),M'}\circ \Upsilon\circ
      p_1^{C''(E),C'(E)} = \phi' \circ p_1^{C'(E'),E'} \circ \Upsilon
      \circ p_2^{C''(E),C'(E)},
    \end{equation}
    where $p_j^{A,B}:A\rightarrow B$ are the maps induced by the
    canonical projections of a Cartesian product onto its factors,
  \item \label{it:DLPSC_mor-L_d} $L_d = L_d' \circ \Upsilon$,
  \item \label{it:DLPSC_mor-IVCM} For all
    $((\epsilon_0,m_1),(\epsilon_1,m_2),\delta \epsilon_1) \in
    p_3^*(TE)$,
    \begin{equation}\label{eq:DLPSC_mor-IVCM}
      \begin{split}
        \IVCM'(\Upsilon^{(2)}((\epsilon_0,m_1),
        &(\epsilon_1,m_2)))(D_1(p_1\circ
        \Upsilon)(\epsilon_1,m_2)(\delta \epsilon_1)) =\\ &d(p_1\circ
        \Upsilon)(\epsilon_0,m_1)(\IVCM((\epsilon_0,m_1),(\epsilon_1,m_2))(\delta
        \epsilon_1),d\phi(\epsilon_1)(\delta \epsilon_1))
      \end{split}
    \end{equation}
    (see Remark~\ref{rem:Upsilon2_well_defined} below).
  \end{enumerate}
\end{definition}

\begin{remark}\label{rem:Upsilon2_well_defined}
  If $\Upsilon\in\Mor_{\DLPSC}(\mathcal{M},\mathcal{M}')$, by
  point~\ref{it:DLPSC_mor-C''_well_def}, $\Upsilon\times \Upsilon$
  defines a map $\Upsilon^{(2)}:C''(E)\rightarrow C''(E')$, which is
  used in point~\ref{it:DLPSC_mor-IVCM}.
\end{remark}

\begin{lemma}\label{le:basic_DLPSC_mor_props}
  Let $\Upsilon\in\Mor_{\DLPSC}(\mathcal{M},\mathcal{M}')$,
  $((\epsilon_0,m_1),(\epsilon_1,m_2))\in C''(E)$ and
  $(\epsilon'_0,m'_1) := \Upsilon(\epsilon_0,m_1)$. The following
  assertions are true.
  \begin{enumerate}
  \item \label{it:basic_DLPSC_mor_props-two_vars} Given $\delta
    \epsilon'_0\in T_{\epsilon'_0}E'$, if $D_1(p_1\circ
    \Upsilon)(\epsilon_0,m_1)(\delta \epsilon_0) = \delta \epsilon'_0$
    for some $\delta \epsilon_0\in T_{\epsilon_0} E$, then
    $d\Upsilon(\epsilon_0,m_1)(\delta \epsilon_0,0) = (\delta
    \epsilon'_0, 0)$.
  \item \label{it:basic_DLPSC_mor_props-derivative} If $\delta
    \epsilon_1\in T_{\epsilon_1} E$,
    \begin{equation}\label{eq:DLPSC_mor-C''_well_def-derivative}
      D_2(p_2\circ \Upsilon)(\epsilon_0, m_1) 
      (d\phi(\epsilon_1)(\delta \epsilon_1)) =
      d\phi'(\epsilon'_1)(D_1(p_1\circ \Upsilon)(\epsilon_1, 
      m_2)(\delta \epsilon_1)).
    \end{equation}
  \end{enumerate}
\end{lemma}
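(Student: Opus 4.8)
The plan is to derive both statements directly from the defining properties of a morphism in $\DLPSC$ (Definition~\ref{def:DLPSC}), mainly properties~\ref{it:DLPSC_mor-D_1_null} and~\ref{it:DLPSC_mor-C''_well_def}, by resolving the relevant differentials into their partial derivatives on the product space $C'(E')=E'\times M'$ and on the fibered product $C''(E)$.

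For part~\ref{it:basic_DLPSC_mor_props-two_vars}, I would observe that $d\Upsilon(\epsilon_0,m_1)(\delta\epsilon_0,0)$ is a tangent vector to $E'\times M'$ and so splits into its $E'$- and $M'$-components. The $E'$-component is $d(p_1\circ\Upsilon)(\epsilon_0,m_1)(\delta\epsilon_0,0)$; since the $M$-slot of the input is zero, this equals the partial derivative $D_1(p_1\circ\Upsilon)(\epsilon_0,m_1)(\delta\epsilon_0)$, which is $\delta\epsilon'_0$ by hypothesis. The $M'$-component is $d(p_2\circ\Upsilon)(\epsilon_0,m_1)(\delta\epsilon_0,0)=D_1(p_2\circ\Upsilon)(\epsilon_0,m_1)(\delta\epsilon_0)$, which vanishes by property~\ref{it:DLPSC_mor-D_1_null}. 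Assembling the two components yields $(\delta\epsilon'_0,0)$. This part is essentially immediate once the differential is expressed through partial derivatives.

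For part~\ref{it:basic_DLPSC_mor_props-derivative}, the idea is to differentiate, along a carefully chosen tangent vector, the identity of maps on $C''(E)$ furnished by property~\ref{it:DLPSC_mor-C''_well_def}, which unwinds to $(p_2\circ\Upsilon)(\epsilon_0,m_1)=\phi'\bigl((p_1\circ\Upsilon)(\epsilon_1,m_2)\bigr)$. Since $((\epsilon_0,m_1),(\epsilon_1,m_2))\in C''(E)$ forces $m_1=\phi(\epsilon_1)$, I would take the displacement $\bigl((0_{\epsilon_0},d\phi(\epsilon_1)(\delta\epsilon_1)),(\delta\epsilon_1,0_{m_2})\bigr)$, which lies in $T C''(E)$ precisely because its two $M$-slots satisfy the linearized constraint $\delta m_1=d\phi(\epsilon_1)(\delta\epsilon_1)$. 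Applying the differential of the left-hand side of~\eqref{eq:DLPSC_mor-C''_well_def} to this vector isolates $D_2(p_2\circ\Upsilon)(\epsilon_0,m_1)$ acting on $d\phi(\epsilon_1)(\delta\epsilon_1)$ (the $D_1$ contribution drops out because the $\epsilon_0$-slot is zero), while the differential of the right-hand side isolates $d\phi'(\epsilon'_1)$ composed with $D_1(p_1\circ\Upsilon)(\epsilon_1,m_2)(\delta\epsilon_1)$ (the $m_2$-slot being zero). Equating the two derivatives gives exactly~\eqref{eq:DLPSC_mor-C''_well_def-derivative}.

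The one point requiring care, and the main obstacle, is the bookkeeping of the fibered-product structure of $C''(E)$ in part~\ref{it:basic_DLPSC_mor_props-derivative}: one must check that the chosen displacement is a genuine tangent vector to $C''(E)$ rather than merely to $E\times E\times M$, which is where the constraint $m_1=\phi(\epsilon_1)$ and its linearization enter, and then track the projections $p_1^{C''(E),C'(E)}$ and $p_2^{C''(E),C'(E)}$ through the chain rule so that only the intended partial derivatives survive. Using the identification $C''(E)\cong E\times E\times M$ of Remark~\ref{rem:isomorphism_C''(E)_with_ExExM}, with free coordinates $(\epsilon_0,\epsilon_1,m_2)$, makes this transparent: the displacement is simply $\partial/\partial\epsilon_1$ in the direction $\delta\epsilon_1$ with $\epsilon_0$ and $m_2$ held fixed and $m_1=\phi(\epsilon_1)$ varying accordingly.
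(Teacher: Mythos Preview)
Your proposal is correct and follows essentially the same approach as the paper: part~\ref{it:basic_DLPSC_mor_props-two_vars} is deduced from property~\ref{it:DLPSC_mor-D_1_null}, and part~\ref{it:basic_DLPSC_mor_props-derivative} is obtained by differentiating~\eqref{eq:DLPSC_mor-C''_well_def} along the tangent vector $\bigl((0,d\phi(\epsilon_1)(\delta\epsilon_1)),(\delta\epsilon_1,0)\bigr)\in T_{((\epsilon_0,m_1),(\epsilon_1,m_2))}C''(E)$. The paper's proof is terser but uses exactly the same displacement and the same morphism conditions.
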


\begin{proof}
  Point~\ref{it:basic_DLPSC_mor_props-two_vars} follows from
  morphism's condition~\ref{it:DLPSC_mor-D_1_null}, satisfied by
  $\Upsilon$. Point~\ref{it:basic_DLPSC_mor_props-derivative} follows
  by noticing that $(0,d\phi(\epsilon_1)(\delta \epsilon_1),\delta
  \epsilon_1,0) \in T_{((\epsilon_0,m_1),(\epsilon_1,m_2))}C''(E)$
  and, then, using~\eqref{eq:DLPSC_mor-C''_well_def}.
\end{proof}

\begin{proposition}\label{prop:DLPSC_is_a_category}
  $\DLPSC$ is a category considering the standard composition of
  functions and identity mappings.
\end{proposition}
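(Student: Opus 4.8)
The plan is to observe first that, since the morphisms of $\DLPSC$ are genuine maps $\Upsilon\colon C'(E)\to C'(E')$ and composition is ordinary function composition, the associativity of composition and the unit laws are automatic. Hence the only substantive content is to verify two things: that for each object $\mathcal{M}=(E,L_d,\IVCM)$ the map $id_{C'(E)}$ belongs to $\Mor_{\DLPSC}(\mathcal{M},\mathcal{M})$, and that whenever $\Upsilon\in\Mor_{\DLPSC}(\mathcal{M},\mathcal{M}')$ and $\Upsilon'\in\Mor_{\DLPSC}(\mathcal{M}',\mathcal{M}'')$, the composite $\Theta:=\Upsilon'\circ\Upsilon\colon C'(E)\to C'(E'')$ lies in $\Mor_{\DLPSC}(\mathcal{M},\mathcal{M}'')$. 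In both cases this amounts to checking the six defining conditions of Definition~\ref{def:DLPSC}.

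For the identity, each condition is immediate: conditions~\ref{it:DLPSC_mor-surj_subm}, \ref{it:DLPSC_mor-D_1_onto} and \ref{it:DLPSC_mor-D_1_null} hold because $p_1\circ id=p_1$ and $p_2\circ id=p_2$; condition~\ref{it:DLPSC_mor-L_d} is trivial; and condition~\ref{it:DLPSC_mor-C''_well_def} reduces, on a point $((\epsilon_0,m_1),(\epsilon_1,m_2))\in C''(E)$, to the identity $m_1=\phi(\epsilon_1)$, which is exactly the fiber-product relation defining $C''(E)$. Condition~\ref{it:DLPSC_mor-IVCM} holds because $D_1(p_1)$ is the identity on the $E$-factor and $d(p_1)$ discards the $M$-component, so both sides of~\eqref{eq:DLPSC_mor-IVCM} collapse to $\IVCM((\epsilon_0,m_1),(\epsilon_1,m_2))(\delta\epsilon_1)$.

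For the composite $\Theta$, conditions~\ref{it:DLPSC_mor-surj_subm} and \ref{it:DLPSC_mor-L_d} are immediate, since a composition of surjective submersions is one and $L_d''\circ\Theta=L_d''\circ\Upsilon'\circ\Upsilon=L_d'\circ\Upsilon=L_d$. For conditions~\ref{it:DLPSC_mor-D_1_onto} and \ref{it:DLPSC_mor-D_1_null}, the key observation is that, by part~\ref{it:basic_DLPSC_mor_props-two_vars} of Lemma~\ref{le:basic_DLPSC_mor_props} applied to $\Upsilon$, a vector $(\delta\epsilon_0,0)\in T_{(\epsilon_0,m_1)}(E\times\{m_1\})$ is sent by $d\Upsilon$ to $(\delta\epsilon_0',0)$ with $\delta\epsilon_0'=D_1(p_1\circ\Upsilon)(\epsilon_0,m_1)(\delta\epsilon_0)$; composing with $\Upsilon'$ and using its conditions~\ref{it:DLPSC_mor-D_1_onto} and \ref{it:DLPSC_mor-D_1_null} then yields the two statements for $\Theta$ via the chain rule. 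Condition~\ref{it:DLPSC_mor-C''_well_def} follows by chaining the compatibility relations: writing $\Upsilon(\epsilon_i,m_{i+1})=(\epsilon_i',m_{i+1}')$ and $\Upsilon'(\epsilon_i',m_{i+1}')=(\epsilon_i'',m_{i+1}'')$, condition~\ref{it:DLPSC_mor-C''_well_def} for $\Upsilon$ gives $m_1'=\phi'(\epsilon_1')$ (so that $\Upsilon^{(2)}$, hence $\Theta^{(2)}=(\Upsilon')^{(2)}\circ\Upsilon^{(2)}$, is well defined), and then condition~\ref{it:DLPSC_mor-C''_well_def} for $\Upsilon'$ gives $m_1''=\phi''(\epsilon_1'')$, which is precisely~\eqref{eq:DLPSC_mor-C''_well_def} for $\Theta$.

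The main obstacle is condition~\ref{it:DLPSC_mor-IVCM} for $\Theta$, which I would establish by applying the corresponding conditions of $\Upsilon$ and $\Upsilon'$ in succession. Set $\delta\epsilon_1':=D_1(p_1\circ\Upsilon)(\epsilon_1,m_2)(\delta\epsilon_1)$; a chain-rule computation using part~\ref{it:basic_DLPSC_mor_props-two_vars} of Lemma~\ref{le:basic_DLPSC_mor_props} shows $D_1(p_1\circ\Theta)(\epsilon_1,m_2)(\delta\epsilon_1)=D_1(p_1\circ\Upsilon')(\epsilon_1',m_2')(\delta\epsilon_1')$, so the left-hand side of~\eqref{eq:DLPSC_mor-IVCM} for $\Theta$ equals the left-hand side of~\eqref{eq:DLPSC_mor-IVCM} for $\Upsilon'$ at the variation $\delta\epsilon_1'$. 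For the right-hand side I would factor $d(p_1\circ\Theta)(\epsilon_0,m_1)=d(p_1\circ\Upsilon')(\epsilon_0',m_1')\circ d\Upsilon(\epsilon_0,m_1)$ and compute the vector $d\Upsilon(\epsilon_0,m_1)(\IVCM((\epsilon_0,m_1),(\epsilon_1,m_2))(\delta\epsilon_1),d\phi(\epsilon_1)(\delta\epsilon_1))$: its $E'$-component is the right-hand side of~\eqref{eq:DLPSC_mor-IVCM} for $\Upsilon$, hence equals $\IVCM'(\cdots)(\delta\epsilon_1')$, while its $M'$-component, by condition~\ref{it:DLPSC_mor-D_1_null} of $\Upsilon$ (which annihilates the $E$-derivative) together with part~\ref{it:basic_DLPSC_mor_props-derivative} of Lemma~\ref{le:basic_DLPSC_mor_props}, equals $d\phi'(\epsilon_1')(\delta\epsilon_1')$. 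Thus that vector is exactly $(\IVCM'(\cdots)(\delta\epsilon_1'),d\phi'(\epsilon_1')(\delta\epsilon_1'))$, and applying $d(p_1\circ\Upsilon')(\epsilon_0',m_1')$ identifies the right-hand side of~\eqref{eq:DLPSC_mor-IVCM} for $\Theta$ with that of~\eqref{eq:DLPSC_mor-IVCM} for $\Upsilon'$, completing the verification. The delicacy here is purely bookkeeping: one must keep the $E$- and $M$-components of each tangent vector separate and invoke condition~\ref{it:DLPSC_mor-D_1_null} at exactly the right moment to discard the $E$-direction contribution to the base.
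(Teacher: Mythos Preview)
Your proposal is correct and follows exactly the route the paper indicates: the paper's own proof simply remarks that associativity and the unit laws are automatic for function composition and that the closure of $\Mor_{\DLPSC}$ under composition and the inclusion of $id_{C'(E)}$ ``follow in a lengthy but straightforward manner,'' which is precisely the computation you have spelled out. Your use of Lemma~\ref{le:basic_DLPSC_mor_props} to handle conditions~\ref{it:DLPSC_mor-D_1_onto}, \ref{it:DLPSC_mor-D_1_null} and especially the $M'$-component in condition~\ref{it:DLPSC_mor-IVCM} is the intended mechanism.
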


\begin{proof}
  In order to prove that the given data defines a category one has to
  check that the composition mapping is associative and the identities
  are left and right identities for the composition mapping. The
  composition of functions and the identity mappings meet those
  requirements, so the only thing left to prove is that $\circ$ is
  well defined in $\DLPSC$, that is, that $\circ :
  \Mor_{\DLPSC}(\mathcal{M}',\mathcal{M}'')\times
  \Mor_{\DLPSC}(\mathcal{M},\mathcal{M}') \rightarrow
  \Mor_{\DLPSC}(\mathcal{M},\mathcal{M}'')$ and that $id_\mathcal{M}
  := id_{C'(E)} \in \Mor_{\DLPSC}(\mathcal{M},\mathcal{M})$.  Both
  properties follow in a lengthy but straightforward manner.
\end{proof}

\begin{lemma}\label{le:comm_triangle_side_function_imp_side_morph}
  Let $\Upsilon'\in\Mor_{\DLPSC}(\mathcal{M},\mathcal{M}')$ and
  $\Upsilon''\in\Mor_{\DLPSC}(\mathcal{M},\mathcal{M}'')$ where
  $\mathcal{M}=(E,L_d,\IVCM)$, $\mathcal{M}'=(E',L_d',\IVCM')$ and
  $\mathcal{M}''=(E'',L_d',\IVCM'')$. If $F:C'(E')\rightarrow
  C'(E'')$ is a smooth map such that the diagram
  \begin{equation*}
    \xymatrix{ {} & {C'(E)} \ar[dl]_{\Upsilon'} \ar[dr]^{\Upsilon''} & {}\\
      {C'(E')} \ar[rr]_{F} & {} & {C'(E'')}
    }
  \end{equation*}
  is commutative, then
  $F\in\Mor_{\DLPSC}(\mathcal{M}',\mathcal{M}'')$. Furthermore, if $F$
  is a diffeomorphism, then $F$ is an isomorphism in $\DLPSC$.
\end{lemma}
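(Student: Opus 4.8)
The plan is to verify that $F$ satisfies the six defining conditions of a morphism in Definition~\ref{def:DLPSC} (now from $\mathcal{M}'$ to $\mathcal{M}''$), transferring each property of $\Upsilon''$ to $F$ by precomposing with $\Upsilon'$ and exploiting that $\Upsilon'$ is a surjective submersion whose first-component derivative $D_1(p_1\circ\Upsilon')$ is onto. Throughout I use $\Upsilon'' = F\circ\Upsilon'$ and, given a point of $C'(E')$, fix a preimage under $\Upsilon'$ (which exists by surjectivity).

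The ``easy'' conditions come first. For condition~\ref{it:DLPSC_mor-surj_subm}, surjectivity of $F$ is immediate since $\im F\supseteq\im\Upsilon'' = C'(E'')$; and since $d\Upsilon''(x) = dF(\Upsilon'(x))\circ d\Upsilon'(x)$ is onto ($\Upsilon''$ being a submersion), the inclusion $\im(dF(\Upsilon'(x)))\supseteq\im(d\Upsilon''(x))$ forces $dF$ to be onto at every point, using surjectivity of $\Upsilon'$ to reach all of $C'(E')$. For condition~\ref{it:DLPSC_mor-L_d} (the lagrangian of $\mathcal{M}''$ being $L_d'$ as well), from $L_d'\circ F\circ\Upsilon' = L_d'\circ\Upsilon'' = L_d = L_d'\circ\Upsilon'$ and surjectivity of $\Upsilon'$ we obtain $L_d'\circ F = L_d'$. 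For conditions~\ref{it:DLPSC_mor-D_1_onto} and~\ref{it:DLPSC_mor-D_1_null} I use that, by condition~\ref{it:DLPSC_mor-D_1_null} for $\Upsilon'$ and Lemma~\ref{le:basic_DLPSC_mor_props}(\ref{it:basic_DLPSC_mor_props-two_vars}), $d\Upsilon'$ sends a first-component vector $(\delta\epsilon_0,0)$ to the first-component vector $(D_1(p_1\circ\Upsilon')(\delta\epsilon_0),0)$. Applying $dp_1$ and $dp_2$ to $d\Upsilon''(\delta\epsilon_0,0) = dF(d\Upsilon'(\delta\epsilon_0,0))$ yields, at matched points,
\begin{equation*}
  D_1(p_1\circ\Upsilon'') = D_1(p_1\circ F)\circ D_1(p_1\circ\Upsilon'), \qquad
  D_1(p_2\circ\Upsilon'') = D_1(p_2\circ F)\circ D_1(p_1\circ\Upsilon').
\end{equation*}
Since $D_1(p_1\circ\Upsilon')$ is onto (condition~\ref{it:DLPSC_mor-D_1_onto} for $\Upsilon'$), the first identity (left side onto) gives condition~\ref{it:DLPSC_mor-D_1_onto} for $F$, and the second (left side zero) gives condition~\ref{it:DLPSC_mor-D_1_null} for $F$.

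The delicate conditions are~\ref{it:DLPSC_mor-C''_well_def} and~\ref{it:DLPSC_mor-IVCM}, which live on $C''$. I first note that condition~\ref{it:DLPSC_mor-C''_well_def} makes $(\Upsilon')^{(2)}$, $(\Upsilon'')^{(2)}$ and $F^{(2)}$ well defined (Remark~\ref{rem:Upsilon2_well_defined}) and that $(\Upsilon'')^{(2)} = F^{(2)}\circ(\Upsilon')^{(2)}$. The key technical point, and the main obstacle, is that $(\Upsilon')^{(2)}:C''(E)\rightarrow C''(E')$ is surjective: given $((\epsilon'_0,m'_1),(\epsilon'_1,m'_2))\in C''(E')$, choose (surjectivity of $\Upsilon'$) a point $(\epsilon_1,m_2)$ over $(\epsilon'_1,m'_2)$ and set $m_1:=\phi(\epsilon_1)$; condition~\ref{it:DLPSC_mor-C''_well_def} for $\Upsilon'$ then forces $p_2(\Upsilon'(\epsilon_0,m_1)) = \phi'(\epsilon'_1) = m'_1$ for \emph{every} $\epsilon_0$, so it remains only to find $\epsilon_0$ with $p_1(\Upsilon'(\epsilon_0,m_1)) = \epsilon'_0$. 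Here $\epsilon_0\mapsto p_1(\Upsilon'(\epsilon_0,m_1))$ is a submersion into $E'$ (its differential is $D_1(p_1\circ\Upsilon')$, onto), and $\Upsilon'$ is a surjective submersion that, by conditions~\ref{it:DLPSC_mor-D_1_null} and~\ref{it:DLPSC_mor-C''_well_def}, is a bundle map over an induced base map $M\rightarrow M'$; this is exactly where the submersion hypothesis on $\Upsilon'$ is used essentially to conclude the restricted map is onto. Granting surjectivity of $(\Upsilon')^{(2)}$, condition~\ref{it:DLPSC_mor-C''_well_def} for $F$ follows by pulling an arbitrary point of $C''(E')$ back along $(\Upsilon')^{(2)}$ and invoking condition~\ref{it:DLPSC_mor-C''_well_def} for $\Upsilon''$. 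Condition~\ref{it:DLPSC_mor-IVCM} for $F$ then follows by rewriting~\eqref{eq:DLPSC_mor-IVCM} for $\Upsilon''$ through the factorizations $(\Upsilon'')^{(2)} = F^{(2)}\circ(\Upsilon')^{(2)}$ and $D_1(p_1\circ\Upsilon'') = D_1(p_1\circ F)\circ D_1(p_1\circ\Upsilon')$: using condition~\ref{it:DLPSC_mor-IVCM} for $\Upsilon'$ together with Lemma~\ref{le:basic_DLPSC_mor_props}(\ref{it:basic_DLPSC_mor_props-derivative}) identifies the inner term as $\IVCM'$ applied along $\Upsilon'$, and the surjectivity of $(\Upsilon')^{(2)}$ and of $D_1(p_1\circ\Upsilon')$ lets one cancel these maps to reach the identity~\eqref{eq:DLPSC_mor-IVCM} for $F$ at all points and all variations.

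Finally, if $F$ is a diffeomorphism, reading the triangle backwards gives $F^{-1}\circ\Upsilon'' = \Upsilon'$ with $F^{-1}$ smooth. Since the hypotheses are symmetric in $\mathcal{M}'$ and $\mathcal{M}''$ (both carry the lagrangian $L_d'$), the first part of the lemma, applied with the roles of $\mathcal{M}'$ and $\mathcal{M}''$ interchanged, shows $F^{-1}\in\Mor_{\DLPSC}(\mathcal{M}'',\mathcal{M}')$. Hence $F$ is an isomorphism in $\DLPSC$.
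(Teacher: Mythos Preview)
Your approach is essentially the same as the paper's: verify each of the six morphism conditions for $F$ by lifting data through $\Upsilon'$ and invoking the corresponding property of $\Upsilon''$, with the final isomorphism statement handled by symmetry of the triangle. You provide considerably more detail than the paper (which only spells out condition~\ref{it:DLPSC_mor-D_1_null} and declares the rest ``follow in a similar fashion''), and you correctly flag the surjectivity of $(\Upsilon')^{(2)}$ as the nontrivial ingredient behind conditions~\ref{it:DLPSC_mor-C''_well_def} and~\ref{it:DLPSC_mor-IVCM}---a point the paper does not isolate; your sketch for it is terse but in line with the level of rigor of the paper's own argument.
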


\begin{proof}
  That $F$ satisfies morphism's
  conditions~\ref{it:DLPSC_mor-surj_subm}
  and~\ref{it:DLPSC_mor-D_1_onto} follows easily using the
  corresponding property of the morphism $\Upsilon''$ to lift the data
  (point or tangent vector) to $C'(E)$ and, then, using $\Upsilon'$ to
  push down to $C'(E')$. 

  Given $(\epsilon_0',m_1')\in C'(E')$ and $\delta \epsilon_0' \in
  T_{\epsilon_0'} E'$, let $(\epsilon_0,m_1)\in C'(E)$ and $\delta
  \epsilon_0\in T_{\epsilon_0}E$ such that $\Upsilon'(\epsilon_0, m_1)
  = (\epsilon_0', m_1')$ and $D_1(p_1\circ
  \Upsilon')(\epsilon_0,m_1)(\delta \epsilon_0) = \delta \epsilon_0'$,
  by point~\ref{it:basic_DLPSC_mor_props-two_vars} in
  Lemma~\ref{le:basic_DLPSC_mor_props},
  $d\Upsilon'(\epsilon_0,m_1)(\delta \epsilon_0,0) = (\delta
  \epsilon_0',0)$. As $p_2 \circ \Upsilon'' = p_2 \circ F \circ
  \Upsilon'$, taking differentials and evaluating at
  $(\epsilon_0,m_1)$ we get 
  \begin{equation*}
    \begin{split}
      D_1(p_2\circ F)(\epsilon_0',m_1')(\delta \epsilon_0') =&
      d(p_2\circ F)(\epsilon_0',m_1')(\delta \epsilon_0',0) =
      d(p_2\circ \Upsilon'')(\epsilon_0,m_1)(\delta \epsilon_0,0) \\=&
      D_1(p_2\circ \Upsilon'')(\epsilon_0,m_1)(\delta \epsilon_0) =0,
    \end{split}
  \end{equation*}
  where the last identity holds because
  $\Upsilon''\in\Mor_{\DLPSC}(\mathcal{M},\mathcal{M}'')$. Thus, $F$
  satisfies morphism's condition~\ref{it:DLPSC_mor-D_1_null}.

  The remaining conditions follow in a similar fashion, and we
  conclude that $F\in\Mor_{\DLPSC}(\mathcal{M}',\mathcal{M}'')$.

  The last assertion of the statement follows easily as the first part
  of the Lemma proves that $F^{-1}$ is a morphism in $\DLPSC$ and
  since, as functions, $F$ and $F^{-1}$ are mutually inverses, they
  have the same property as morphisms in $\DLPSC$.
\end{proof}

\begin{lemma}\label{le:morphism_DLPSC_and_diffeo_imp_iso_DLPSC}
  Let $\Upsilon\in \Mor_{\DLPSC}(\mathcal{M}, \mathcal{M}')$ for
  $\mathcal{M}=(E,L_d,\IVCM)$ and $\mathcal{M}'=(E',L_d',\IVCM')$
  such that $\Upsilon:C'(E)\rightarrow C'(E')$ is a
  diffeomorphism. Then $\Upsilon$ is an isomorphism of $\DLPSC$.
\end{lemma}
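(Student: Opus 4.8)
The plan is to reduce the statement to the preceding Lemma~\ref{le:comm_triangle_side_function_imp_side_morph} by exhibiting $\Upsilon^{-1}$ as the ``third side'' of a commuting triangle whose other two sides are known morphisms. The key observation is that $\DLPSC$ is a category (Proposition~\ref{prop:DLPSC_is_a_category}), so the identity map $id_{C'(E)}$ is a morphism $id_{\mathcal{M}}\in\Mor_{\DLPSC}(\mathcal{M},\mathcal{M})$. I will use this identity together with the given $\Upsilon$ to supply the hypotheses of Lemma~\ref{le:comm_triangle_side_function_imp_side_morph}.

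Concretely, I take the common source in that lemma to be $\mathcal{M}$, I set $\Upsilon':=\Upsilon\in\Mor_{\DLPSC}(\mathcal{M},\mathcal{M}')$ and I take for $\Upsilon''$ the identity morphism $id_{\mathcal{M}}\in\Mor_{\DLPSC}(\mathcal{M},\mathcal{M})$, so that the role of $\mathcal{M}''$ is played by $\mathcal{M}$ itself. Since $\Upsilon:C'(E)\rightarrow C'(E')$ is a diffeomorphism by hypothesis, its inverse $F:=\Upsilon^{-1}:C'(E')\rightarrow C'(E)$ is a smooth map, and by construction the triangle commutes: $F\circ\Upsilon' = \Upsilon^{-1}\circ\Upsilon = id_{C'(E)} = \Upsilon''$. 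Thus all hypotheses of Lemma~\ref{le:comm_triangle_side_function_imp_side_morph} are met, and I conclude that $F=\Upsilon^{-1}\in\Mor_{\DLPSC}(\mathcal{M}',\mathcal{M})$.

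Finally, since $F=\Upsilon^{-1}$ is itself a diffeomorphism, the ``furthermore'' clause of Lemma~\ref{le:comm_triangle_side_function_imp_side_morph} gives that $F$ is an isomorphism in $\DLPSC$; as the inverse of an isomorphism is again an isomorphism, $\Upsilon$ is an isomorphism of $\DLPSC$. Equivalently, and more directly, once it is known that both $\Upsilon$ and $\Upsilon^{-1}$ are morphisms in $\DLPSC$ and that they are mutually inverse as maps of sets, they are automatically mutually inverse as morphisms, which is precisely the assertion that $\Upsilon$ is an isomorphism.

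I expect no substantive obstacle in this argument, since the genuinely laborious part---verifying morphism conditions~\ref{it:DLPSC_mor-surj_subm}--\ref{it:DLPSC_mor-IVCM} for a smooth map completing such a triangle---has already been discharged in Lemma~\ref{le:comm_triangle_side_function_imp_side_morph}. The only points demanding care are the two bookkeeping facts that $id_{C'(E)}$ is genuinely a morphism (guaranteed by Proposition~\ref{prop:DLPSC_is_a_category}) and that the diffeomorphism hypothesis on $\Upsilon$ is exactly what is needed both to define the smooth completing map $F=\Upsilon^{-1}$ and to apply the final clause of the cited lemma.
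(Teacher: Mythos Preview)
Your proof is correct and follows essentially the same approach as the paper: apply Lemma~\ref{le:comm_triangle_side_function_imp_side_morph} with $\Upsilon':=\Upsilon$, $\Upsilon'':=id_{C'(E)}$ (a morphism by Proposition~\ref{prop:DLPSC_is_a_category}), $\mathcal{M}'':=\mathcal{M}$, and $F:=\Upsilon^{-1}$. The paper's proof is just a one-line invocation of exactly this setup.
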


\begin{proof}
  As $\Upsilon\in \Mor_{\DLPSC}(\mathcal{M}, \mathcal{M}')$ and, by
  Proposition~\ref{prop:DLPSC_is_a_category},
  $id_{C'(E)}\in \Mor_{\DLPSC}(\mathcal{M}, \mathcal{M})$, the result
  follows from
  Lemma~\ref{le:comm_triangle_side_function_imp_side_morph} with
  $\mathcal{M}'':=\mathcal{M}$ and $F:=\Upsilon^{-1}$.
\end{proof}

The following result exposes the relation between trajectories of a
DLPS and their images under a morphism in $\DLPSC$.

\begin{theorem}\label{thm:morphisms_and_trajectories}
  Given $\Upsilon\in \Mor_{\DLPSC}(\mathcal{M},\mathcal{M}')$ with
  $\mathcal{M}=(E,L_d,\IVCM)$ and $\mathcal{M}'=(E',L_d',\IVCM')$, let
  $(\epsilon_\cdot,m_\cdot) =
  ((\epsilon_0,m_1),\ldots,(\epsilon_{N-1},m_N))$
  be a discrete path in $C'(E)$ and define
  $(\epsilon'_k,m'_{k+1}) := \Upsilon(\epsilon_k,m_{k+1})$ for
  $k=0,\ldots, N-1$. Then, $(\epsilon_\cdot, m_\cdot)$ is a trajectory
  of $\mathcal{M}$ if and only if $(\epsilon'_\cdot, m'_\cdot)$ is a
  trajectory of $\mathcal{M}'$.
\end{theorem}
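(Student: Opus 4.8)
The plan is to test each system against the algebraic equations of motion~\eqref{eq:equation_of_motion-generalized-in_epsilon_m} provided by Proposition~\ref{prop:eqs_of_motion_gdms}, and to show that, at every interior index $k$, the equation of motion for $\mathcal{M}$ is the pullback of that for $\mathcal{M}'$ along the linear map $A_k := D_1(p_1\circ\Upsilon)(\epsilon_k,m_{k+1})\colon T_{\epsilon_k}E\to T_{\epsilon'_k}E'$. First I would check that $(\epsilon'_\cdot,m'_\cdot)$ is a genuine discrete path in $C'(E')$: since $((\epsilon_k,m_{k+1}),(\epsilon_{k+1},m_{k+2}))\in C''(E)$, evaluating the morphism's condition~\ref{it:DLPSC_mor-C''_well_def}, in the form~\eqref{eq:DLPSC_mor-C''_well_def}, on this point yields precisely $m'_{k+1}=\phi'(\epsilon'_{k+1})$, so that Proposition~\ref{prop:eqs_of_motion_gdms} applies to $\mathcal{M}'$ along $(\epsilon'_\cdot,m'_\cdot)$ as well.

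Denote by $\eta_k\in T^*_{\epsilon_k}E$ and $\eta'_k\in T^*_{\epsilon'_k}E'$ the left-hand sides of~\eqref{eq:equation_of_motion-generalized-in_epsilon_m} for $\mathcal{M}$ and $\mathcal{M}'$ at index $k$. The target identity is $\eta_k(\delta\epsilon_k)=\eta'_k(A_k\delta\epsilon_k)$ for all $\delta\epsilon_k\in T_{\epsilon_k}E$. I would expand the three summands of $\eta_k(\delta\epsilon_k)$ via $L_d=L_d'\circ\Upsilon$ (condition~\ref{it:DLPSC_mor-L_d}) and the chain rule. Both the summand $D_1L_d(\epsilon_k,m_{k+1})(\delta\epsilon_k)$ and the summand $D_1L_d(\epsilon_{k-1},m_k)(\IVCM((\epsilon_{k-1},m_k),(\epsilon_k,m_{k+1}))(\delta\epsilon_k))$ produce, after the chain rule, a piece of the form $D_2L_d'\circ D_1(p_2\circ\Upsilon)$ that vanishes because $D_1(p_2\circ\Upsilon)=0$ (condition~\ref{it:DLPSC_mor-D_1_null}); the first summand therefore collapses to $D_1L_d'(\epsilon'_k,m'_{k+1})(A_k\delta\epsilon_k)$, which is the first summand of $\eta'_k(A_k\delta\epsilon_k)$.

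The heart of the computation is the interaction of the remaining two summands. Writing $P:=\IVCM((\epsilon_{k-1},m_k),(\epsilon_k,m_{k+1}))(\delta\epsilon_k)$, the leftover of the $\IVCM$ summand is $D_1L_d'(\epsilon'_{k-1},m'_k)(D_1(p_1\circ\Upsilon)(\epsilon_{k-1},m_k)(P))$. I would invoke the $\IVCM$-compatibility~\eqref{eq:DLPSC_mor-IVCM} with $(\epsilon_0,m_1)=(\epsilon_{k-1},m_k)$ and $(\epsilon_1,m_2)=(\epsilon_k,m_{k+1})$; splitting its right-hand side $d(p_1\circ\Upsilon)(\epsilon_{k-1},m_k)(P,d\phi(\epsilon_k)\delta\epsilon_k)$ into its two partial derivatives identifies $D_1(p_1\circ\Upsilon)(\epsilon_{k-1},m_k)(P)$ with $\IVCM'((\epsilon'_{k-1},m'_k),(\epsilon'_k,m'_{k+1}))(A_k\delta\epsilon_k)-D_2(p_1\circ\Upsilon)(\epsilon_{k-1},m_k)(d\phi(\epsilon_k)\delta\epsilon_k)$. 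For the second summand $D_2L_d(\epsilon_{k-1},m_k)(d\phi(\epsilon_k)\delta\epsilon_k)$ I would again use the chain rule and rewrite $D_2(p_2\circ\Upsilon)(\epsilon_{k-1},m_k)(d\phi(\epsilon_k)\delta\epsilon_k)$ by part~\ref{it:basic_DLPSC_mor_props-derivative} of Lemma~\ref{le:basic_DLPSC_mor_props} as $d\phi'(\epsilon'_k)(A_k\delta\epsilon_k)$. Adding the two contributions, the two copies of $D_1L_d'(\epsilon'_{k-1},m'_k)(D_2(p_1\circ\Upsilon)(\epsilon_{k-1},m_k)(d\phi(\epsilon_k)\delta\epsilon_k))$ cancel, and what remains is exactly the other two summands of $\eta'_k(A_k\delta\epsilon_k)$.

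This establishes $\eta_k=A_k^*\eta'_k$. Since $A_k=D_1(p_1\circ\Upsilon)(\epsilon_k,m_{k+1})$ is onto by condition~\ref{it:DLPSC_mor-D_1_onto}, its adjoint $A_k^*$ is injective, so $\eta_k=0$ if and only if $\eta'_k=0$. Running this over all interior indices $k=1,\ldots,N-1$ and applying Proposition~\ref{prop:eqs_of_motion_gdms} to each system gives the desired equivalence. I expect the main obstacle to be the bookkeeping of the cancellation above: one has to match the two $D_2(p_1\circ\Upsilon)$ contributions, which arise from different sources (the chain-rule expansion of the $D_2L_d$ term and the expansion of the right-hand side of~\eqref{eq:DLPSC_mor-IVCM}), while keeping straight that the various partial derivatives are evaluated at $(\epsilon_{k-1},m_k)$ versus $(\epsilon_k,m_{k+1})$.
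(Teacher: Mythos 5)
Your proposal is correct, and it takes a genuinely different route from the paper. The paper argues variationally: it shows that for variations related by $d\Upsilon$ the two actions have equal differentials, then uses the surjectivity of $D_1(p_1\circ\Upsilon)$ (morphism condition~\ref{it:DLPSC_mor-D_1_onto}) to lift the generating vectors $\ti{\delta\epsilon'_k}$ of an arbitrary fixed-endpoint variation downstairs to generators upstairs, builds the corresponding fixed-endpoint variation via~\eqref{eq:gdms_variation_fep-def}, and checks that it pushes forward to the given one; criticality then transfers in both directions. You instead work at the level of the algebraic equations of motion from Proposition~\ref{prop:eqs_of_motion_gdms} and prove the pointwise identity $\eta_k = A_k^*\eta'_k$ with $A_k = D_1(p_1\circ\Upsilon)(\epsilon_k,m_{k+1})$, concluding via injectivity of $A_k^*$. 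I checked the cancellation at the core of your argument: the term $D_1L'_d(\epsilon'_{k-1},m'_k)\bigl(D_2(p_1\circ\Upsilon)(\epsilon_{k-1},m_k)(d\phi(\epsilon_k)(\delta\epsilon_k))\bigr)$ produced with a minus sign by splitting the right side of~\eqref{eq:DLPSC_mor-IVCM} does match the term produced with a plus sign by the chain-rule expansion of $D_2L_d(\epsilon_{k-1},m_k)\circ d\phi(\epsilon_k)$, and Lemma~\ref{le:basic_DLPSC_mor_props}\eqref{it:basic_DLPSC_mor_props-derivative} correctly converts the surviving $D_2(p_2\circ\Upsilon)$ piece into $d\phi'(\epsilon'_k)\circ A_k$; your preliminary check that $(\epsilon'_\cdot,m'_\cdot)$ is a discrete path via~\eqref{eq:DLPSC_mor-C''_well_def} is also needed and correctly placed. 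What each approach buys: yours is more explicit and exhibits exactly which morphism axiom is responsible for each term of the discrete Lagrange--Poincar\'e equation (a useful sanity check on Definition~\ref{def:DLPSC}), while the paper's variational argument avoids the term-by-term bookkeeping and reuses the lifting machinery for variations that it needs elsewhere (e.g.\ in Proposition~\ref{prop:inf_var_compar-QxQ_reduced}).
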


\begin{proof}
  Assume that $(\delta \epsilon_\cdot,\delta m_\cdot)$ is an
  infinitesimal variation in $\mathcal{M}$ over
  $(\epsilon_\cdot,m_\cdot)$ and that $(\delta \epsilon'_\cdot,\delta
  m'_\cdot)$ is an infinitesimal variation in $\mathcal{M}'$ over
  $(\epsilon'_\cdot,m'_\cdot)$ satisfying
  \begin{equation}\label{eq:image_of_variation_under_morphism}
    d\Upsilon(\epsilon_k,m_{k+1})(\delta \epsilon_k,\delta m_{k+1}) = 
    (\delta \epsilon'_k,\delta m'_{k+1}) \stext{ for } k=0,\ldots,N-1.
  \end{equation}
  Then, using the chain rule, we see that 
  \begin{equation}\label{eq:comparison_actions_Upsilon_related}
    dS_d(\epsilon_\cdot,m_\cdot)(\delta \epsilon_\cdot,\delta
    m_\cdot) = dS_d'(\epsilon'_\cdot,m'_\cdot)(\delta
    \epsilon'_\cdot, \delta m'_\cdot).
  \end{equation}

  Next we prove the equivalence of the assertions in the statement.

  Assume that $(\epsilon_\cdot,m_\cdot)$ is a trajectory of
  $\mathcal{M}$. Let $(\delta \epsilon'_\cdot, \delta m'_\cdot)$ be
  an infinitesimal variation with fixed endpoints in $\mathcal{M}'$
  over the path $(\epsilon'_\cdot,m'_\cdot)$. That is, there are
  $\ti{\delta \epsilon'_k}\in T_{\epsilon'_k} E'$ for $k=1,\ldots,N-1$
  such that~\eqref{eq:gdms_variation_free-def}
  and~\eqref{eq:gdms_variation_fep-def} hold with $\delta \epsilon_k'$
  and $\ti{\delta \epsilon_k'}$ instead of $\delta \epsilon_k$ and
  $\ti{\delta \epsilon_k}$.

  By morphism's property~\ref{it:DLPSC_mor-D_1_onto} applied to
  $\Upsilon$, there exist $\ti{\delta \epsilon_k}\in T_{\epsilon_k}E$
  such that $D_1(p_1\circ \Upsilon)(\epsilon_k,m_{k+1})(\ti{\delta
    \epsilon_k}) = \ti{\delta \epsilon'_k}$ for $k=1,\ldots,N-1$; we
  fix one such vector for each $k$. Next
  apply~\eqref{eq:gdms_variation_free-def}
  and~\eqref{eq:gdms_variation_fep-def} to define an infinitesimal
  variation $(\delta \epsilon_\cdot, \delta m_\cdot)$ on
  $(\epsilon_\cdot, m_\cdot)$ with fixed endpoints based on the
  $\ti{\delta \epsilon_\cdot}$ constructed above.

  Direct computations using the morphism properties of $\Upsilon$ show
  that condition~\eqref{eq:image_of_variation_under_morphism} holds
  for these variations. Then,
  using~\eqref{eq:comparison_actions_Upsilon_related},
  \begin{equation*}
    dS_d'(\epsilon'_\cdot,m'_\cdot)(\delta \epsilon'_\cdot, \delta
    m'_\cdot) = dS_d(\epsilon_\cdot,m_\cdot)(\delta \epsilon_\cdot,\delta
    m_\cdot) = 0,
  \end{equation*}
  where the last equality holds because $(\delta \epsilon_\cdot,\delta
  m_\cdot)$ is an infinitesimal variation with fixed endpoints in
  $\mathcal{M}$ over $(\epsilon_\cdot,m_\cdot)$, that is a trajectory
  of $\mathcal{M}$. Finally, as $(\delta \epsilon'_\cdot, \delta
  m'_\cdot)$ was an arbitrary infinitesimal variation with fixed
  endpoints in $\mathcal{M}'$ over the path
  $(\epsilon'_\cdot,m'_\cdot)$, we conclude that
  $(\epsilon'_\cdot,m'_\cdot)$ is a trajectory of $\mathcal{M}'$.

  A similar argument shows that if $(\epsilon_\cdot',m_\cdot')$ is a
  trajectory of $\mathcal{M}'$, then $(\epsilon_\cdot,m_\cdot)$ is a
  trajectory of $\mathcal{M}$.
\end{proof}


\section{Reduction of discrete Lagrange--Poincar\'e systems}
\label{sec:reduction_of_generalized_discrete_mechanical_systems}

The purpose of this section is to define what is meant by a group of
symmetries of a DLPS. Also, a reduction result is studied.


\subsection{Symmetry groups of discrete Lagrange--Poincar\'e systems}
\label{sec:symmetry_groups_of_generalized_discrete_mechanical_systems}

Recall that a $\SG$-action on a fiber bundle consists of a pair of
$\SG$-actions $l^E$ and $l^M$, satisfying a number of conditions
(Definition~\ref{def:group_acts_on_fiber_bundle}). We can use these
actions to define ``diagonal'' $\SG$-actions on the fiber bundles
$C'(E)$ and $C''(E)$ by
\begin{equation}\label{eq:G_actions_on_C'_and_C''}
  \begin{split}
    l^{C'(E)}_g(\epsilon_0,m_1) :=& (l^E_g(\epsilon_0),l^M_g(m_1))\\
    l^{C''(E)}_g((\epsilon_0,m_1),(\epsilon_1,m_2)) :=&
    (l^{C'(E)}_g(\epsilon_0,m_1), l^{C'(E)}_g(\epsilon_1,m_2)).
  \end{split}
\end{equation}
These actions are smooth and free because $l^E$ and $l^M$ have those
properties. In addition, the bundle projection maps of $C'(E)$ and
$C''(E)$ on $M$ are $\SG$-equivariant and $\pi^{M,\SG}:M\rightarrow
M/\SG$ is a principal $\SG$-bundle. In fact, it is easy to check that
$\SG$ acts on the fiber bundles $\phi\circ p_1:C'(E)\rightarrow M$ and
$\phi\circ p_3:C''(E)\rightarrow M$.

We can also define $\SG$-actions on $\ker(d\phi)$ and $p_3^*(TE)$ by
\begin{equation}\label{eq:p_1^*TE-action}
  \begin{split}
    l^{TE}_g(\epsilon_0,\delta \epsilon_0) :=&
    dl^E_g(\epsilon_0)(\delta \epsilon_0),\\
    l^{p_3^*(TE)}_g((\epsilon_0,m_1),(\epsilon_1,m_2),\delta \epsilon_1)
    :=& (l^{C''(E)}_g((\epsilon_0,m_1),(\epsilon_1,m_2)),
    dl^E_g(\epsilon_1)(\delta \epsilon_1)).
  \end{split}
\end{equation}
We denote the $\SG$-action on $\ker(d\phi)$ by $l^{TE}$ because it is
the restriction of the natural $\SG$-action on $TE$. The action
$l^{TE}$ is well defined by the $\SG$-equivariance of $\phi$.

\begin{lemma}\label{le:properties_Upsilon^2}
  Let $\SG$ be a Lie group acting on the fiber bundle
  $\phi:E\rightarrow M$ and $\DC$ be a discrete connection on the
  principal $\SG$-bundle $\pi^{M,\SG}:M\rightarrow M/\SG$. Define
  $\Upsilon_\DC^{(2)}:C''(E)\rightarrow C''(\ti{\SG}_E)$ as the
  restriction of $(\Upsilon_\DC\circ p_1)\times(\Upsilon_\DC\circ
  p_2): C'(E)\times C'(E)\rightarrow C'(\ti{\SG}_E)\times
  C'(\ti{\SG}_E)$ to the corresponding spaces, where $\Upsilon_\DC$ is
  the surjective submersion defined in~\eqref{eq:Upsilon_DC-def}. Then
  \begin{enumerate}
  \item \label{it:properties_Upsilon^2-well_def} $\Upsilon_\DC^{(2)}$
    is well defined.
  \item \label{it:properties_Upsilon^2-isomorphism} $D_1(p_1\circ
    \Upsilon_\DC)(\epsilon_0,m_1):T_{(\epsilon_0,m_1)}(E\times
    \{m_1\})\rightarrow T_{(p_1\circ
      \Upsilon_\DC)(\epsilon_0,m_1)}\ti{\SG}_E$ is an isomorphism of
    vector spaces for every $(\epsilon_0,m_1)\in C'(E)$.
  \item \label{it:properties_Upsilon^2-ppal_bundle}
    $\Upsilon_\DC^{(2)}:C''(E)\rightarrow C''(\ti{\SG}_E)$ is a
    principal $\SG$-bundle with structure group $\SG$. In particular,
    $C''(E)/\SG\simeq C''(\ti{\SG}_E)$.
  \item \label{it:properties_Upsilon^2-lift} For
    $((v_0,r_1),(v_1,r_2))\in C''(\ti{\SG}_E)$ and
    $(\epsilon_0,m_1)\in C'(E)$ such that
    $\Upsilon_\DC(\epsilon_0,m_1)=(v_0,r_1)$, there is a unique
    $(\epsilon_1,m_2)\in C'(E)$ such that
    $((\epsilon_0,m_1),(\epsilon_1,m_2))\in C''(E)$ and
    $\Upsilon_\DC^{(2)}((\epsilon_0,m_1),(\epsilon_1,m_2)) =
    ((v_0,r_1),(v_1,r_2))$.
  \end{enumerate}
\end{lemma}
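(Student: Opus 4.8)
The plan is to establish the four assertions in the order \ref{it:properties_Upsilon^2-well_def}, \ref{it:properties_Upsilon^2-isomorphism}, \ref{it:properties_Upsilon^2-lift}, \ref{it:properties_Upsilon^2-ppal_bundle}, since \ref{it:properties_Upsilon^2-ppal_bundle} relies on the surjectivity supplied by \ref{it:properties_Upsilon^2-lift} while \ref{it:properties_Upsilon^2-isomorphism} is an independent pointwise statement. For part~\ref{it:properties_Upsilon^2-well_def} I would simply unwind the two fibered products: $((\epsilon_0,m_1),(\epsilon_1,m_2))\in C''(E)$ means $m_1=\phi(\epsilon_1)$, while membership in $C''(\ti{\SG}_E)$ means $r_1=p^{M/\SG}(v_1)$. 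Writing $\Upsilon_\DC(\epsilon,m)=(\pi^{E\times\SG,\SG}(\epsilon,\DC(\phi(\epsilon),m)),\pi^{M,\SG}(m))$ and using that $p^{M/\SG}\circ\pi^{E\times\SG,\SG}$ sends $(\epsilon,w)$ to $\pi^{M,\SG}(\phi(\epsilon))$, the required compatibility reduces to $\pi^{M,\SG}(m_1)=\pi^{M,\SG}(\phi(\epsilon_1))$, which is immediate. For part~\ref{it:properties_Upsilon^2-isomorphism} I would use that $\Upsilon_\DC$ is a principal $\SG$-bundle (Lemma~\ref{le:Upsilon_DC_is_ppal_bundle}), so $\ker d\Upsilon_\DC(\epsilon_0,m_1)$ is the orbit tangent space $\{(\xi_E(\epsilon_0),\xi_M(m_1)):\xi\in\jgsg\}$ of fundamental vector fields. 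If $D_1(p_1\circ\Upsilon_\DC)(\epsilon_0,m_1)(\delta\epsilon)=0$, then, because the $M/\SG$-component of $\Upsilon_\DC$ is independent of $\epsilon$, the whole vector $d\Upsilon_\DC(\epsilon_0,m_1)(\delta\epsilon,0)$ vanishes; hence $(\delta\epsilon,0)=(\xi_E(\epsilon_0),\xi_M(m_1))$, so $\xi_M(m_1)=0$, and freeness of the $\SG$-action on $M$ forces $\xi=0$ and $\delta\epsilon=0$. As $\dim\ti{\SG}_E=\dim E$, injectivity yields the isomorphism.

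For part~\ref{it:properties_Upsilon^2-lift} the decisive point is that the constraint $\phi(\epsilon_1)=m_1$ pins down a unique representative of $v_1$. Because $p^{M/\SG}(v_1)=r_1=\pi^{M,\SG}(m_1)$, among all representatives $(\epsilon,w)\in E\times\SG$ of $v_1$ there is a unique one, $(\epsilon_1,w_1)$, with $\phi(\epsilon_1)=m_1$: two representatives differ by the free $\SG$-action, and the normalization $\phi(\epsilon_1)=m_1$ together with freeness on $M$ removes the remaining freedom. With $\epsilon_1$ fixed I would set $m_2:=l^M_{w_1}(\HLds{m_1}(r_2))$, reading it off from $\ti{\Psi}_\DC(\epsilon_1,w_1,r_2)$, and verify $\DC(m_1,m_2)=w_1$ and $\pi^{M,\SG}(m_2)=r_2$ using part~\ref{it:dcs_and_lifts-GxG} of Proposition~\ref{prop:dcs_and_lifts} (with $g_0=e$, $g_1=w_1$ and $\DC(m_1,\HLds{m_1}(r_2))=e$). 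This gives $\Upsilon_\DC(\epsilon_1,m_2)=(v_1,r_2)$ with $((\epsilon_0,m_1),(\epsilon_1,m_2))\in C''(E)$. Uniqueness of $m_2$ follows because $m_2\mapsto(\DC(m_1,m_2),\pi^{M,\SG}(m_2))$ is injective on the fiber over $r_2$, again by the horizontal-lift description of the connection.

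For part~\ref{it:properties_Upsilon^2-ppal_bundle} I would verify the hypotheses of Theorem~\ref{thm:ppal_bundles_via_embedding}, exactly as in Lemma~\ref{le:Upsilon_DC_is_ppal_bundle}. Surjectivity of $\Upsilon_\DC^{(2)}$ is immediate from surjectivity of $\Upsilon_\DC$ together with part~\ref{it:properties_Upsilon^2-lift}. That the fibers are the diagonal $\SG$-orbits is seen as follows: if $\Upsilon_\DC^{(2)}$ identifies $((\epsilon_0,m_1),(\epsilon_1,m_2))$ with a primed point, then, $\Upsilon_\DC$ being a principal bundle, the two halves differ by elements $g,h\in\SG$; comparing $m_1'=l^M_g(m_1)$ with $\phi(\epsilon_1')=l^M_h(\phi(\epsilon_1))=l^M_h(m_1)$ and using freeness on $M$ forces $g=h$, while $\SG$-invariance of $\Upsilon_\DC$ gives the reverse inclusion. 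The submersion property I would obtain from a kernel computation: a tangent vector at $x\in C''(E)$ killed by $d\Upsilon_\DC^{(2)}(x)$ has its two halves equal to fundamental vectors for some $\xi$ and $\eta$, and the $C''(E)$-constraint $\delta m_1=d\phi(\epsilon_1)(\delta\epsilon_1)$ together with freeness forces $\xi=\eta$; thus $\ker d\Upsilon_\DC^{(2)}(x)$ is exactly the $\dim\SG$-dimensional diagonal orbit tangent space, and since $\dim C''(E)-\dim C''(\ti{\SG}_E)=\dim\SG$ the differential is onto. The stated isomorphism $C''(E)/\SG\simeq C''(\ti{\SG}_E)$ is then the identification of the base of this bundle with the orbit space.

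The recurring difficulty, and the step requiring the most care, is the interplay of the two fibered-product constraints: $m_1=\phi(\epsilon_1)$ on the source and $r_1=p^{M/\SG}(v_1)$ on the target. Although $\Upsilon_\DC\times\Upsilon_\DC$ is only a principal $(\SG\times\SG)$-bundle, this compatibility is exactly what cuts the structure group down to the diagonal $\SG$: it forces $g=h$ in the fiber computation and $\xi=\eta$ in the kernel computation, and it is what makes the lift in part~\ref{it:properties_Upsilon^2-lift} unique. Tracking this constraint carefully, rather than any of the underlying calculus, is where the real work lies.
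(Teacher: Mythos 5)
Your proof is correct, and parts~\ref{it:properties_Upsilon^2-well_def} and~\ref{it:properties_Upsilon^2-isomorphism} coincide with the paper's argument (same unwinding of the fibered-product constraint; same identification of $\ker d\Upsilon_\DC$ with the orbit tangent space, use of freeness of $l^M$, and dimension count). Where you genuinely diverge is in the organization and technique of parts~\ref{it:properties_Upsilon^2-ppal_bundle} and~\ref{it:properties_Upsilon^2-lift}. The paper proves the principal-bundle statement first by transporting $\Upsilon_\DC^{(2)}$ through the diffeomorphisms $F_E$ and $F_{\ti{\SG}_E}$ of Remark~\ref{rem:isomorphism_C''(E)_with_ExExM} to a map $\ti{\Upsilon}_\DC^{(2)}$ on $E\times E\times M$, where the submersion property falls out because the last two components are just $\Upsilon_\DC$ and the first is handled by part~\ref{it:properties_Upsilon^2-isomorphism}; surjectivity is shown by picking arbitrary preimages and translating by the group element matching $\phi(\epsilon_1')$ to $m_1$, and part~\ref{it:properties_Upsilon^2-lift} is then read off from that construction. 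You reverse the order: you prove the lift first, with an explicit formula $m_2 = l^M_{w_1}(\HLds{m_1}(r_2))$ coming from $\ti{\Psi}_\DC$ and the normalization of the representative of $v_1$ by $\phi(\epsilon_1)=m_1$, deduce surjectivity of $\Upsilon_\DC^{(2)}$ from it, and establish the submersion property by a direct kernel computation on $C''(E)$ plus rank--nullity, with the fibered-product constraint forcing the two infinitesimal generators $\xi,\eta$ to agree. Both routes invoke Theorem~\ref{thm:ppal_bundles_via_embedding} at the end. Your version buys a more constructive description of the lifted point (useful if one wants the reconstruction map explicitly) and avoids introducing the auxiliary model $E\times E\times M$; the paper's version buys a cleaner submersion argument by reducing everything to the already-established properties of $\Upsilon_\DC$ on $C'(E)$. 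Your closing observation --- that the single fibered-product constraint is what cuts the structure group of $\Upsilon_\DC\times\Upsilon_\DC$ from $\SG\times\SG$ down to the diagonal, both at the level of fibers ($g=h$) and of kernels ($\xi=\eta$) --- is exactly the right point to emphasize and is only implicit in the paper's treatment.
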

\begin{proof} 
  A simple computation shows that, for
  $((\epsilon_0,m_1),(\epsilon_1,m_2))\in C''(E)$, we have
  $p_2(\Upsilon_\DC(\epsilon_0,m_1)) =
  p^{M/\SG}(p_1(\Upsilon_\DC(\epsilon_1,m_2)))$, proving
  point~\ref{it:properties_Upsilon^2-well_def}.

  Let $(v_0,r_1):=\Upsilon_\DC(\epsilon_0,m_1) =
  (\pi^{E\times\SG,\SG}(\epsilon_0,\DC(\phi(\epsilon_0),m_1)),\pi^{M,\SG}(m_1))$. It
  is easy to check that if $\delta \epsilon_0 \in \ker(D_1(p_1\circ
  \Upsilon_\DC)(\epsilon_0,m_1))$, then $(\delta \epsilon_0,0)\in
  \ker(d\Upsilon_\DC(\epsilon_0,m_1)) =
  \{(\xi_E(\epsilon_0),\xi_M(m_1))\in T_{(\epsilon_0,m_1)}(E\times M):
  \xi\in\jgsg\}$. But, being $\pi^{M,\SG}:M\rightarrow M/\SG$ a
  principal $\SG$-bundle, $\xi_M(m_1)=0$ implies that $\xi=0$, and we
  conclude that $\delta \epsilon_0 = 0$, so that $D_1(p_1\circ
  \Upsilon_\DC)(\epsilon_0,m_1):T_{(\epsilon_0,m_1)}(E\times
  \{m_1\})\rightarrow T_{v_0}\ti{\SG}_E$ is one to one. As, in
  addition, $\dim(T_{(\epsilon_0,m_1)}(E\times \{m_1\})) =
  \dim(T_{v_0}\ti{\SG}_E)$, we conclude that
  point~\ref{it:properties_Upsilon^2-isomorphism} is true.

  Consider the commutative diagram
  \begin{equation*}
    \xymatrix{
      {C''(E)} \ar[d]_{\Upsilon_\DC^{(2)}} \ar[r]^{F_E} & 
      {E\times E\times M} \ar[d]^{\ti{\Upsilon}_\DC^{(2)}}\\
      {C''(\ti{\SG}_E)} \ar[r]_(0.4){F_{\ti{\SG}_E}} & 
      {\ti{\SG}_E\times \ti{\SG}_E\times (M/\SG)}
    }
  \end{equation*}
  where $F_E$ and $F_{\ti{\SG}_E}$ are the diffeomorphisms introduced
  in Remark~\ref{rem:isomorphism_C''(E)_with_ExExM} and
  \begin{equation*}
    \ti{\Upsilon}_\DC^{(2)}(\epsilon_0,\epsilon_1,m_2) := ((p_1\circ
    \Upsilon_\DC)(\epsilon_0,\phi(\epsilon_1)),
    \Upsilon_\DC(\epsilon_1,m_2)).
  \end{equation*}
  It is clear that $\ti{\Upsilon}_\DC^{(2)}$ is smooth. Furthermore,
  as the projection to its last two components is simply
  $\Upsilon_\DC:E\times M\rightarrow \ti{\SG}_E\times (M/\SG)$, that
  is known to be a surjective submersion and applying
  point~\ref{it:properties_Upsilon^2-isomorphism} to the first
  component, we conclude that $\ti{\Upsilon}_\DC^{(2)}$ is a
  submersion. We check explicitly that $\ti{\Upsilon}_\DC^{(2)}$ is
  surjective. Let $(v_0,v_1,r_2)\in\ti{\SG}_E\times \ti{\SG}_E\times
  (M/\SG)$. Then, by definition of $\ti{\SG}_E$, there are
  $(\epsilon_0,m_1)\in E\times M$ such that
  $\Upsilon_\DC(\epsilon_0,m_1)=(v_0,p^{M/\SG}(v_1))$. Next, choose
  $(\epsilon_1',m_2')\in E\times M$ such that
  $\Upsilon_\DC(\epsilon_1',m_2')=(v_1,r_2)$. Notice that, using
  diagram~\eqref{eq:ExM_and_tiGxM/G}, $\pi^{M,\SG}(\phi(\epsilon_1'))
  = p^{M/\SG}(v_1) = \pi^{M,\SG}(m_1)$. Hence, as
  $\pi^{M,\SG}:M\rightarrow M/\SG$ is a principal $\SG$-bundle, there
  is $g'\in\SG$ such that $l^M_{g'}(\phi(\epsilon_1')) = m_1$. We
  define $(\epsilon_1,m_2) := l^{E\times
    M}_{g'}(\epsilon_1',m_2')$. By construction,
  $\Upsilon_\DC(\epsilon_1,m_2)=(v_1,r_2)$ and
  $\phi(\epsilon_1)=m_1$. All together,
  $\ti{\Upsilon}_\DC^{(2)}(\epsilon_0,\epsilon_1,m_2)=(v_0,v_1,r_2)$,
  showing that $\ti{\Upsilon}_\DC^{(2)}$ is onto. Using that
  $\Upsilon_\DC$ is a principal $\SG$-bundle, it follows easily that
  $(\ti{\Upsilon}_\DC^{(2)})^{-1}(v_0,v_1,r_2)=l^{E\times E\times
    M}_\SG\{(\epsilon_0,\epsilon_1,m_2)\}$, showing that
  $(\ti{\Upsilon}_\DC^{(2)})^{-1}(v_0,v_1,r_2)$ coincides with the
  orbit of the free ``diagonal'' action of $\SG$ on $E\times E\times
  M$. Theorem~\ref{thm:ppal_bundles_via_embedding} proves that
  $\ti{\Upsilon}_\DC^{(2)}:E\times E\times M\rightarrow
  \ti{\SG}_E\times\ti{\SG}_E\times(M/\SG)$ is a principal
  $\SG$-bundle. Finally, since the diffeomorphism $F_E$ is
  $\SG$-equivariant (when considering the $\SG$-actions $l^{C''(E)}$
  and $l^{E\times E\times M}$), we conclude that
  point~\ref{it:properties_Upsilon^2-ppal_bundle} holds.

  Notice that in the first step of the previous construction, we
  picked $(\epsilon_0,m_1)\in C'(E)$ such that
  $\Upsilon_\DC(\epsilon_0,m_1) = (v_0,p^{M/\SG}(v_1))$. In the
  context of point~\ref{it:properties_Upsilon^2-lift}, such
  $(\epsilon_0,m_1)$ is given. Hence, the rest of the construction
  produces $(\epsilon_1,m_2)$ so that
  $((\epsilon_0,m_1),(\epsilon_1,m_2)) \in C''(E)$ and
  $\Upsilon_\DC^{(2)}((\epsilon_0,m_1),(\epsilon_1,m_2)) =
  ((v_0,r_1),(v_1,r_2))$. The uniqueness of that pair follows from the
  fact this is the only element in the $\SG$-orbit that has
  $(\epsilon_0,m_1)$ as the first component. Hence,
  point~\ref{it:properties_Upsilon^2-lift} is valid.
\end{proof}

\begin{proposition}\label{prop:lifting_reduced_discrete_paths}
  Let $\SG$ be a Lie group acting on the fiber bundle
  $\phi:E\rightarrow M$ and $\DC$ a discrete connection on the
  principal $\SG$-bundle $\pi^{M,\SG}:M\rightarrow M/\SG$. Given a
  discrete path $(v_\cdot,r_\cdot) = ((v_0,r_1), \ldots, (v_{N-1},r_N))$
  in $C'(\ti{\SG}_E)$ and $(\ti{\epsilon}_0,\ti{m}_1)\in C'(E)$ such
  that $\Upsilon_\DC(\ti{\epsilon}_0, \ti{m}_1) = (v_0,r_1)$, there is
  a unique discrete path $(\epsilon_\cdot,m_\cdot)$ in $C'(E)$ such
  that $(\epsilon_0,m_1) = (\ti{\epsilon}_0,\ti{m}_1)$ and
  $\Upsilon_\DC(\epsilon_k,m_{k+1}) = (v_k, r_{k+1})$ for all $k$.
\end{proposition}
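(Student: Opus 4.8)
The plan is to build the path one point at a time by induction on $k$, using the single-step lifting property established in point~\ref{it:properties_Upsilon^2-lift} of Lemma~\ref{le:properties_Upsilon^2}. The base point $(\epsilon_0,m_1):=(\ti{\epsilon}_0,\ti{m}_1)$ is prescribed by hypothesis and already satisfies $\Upsilon_\DC(\epsilon_0,m_1)=(v_0,r_1)$, so there is nothing to do at $k=0$.

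For the inductive step, suppose $(\epsilon_k,m_{k+1})\in C'(E)$ has been produced with $\Upsilon_\DC(\epsilon_k,m_{k+1})=(v_k,r_{k+1})$. Since $(v_\cdot,r_\cdot)$ is a discrete path in $C'(\ti{\SG}_E)$, the consecutive pair $((v_k,r_{k+1}),(v_{k+1},r_{k+2}))$ lies in $C''(\ti{\SG}_E)$. I would then invoke point~\ref{it:properties_Upsilon^2-lift} of Lemma~\ref{le:properties_Upsilon^2} with this pair and the given $(\epsilon_k,m_{k+1})$ (which projects to its first component under $\Upsilon_\DC$) to obtain a \emph{unique} $(\epsilon_{k+1},m_{k+2})\in C'(E)$ such that $((\epsilon_k,m_{k+1}),(\epsilon_{k+1},m_{k+2}))\in C''(E)$ and $\Upsilon_\DC^{(2)}((\epsilon_k,m_{k+1}),(\epsilon_{k+1},m_{k+2}))=((v_k,r_{k+1}),(v_{k+1},r_{k+2}))$. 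Because $\Upsilon_\DC^{(2)}$ is, by its definition in Lemma~\ref{le:properties_Upsilon^2}, the pairing $(\Upsilon_\DC\circ p_1)\times(\Upsilon_\DC\circ p_2)$, reading off the second component gives exactly $\Upsilon_\DC(\epsilon_{k+1},m_{k+2})=(v_{k+1},r_{k+2})$, which closes the induction. Moreover, membership in $C''(E)$ is precisely the fiber-bundle compatibility $m_{k+1}=\phi(\epsilon_{k+1})$ needed for $(\epsilon_\cdot,m_\cdot)$ to be a genuine discrete path in $C'(E)$.

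Uniqueness of the entire lift would follow by the same induction: the initial point is fixed, and at each stage point~\ref{it:properties_Upsilon^2-lift} determines $(\epsilon_{k+1},m_{k+2})$ uniquely once $(\epsilon_k,m_{k+1})$ is fixed. Hence any two lifts sharing the prescribed initial condition agree at every index, so the lifted path is unique.

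I do not expect a serious obstacle here, since the essential work has already been carried out in Lemma~\ref{le:properties_Upsilon^2}; the present statement is simply its iteration along the path. The only point requiring a moment's care is verifying that the consecutive pairs of $(v_\cdot,r_\cdot)$ genuinely lie in $C''(\ti{\SG}_E)$ so that the lifting lemma applies at each step, but this is immediate from the definition of a discrete path in $C'(\ti{\SG}_E)$.
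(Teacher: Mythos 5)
Your proposal is correct and follows essentially the same route as the paper: an induction along the path in which each new point is supplied, together with its uniqueness, by point~\ref{it:properties_Upsilon^2-lift} of Lemma~\ref{le:properties_Upsilon^2}. The only cosmetic difference is that the paper phrases the induction on the length $N$ of the reduced path rather than forward on the index $k$, which changes nothing of substance.
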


\begin{proof}
  The proof is by induction in the length of the reduced discrete
  path, $N$. If $N=0$, taking $(\epsilon_0,m_1) :=
  (\ti{\epsilon}_0,\ti{m}_1)$ solves the problem. Otherwise, assume
  that the result holds for all lengths $<N$ and $(v_\cdot,r_\cdot) =
  ((v_0,r_1), \ldots, (v_{N-1},r_N))$. Then, there is a discrete path
  $((\epsilon_0,m_1),\ldots,(\epsilon_{N-2},m_{N-1}))$ in $C'(E)$ that
  lifts $((v_0,r_1), \ldots, (v_{N-2},r_{N-1}))$ starting at
  $(\ti{\epsilon}_0,\ti{m}_1)$. In particular,
  $\Upsilon_\DC((\epsilon_{N-2},m_{N-1})) = (v_{N-2},r_{N-1})$. As, in
  addition, $((v_{N-2},r_{N-1}), (v_{N-1},r_{N}))\in C''(\ti{\SG}_E)$,
  by point~\ref{it:properties_Upsilon^2-lift} in
  Lemma~\ref{le:properties_Upsilon^2}, there is
  $(\epsilon_{N-1},m_N)\in C'(E)$ such that
  $((\epsilon_{N-2},m_{N-1}),(\epsilon_{N-1},m_N)) \in C''(E)$ and
  $\Upsilon_\DC^{(2)}((\epsilon_{N-2},m_{N-1}),(\epsilon_{N-1},m_N)) =
  ((v_{N-2},r_{N-1}), (v_{N-1},r_{N}))$. This proves that
  $((\epsilon_0,m_1),\ldots,(\epsilon_{N-2},m_{N-1}),(\epsilon_{N-1},m_N))$
  is a discrete path in $C'(E)$ starting at
  $(\ti{\epsilon}_0,\ti{m}_1)$ and that lifts $((v_0,r_1), \ldots,
  (v_{N-1},r_N))$. This proves that the statement holds for discrete
  paths of length $N$ so that, by the induction principle, it holds
  for arbitrary lengths.
\end{proof}

\begin{definition}
  A Lie group $\SG$ is a \jdef{symmetry group} of the DLPS
  $\mathcal{M}=(E,L_d,\IVCM)$ if
  \begin{enumerate}
  \item \label{it:symmetry_group-G_acts_on_FB} $\SG$ acts on the fiber
    bundle $\phi:E\rightarrow M$
    (Definition~\ref{def:group_acts_on_fiber_bundle}),
  \item considering the ``diagonal action'' of $\SG$ on $C'(E)$,
    $l^{C'(E)}$ defined in~\eqref{eq:G_actions_on_C'_and_C''}, $L_d$
    is $\SG$-invariant, and
  \item $\IVCM$ is a $\SG$-equivariant element of
    $\hom(p_3^*(TE),\ker(d\phi))$ for the $\SG$-actions $l^{TE}$ and
    $l^{p_3^*(TE)}$ defined in~\eqref{eq:p_1^*TE-action}. In other
    words,
    \begin{equation}
      \label{eq:G_equivariance_of_IVCM}
      \IVCM\circ l^{p_3^*(TE)}_g = l^{TE}_g \circ \IVCM =
      dl^E_g\circ \IVCM \stext{ for all } g\in\SG.
    \end{equation}
  \end{enumerate}
\end{definition}

\begin{example}\label{ex:symmetry_of_DMS_is_symmetry_of_DLPS}
  Let $(Q,L_d)$ be a DMS and $\mathcal{M}:=(id_Q:Q\rightarrow Q,
  L_d,0)$ the DLPS associated to $(Q,L_d)$ in
  Example~\ref{ex:mechanical_system_as_generalized_mechanical_system}. If
  $\SG$ is a symmetry group of $(Q,L_d)$ as in
  Section~\ref{sec:reduced_system_associated_to_a_symmetric_mechanical_system},
  then $\SG$ acts on the fiber bundle $id_Q:Q\rightarrow Q$ and $L_d$
  is $\SG$-invariant. Also, as $\IVCM=0$,
  condition~\eqref{eq:G_equivariance_of_IVCM} is trivially
  satisfied. Hence, $\SG$ is a symmetry group of $\mathcal{M}$.
\end{example}

\begin{lemma}\label{le:G_equivariance_of_IVCM_and_IVCM_morph_cond}
  Let $\mathcal{M} = (E,L_d,\IVCM) \in\Ob_{\DLPSC}$ and $\SG$ be a Lie
  group. Then, for $g\in \SG$,~\eqref{eq:G_equivariance_of_IVCM} holds
  if and only if~\eqref{eq:DLPSC_mor-IVCM} holds for
  $\Upsilon:=l^{C'(E)}_g$ and $\mathcal{M}'=\mathcal{M}$.
\end{lemma}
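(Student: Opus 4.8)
The plan is to prove the equivalence by a direct unpacking of both sides after performing the substitution $\Upsilon := l^{C'(E)}_g$ and $\mathcal{M}' = \mathcal{M}$ (so that $E' = E$, $\phi' = \phi$ and $\IVCM' = \IVCM$), and then observing that conditions~\eqref{eq:G_equivariance_of_IVCM} and~\eqref{eq:DLPSC_mor-IVCM} reduce to literally the same identity. There is no genuine content beyond careful bookkeeping of the various derivatives, so I expect the equivalence in both directions to fall out at once.

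First I would check that $\Upsilon = l^{C'(E)}_g$ satisfies morphism's condition~\eqref{eq:DLPSC_mor-C''_well_def}, which is what makes $\Upsilon^{(2)}$ well defined (Remark~\ref{rem:Upsilon2_well_defined}). On $C''(E)$ one has $m_1 = \phi(\epsilon_1)$, and both sides of~\eqref{eq:DLPSC_mor-C''_well_def} equal $l^M_g(\phi(\epsilon_1))$ by the $\SG$-equivariance of $\phi$; hence $\Upsilon^{(2)} = (\Upsilon\times\Upsilon)|_{C''(E)} = l^{C''(E)}_g$. Next I would compute the remaining factors occurring in~\eqref{eq:DLPSC_mor-IVCM} for this $\Upsilon$. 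Since $p_1\circ l^{C'(E)}_g = l^E_g\circ p_1$ depends only on the $E$-variable, the partial derivative satisfies $D_1(p_1\circ\Upsilon)(\epsilon_1,m_2)(\delta\epsilon_1) = dl^E_g(\epsilon_1)(\delta\epsilon_1)$, while the total derivative $d(p_1\circ\Upsilon)(\epsilon_0,m_1)$ annihilates the $M$-component, so that $d(p_1\circ\Upsilon)(\epsilon_0,m_1)(\IVCM((\epsilon_0,m_1),(\epsilon_1,m_2))(\delta\epsilon_1),\,d\phi(\epsilon_1)(\delta\epsilon_1)) = dl^E_g(\epsilon_0)(\IVCM((\epsilon_0,m_1),(\epsilon_1,m_2))(\delta\epsilon_1))$. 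Substituting these, together with $\Upsilon^{(2)} = l^{C''(E)}_g$, turns~\eqref{eq:DLPSC_mor-IVCM} into
\[
\IVCM(l^{C''(E)}_g((\epsilon_0,m_1),(\epsilon_1,m_2)))(dl^E_g(\epsilon_1)(\delta\epsilon_1)) = dl^E_g(\epsilon_0)(\IVCM((\epsilon_0,m_1),(\epsilon_1,m_2))(\delta\epsilon_1)).
\]

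Finally I would unpack~\eqref{eq:G_equivariance_of_IVCM} at the same data $((\epsilon_0,m_1),(\epsilon_1,m_2),\delta\epsilon_1)\in p_3^*(TE)$. Using the explicit formula for $l^{p_3^*(TE)}_g$ in~\eqref{eq:p_1^*TE-action}, the expression $\IVCM\circ l^{p_3^*(TE)}_g$ evaluates to $\IVCM(l^{C''(E)}_g((\epsilon_0,m_1),(\epsilon_1,m_2)))(dl^E_g(\epsilon_1)(\delta\epsilon_1))$, whereas $dl^E_g\circ\IVCM = l^{TE}_g\circ\IVCM$ evaluates to $dl^E_g(\epsilon_0)(\IVCM((\epsilon_0,m_1),(\epsilon_1,m_2))(\delta\epsilon_1))$. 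These are exactly the two sides of the displayed identity above, so~\eqref{eq:G_equivariance_of_IVCM} for the fixed $g$ and~\eqref{eq:DLPSC_mor-IVCM} for $\Upsilon = l^{C'(E)}_g$, $\mathcal{M}' = \mathcal{M}$ coincide term by term, which yields both implications simultaneously. The only point requiring attention --- the sole ``obstacle'', if one can call it that --- is keeping straight the distinction between the partial derivative $D_1$ and the total derivative $d$ of $p_1\circ\Upsilon$, and verifying that the latter kills the $M$-direction; once this is settled, the proof is a one-line matching of terms.
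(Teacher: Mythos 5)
Your proof is correct and follows exactly the route the paper intends: the paper's own proof is the single line ``Unravel the definitions,'' and your computation is precisely that unraveling, with the key observations (that $\Upsilon^{(2)}=l^{C''(E)}_g$, that $D_1(p_1\circ\Upsilon)=dl^E_g$, and that $d(p_1\circ\Upsilon)$ kills the $M$-direction) all handled correctly so that both conditions reduce to the same identity.
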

\begin{proof}
  Unravel the definitions.
\end{proof}

\begin{proposition}\label{prop:symmetry_group_and_morphisms}
  Let $\mathcal{M} = (E,L_d,\IVCM) \in\Ob_{\DLPSC}$ and $\SG$ a Lie
  group. Then $\SG$ is a symmetry group of $\mathcal{M}$ if and only
  if $\SG$ acts on the fiber bundle $\phi:E\rightarrow M$ and
  $l^{C'(E)}_g\in \Mor_{\DLPSC}(\mathcal{M}, \mathcal{M})$ for all
  $g\in \SG$.
\end{proposition}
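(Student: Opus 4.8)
The plan is to prove the equivalence by unwinding the definition of symmetry group and matching each of its three conditions against the requirement that every $l^{C'(E)}_g$ be a morphism in $\DLPSC$. The statement already assumes that $\SG$ acts on the fiber bundle $\phi:E\rightarrow M$ on both sides, so the real content is: given that action, the remaining two conditions defining a symmetry group ($\SG$-invariance of $L_d$ and $\SG$-equivariance of $\IVCM$, equation~\eqref{eq:G_equivariance_of_IVCM}) are equivalent to the statement that each $l^{C'(E)}_g$ satisfies the six morphism conditions~\ref{it:DLPSC_mor-surj_subm}--\ref{it:DLPSC_mor-IVCM} of Definition~\ref{def:DLPSC}. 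I would prove each direction by checking these conditions one at a time.

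First I would handle the ``automatic'' morphism conditions, namely~\ref{it:DLPSC_mor-surj_subm} through~\ref{it:DLPSC_mor-C''_well_def}, which hold for $\Upsilon := l^{C'(E)}_g$ purely because $\SG$ acts on the fiber bundle. Since $l^{C'(E)}_g$ is a diffeomorphism with inverse $l^{C'(E)}_{g^{-1}}$, it is a surjective submersion, giving~\ref{it:DLPSC_mor-surj_subm}. Because the action is diagonal, $p_1\circ l^{C'(E)}_g(\epsilon_0,m_1) = l^E_g(\epsilon_0)$ depends only on $\epsilon_0$, so $D_1(p_1\circ\Upsilon)(\epsilon_0,m_1) = dl^E_g(\epsilon_0)$ is an isomorphism, hence onto, yielding~\ref{it:DLPSC_mor-D_1_onto}. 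Likewise $p_2\circ l^{C'(E)}_g(\epsilon_0,m_1) = l^M_g(m_1)$ is independent of $\epsilon_0$, so $D_1(p_2\circ\Upsilon) = 0$, giving~\ref{it:DLPSC_mor-D_1_null}. Finally condition~\ref{it:DLPSC_mor-C''_well_def} reduces to the $\SG$-equivariance of $\phi$, which is part of the fiber-bundle action hypothesis. These checks are routine diagonal-action computations, so I would state them briefly rather than belabor them.

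Having dispatched those, the proof turns on the two remaining conditions. Condition~\ref{it:DLPSC_mor-L_d}, $L_d = L_d\circ l^{C'(E)}_g$, is literally the statement that $L_d$ is $\SG$-invariant; this is an exact restatement, giving both directions at once for that condition. Condition~\ref{it:DLPSC_mor-IVCM}, the $\IVCM$-compatibility equation~\eqref{eq:DLPSC_mor-IVCM}, is exactly the content of Lemma~\ref{le:G_equivariance_of_IVCM_and_IVCM_morph_cond}, which already asserts that~\eqref{eq:G_equivariance_of_IVCM} holds if and only if~\eqref{eq:DLPSC_mor-IVCM} holds for $\Upsilon = l^{C'(E)}_g$ and $\mathcal{M}'=\mathcal{M}$. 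So I would simply invoke that lemma to identify the $\IVCM$-equivariance condition with morphism condition~\ref{it:DLPSC_mor-IVCM}.

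Assembling these, both directions follow. If $\SG$ is a symmetry group, then $L_d$ is $\SG$-invariant and $\IVCM$ is $\SG$-equivariant, so conditions~\ref{it:DLPSC_mor-L_d} and~\ref{it:DLPSC_mor-IVCM} hold by the two identifications above, while~\ref{it:DLPSC_mor-surj_subm}--\ref{it:DLPSC_mor-C''_well_def} hold from the action alone; hence each $l^{C'(E)}_g$ is a morphism. Conversely, if each $l^{C'(E)}_g$ is a morphism, then condition~\ref{it:DLPSC_mor-L_d} recovers $\SG$-invariance of $L_d$ and condition~\ref{it:DLPSC_mor-IVCM} recovers~\eqref{eq:G_equivariance_of_IVCM} via Lemma~\ref{le:G_equivariance_of_IVCM_and_IVCM_morph_cond}, so $\SG$ is a symmetry group. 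The only mild subtlety to keep in mind is that Definition~\ref{def:DLPSC} demands $\Upsilon$ be a surjective submersion $C'(E)\rightarrow C'(E')$ with $E'=E$, and one should confirm the diagonal action genuinely lands each $l^{C'(E)}_g$ in that space of maps; but since $l^{C'(E)}_g$ is a diffeomorphism of $C'(E)$ this is immediate. I expect no real obstacle: the heavy lifting for the $\IVCM$ condition is already isolated in Lemma~\ref{le:G_equivariance_of_IVCM_and_IVCM_morph_cond}, so the proof is essentially a matter of lining up definitions.
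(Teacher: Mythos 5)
Your proposal is correct and follows essentially the same route as the paper's own proof: verify morphism conditions~\ref{it:DLPSC_mor-surj_subm}--\ref{it:DLPSC_mor-C''_well_def} directly from the diagonal action, identify condition~\ref{it:DLPSC_mor-L_d} with $\SG$-invariance of $L_d$, and invoke Lemma~\ref{le:G_equivariance_of_IVCM_and_IVCM_morph_cond} for condition~\ref{it:DLPSC_mor-IVCM}. Nothing is missing.
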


\begin{proof}
  Assume that $\SG$ is a symmetry group of $\mathcal{M}$. Then, by
  definition, $\SG$ acts on the fiber bundle $\phi:E\rightarrow M$. We
  have to prove that $l^{C'(E)}_g\in
  \Mor_{\DLPSC}(\mathcal{M},\mathcal{M})$. It is immediate that
  $l^{C'(E)}_g$ is a diffeomorphism, so it has morphism's
  property~\ref{it:DLPSC_mor-surj_subm}. As $p_1\circ l^{C'(E)}_g =
  l^E_g\circ p_1$, we have $D_1(p_1\circ l^{C'(E)}_g) = dl^{E}_g$,
  that is an isomorphism; hence, $l^{C'(E)}_g$ has morphism's
  property~\ref{it:DLPSC_mor-D_1_onto}. As $p_2\circ l^{C'(E)}_g =
  l^M_g\circ p_2$, $D_1(p_2\circ l^{C'(E)}_g) = D_1(l^M_g\circ p_2) =
  0$, it follows that $l^{C'(E)}_g$ has morphism's
  property~\ref{it:DLPSC_mor-D_1_null}. Also, as on $C''(E)$ we have
  that
  \begin{equation*}
    \begin{split}
      p_2^{C'(E),M}\circ l^{C'(E)}_g \circ p_1^{C''(E),C'(E)} =& l^M_g
      \circ p_2^{C'(E),M} \circ p_1^{C''(E),C'(E)} \\=& \phi\circ
      p_1^{C'(E),E} \circ l^{C'(E)}_g \circ p_2^{C''(E),C'(E)},
    \end{split}
  \end{equation*}
  we see that $l^{C'(E)}_g$ has morphism's
  property~\ref{it:DLPSC_mor-C''_well_def}. As $L_d\circ l^{C'(E)}_g =
  L_d$, $l^{C'(E)}_g$ has morphism's property~\ref{it:DLPSC_mor-L_d}
  and Lemma~\ref{le:G_equivariance_of_IVCM_and_IVCM_morph_cond} shows
  that morphism's property~\ref{it:DLPSC_mor-IVCM} is valid for
  $l^{C'(E)}_g$. We conclude that $l^{C'(E)}_g\in
  \Mor_{\DLPSC}(\mathcal{M}, \mathcal{M})$.

  Conversely, if $\SG$ acts on the fiber bundle $\phi:E\rightarrow M$
  and $l^{C'(E)}_g\in \Mor_{\DLPSC}(\mathcal{M}, \mathcal{M})$, the
  first condition for being a symmetry group is met. The other two
  follow from morphism's properties~\ref{it:DLPSC_mor-L_d}
  and~\ref{it:DLPSC_mor-IVCM}, together with
  Lemma~\ref{le:G_equivariance_of_IVCM_and_IVCM_morph_cond}.
\end{proof}

Later on we will be interested in subgroups of a symmetry group of a
DLPS. The following results establish that closed subgroups of a
symmetry group of a system $\DLPS$ are symmetry groups of $\DLPS$.

\begin{lemma}\label{le:group_acts_on_FB_subgroup_acts_too}
  Let $\SG$ act on the fiber bundle $(E,M,\phi,F)$ and $H\subset \SG$
  be a closed Lie subgroup. Then $H$ acts on the fiber bundle
  $(E,M,\phi,F)$.
\end{lemma}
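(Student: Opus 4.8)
The plan is to verify that the two given $\SG$-actions $l^E$ and $l^M$, restricted to the closed subgroup $H$, fulfill each clause of Definition~\ref{def:group_acts_on_fiber_bundle}. The restrictions $l^E|_{H\times E}$ and $l^M|_{H\times M}$ are smooth left $H$-actions, being compositions of the inclusion $H\hookrightarrow\SG$ with the given smooth actions, and the $\SG$-equivariance of $\phi$ at once yields $\phi\circ l^E_h=l^M_h\circ\phi$ for all $h\in H$, so $\phi$ is $H$-equivariant. The only clause that is not immediate is that the restricted base action must keep $\pi^{M,H}:M\rightarrow M/H$ a principal $H$-bundle.

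The key step I would use is that the restriction of a free and proper action to a closed subgroup is again free and proper. Indeed, since $\pi^{M,\SG}:M\rightarrow M/\SG$ is a principal $\SG$-bundle, the action $l^M$ is free and proper, i.e., the map $\Theta_\SG:\SG\times M\rightarrow M\times M$, $\Theta_\SG(g,m):=(l^M_g(m),m)$, is proper and injective. Freeness of $l^M|_H$ is inherited directly. For properness, I would observe that the corresponding map $\Theta_H:H\times M\rightarrow M\times M$ is precisely the restriction of $\Theta_\SG$ to $H\times M$, and this subset is closed in $\SG\times M$ exactly because $H$ is closed in $\SG$; hence for any compact $K\subset M\times M$ the set $\Theta_H^{-1}(K)=\Theta_\SG^{-1}(K)\cap(H\times M)$ is a closed subset of the compact set $\Theta_\SG^{-1}(K)$, so it is compact and $\Theta_H$ is proper.

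Consequently $\Theta_H$ is a proper injective embedding, and Theorem~\ref{thm:ppal_bundles_via_embedding} furnishes a manifold structure on $M/H$ for which $\pi^{M,H}:M\rightarrow M/H$ is a principal $H$-bundle. The same free-and-proper argument applies verbatim to $l^E$ should Definition~\ref{def:group_acts_on_fiber_bundle} require the total-space action to be principal as well (note that freeness of $l^E$ itself follows from equivariance and freeness of $l^M$, since $l^E_g(\epsilon)=\epsilon$ forces $l^M_g(\phi(\epsilon))=\phi(\epsilon)$ and hence $g=e$); and any remaining clauses, which express that individual group elements act by bundle maps compatible with the fiber $F$, hold for $H$ because they already hold for all of $\SG\supset H$. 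The single genuine obstacle is the properness step, and it is exactly there that closedness of $H$ in $\SG$ enters: without it, $H\times M$ need not be closed, the intersection above could fail to be compact, and $M/H$ might not even be Hausdorff.
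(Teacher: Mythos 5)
Your proof is correct and follows essentially the same route as the paper's: restrict the actions to $H$, observe that freeness, smoothness and $\phi$-equivariance are inherited, show that properness of $l^M$ survives restriction to the closed subgroup $H$ (you spell out the compactness argument that the paper leaves implicit), conclude that $\pi^{M,H}$ is a principal $H$-bundle, and note that the remaining clauses hold because $\SG$-invariance/equivariance of the trivializing charts implies the $H$-versions. One small correction: the result that turns ``smooth, free and proper'' into a manifold structure on $M/H$ with $\pi^{M,H}$ a principal $H$-bundle is Theorem~\ref{thm:quotient_manifolds} (via Corollary~\ref{cor:quotient_manifolds-fiber_bundle}), not Theorem~\ref{thm:ppal_bundles_via_embedding}, which presupposes the quotient manifold already exists.
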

\begin{proof}
  We consider the $H$-actions on $E$, $M$ and $F$ obtained by
  restricting the $\SG$-actions $l^E$, $l^M$ and $r^F$ to $H$. Hence,
  all are smooth and the first two are free; also, $\phi$ is
  $H$-equivariant. As $\pi^{M,\SG}:M\rightarrow M/\SG$ is a principal
  $\SG$-bundle, by
  Lemma~\ref{le:G_actions_on_ppal_bundles_are_proper}, the
  $\SG$-action $l^M$ is proper and, being $H\subset \SG$ closed, the
  $H$-action $l^M$ obtained by restriction is proper. Then applying
  Corollary~\ref{cor:quotient_manifolds-fiber_bundle} to the
  $H$-action $l^M$, we see that $\pi^{M,H}:M\rightarrow M/H$ is a
  principal $H$-bundle. Given $m\in M$, there is a trivializing chart
  $(U,\Phi_U)$ with $m\in U$, an open $\SG$-invariant subset of $M$,
  and $\Phi_U$ $\SG$-equivariant. Thus, $U$ is $H$-invariant and
  $\Phi_U$ is $H$-equivariant, so that $(U,\Phi_U)$ is the type of
  trivializing chart required in
  point~\ref{it:group_acts_on_FB-trivializing_chart} of
  Definition~\ref{def:group_acts_on_fiber_bundle} to conclude that $H$
  acts on the fiber bundle $(E,M,\phi,F)$.
\end{proof}

\begin{proposition}\label{prop:subgroup_of_sym_group_is_sym_group}
  Let $\SG$ be a symmetry group of $\mathcal{M}\in\Ob_{\DLPSC}$. If
  $H\subset \SG$ is a closed Lie subgroup, then $H$ is a symmetry
  group of $\mathcal{M}$.
\end{proposition}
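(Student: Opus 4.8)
The plan is to verify, for the restricted $H$-action, the three defining conditions of a symmetry group given in the Definition preceding Example~\ref{ex:symmetry_of_DMS_is_symmetry_of_DLPS}. The first condition, namely item~\ref{it:symmetry_group-G_acts_on_FB} asserting that $H$ acts on the fiber bundle $\phi:E\rightarrow M$, is exactly the conclusion of Lemma~\ref{le:group_acts_on_FB_subgroup_acts_too}, which applies because $H\subset\SG$ is a closed Lie subgroup. Invoking that lemma produces the $H$-actions $l^E$, $l^M$ (and $r^F$) as the restrictions to $H$ of the corresponding $\SG$-actions.

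The key observation, which I would state explicitly since it is the only point requiring care, is that the auxiliary actions entering the remaining two conditions are built from $l^E$ and $l^M$ alone. Indeed, $l^{C'(E)}$ is defined from $l^E,l^M$ by~\eqref{eq:G_actions_on_C'_and_C''}, while $l^{TE}$ and $l^{p_3^*(TE)}$ are defined from $l^E$ by~\eqref{eq:p_1^*TE-action}. Because the $H$-actions on $E$ and $M$ are the restrictions of the $\SG$-actions, the $H$-actions induced on $C'(E)$, on $\ker(d\phi)\subset TE$, and on $p_3^*(TE)$ are precisely the restrictions to $H$ of the corresponding $\SG$-actions. Thus no new action is introduced; passing from $\SG$ to $H$ merely restricts the group parameter.

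With this identification in hand the remaining conditions are immediate. The $\SG$-invariance of $L_d$, i.e.\ $L_d\circ l^{C'(E)}_g=L_d$ for all $g\in\SG$, holds a fortiori for every $g\in H$, giving the invariance condition for $H$. Likewise, the equivariance~\eqref{eq:G_equivariance_of_IVCM}, $\IVCM\circ l^{p_3^*(TE)}_g=l^{TE}_g\circ\IVCM$ for all $g\in\SG$, holds in particular for all $g\in H$, giving the equivariance condition for $H$. Hence $H$ is a symmetry group of $\mathcal{M}$. I do not expect a genuine obstacle here: the whole argument reduces to Lemma~\ref{le:group_acts_on_FB_subgroup_acts_too} together with the elementary remark that restricting the structure group restricts all the derived actions, after which invariance and equivariance descend to the subgroup by inclusion.
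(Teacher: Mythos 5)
Your proof is correct and follows essentially the same route as the paper: both invoke Lemma~\ref{le:group_acts_on_FB_subgroup_acts_too} for the fiber bundle action and then observe that the $H$-invariance of $L_d$ and the $H$-equivariance of $\IVCM$ are inherited by restriction of the $\SG$-actions. Your explicit remark that all the derived actions on $C'(E)$, $\ker(d\phi)$ and $p_3^*(TE)$ are built from $l^E$ and $l^M$ alone makes the restriction argument slightly more transparent than the paper's terse version, but the substance is identical.
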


\begin{proof}
  Since $\SG$ is a symmetry group of $\mathcal{M} = (E,L_d,\IVCM)$,
  $\SG$ acts on the fiber bundle $(E,M,\phi,F)$ and, by
  Lemma~\ref{le:group_acts_on_FB_subgroup_acts_too}, the same happens
  to the closed subgroup $H$, when acting via the restricted
  $\SG$-actions. That $L_d$ is $H$-invariant and $\IVCM$ is
  $H$-equivariant, follow from the fact that they have those
  properties for $\SG$, and that $H$ acts by the restriction of the
  corresponding $\SG$-actions. Thus, $H$ is a symmetry group of
  $\mathcal{M}$.
\end{proof}


\begin{remark}\label{rem:discrete_momenta}
  When $\SG$ is a symmetry group of $\mathcal{M}=(E,L_d,\IVCM)
  \in\Ob_{\DLPSC}$ there are functions $J_d:C'(E)\rightarrow \jgsg^*$
  and $(J_d)_\xi: C'(E)\rightarrow \R$ defined as follows.
  $J_d(\epsilon_0,m_1)(\xi) :=
  -D_1L_d(\epsilon_0,m_1)(\xi_E(\epsilon_0))$ for $(\epsilon_0,m_1)\in
  C'(E)$ and $\xi \in \jgsg$, where $\xi_E$ is the infinitesimal
  generator associated to $\xi$ by the $\SG$-action on $E$. Then,
  $(J_d)_{\xi}(\epsilon_0,m_1) := J_d(\epsilon_0,m_1)(\xi)$. In some
  sense, these functions resemble the momentum mappings that appear in
  the context of DMS. It is easy to check that when
  $(\epsilon_\cdot,m_\cdot)$ is a trajectory of $\mathcal{M}$, for any
  $\xi\in\jgsg$,
  \begin{equation}\label{eq:momentum-evolution_on_trajectory}
    \begin{split}
      (J_d)_\xi(\epsilon_k,m_{k+1}) =& (J_d)_\xi(\epsilon_{k-1},m_k) \\ &+
      D_1L_d(\epsilon_{k-1},m_{k}) \circ 
      \IVCM((\epsilon_{k-1},m_{k}),(\epsilon_{k},m_{k+1}))(\xi_E(\epsilon_k))
    \end{split}
  \end{equation}
  for all $k=0,\ldots,N-1$. This last expression shows how $J_d$
  evolves on a given trajectory of $\mathcal{M}$. In particular, when
  the image of $\IVCM$ is contained in $\ker(D_1L_d)$, the momentum is
  conserved along the trajectories; this is the case of a DLPS arising
  from a discrete mechanical system (see
  Example~\ref{ex:mechanical_system_as_generalized_mechanical_system}).
  Equation~\eqref{eq:momentum-evolution_on_trajectory} can also be
  compared with the momentum evolution equation in the nonholonomic
  case: equation (35)
  in~\cite{ar:fernandez_tori_zuccalli-lagrangian_reduction_of_discrete_mechanical_systems}.
\end{remark}


\subsection{Reduced discrete Lagrange--Poincar\'e system}
\label{sec:reduced_generalized_discrete_mechanical_system}

Let $\SG$ be a symmetry group of $\mathcal{M}=(E,L_d,\IVCM) \in
\Ob_{\DLPSC}$. We want to construct a new DLPS that, as will be shown
later, will play the role of the reduced system of
$\mathcal{M}$. First of all, since $\SG$ acts on $(E,M,\phi,F)$, the
conjugate bundle $(\ti{\SG}_E,M/\SG,p^{M/\SG},F\times \SG)$, introduced
in Example~\ref{ex:extended_associated_bundle-fiber_bundle}, is a
fiber bundle.

Fix a discrete connection $\DC$ on the principal $\SG$-bundle
$\pi^{M,\SG}:M\rightarrow M/\SG$ and let $\Upsilon_\DC:E\times
M\rightarrow \ti{\SG}_E\times (M/\SG)$ be the map introduced
in~\eqref{eq:Upsilon_DC-def} that, by
Lemma~\ref{le:Upsilon_DC_is_ppal_bundle}, is a principal
$\SG$-bundle. Define $\check{L}_d:\ti{\SG}_E\times (M/\SG)\rightarrow
\R$ by $\check{L}_d(v_0,r_1):=L_d(\epsilon_0,m_1)$ for any
$(\epsilon_0,m_1)\in \Upsilon_\DC^{-1}(v_0,r_1)$; $\check{L}_d$ is
well defined by the $\SG$-invariance of $L_d$.

Next we define
$\check{\IVCM} \in\hom(p_3^*(T(\ti{\SG}_E)),\ker(dp^{M/\SG}))$. By
point~\ref{it:properties_Upsilon^2-ppal_bundle} of
Lemma~\ref{le:properties_Upsilon^2}, given any
$((v_0,r_1),(v_1,r_2))\in C''(\ti{\SG}_E)$, there are elements
$((\epsilon_0,m_1),(\epsilon_1,m_2))\in C''(E)$ such that
$\Upsilon_\DC^{(2)}((\epsilon_0,m_1),(\epsilon_1,m_2)) =
((v_0,r_1),(v_1,r_2))$.
In fact, those elements form a $\SG$-orbit in $C''(E)$; we fix one
element in the orbit. Also, by
point~\ref{it:properties_Upsilon^2-isomorphism} of
Lemma~\ref{le:properties_Upsilon^2},
$D_1(p_1\circ
\Upsilon_\DC)(\epsilon_1,m_2):T_{(\epsilon_1,m_2)}(E\times
\{m_2\})\rightarrow T_{v_1}\ti{\SG}_E$
is an isomorphism of vector spaces. Consequently, every element
$((v_0,r_1),(v_1,r_2),\delta v_1)\in p_3^*(T(\ti{\SG}_E))$ is
$\delta v_1 = D_1(p_1\circ \Upsilon_\DC)(\epsilon_1,m_2)(\delta
\epsilon_1)$
for a unique
$\delta \epsilon_1 \in T_{(\epsilon_1,m_2)}(E\times \{m_2\})$. Let
\begin{equation}
  \label{eq:PVMs_for_reduced_system-def}
  \begin{split}
    \check{\IVCM}((v_0,r_1),(v_1,r_2))(\delta v_1) :=& D_1(p_1\circ
    \Upsilon_\DC)(\epsilon_0,m_1)
    (\IVCM((\epsilon_0,m_1),(\epsilon_1,m_2))(\delta \epsilon_1)) \\
    &+ D_2(p_1\circ
    \Upsilon_\DC)(\epsilon_0,m_1)(d\phi(\epsilon_1)(\delta
    \epsilon_1)).
  \end{split}
\end{equation}

\begin{lemma}\label{le:reduced_IVCM_is_well_def}
  Under the previous conditions, the map defined
  by~\eqref{eq:PVMs_for_reduced_system-def} is a well defined
  homomorphism $\check{\IVCM}\in
  \hom(p_3^*(T(\ti{\SG}_E)),\ker(dp^{M/\SG}))$.
\end{lemma}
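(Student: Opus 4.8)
The plan is to check three things in turn: that formula~\eqref{eq:PVMs_for_reduced_system-def} is independent of the chosen lift, that its value lies in $\ker(dp^{M/\SG})$, and that the resulting assignment is a smooth, fiberwise-linear bundle homomorphism over $\ti{p_1}$. The recurring ingredients are the $\SG$-invariance of $\Upsilon_\DC$ together with the $\SG$-equivariance of $\IVCM$ and of $\phi$. Indeed, since $\Upsilon_\DC = \Phi_\DC\circ\pi^{E\times M,\SG}$ by~\eqref{eq:Upsilon_DC-def} and the quotient projection is constant on $\SG$-orbits, one has $\Upsilon_\DC\circ l^{E\times M}_g = \Upsilon_\DC$; writing $\Lambda := p_1\circ\Upsilon_\DC$, the same identity holds for $\Lambda$, and differentiating it yields $D_1\Lambda(l^E_g(\epsilon_0),l^M_g(m_1))\circ dl^E_g(\epsilon_0) = D_1\Lambda(\epsilon_0,m_1)$ and $D_2\Lambda(l^E_g(\epsilon_0),l^M_g(m_1))\circ dl^M_g(m_1) = D_2\Lambda(\epsilon_0,m_1)$.

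For well-definedness, point~\ref{it:properties_Upsilon^2-lift} of Lemma~\ref{le:properties_Upsilon^2} shows that any two lifts of $((v_0,r_1),(v_1,r_2))$ differ by the diagonal $\SG$-action, so it suffices to compare the lift $((\epsilon_0,m_1),(\epsilon_1,m_2))$ with its $g$-translate. Using point~\ref{it:properties_Upsilon^2-isomorphism} of the same lemma and the invariance of $\Lambda$, the unique vector associated with $\delta v_1$ for the translated lift is $dl^E_g(\epsilon_1)(\delta\epsilon_1)$. Then the triple $((l^E_g(\epsilon_0),l^M_g(m_1)),(l^E_g(\epsilon_1),l^M_g(m_2)),dl^E_g(\epsilon_1)(\delta\epsilon_1))$ is precisely $l^{p_3^*(TE)}_g$ applied to the original triple, so~\eqref{eq:G_equivariance_of_IVCM} turns the first $\IVCM$-term into $dl^E_g(\epsilon_0)$ applied to the original value, while the $\SG$-equivariance of $\phi$ turns $d\phi(l^E_g(\epsilon_1))(dl^E_g(\epsilon_1)(\delta\epsilon_1))$ into $dl^M_g(m_1)(d\phi(\epsilon_1)(\delta\epsilon_1))$. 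Feeding these into $D_1\Lambda$ and $D_2\Lambda$ at the translated base point and applying the two invariance identities above, each summand reproduces its value at the original lift; hence~\eqref{eq:PVMs_for_reduced_system-def} is independent of the choice.

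For the image condition, diagram~\eqref{eq:ExM_and_tiGxM/G} gives $p^{M/\SG}\circ\Lambda = \pi^{M,\SG}\circ\phi\circ p_1$. Writing $A := \IVCM((\epsilon_0,m_1),(\epsilon_1,m_2))(\delta\epsilon_1)$, $B := d\phi(\epsilon_1)(\delta\epsilon_1)$ and $w:=\check{\IVCM}((v_0,r_1),(v_1,r_2))(\delta v_1)$, the right-hand side of~\eqref{eq:PVMs_for_reduced_system-def} is $d\Lambda(\epsilon_0,m_1)(A,B)$, so
\begin{equation*}
  dp^{M/\SG}(v_0)(w) = d(\pi^{M,\SG}\circ\phi\circ p_1)(\epsilon_0,m_1)(A,B) = d\pi^{M,\SG}(\phi(\epsilon_0))(d\phi(\epsilon_0)(A)).
\end{equation*}
Since $\IVCM$ takes values in $\ker(d\phi)$, we have $d\phi(\epsilon_0)(A)=0$, and the expression vanishes; thus $w\in\ker(dp^{M/\SG}(v_0))$, as required.

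Finally, fiberwise linearity in $\delta v_1$ is immediate: $\delta\epsilon_1$ depends linearly on $\delta v_1$ through the inverse of the isomorphism of point~\ref{it:properties_Upsilon^2-isomorphism}, and $\IVCM$, $d\phi$, $D_1\Lambda$ and $D_2\Lambda$ are all linear. For smoothness I would use that $\Upsilon_\DC^{(2)}$ is a principal $\SG$-bundle (point~\ref{it:properties_Upsilon^2-ppal_bundle}) to choose local smooth sections, i.e.\ lifts $((\epsilon_0,m_1),(\epsilon_1,m_2))$ depending smoothly on $((v_0,r_1),(v_1,r_2))$; substituting such a section realizes $\check{\IVCM}$ locally as a composition of smooth maps, and the lift-independence just established guarantees that these local expressions agree, yielding a globally smooth homomorphism. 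The main work is the bookkeeping in the well-definedness step, where all three equivariance properties must be combined correctly.
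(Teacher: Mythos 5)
Your proposal is correct and follows essentially the same route as the paper: both arguments reduce the choice of lift to a $\SG$-translate and combine the $\SG$-invariance of $p_1\circ\Upsilon_\DC$ with the $\SG$-equivariance of $\IVCM$ and of $\phi$ to show each summand of~\eqref{eq:PVMs_for_reduced_system-def} is unchanged. You additionally write out the computation that the paper dismisses as ``direct'' (the image lying in $\ker(dp^{M/\SG})$, which indeed comes down to $\IVCM$ taking values in $\ker(d\phi)$ via diagram~\eqref{eq:ExM_and_tiGxM/G}) and you address linearity and smoothness explicitly, which is a harmless strengthening.
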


\begin{proof}
  Two things have to be checked: that the image of $\check{\IVCM}$ is
  contained in $\ker(dp^{M/\SG})$ and that the definition is
  independent of any choices involved in lifting the input data to
  $p_3^*(TE)$. Since the points $((\epsilon_0,m_1),(\epsilon_1,m_2))$
  lying over $((v_0,r_1),(v_1,r_2))$ form a $\SG$-orbit, any other
  such point would be of the form
  $((\epsilon_0',m_1'),(\epsilon_1',m_2')) =
  l^{C''(E)}_g((\epsilon_0,m_1),(\epsilon_1,m_2))$ for some
  $g\in\SG$. It follows from the $\SG$-invariance of $\Upsilon_\DC$
  that
  \begin{equation*}
    D_1(p_1\circ \Upsilon_\DC)(\epsilon_1,m_2)(\delta \epsilon_1) =
    D_1(p_1\circ
    \Upsilon_\DC)(l^{C'(E)}_g(\epsilon_1,m_2))(dl^E_g(\epsilon_1)(\delta
    \epsilon_1)).
  \end{equation*}
  Hence, a variation
  $((v_0,r_1),(v_1,r_2),\delta v_1) \in p_3^*(T(\ti{\SG}_E))$ lifts to
  the (unique for each $g$) variation
  $(l^{C''(E)}_g((\epsilon_0,m_1),(\epsilon_1,m_2)),dl^E_g(\epsilon_1)(\delta
  \epsilon_1))$
  for arbitrary $g\in\SG$. Then, for a given $g\in \SG$, using the
  $\SG$-equivariance of $\IVCM$ and the $\SG$-invariance of
  $\Upsilon_\DC$, we see that replacing
  $((\epsilon_0,m_1),(\epsilon_1,m_2))$ and $\delta \epsilon_1$ by
  $l^{C''(E)}_g((\epsilon_0,m_1),(\epsilon_1,m_2))$ and
  $(l^{C''(E)}_g((\epsilon_0,m_1),(\epsilon_1,m_2)),dl^E_g(\epsilon_1)(\delta
  \epsilon_1))$
  does not alter the value of the left side
  of~\eqref{eq:PVMs_for_reduced_system-def}. This proves
  $\check{\IVCM}$ is independent of the choices made.

  Direct computations show that the image of $\check{\IVCM}$ is
  contained in $\ker(dp^{M/\SG})$.
\end{proof}

\begin{definition}
  Let $\SG$ be a symmetry group of
  $\mathcal{M}=(E,L_d,\IVCM)\in\Ob_{\DLPSC}$ and $\DC$ a discrete
  connection on the principal $\SG$-bundle $\pi^{M,\SG}:M\rightarrow
  M/\SG$.  The DLPS $(\ti{\SG}_E,\check{L}_d,\check{\IVCM}) \in
  \Ob_{\DLPSC}$ defined above is called the \jdef{reduced discrete
    Lagrange--Poincar\'e system} obtained as the reduction of
  $\mathcal{M}$ by the symmetry group $\SG$ using the discrete
  connection $\DC$. We denote this system by $\mathcal{M}/\SG$ or
  $\mathcal{M}/(\SG,\DC)$.
\end{definition}

\begin{example}
  Given a DMS $(Q,L_d)$, let $\mathcal{M}:=(Q,L_d,0)$ be the DLPS
  constructed in
  Example~\ref{ex:mechanical_system_as_generalized_mechanical_system}. Let
  $\SG$ be a symmetry group of $(Q,L_d)$. By
  Example~\ref{ex:symmetry_of_DMS_is_symmetry_of_DLPS}, $\SG$ is a
  symmetry group of $\mathcal{M}$. Fix a discrete connection $\DC$ on
  the principal $\SG$-bundle $\pi^{Q,\SG}:Q\rightarrow Q/\SG$. The
  reduced DLPS $\mathcal{M}/(\SG,\DC)$ is
  $(\ti{\SG}_E,\check{L}_d,\check{\IVCM})$ where the fiber bundle
  $\phi:\ti{\SG}_E\rightarrow M/\SG$ is $p^{Q/\SG}:\ti{\SG}\rightarrow
  Q/\SG$, the lagrangian is determined by $\check{L}_d \circ
  \Upsilon_\DC = L_d$ and, according
  to~\eqref{eq:PVMs_for_reduced_system-def},
  \begin{equation*}
    \check{\IVCM}((v_{k-1},r_k),(v_k,r_{k+1}))(\delta v_k) =
    D_2(p_1\circ \Upsilon_\DC)(q_{k-1},q_k)(\delta q_k),
  \end{equation*}
  where $(v_{k-1},r_k) = \Upsilon_\DC(q_{k-1},q_k)$, $(v_{k},r_{k+1})
  = \Upsilon_\DC(q_{k},q_{k+1})$ and $\delta v_k = D_1(p_1\circ
  \Upsilon_\DC)(q_k,q_{k+1})(\delta q_k)$. Notice that
  this DLPS coincides with the DLPS associated to the reduction of
  $(Q,L_d)$ in
  Section~\ref{sec:reduced_system_associated_to_a_symmetric_mechanical_system}. In
  other words, the reduced system $\mathcal{M}/(\SG,\DC)$ extends the
  reduction construction of DMSs introduced
  in~\cite{ar:fernandez_tori_zuccalli-lagrangian_reduction_of_discrete_mechanical_systems}.
\end{example}

\begin{proposition}\label{prop:Upsilon_DC_is_morphism}
  Let $\SG$ be a symmetry group of
  $\mathcal{M}=(E,L_d,\IVCM)\in \Ob_{\DLPSC}$ and $\DC$ a discrete
  connection on the principal $\SG$-bundle
  $\pi^{M,\SG}:M\rightarrow M/\SG$. Then $\Upsilon_\DC$ defined
  by~\eqref{eq:Upsilon_DC-def} is in
  $\Mor_{\DLPSC}(\mathcal{M}, \mathcal{M}/(\SG,\DC))$.
\end{proposition}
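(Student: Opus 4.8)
The plan is to verify directly that $\Upsilon_\DC$ satisfies each of the six defining conditions of a morphism in Definition~\ref{def:DLPSC}, taking $\mathcal{M}' = \mathcal{M}/(\SG,\DC) = (\ti{\SG}_E,\check{L}_d,\check{\IVCM})$. Most of the substantive work has already been carried out in the preceding lemmas, so the argument is essentially one of assembling those results rather than of new computation.

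First, condition~\ref{it:DLPSC_mor-surj_subm} is immediate from Lemma~\ref{le:Upsilon_DC_is_ppal_bundle}, since a principal $\SG$-bundle projection is in particular a surjective submersion. Condition~\ref{it:DLPSC_mor-D_1_onto} is exactly point~\ref{it:properties_Upsilon^2-isomorphism} of Lemma~\ref{le:properties_Upsilon^2} (an isomorphism is, a fortiori, onto). For condition~\ref{it:DLPSC_mor-D_1_null} I would invoke the explicit formula~\eqref{eq:Upsilon_DC-def}: the second component of $\Upsilon_\DC(\epsilon_0,m_1)$ is $\pi^{M,\SG}(m_1)$, which does not depend on $\epsilon_0$, so its partial derivative in the first slot vanishes. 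Condition~\ref{it:DLPSC_mor-C''_well_def}, read pointwise, is the identity $p_2(\Upsilon_\DC(\epsilon_0,m_1)) = p^{M/\SG}(p_1(\Upsilon_\DC(\epsilon_1,m_2)))$ for $((\epsilon_0,m_1),(\epsilon_1,m_2))\in C''(E)$; this is precisely the computation establishing point~\ref{it:properties_Upsilon^2-well_def} of Lemma~\ref{le:properties_Upsilon^2}, and it also follows from the commutativity of diagram~\eqref{eq:ExM_and_tiGxM/G} together with the constraint $m_1 = \phi(\epsilon_1)$ built into $C''(E)$. Condition~\ref{it:DLPSC_mor-L_d} holds by the very definition of $\check{L}_d$, since $\check{L}_d(\Upsilon_\DC(\epsilon_0,m_1)) = L_d(\epsilon_0,m_1)$.

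The only condition requiring genuine attention is~\ref{it:DLPSC_mor-IVCM}, the compatibility of $\IVCM$ with $\check{\IVCM}$. Here I would point out that the morphism equation~\eqref{eq:DLPSC_mor-IVCM}, specialized to $\Upsilon = \Upsilon_\DC$ and $\IVCM' = \check{\IVCM}$, is almost verbatim the defining formula~\eqref{eq:PVMs_for_reduced_system-def} of $\check{\IVCM}$. The key manipulation is to split the right-hand side of~\eqref{eq:DLPSC_mor-IVCM}: using the product structure $C'(E) = E\times M$, the total differential $d(p_1\circ\Upsilon_\DC)(\epsilon_0,m_1)$ applied to the pair $(\IVCM((\epsilon_0,m_1),(\epsilon_1,m_2))(\delta\epsilon_1),\,d\phi(\epsilon_1)(\delta\epsilon_1))$ decomposes as the sum of $D_1(p_1\circ\Upsilon_\DC)(\epsilon_0,m_1)$ applied to the $\IVCM$-term (which lies in $\ker(d\phi)\subset TE$) and $D_2(p_1\circ\Upsilon_\DC)(\epsilon_0,m_1)$ applied to $d\phi(\epsilon_1)(\delta\epsilon_1)\in T_{m_1}M$. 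This is exactly the right-hand side of~\eqref{eq:PVMs_for_reduced_system-def}. To align the left-hand sides, I would take the \emph{given} element $((\epsilon_0,m_1),(\epsilon_1,m_2))\in C''(E)$ and the \emph{given} vector $\delta\epsilon_1\in T_{\epsilon_1}E$ as the representative lift in the definition of $\check{\IVCM}$ of their image $\Upsilon_\DC^{(2)}((\epsilon_0,m_1),(\epsilon_1,m_2))$, with $\delta v_1 := D_1(p_1\circ\Upsilon_\DC)(\epsilon_1,m_2)(\delta\epsilon_1)$; point~\ref{it:properties_Upsilon^2-isomorphism} of Lemma~\ref{le:properties_Upsilon^2} guarantees that $\delta\epsilon_1$ is the unique preimage of $\delta v_1$, so the two sides coincide.

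The main (and essentially only) obstacle is this last matching step, and it is mild: since the definition of $\check{\IVCM}$ involves a \emph{choice} of lift, reading off condition~\ref{it:DLPSC_mor-IVCM} requires being entitled to use the particular lift $((\epsilon_0,m_1),(\epsilon_1,m_2))$ appearing in~\eqref{eq:DLPSC_mor-IVCM} as that choice. This is precisely what Lemma~\ref{le:reduced_IVCM_is_well_def} provides, namely that $\check{\IVCM}$ is independent of the chosen lift. With that in hand, condition~\ref{it:DLPSC_mor-IVCM} collapses to the definition of $\check{\IVCM}$, and having verified all six conditions I conclude that $\Upsilon_\DC\in\Mor_{\DLPSC}(\mathcal{M},\mathcal{M}/(\SG,\DC))$.
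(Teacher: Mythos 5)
Your proposal is correct and follows essentially the same route as the paper's proof: each of the six morphism conditions is checked using Lemma~\ref{le:Upsilon_DC_is_ppal_bundle}, Lemma~\ref{le:properties_Upsilon^2}, the explicit form of $\Upsilon_\DC$, the definition of $\check{L}_d$, and the definition~\eqref{eq:PVMs_for_reduced_system-def} of $\check{\IVCM}$ (where your splitting of $d(p_1\circ\Upsilon_\DC)$ into $D_1+D_2$ and the appeal to Lemma~\ref{le:reduced_IVCM_is_well_def} for independence of the lift are exactly the points the paper leaves implicit). No gaps.
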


\begin{proof}
  We have already noticed that
  $\Upsilon_\DC:C'(E)\rightarrow \ti{\SG}_E$ is a surjective
  submersion, so that morphism's property~\ref{it:DLPSC_mor-surj_subm}
  holds. By point~\ref{it:properties_Upsilon^2-isomorphism} of
  Lemma~\ref{le:properties_Upsilon^2}, morphism's
  property~\ref{it:DLPSC_mor-D_1_onto} holds. As
  $p_2\circ \Upsilon_\DC = \pi^{M,\SG} \circ p_2$, if
  $i_1:TE\rightarrow T(C'(E)) = TE \oplus TM$ is the natural
  inclusion, we have that
  $D_1(p_2\circ \Upsilon_\DC) = d\pi^{M,\SG} \circ dp_2 \circ i_1 =
  0$,
  as $\im(i_1)\subset \ker(dp_2)$, so that morphism's
  property~\ref{it:DLPSC_mor-D_1_null} is valid. As
  \begin{equation*}
    (p_2^{C'(\ti{\SG}_E),M/\SG} \circ \Upsilon_\DC \circ
    p_1^{C''(E),C'(E)}) ((\epsilon_0,m_1),(\epsilon_1,m_2)) = \pi^{M,\SG}(m_1)
  \end{equation*}
  and
  \begin{equation*}
    \begin{split}
      (p^{M,\SG}\circ p_1^{C'(\ti{\SG}_E),\ti{\SG}_E}\circ
      &\Upsilon_\DC \circ
      p_2^{C''(E),C'(E)})((\epsilon_0,m_1),(\epsilon_1,m_2)) \\=&
      p^{M,\SG}(\pi^{E\times\SG,\SG}(\epsilon_1,\DC(\phi(\epsilon_1),m_2)))
      = \pi^{M,\SG}(\phi(\epsilon_1)) = \pi^{M,\SG}(m_1),
    \end{split}
  \end{equation*}
  morphism's property~\ref{it:DLPSC_mor-C''_well_def} is
  satisfied. Morphism's property~\ref{it:DLPSC_mor-L_d} is satisfied
  by $\SG$ being a symmetry group of $\mathcal{M}$ and, by definition
  of $\check{\IVCM}$~\eqref{eq:PVMs_for_reduced_system-def}, we see
  that~\eqref{eq:DLPSC_mor-IVCM} holds, proving that morphism's
  property~\ref{it:DLPSC_mor-IVCM} holds for $\Upsilon_\DC$.
\end{proof}

When a DLPS is symmetric, constructing the associated reduced system
requires the choice of a discrete connection. The following result
proves that all reduced DLPSs obtained from a DLPS by this procedure
are isomorphic in $\DLPSC$, independently of the discrete connection
chosen.

\begin{proposition}\label{prop:reduction_different_connections_isomorphis_sys}
  Let $\SG$ be a symmetry group of the $\mathcal{M} = (E,L_d,\IVCM)
  \in \Ob_{\DLPSC}$ and $\DCp{1},\DCp{2}$ two discrete connections on the
  underlying principal $\SG$-bundle $\pi^{M,\SG}:M\rightarrow
  M/\SG$. Then, the reduced systems $\mathcal{M}/(\SG,\DCp{1})$ and
  $\mathcal{M}/(\SG,\DCp{2})$ are isomorphic in $\DLPSC$.
\end{proposition}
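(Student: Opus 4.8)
The plan is to exhibit an explicit diffeomorphism between the underlying spaces of the two reduced systems and then recognize it as an isomorphism via the triangle Lemma~\ref{le:comm_triangle_side_function_imp_side_morph}. The crucial observation is that the conjugate bundle $\ti{\SG}_E\to M/\SG$ depends only on the $\SG$-action on $(E,M,\phi,F)$ and not on the discrete connection; hence both reduced systems $\mathcal{M}/(\SG,\DCp{1})$ and $\mathcal{M}/(\SG,\DCp{2})$ are DLPSs over the \emph{same} fiber bundle $\ti{\SG}_E$, so they share the common space $C'(\ti{\SG}_E)=\ti{\SG}_E\times(M/\SG)$. Moreover, by~\eqref{eq:Upsilon_DC-def}, each reduction map factors as $\Upsilon_{\DCp{i}}=\Phi_{\DCp{i}}\circ\pi^{E\times M,\SG}$ through the \emph{same} quotient projection $\pi^{E\times M,\SG}:E\times M\to(E\times M)/\SG$, where $\Phi_{\DCp{i}}:(E\times M)/\SG\to\ti{\SG}_E\times(M/\SG)$ is the diffeomorphism supplied by Proposition~\ref{prop:generalized_isomorphisms_associated_to_DC}.

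First I would set $F:=\Phi_{\DCp{2}}\circ\Phi_{\DCp{1}}^{-1}:C'(\ti{\SG}_E)\to C'(\ti{\SG}_E)$, which is a diffeomorphism as a composite of diffeomorphisms. A one-line computation then yields the commuting triangle
\begin{equation*}
  F\circ\Upsilon_{\DCp{1}} = \Phi_{\DCp{2}}\circ\Phi_{\DCp{1}}^{-1}\circ\Phi_{\DCp{1}}\circ\pi^{E\times M,\SG}
  = \Phi_{\DCp{2}}\circ\pi^{E\times M,\SG} = \Upsilon_{\DCp{2}},
\end{equation*}
since the projection $\pi^{E\times M,\SG}$ is common to both factorizations. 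Equivalently, one may argue that $\Upsilon_{\DCp{1}}$ and $\Upsilon_{\DCp{2}}$ are principal $\SG$-bundles with the \emph{same} fibers --- the orbits of the diagonal action $l^{E\times M}$, by Lemma~\ref{le:Upsilon_DC_is_ppal_bundle} --- and invoke the universal property of the quotient to produce a unique such $F$.

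By Proposition~\ref{prop:Upsilon_DC_is_morphism}, both $\Upsilon_{\DCp{1}}\in\Mor_{\DLPSC}(\mathcal{M},\mathcal{M}/(\SG,\DCp{1}))$ and $\Upsilon_{\DCp{2}}\in\Mor_{\DLPSC}(\mathcal{M},\mathcal{M}/(\SG,\DCp{2}))$. Taking $\mathcal{M}':=\mathcal{M}/(\SG,\DCp{1})$ and $\mathcal{M}'':=\mathcal{M}/(\SG,\DCp{2})$, the commuting triangle above is exactly the hypothesis of Lemma~\ref{le:comm_triangle_side_function_imp_side_morph}, so $F\in\Mor_{\DLPSC}(\mathcal{M}',\mathcal{M}'')$; and since $F$ is a diffeomorphism, that same lemma upgrades it to an isomorphism in $\DLPSC$, which proves the claim. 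The only point requiring care --- the ``main obstacle'' --- is the verification, inside Lemma~\ref{le:comm_triangle_side_function_imp_side_morph}, that $F$ respects the reduced Lagrangians $\check{L}_d^{(i)}$ and the reduced chaining maps $\check{\IVCM}^{(i)}$, which genuinely differ between the two connections. These compatibilities are forced by the common factorization through $\pi^{E\times M,\SG}$: for instance $\check{L}_d^{(2)}\circ F\circ\Upsilon_{\DCp{1}}=\check{L}_d^{(2)}\circ\Upsilon_{\DCp{2}}=L_d=\check{L}_d^{(1)}\circ\Upsilon_{\DCp{1}}$, and surjectivity of $\Upsilon_{\DCp{1}}$ gives $\check{L}_d^{(1)}=\check{L}_d^{(2)}\circ F$, with the analogous chain of identities handling $\check{\IVCM}^{(i)}$, so that no extra hypotheses are needed.
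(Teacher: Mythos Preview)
Your proof is correct and follows essentially the same route as the paper: you define $F=\Phi_{\DCp{2}}\circ\Phi_{\DCp{1}}^{-1}$, verify $F\circ\Upsilon_{\DCp{1}}=\Upsilon_{\DCp{2}}$ via the common factorization through $\pi^{E\times M,\SG}=\pi^{C'(E),\SG}$, and invoke Proposition~\ref{prop:Upsilon_DC_is_morphism} together with Lemma~\ref{le:comm_triangle_side_function_imp_side_morph}. Your final paragraph about the ``main obstacle'' is superfluous, since the compatibility of $F$ with the reduced Lagrangians and chaining maps is already absorbed into the proof of Lemma~\ref{le:comm_triangle_side_function_imp_side_morph} and need not be re-argued here.
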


\begin{proof}
  By Lemma~\ref{le:Upsilon_DC_is_ppal_bundle}, $\Upsilon_{\DCp{1}},
  \Upsilon_{\DCp{2}} : C'(E)\rightarrow C'(\ti{\SG}_E)$ are principal
  $\SG$-bundles. Then, we have the following commutative diagrams of
  smooth maps, where the horizontal arrows are diffeomorphisms (see
  Proposition~\ref{prop:generalized_isomorphisms_associated_to_DC})
  \begin{equation*}
    \xymatrix{ {} & {C'(E)} \ar[d]^{\pi^{C'(E),\SG}} \ar[dl]_{\Upsilon_{\DCp{1}}}\\
      {C'(\ti{\SG}_E)} \ar[r]_{\Phi_{\DCp{1}}^{-1}} & {C'(E)/\SG}
    } 
    \stext{ and }
    \xymatrix{ {C'(E)} \ar[d]_{\pi^{C'(E),\SG}} \ar[dr]^{\Upsilon_{\DCp{2}}} & {}\\
      {C'(E)/\SG} \ar[r]_{\Phi_{\DCp{2}}}  & {C'(\ti{\SG}_E)} 
    }
  \end{equation*}
  Joining the two diagrams we obtain the commutative diagram of smooth
  maps
  \begin{equation*}
    \xymatrix{ {} & {C'(E)} \ar[dl]_{\Upsilon_{\DCp{1}}} \ar[dr]^{\Upsilon_{\DCp{2}}} & 
      {}\\
      {C'(\ti{\SG}_E)} \ar[rr]_{\Phi_{\DCp{2}} \circ \Phi_{\DCp{1}}^{-1}} & {} & 
      {C'(\ti{\SG}_E)}
    }
  \end{equation*}
  The result then follows from
  Lemma~\ref{le:comm_triangle_side_function_imp_side_morph} because
  the horizontal arrow is a diffeomorphism and, by
  Proposition~\ref{prop:Upsilon_DC_is_morphism}, the non-horizontal
  arrows are morphisms in $\DLPSC$.
\end{proof}


\subsection{Dynamics of the reduced discrete Lagrange--Poincar\'e
  system}
\label{sec:dynamics_of_the_reduced_generalized_discrete_mechanical_system}

The following result compares the dynamics of a reduced DLPS to that
of the original symmetric system.

\begin{theorem}\label{thm:two_point_theorem-unconstrained-generalized}
  Let $\SG$ be a symmetry group of the DLPS
  $\mathcal{M}=(E,L_d,\IVCM)$, $\DC$ a discrete connection on the
  principal $\SG$-bundle $\pi^{M,\SG}:M\rightarrow M/\SG$ and
  $\mathcal{M}/(\SG,\DC) =(\ti{G}_E,\check{L}_d,\check{\IVCM})$ the
  corresponding reduced DLPS. If
  $(\epsilon_\cdot,m_\cdot) = ((\epsilon_0,m_1), \ldots,
  (\epsilon_{N-1},m_N))$
  is a discrete path in $C'(E)$, we define a discrete path
  $(v_\cdot,r_\cdot)$ in $C'(\ti{\SG}_E)$ by
  $(v_k,r_{k+1}) := \Upsilon_\DC(\epsilon_k,m_{k+1})$ for
  $k=0,\ldots,N-1$. Then, $(\epsilon_\cdot,m_\cdot)$ is a trajectory
  of $\mathcal{M}$ if and only if $(v_\cdot,r_\cdot)$ is a trajectory
  of $\mathcal{M}/(\SG,\DC)$.
\end{theorem}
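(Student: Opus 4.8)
The plan is to reduce Theorem~\ref{thm:two_point_theorem-unconstrained-generalized} to the already-established Theorem~\ref{thm:morphisms_and_trajectories} by recognizing $\Upsilon_\DC$ as a morphism in the category $\DLPSC$. Indeed, Proposition~\ref{prop:Upsilon_DC_is_morphism} asserts precisely that $\Upsilon_\DC \in \Mor_{\DLPSC}(\mathcal{M}, \mathcal{M}/(\SG,\DC))$, and the hypothesis of the present theorem defines $(v_k, r_{k+1}) := \Upsilon_\DC(\epsilon_k, m_{k+1})$ for each $k$. Thus the discrete path $(v_\cdot, r_\cdot)$ in $C'(\ti{\SG}_E)$ is exactly the image of $(\epsilon_\cdot, m_\cdot)$ under the morphism $\Upsilon_\DC$, in the sense required by the statement of Theorem~\ref{thm:morphisms_and_trajectories}.

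First I would invoke Proposition~\ref{prop:Upsilon_DC_is_morphism} to record that $\Upsilon_\DC$ is a morphism from $\mathcal{M}$ to its reduction $\mathcal{M}/(\SG,\DC) = (\ti{\SG}_E, \check{L}_d, \check{\IVCM})$. Then I would observe that the definition $(v_k, r_{k+1}) := \Upsilon_\DC(\epsilon_k, m_{k+1})$ matches verbatim the hypothesis of Theorem~\ref{thm:morphisms_and_trajectories}, with the roles $\mathcal{M}' = \mathcal{M}/(\SG,\DC)$, $E' = \ti{\SG}_E$, and the reduced data $L_d' = \check{L}_d$, $\IVCM' = \check{\IVCM}$. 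Applying that theorem directly yields the desired equivalence: $(\epsilon_\cdot, m_\cdot)$ is a trajectory of $\mathcal{M}$ if and only if $(v_\cdot, r_\cdot)$ is a trajectory of $\mathcal{M}/(\SG,\DC)$.

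\emph{The main subtlety}, and the only point requiring care, is a matter of bookkeeping rather than mathematics: one must confirm that the discrete path $(v_\cdot, r_\cdot)$ produced by applying $\Upsilon_\DC$ componentwise is genuinely a discrete path in $C'(\ti{\SG}_E)$, i.e.\ that contiguous pairs lie in $C''(\ti{\SG}_E)$. This is ensured by point~\ref{it:properties_Upsilon^2-well_def} of Lemma~\ref{le:properties_Upsilon^2}, which guarantees that $\Upsilon_\DC^{(2)}$ is well defined and hence that $\Upsilon_\DC$ maps elements of $C''(E)$ into $C''(\ti{\SG}_E)$; equivalently, this is the content of morphism condition~\ref{it:DLPSC_mor-C''_well_def} already verified in Proposition~\ref{prop:Upsilon_DC_is_morphism}. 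With this check in place, the theorem is a one-line corollary of the categorical machinery, and no fresh variational computation is needed. The entire proof therefore amounts to citing Proposition~\ref{prop:Upsilon_DC_is_morphism} and Theorem~\ref{thm:morphisms_and_trajectories}.
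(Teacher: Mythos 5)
Your proposal is correct and is exactly the paper's own proof: cite Proposition~\ref{prop:Upsilon_DC_is_morphism} to see that $\Upsilon_\DC\in\Mor_{\DLPSC}(\mathcal{M},\mathcal{M}/(\SG,\DC))$ and then apply Theorem~\ref{thm:morphisms_and_trajectories}. The extra bookkeeping remark about contiguous pairs landing in $C''(\ti{\SG}_E)$ is a sensible (if implicit in the paper) addition, already covered by morphism condition~\ref{it:DLPSC_mor-C''_well_def}.
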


\begin{proof}
  As, by Proposition~\ref{prop:Upsilon_DC_is_morphism},
  $\Upsilon_\DC\in\Mor_{\DLPSC}(\mathcal{M},\mathcal{M}/(\SG,\DC))$,
  the result follows from
  Theorem~\ref{thm:morphisms_and_trajectories}.
\end{proof}

\begin{corollary}\label{cor:four_point_thm-generalized}
  In the same setting of
  Theorem~\ref{thm:two_point_theorem-unconstrained-generalized}, the
  following assertions are equivalent.
  \begin{enumerate}
  \item $(\epsilon_\cdot,m_\cdot)$ is a trajectory of $\mathcal{M}$.
  \item
    Equation~\eqref{eq:equation_of_motion-generalized-in_epsilon_m} is
    satisfied.
  \item $(v_\cdot,r_\cdot)$ is a trajectory of
    $\mathcal{M}/(\SG,\DC)$.
  \item For all $k=0,\ldots,N-1$,
    \begin{equation}
      \label{eq:red_equation_of_motion-generalized-in_v_r}
      \begin{split}
        D_1\check{L}_d(v_k,r_{k+1}) + D_1\check{L}_d(v_{k-1},r_{k})
        \check{\IVCM}&((v_{k-1},r_{k}),(v_{k},r_{k+1})) \\ &+
        D_2\check{L}_d(v_{k-1},r_k) dp^{M/\SG}(v_k) = 0
      \end{split}
    \end{equation}
  \end{enumerate}
\end{corollary}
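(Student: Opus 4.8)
The plan is to establish the equivalence of all four assertions by chaining together three pairwise equivalences, each of which follows directly from a result already proved above, so that the only genuine work is a notational verification.

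First, the equivalence of assertions (1) and (2) is immediate from Proposition~\ref{prop:eqs_of_motion_gdms} applied to $\mathcal{M}$: that proposition states precisely that a discrete path in $C'(E)$ is a trajectory of a DLPS if and only if it satisfies the equations of motion~\eqref{eq:equation_of_motion-generalized-in_epsilon_m}. Hence (1) $\iff$ (2) holds verbatim.

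Next, the equivalence of (1) and (3) is exactly the content of Theorem~\ref{thm:two_point_theorem-unconstrained-generalized}, which in the present notation asserts that $(\epsilon_\cdot,m_\cdot)$ is a trajectory of $\mathcal{M}$ if and only if the path $(v_\cdot,r_\cdot)$ defined by $(v_k,r_{k+1}) := \Upsilon_\DC(\epsilon_k,m_{k+1})$ is a trajectory of $\mathcal{M}/(\SG,\DC)$. I would simply invoke that theorem to obtain (1) $\iff$ (3).

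Finally, for (3) $\iff$ (4) I would apply Proposition~\ref{prop:eqs_of_motion_gdms} a second time, now to the reduced DLPS $\mathcal{M}/(\SG,\DC) = (\ti{\SG}_E,\check{L}_d,\check{\IVCM})$, whose underlying fiber bundle is $p^{M/\SG}:\ti{\SG}_E\rightarrow M/\SG$. Under this choice the general equation~\eqref{eq:equation_of_motion-generalized-in_epsilon_m} takes the form dictated by the substitutions $E\mapsto\ti{\SG}_E$, $L_d\mapsto\check{L}_d$, $\IVCM\mapsto\check{\IVCM}$ and $\phi\mapsto p^{M/\SG}$ (so that $d\phi(\epsilon_k)\mapsto dp^{M/\SG}(v_k)$). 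The step requiring the most care---though still routine---is checking that this specialization is literally equation~\eqref{eq:red_equation_of_motion-generalized-in_v_r}: the term $D_1\check{L}_d(v_k,r_{k+1})$ is the first summand, $D_2\check{L}_d(v_{k-1},r_k)\,dp^{M/\SG}(v_k)$ is the second, and $D_1\check{L}_d(v_{k-1},r_k)\,\check{\IVCM}((v_{k-1},r_k),(v_k,r_{k+1}))$ is the third. Once this identification is made, Proposition~\ref{prop:eqs_of_motion_gdms} yields (3) $\iff$ (4), and combining the three equivalences gives the equivalence of all four assertions.
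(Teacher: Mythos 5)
Your proposal is correct and follows exactly the paper's own argument: the paper likewise proves the corollary by combining Theorem~\ref{thm:two_point_theorem-unconstrained-generalized} with Proposition~\ref{prop:eqs_of_motion_gdms} applied to both $\mathcal{M}$ and $\mathcal{M}/(\SG,\DC)$. Your write-up simply makes explicit the notational specialization that the paper leaves to the reader.
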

\begin{proof}
  Use Theorem~\ref{thm:two_point_theorem-unconstrained-generalized}
  and Proposition~\ref{prop:eqs_of_motion_gdms}, applied to
  $\mathcal{M}$ and $\mathcal{M}/(\SG,\DC)$.
\end{proof}

The following reconstruction result shows how, knowing the discrete
trajectories of a reduced system, the trajectories of the original
system can be recovered.

\begin{theorem}\label{rem:reconstruction}
  Let $\SG$ be a symmetry group of the DLPS
  $\mathcal{M}=(E,L_d,\IVCM)$, $\DC$ a discrete connection on the
  principal $\SG$-bundle $\pi^{M,\SG}:M\rightarrow M/\SG$ and
  $\mathcal{M}/(\SG,\DC)$ the corresponding reduced DLPS. Let
  $(v_\cdot,r_\cdot)$ be a trajectory of $\mathcal{M}/(\SG,\DC)$ and
  $(\ti{\epsilon}_0,\ti{m}_1)\in C'(E)$ such that
  $\Upsilon_\DC(\ti{\epsilon}_0,\ti{m}_1)=(v_0,r_1)$. Then, there
  exists a unique trajectory $(\epsilon_\cdot, m_\cdot)$ of
  $\mathcal{M}$ such that
  $(\epsilon_0,m_1)=(\ti{\epsilon}_0,\ti{m}_1)$ and
  $\Upsilon_\DC(\epsilon_k, m_{k+1}) = (v_k, r_{k+1})$ for all $k$.
\end{theorem}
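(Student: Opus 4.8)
The plan is to recognize this as a clean combination of two results already established: the path-lifting Proposition~\ref{prop:lifting_reduced_discrete_paths} and the trajectory-correspondence Theorem~\ref{thm:two_point_theorem-unconstrained-generalized}. The former handles the purely geometric problem of lifting a discrete path through $\Upsilon_\DC$ with a prescribed initial point, while the latter guarantees that the dynamics is preserved under this lift. The reconstruction statement is exactly the conjunction of these two facts.

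First I would apply Proposition~\ref{prop:lifting_reduced_discrete_paths} to the reduced trajectory $(v_\cdot,r_\cdot)$, viewed merely as a discrete path in $C'(\ti{\SG}_E)$, together with the chosen lift $(\ti{\epsilon}_0,\ti{m}_1)$. Since $\Upsilon_\DC(\ti{\epsilon}_0,\ti{m}_1)=(v_0,r_1)$, that proposition produces a unique discrete path $(\epsilon_\cdot,m_\cdot)$ in $C'(E)$ with $(\epsilon_0,m_1)=(\ti{\epsilon}_0,\ti{m}_1)$ and $\Upsilon_\DC(\epsilon_k,m_{k+1})=(v_k,r_{k+1})$ for every $k$. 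This single step delivers the candidate path and simultaneously records its compatibility with $\Upsilon_\DC$ and the prescribed initial condition.

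Second, I would promote this path to a trajectory. By construction we have $(v_k,r_{k+1})=\Upsilon_\DC(\epsilon_k,m_{k+1})$ for all $k$, so $(v_\cdot,r_\cdot)$ is precisely the reduced path associated to $(\epsilon_\cdot,m_\cdot)$ in the sense of Theorem~\ref{thm:two_point_theorem-unconstrained-generalized}. Since $(v_\cdot,r_\cdot)$ is a trajectory of $\mathcal{M}/(\SG,\DC)$ by hypothesis, that theorem (whose proof rests on $\Upsilon_\DC$ being a morphism, Proposition~\ref{prop:Upsilon_DC_is_morphism}) immediately yields that $(\epsilon_\cdot,m_\cdot)$ is a trajectory of $\mathcal{M}$, establishing existence.

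For uniqueness I would observe that any competing trajectory $(\epsilon'_\cdot,m'_\cdot)$ of $\mathcal{M}$ satisfying the stated conditions is, in particular, a discrete path in $C'(E)$ meeting the hypotheses of Proposition~\ref{prop:lifting_reduced_discrete_paths} (same initial point, same $\Upsilon_\DC$-compatibility at every index); by the uniqueness already contained in that proposition it must coincide with $(\epsilon_\cdot,m_\cdot)$. I do not expect a genuine obstacle here, as the entire content has been absorbed into the two cited results; the only conceptual point worth flagging is that uniqueness of the reconstructed \emph{trajectory} is inherited from uniqueness of the lifted \emph{path}, since the trajectory property imposes no further constraint once the lift through $\Upsilon_\DC$ is fixed.
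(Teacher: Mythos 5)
Your proposal is correct and follows exactly the paper's own argument: lift the path uniquely via Proposition~\ref{prop:lifting_reduced_discrete_paths} and then invoke Theorem~\ref{thm:two_point_theorem-unconstrained-generalized} to upgrade the lifted path to a trajectory. Your explicit remark that uniqueness of the trajectory is inherited from uniqueness of the lifted path is a point the paper leaves implicit, but it is the same reasoning.
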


\begin{proof}
  By Proposition~\ref{prop:lifting_reduced_discrete_paths}, the
  discrete path $(v_\cdot,r_\cdot)$ lifts to a unique discrete path
  $(\epsilon_\cdot, m_\cdot)$ in $C'(E)$, starting at
  $(\ti{\epsilon}_0,\ti{m}_1)$. Then,
  $(\epsilon_0,m_1)=(\ti{\epsilon}_0,\ti{m}_1)$ and
  $(v_k, r_{k+1}) = \Upsilon_\DC(\epsilon_k, m_{k+1})$ for all $k$.
  By Theorem~\ref{thm:two_point_theorem-unconstrained-generalized},
  $(\epsilon_\cdot, m_\cdot)$ is a trajectory of $\mathcal{M}$.
\end{proof}

\begin{remark}
  Theorem~\ref{rem:reconstruction} asserts that all trajectories of a
  reduced DLPS $\DLPS/(\SG,\DC)$ come from trajectories of the
  original system $\DLPS$. It is possible to give a direct description
  of the reconstruction process in terms of lifting discrete paths
  (see Lemma~\ref{le:properties_Upsilon^2} and
  Proposition~\ref{prop:lifting_reduced_discrete_paths}). This process
  is inductive, so it suffices to describe the initial step, as we do
  next.

  Given a discrete path
  $\rho := ((v_0,r_1),(v_1,r_2))\in C''(\ti{\SG}_E)$ and
  $(\epsilon_0,m_1)\in C'(E)$ such that
  $\Upsilon_\DC(\epsilon_0,m_1) = (v_0,r_1)$, the discrete lift of $\rho$
  starting at $(\epsilon_0,m_1)$ is
  $\hat{\rho}:=((\epsilon_0,m_1), (l^E_g(\epsilon_1'),l^M_g(m_2')))$
  where $(\epsilon_1',m_2')\in \Upsilon_\DC^{-1}(v_1,r_2)$ is
  arbitrary ---as $\Upsilon_\DC$ is onto, it is always possible to
  find such pairs $(\epsilon_1',m_2')$--- and $g\in\SG$ is the unique
  element making $l^M_g(\phi(\epsilon_1')) = m_1$.

  Using~\eqref{eq:ExM_and_tiGxM/G} we see that
  $\pi^{M,\SG}(\phi(\epsilon_1')) =
  p^{M/\SG}(p_1(\Upsilon_\DC(\epsilon_1',m_2'))) = p^{M/\SG}(v_1) =
  r_1 = \pi^{M,\SG}(m_1)$,
  so that $m_1$ and $\phi(\epsilon_1')$ are in the same $\SG$-orbit
  and $g$ is well defined. Furthermore, as
  $\phi(l^E_g(\epsilon_1')) = l^M_g(\phi(\epsilon_1')) = m_1$, we have
  that $\hat{\rho}\in C''(E)$. Finally, as $\Upsilon_\DC$ is
  $\SG$-invariant
  $\Upsilon_\DC(l^E_g(\epsilon_1'),l^M_g(m_2')) =
  \Upsilon_\DC(l^{C'(E)}_g(\epsilon_1',m_2')) =
  \Upsilon_\DC(\epsilon_1',m_2') = (v_1,r_2)$.
  Hence $\Upsilon_\DC^{(2)}(\hat{\rho})=\rho$, and $\hat{\rho}$ is,
  indeed, the corresponding lifted path.
\end{remark}


\section{Example}
\label{sec:example_T2}

In this section we illustrate the reduction techniques introduced so
far with the reduction of an explicit symmetric DLPS and give a
description of the resulting system.


\subsection{The system and a symmetry group}
\label{sec:ex_T2_system_and_symmetry_group}

The starting point is the DMS $(Q,L_d)$, where
$Q:=\C^2-\Delta_{xy}$, for $\Delta_{xy}$ the diagonal in
$\C^2$ and   
\begin{equation}\label{eq:Ld_2body-direct_approach}
  \Lagr(q_0,q_1) := \frac{1}{2h}(\norm{q_1^x-q_0^x}^2 + 
  \norm{q_1^y-q_0^y}^2) - 
  \frac{h}{2} V(\norm{(q_0^y-q_0^x)}^2)
\end{equation}
where $h\neq 0$ is a real constant. This DMS arises as a simple
discretization of the mechanical system consisting of two distinct
unit-mass particles in the plane that interact via a potential $V$,
that only depends on the distance between the particles.

Following
Example~\ref{ex:mechanical_system_as_generalized_mechanical_system},
we associate a DLPS $\DLPS$ to $(Q,\Lagr)$. Take the fiber bundle
$\FBM:\FBTS\rightarrow\FBBS$ to be $id_Q:Q\rightarrow Q$, the
Lagrangian function $\Lagr$ and $\IVCM:=0$. Define the DLPS $\DLPS :=
(Q, \Lagr, \IVCM)$.

Recall that $SE(2)$ is the group of special Euclidean symmetries of
$\R^2\simeq \C$. We can identify $SE(2)$ with
$\{(A,v)\in \C^2 : \abs{A}=1\} = U(1)\times \C$.  The product
operation is $(A_1,v_1)\cdot (A_2,v_2) = (A_1 A_2, A_1 v_2 + v_1)$
with null element $e_{SE(2)} = (1,0)$ and inverse
$(A,v)^{-1} = (A^{-1},-A^{-1}v)$. The subset
$T_2:=\{(1,v)\in U(1)\times \C\}\subset SE(2)$ is a closed normal
subgroup that is isomorphic (as a group) to $\C$.

$SE(2)$ acts naturally on $\C$ by $l^{\C}_{(A,v)}(z) := Az+v$. This
action induces the diagonal action of $SE(2)$ on $Q\times Q$ by
$l^{\C^2}_{(A,v)}(q) :=
(l^{\C}_{(A,v)}(q^x),l^{\C}_{(A,v)}(q^y)) = (Aq^x+v,Aq^y+v)$. Since
$Q$ is preserved by $l^{\C^2}$, $SE(2)$ acts smoothly on $Q$ by the
restricted action, that we denote by $l^Q$.  

It is immediate that $l^Q$ is a free action. Being $U(1)$ compact,
$l^Q$ is a proper action. Then, by
Corollary~\ref{cor:quotient_manifolds-fiber_bundle},
$\pi^{Q,SE(2)}:Q\rightarrow Q/SE(2)$ is a principal $SE(2)$-bundle.

From the previous discussion and the fact that $\FBM=id_Q$ is an
$SE(2)$-invariant trivialization of $\FBM:\FBTS\rightarrow\FBBS$, we
conclude that $SE(2)$ acts on the fiber bundle
$\FBM:\FBTS\rightarrow\FBBS$. As
$\Lagr \circ l^{Q\times Q}_{(A,v)} = \Lagr$ for all $(A,v)\in SE(2)$
and $\IVCM:=0$ is $SE(2)$-equivariant, we conclude that $SE(2)$ is a
symmetry group of $\DLPS$. Being $T_2\subset SE(2)$ a closed subgroup,
it is also a symmetry group of $\DLPS$ by
Proposition~\ref{prop:subgroup_of_sym_group_is_sym_group}.


\subsection{A discrete connection}
\label{sec:reduction_example_T2_discrete_connection}

In this section we use the canonical real inner products in $\C^2$ and
$\C$ to produce a discrete connection $\DCT$ on the principal
$T_2$-bundle $\pi^{Q,T_2}:Q\rightarrow Q/T_2$, following the
construction given in Section 5
of~\cite{ar:fernandez_zuccalli-a_geometric_approach_to_discrete_connections_on_principal_bundles}. The
idea of that construction (in the current setting) is as follows. As
$T_2$ acts by isometries on $\C$ (with the canonical real product),
$T_2$ acts by isometries on $\C^2$ via the diagonal action (with the
canonical real inner product on $\C^2$). This last inner product
induces a $T_2$-invariant riemannian metric on $Q$. The horizontal
subspace for the discrete connection is an open subset of the set of
pairs $(q_0,q_1)\in Q\times Q$ such that $q_1 = \exp^Q(v)$ for some
$v\in T_{q_0}Q$ that is orthogonal to
$T_{q_0}\mathcal{V}(q_0) = \ker(d\pi^{Q,T_2}(q_0))$, the tangent space
to the $l^Q$-orbit through $q_0$.

The previous construction gives that 
\begin{equation*}
  Hor_\DCT:=\{(q_0,q_1)\in Q\times Q: q_0^x+q_0^y=q_1^x+q_1^y\}
\end{equation*}
is a discrete connection on the principal $T_2$-bundle
$\pi^{Q,T_2}:Q\rightarrow Q/T_2$. Straightforward computations show
that the discrete connection form is
\begin{equation}\label{eq:DCT-def}
  \begin{split}
    \DCT(q_0,q_1) =& \left(1,\frac{1}{2}\big((q_1^x+q_1^y) -
      (q_0^x+q_0^y) \big) \right).
  \end{split}
\end{equation}

\begin{remark}
  Other discrete connections can be considered on the principal
  $T_2$-bundle $\pi^{Q,T_2}:Q\rightarrow Q/T_2$. For instance, one
  can define an affine discrete connection whose horizontal space
  consists of a level manifold of the discrete momentum function (see
  Remark~\ref{rem:discrete_momenta}), This would lead to the reduction
  procedure considered in Section 11
  of~\cite{ar:fernandez_tori_zuccalli-lagrangian_reduction_of_discrete_mechanical_systems}
  for DMS with horizontal symmetries.
\end{remark}


\subsection{The reduced system}
\label{sec:reduction_example_T2_reduced_system}

Using the discrete connection $\DCT$ we construct the reduced system
$\DLPS/(T_2,\DCT) = (\ti{(T_2)_Q},\check{\Lagr},\check{\IVCM})$, in
such a way that
\begin{equation*}
  \DLPSMor_{\DCT}(q_0,q_1) = (\pi^{Q\times T_2, T_2}(q_0,\DCT(q_0,q_1)),
  \pi^{Q,T_2}(q_1))
\end{equation*}
is in $\Mor_{\DLPS}(\DLPS,\DLPS/(T_2,\DCT))$.  Below we give an
explicit DLPS $\DLPS'$, isomorphic to $\DLPS/(T_2,\DCT)$.

\subsubsection{An alternative model}
\label{sec:an_alternative_model}

Define $\FBM' : \FBTS' \rightarrow \FBBS'$ by
$p_1: \C^* \times T_2\rightarrow \C^*$, so that
$(\FBTS', \FBBS', \FBM', T_2)$ is a trivial fiber bundle. Define the
map $\Upsilon:C'(\FBTS)\rightarrow C'(\FBTS')$ by
\begin{equation*}
  \begin{split}
    \Upsilon(q_0,q_1):=& \bigg(\bigg(\frac{1}{\sqrt{2}} (q_0^x-q_0^y),
    \bigg(1,\frac{1}{2} ((q_1^x+q_1^y)- (q_0^x+q_0^y)) \bigg)\bigg),
    \frac{1}{\sqrt{2}} (q_1^x-q_1^y) \bigg).
  \end{split}
\end{equation*}
Clearly $\Upsilon$ is smooth and $T_2$-invariant.

We intend to define a DLPS structure on $\FBM':\FBTS' \rightarrow \FBBS'$
in such a way that $\Upsilon$ is a morphism. This forces us to define
\begin{equation}\label{eq:reduction_example-SE(2)-reduced_L}
  \begin{split}
    \Lagr'((r_0,z_0),r_1) =& \frac{1}{2h}\big( 2\norm{z_0^s}^2 +
    \norm{r_1 - r_0}^2\big) - \frac{h}{2} V\big(2\norm{r_0}^2\big)
  \end{split}
\end{equation}
and
\begin{equation}\label{eq:reduction_example-SE(2)-reduced_IVCM}
  \begin{split}
    \IVCM'\big(((r_0,z_0),r_1),((r_1,z_1),r_2)\big)\left(b \pd{}{r_1}
      + c\pd{}{z_1}\right) = - c \pd{}{z_0},
  \end{split}
\end{equation}
where $(r,z) \in \C^*\times \C$. A number of computations confirm that
$\DLPS' := (\FBTS', \Lagr', \IVCM')$ is a DLPS and that
$\Upsilon\in \Mor_{\DLPSC}(\DLPS, \DLPS')$.

By the $T_2$-invariance of $\Upsilon$, there is a smooth map
$\ti{\Upsilon}$ such that the diagram
\begin{equation*}
  \xymatrix{
    {C'(\FBTS')} 
    & {Q\times Q} \ar[l]_{\Upsilon} \ar[d]^{\pi^{Q\times Q,T_2}}
    \ar@/^/[dr]^{\Upsilon_\DCT} 
    & {} \\
    {} & {(Q\times Q)/T_2} \ar[lu]^{\ti{\Upsilon}} & 
    {\ti{(T_2)}_Q} \ar[l]^(.35){\Psi_{\DCT}}_(.35){\sim}
  }
\end{equation*}
is commutative. Define the smooth map
$\check{\Upsilon}:=\ti{\Upsilon}\circ \Psi_\DCT$. As $\Upsilon$ is
onto and satisfies
$\Upsilon^{-1}(\Upsilon(q_0,q_1)) = l^{Q\times Q}_{T_2}(q_0,q_1)$ for
all $(q_0,q_1)\in Q\times Q$, which easily implies that
$\ti{\Upsilon}$ is one to one, $\check{\Upsilon}$ is a
diffeomorphism. By
Lemma~\ref{le:comm_triangle_side_function_imp_side_morph},
$\check{\Upsilon}$ is an isomorphism in $\DLPSC$.

All together, $\DLPS'$ is an explicit model for the reduced DLPS
$\DLPS/(T_2,\DCT)$.


\subsubsection{Equations of motion}
\label{sec:equations_of_motion-T2}

Trajectories in $\DLPS'$ are found
using~\eqref{eq:equation_of_motion-generalized-in_epsilon_m}.
Evaluating the left side
of~\eqref{eq:equation_of_motion-generalized-in_epsilon_m} on an
arbitrary tangent vector $b\pd{}{r_1}+c\pd{}{z_1} \in T_{(r_1,z_1)}E'$
and computing the corresponding derivatives we obtain the equations
\begin{equation*}
  \re((z_1-z_0)\conj{c}) = 0 \stext{ and } 
  \re(((r_1-r_0) - (r_2-r_1) - 2h^2 V'(2\norm{r_1}^2)r_1)\conj{b}) = 0
\end{equation*}
which, due to de arbitrariness of $b,c\in \C$, lead to
\begin{equation*}
  z_1=z_0 \stext{ and } r_2 = 2r_1 -r_0 - 2h^2 V'(2\norm{r_1}^2)r_1.
\end{equation*}
It should be noticed that the $z_k$ are (proportional to) the velocity
of the center of mass of the original system, which explains the fact
that $z_k$ is constant for a trajectory, while $r_k$ gives the
position of one particle relative to the other.

\begin{remark}\label{rem:example_T2-U(1)_residual_action}
  There is a $U(1)$ action on $\DLPS'$ given by
  $l^{E'}_{(A,0)}(r,z) := (Az,Az)$. This action is a ``residue'' of
  the original $SE(2)$ action on $\DLPS$. This action is, indeed, a
  symmetry of $\DLPS'$ and can be reduced using the same
  techniques. During the rest of the paper we will show that, under
  appropriate conditions, this second reduction produces a system that
  is isomorphic to $\DLPS/SE(2)$.
\end{remark}


\section{Reduction in two stages}
\label{sec:reduction_in_two_stages}

Let $\SG$ be a symmetry group of the DLPS $\mathcal{M}$ and $H\subset
\SG$ a normal closed subgroup.  In this section we apply the reduction
theory of $\mathcal{M}$ by $H$ and, provided that $\SG/H$ is a
symmetry group of $\mathcal{M}/H$, perform a second reduction. Last,
we compare the two step reduction with the reduction $\mathcal{M}/\SG$
performed in one step.


\subsection{Residual symmetry group}
\label{sec:residual_symmetry_group}

\begin{lemma}\label{le:G_acts_on_FB_imp_G/H_acts_on_FB/H}
  Let $\SG$ act on the fiber bundle $(E,M,\phi,F)$ and $H\subset \SG$
  be a closed normal subgroup. Define mappings
  \begin{equation}\label{eq:action_of_residual_symmetry-def}
    \begin{split}
      l^{\ti{H}_E}_{\pi^{\SG,H}(g)}(\pi^{E\times H,H}(\epsilon,w))
      :=& \pi^{E\times H,H}(l^E_g(\epsilon),l^\SG_g(w)),\\
      l^{M/H}_{\pi^{\SG,H}(g)}(\pi^{M,H}(m)) :=& \pi^{M,H}(l^M_g(m)).
    \end{split}
  \end{equation}
  Then $l^{\ti{H}_E}$, $l^{M/H}$ and the trivial right action on
  $F\times H$ define a $\SG/H$-action on the fiber bundle
  $(\ti{H}_E,M/H,p^{M/H},F\times H)$.
\end{lemma}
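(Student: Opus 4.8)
The plan is to verify, one by one, the conditions that Definition~\ref{def:group_acts_on_fiber_bundle} imposes on the triple consisting of $l^{\ti{H}_E}$, $l^{M/H}$ and the (trivial) right action on $F\times H$. The one genuinely substantive point is the well-definedness of the formulas~\eqref{eq:action_of_residual_symmetry-def}, and this is exactly where the normality of $H$ in $\SG$ is used. First I would check well-definedness. Each formula depends on the representative $g\in\SG$ of $\pi^{\SG,H}(g)$, on the representative $m\in M$ of $\pi^{M,H}(m)$, and, in the second case, on the representative $(\epsilon,w)\in E\times H$ of $\pi^{E\times H,H}(\epsilon,w)$. Replacing $g$ by $gh$ and $m$ by $l^M_{h'}(m)$ (with $h,h'\in H$) and invoking the action identity $l^M_g\circ l^M_{h}=l^M_{ghg^{-1}}\circ l^M_g$ together with $ghg^{-1}\in H$ (the only place normality enters) shows that $l^M_g(m)$ and $l^M_{gh}(l^M_{h'}(m))$ lie in one $H$-orbit, so $l^{M/H}$ is well defined; running the identical computation simultaneously on the $E$-factor (via $l^E$) and on the $H$-factor (via the conjugation action $l^\SG$, using $l^\SG_g\circ l^\SG_{h}=l^\SG_{ghg^{-1}}\circ l^\SG_g$) settles $l^{\ti{H}_E}$.

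Once the formulas are well defined, the action axioms (that $\pi^{\SG,H}(e)$ acts as the identity and that composition is respected) follow at once from the corresponding axioms for $l^E$, $l^M$, $l^\SG$ and from $\pi^{\SG,H}$ being a group homomorphism, while the right action on $F\times H$ being trivial needs nothing. For smoothness I would use that $\pi^{\SG,H}$, $\pi^{M,H}$ and $\pi^{E\times H,H}$ are surjective submersions (the latter two as quotient maps of free proper $H$-actions, by Corollary~\ref{cor:quotient_manifolds-fiber_bundle} and the conjugate bundle construction of Example~\ref{ex:extended_associated_bundle-fiber_bundle}): the smooth map $(g,m)\mapsto\pi^{M,H}(l^M_g(m))$ is constant on the fibers of the submersion $\pi^{\SG,H}\times\pi^{M,H}$, hence descends to a smooth $l^{M/H}$, and likewise for $l^{\ti{H}_E}$. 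Freeness is immediate: if $l^{M/H}_{\pi^{\SG,H}(g)}(\pi^{M,H}(m))=\pi^{M,H}(m)$ then $l^M_g(m)=l^M_h(m)$ for some $h\in H$, so freeness of $l^M$ forces $g=h\in H$ and $\pi^{\SG,H}(g)$ is the identity of $\SG/H$; the same reasoning applies on $\ti{H}_E$. Finally, equivariance of the projection follows from that of $\phi$: since $p^{M/H}(\pi^{E\times H,H}(\epsilon,w))=\pi^{M,H}(\phi(\epsilon))$ and $\phi\circ l^E_g=l^M_g\circ\phi$, one reads off $p^{M/H}\circ l^{\ti{H}_E}_{\pi^{\SG,H}(g)}=l^{M/H}_{\pi^{\SG,H}(g)}\circ p^{M/H}$.

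It remains to produce the principal-bundle structure and the equivariant trivializing charts, and this is where I expect the only real work. For the former I would show the smooth free action $l^{M/H}$ is proper: the $\SG$-action $l^M$ is proper (Lemma~\ref{le:G_actions_on_ppal_bundles_are_proper}, since $\pi^{M,\SG}$ is principal), and, $H$ being closed, properness passes to the induced $\SG/H$-action on $M/H$; Corollary~\ref{cor:quotient_manifolds-fiber_bundle} then makes $\pi^{M/H,\SG/H}:M/H\rightarrow(M/H)/(\SG/H)$ a principal $\SG/H$-bundle, whose base is identified canonically with $M/\SG$ through the tower $M\to M/H\to(M/H)/(\SG/H)$. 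For the charts, starting from a $\SG$-invariant open $U\subset M$ carrying a $\SG$-equivariant trivialization of $(E,M,\phi,F)$ (furnished by the hypothesis that $\SG$ acts on this fiber bundle), I would descend it to a $\SG/H$-equivariant trivialization of $\ti{H}_E$ over $\pi^{M,H}(U)\subset M/H$ with typical fiber $F\times H$, which supplies the chart required in point~\ref{it:group_acts_on_FB-trivializing_chart} of Definition~\ref{def:group_acts_on_fiber_bundle}. The main obstacle is precisely the descent of properness to the quotient action on $M/H$: freeness, smoothness, equivariance and the action axioms are all formal consequences of the definitions and of normality, whereas the principal-bundle structure of $M/H$ over $M/\SG$ rests on the standard but non-formal point-set fact that a proper $\SG$-action on $M$ induces a proper $\SG/H$-action on $M/H$ when $H\subset\SG$ is closed and normal.
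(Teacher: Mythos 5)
Your overall route coincides with the paper's: check the conditions of Definition~\ref{def:group_acts_on_fiber_bundle} one by one, obtain the quotient $\SG/H$-actions on $M/H$ and on $\ti{H}_E=(E\times H)/H$ as smooth, free and proper (the paper packages your hand verification of well-definedness, smoothness, freeness and properness into a single appeal to Lemma~\ref{le:quotient_action_properties}, applied to the $\SG$-actions $l^{E\times H}_g(\epsilon,w)=(l^E_g(\epsilon),l^\SG_g(w))$ and $l^M$), deduce the principal $\SG/H$-bundle structure on $M/H$ from Corollary~\ref{cor:quotient_manifolds-fiber_bundle}, and read off the equivariance of $p^{M/H}$ from that of $\phi$. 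All of that part of your argument is sound.

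The one place where your proposal is genuinely thin is the construction of the $\SG/H$-equivariant trivializing charts, which you dispatch with ``I would descend it.'' This descent is not a formal consequence of the equivariance of $\Phi_U$: over a $\SG$-invariant $U\subset M$ the relevant piece of $\ti{H}_E$ is $(U\times F\times H)/H$ with $H$ acting by $h\cdot(m,f,w)=(l^M_h(m),r^F_{h^{-1}}(f),hwh^{-1})$, and this quotient is only \emph{abstractly} a bundle over $U/H$ with fiber $F\times H$; to exhibit an actual ($\SG/H$-equivariant) product decomposition one must untwist the $F$- and $H$-factors, which requires an additional choice. The paper does this by shrinking further so that $\pi^{M,\SG}|_U$ is a trivial principal $\SG$-bundle, $U\cong(U/\SG)\times\SG$, and then writing the explicit $\SG$-equivariant diffeomorphism $\sigma(\pi^{M,\SG}(m),g',f,h)=(\pi^{M,\SG}(m),g',r^F_{g'}(f),(g')^{-1}hg')$ onto a product carrying the \emph{untwisted} action, after which Corollary~\ref{cor:quotient_maps-G_equivariant_map} produces the chart. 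Relatedly, you identify the descent of properness to $M/H$ as ``the main obstacle,'' but that step is standard (the paper simply cites Lemma~\ref{le:quotient_action_properties}); the substantive work in the paper's proof is precisely the chart construction you leave unexecuted. Supplying the untwisting map (or an equivalent local section argument) would close the gap.
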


\begin{proof}
  By Lemma~\ref{le:group_acts_on_FB_subgroup_acts_too}, $H$ acts on
  $(E,M,\phi,F)$. Hence, by
  Example~\ref{ex:extended_associated_bundle-fiber_bundle}, the
  conjugate bundle $(\ti{H}_E,M/H,p^{M/H},F\times H)$ is a fiber bundle.
  Consider the $\SG$-action on $E\times H$ defined by
  \begin{equation}
    \label{eq:G-action_on_ExH}
    l^{E\times H}_g(\epsilon,w) :=(l^E_g(\epsilon),l^\SG_g(w)), 
  \end{equation}
  where $l^\SG_g(w) = g w g^{-1}$. Being a product of smooth actions,
  it is a smooth action. Also, as $l^E$ is a free and proper
  $\SG$-action (see
  Lemma~\ref{le:G_actions_on_ppal_bundles_are_proper}), the same is
  true for $l^{E\times H}$. Therefore, by
  Lemma~\ref{le:quotient_action_properties}, the function
  $l^{\ti{H}_E}$ given in~\eqref{eq:action_of_residual_symmetry-def}
  defines a smooth, free and proper $\SG/H$-action on $\ti{H}_E =
  (E\times H)/H$.

  Analogously, as the $\SG$-action $l^M$ on $M$ is smooth, free and
  proper (this last fact by
  Lemma~\ref{le:G_actions_on_ppal_bundles_are_proper} ),
  Lemma~\ref{le:quotient_action_properties} proves that $l^{M/H}$
  defined in~\eqref{eq:action_of_residual_symmetry-def} is a smooth,
  free and proper $\SG/H$-action on $M/H$. Then,
  Corollary~\ref{cor:quotient_manifolds-fiber_bundle} proves that the
  quotient map $\pi^{M/H,\SG/H}:M/H\rightarrow (M/H)/(\SG/H)$ is a
  principal $\SG/H$-bundle.

  It follows from the $\SG$-equivariance of $\phi:E\rightarrow M$ that
  $p^{M/H}:\ti{H}_E\rightarrow M/H$ is $\SG/H$-equivariant for the
  $\SG/H$-actions defined above.

  For the rest of the proof we construct $\SG/H$-equivariant
  trivializing charts of the fiber bundle
  $(\ti{H}_E,M/H,p^{M/H},F\times H)$. Since this is a local problem
  (in the base of the bundle) we will assume that
  $\phi:E\rightarrow M$ is $p_1:M\times F\rightarrow M$ with the $\SG$
  action on $E$ given by $l^E_g(m,f) = (l^M_g(m),r^F_{g^{-1}}(f))$.
  Even more, as $\pi^{M,\SG}:M\rightarrow M/\SG$ is a principal
  $\SG$-bundle, by shrinking $M$ further we may assume that
  $\pi^{M,\SG}$ is a trivial principal $\SG$-bundle, that is,
  $M=(M/\SG)\times \SG$, where the $\SG$-action is given by left
  multiplication on $\SG$. All together, we have that
  $\phi:E\rightarrow M$ is given by the projection on the first two
  components
  $p_{12}:(M/\SG)\times \SG\times F\rightarrow (M/\SG)\times \SG$ and
  the $\SG$-actions are
  $l^E_g(\pi^{M,\SG}(m),g',f)=(\pi^{M,\SG}(m),g g',r^F_{g^{-1}}(f))$
  and $l^M_g(\pi^{M,\SG}(m),g')=(\pi^{M,\SG}(m),g g')$.

  Define the map
  \begin{equation*}
    \sigma:\underbrace{(M/\SG)\times \SG\times F}_{=E}\times H\rightarrow 
    \underbrace{(M/\SG)\times \SG}_{=M} \times F\times H
  \end{equation*}
  by
  \begin{equation*}
    \sigma(\pi^{M,\SG}(m),g',f,h) := (\pi^{M,\SG}(m), g', 
    r^F_{g'}(f), (g')^{-1} h g').
  \end{equation*}
  A quick check shows that $\sigma$ is a $\SG$-equivariant
  diffeomorphism for the left actions $l^{E\times H}$ and
  $l^{M\times F\times H}_g(m,f,h):=(l^M_g(m),f,h)$, for all
  $g\in\SG$. Restricting those actions to $H\subset \SG$ and applying
  Corollary~\ref{cor:quotient_maps-G_equivariant_map}, $\sigma$
  induces a map
  $\Phi^{\ti{H}_E}: \ti{H}_E\rightarrow (M/H)\times F\times H$. It is
  easy to see that $\Phi^{\ti{H}_E}$ provides a (local)
  $\SG/H$-equivariant trivialization of
  $p^{M/H}:\ti{H}_E\rightarrow M/H$, concluding the proof of the fact
  that $\SG/H$ acts on the fiber bundle
  $(\ti{H}_E,M/H,p^{M/H},F\times H)$.
\end{proof}

\begin{lemma}\label{le:G-equivariance_H_connection_form}
  Let $\SG$ be a Lie group acting on $Q$ by the action $l^Q$ in such a
  way that $\pi^{Q,\SG}:Q\rightarrow Q/\SG$ is a principal
  $\SG$-bundle. Assume that $H\subset\SG$ is a closed and normal
  subgroup and that $\DC$ is a discrete connection on the principal
  $H$-bundle $\pi^{Q,H}:Q\rightarrow Q/H$ whose domain is
  $\SG$-invariant for the diagonal $\SG$-action $l^{Q\times Q}$. Then,
  the following assertions are equivalent.
  \begin{enumerate}
  \item For each $g\in \SG$ and $(q_0,q_1)$ in the domain of $\DC$,
    \begin{equation}
      \label{eq:G-equivariance_H_connection_form}
      \DC(l^Q_g(q_0),l^Q_g(q_1)) = g\DC(q_0,q_1)g^{-1}.
    \end{equation}
  \item The submanifold $Hor_\DC\subset Q\times Q$ is $\SG$-invariant
    for the $\SG$-action $l^{Q\times Q}$.
  \end{enumerate}
\end{lemma}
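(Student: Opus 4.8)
The plan is to reduce both conditions to a single observation: for $(q_0,q_1)$ in the domain of $\DC$, one has $(q_0,q_1)\in Hor_\DC$ if and only if $\DC(q_0,q_1)=e$, where $e\in H$ is the identity. This is immediate from the definition of the connection form, since $\DC(q_0,q_1)$ is the unique $h\in H$ with $(q_0,l^Q_{h^{-1}}(q_1))\in Hor_\DC$, and the point $(q_0,q_1)=(q_0,l^Q_{e^{-1}}(q_1))$ lies in $Hor_\DC$ exactly when that unique element equals $e$. I would record this equivalence first, as it is the bridge between the set $Hor_\DC$ and the form $\DC$.

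For the implication that \eqref{eq:G-equivariance_H_connection_form} gives $\SG$-invariance of $Hor_\DC$, I would take $(q_0,q_1)\in Hor_\DC$ and $g\in\SG$. Then $\DC(q_0,q_1)=e$, so \eqref{eq:G-equivariance_H_connection_form} yields $\DC(l^Q_g(q_0),l^Q_g(q_1))=g\,e\,g^{-1}=e$, and the equivalence above shows $(l^Q_g(q_0),l^Q_g(q_1))\in Hor_\DC$. Hence $Hor_\DC$ is invariant under $l^{Q\times Q}$.

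For the converse I would argue directly from the defining property of $\DC$. Fixing $g\in\SG$ and $(q_0,q_1)$ in the domain, set $h:=\DC(q_0,q_1)$, so $(q_0,l^Q_{h^{-1}}(q_1))\in Hor_\DC$. Applying the assumed $\SG$-invariance of $Hor_\DC$ to $g$, and using the identity $gh^{-1}=(ghg^{-1})^{-1}g$, which gives $l^Q_g\circ l^Q_{h^{-1}}=l^Q_{(ghg^{-1})^{-1}}\circ l^Q_g$, I obtain $(l^Q_g(q_0),\,l^Q_{(ghg^{-1})^{-1}}(l^Q_g(q_1)))\in Hor_\DC$. By the uniqueness clause in the definition of the connection form, this identifies $\DC(l^Q_g(q_0),l^Q_g(q_1))$ with $ghg^{-1}$, which is precisely \eqref{eq:G-equivariance_H_connection_form}.

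The key point, and the only place the hypotheses on $H$ enter, is that the last identification requires $ghg^{-1}$ to be an element of $H$, since $\DC$ takes values in $H$ and its defining property characterizes $\DC(l^Q_g(q_0),l^Q_g(q_1))$ inside $H$; this is exactly where the normality of $H$ in $\SG$ is used. The remaining care is purely about domains: because $l^Q_g$ is applied to points of the domain of $\DC$, the assumed $\SG$-invariance of that domain guarantees the images again lie where $\DC$ is defined, so each step of the computation is legitimate. I expect no further obstacle, as the argument is a direct translation between the equation for $\DC$ and the invariance of $Hor_\DC$ once the identity $Hor_\DC=\DC^{-1}(e)$ and normality are in hand.
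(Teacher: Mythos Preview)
Your proof is correct and follows essentially the same route as the paper's: both directions hinge on the characterization $Hor_\DC=\DC^{-1}(e)$, and the converse is obtained by pushing the horizontal pair $(q_0,l^Q_{h^{-1}}(q_1))$ through $l^{Q\times Q}_g$ and rewriting $l^Q_g\circ l^Q_{h^{-1}}=l^Q_{(ghg^{-1})^{-1}}\circ l^Q_g$ to read off the connection form via uniqueness. Your explicit remarks on where normality of $H$ and $\SG$-invariance of the domain enter are a welcome addition that the paper leaves implicit.
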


\begin{proof}
  Recall that $(q_0,q_1)$ in the domain of $\DC$ is in $Hor_{\DC}$ if
  and only if $\DC(q_0,q_1)=e$. Assume
  that~\eqref{eq:G-equivariance_H_connection_form} holds, for each
  $g\in\SG$. Let $(q_0,q_1)\in Hor_\DC$. Then, for any $g\in \SG$,
  \begin{equation*}
    \DC(l^Q_g(q_0),l^Q_g(q_1)) = g \DC(q_0,q_1) g^{-1} = g g^{-1} = e, 
  \end{equation*}
  showing that $l^{Q\times Q}_g(q_0,q_1) = (l^Q_g(q_0),l^Q_g(q_1))\in
  Hor_\DC$.
  
  Conversely, if $(q_0,q_1)$ is in the domain of $\DC$, which is
  $\SG$-invariant, we have that $(l^Q_g(q_0),l^Q_g(q_1))$ is also in
  the domain of $\DC$. Then $\DC(l^Q_g(q_0),l^Q_g(q_1)) = h\in H$
  if and only if 
  \begin{equation}
    \label{eq:condition_for_h_to_be_DC_value}
    (l^Q_g(q_0),l^Q_{h^{-1}}(l^Q_g(q_1)))\in Hor_\DC.
  \end{equation}
  But, as $(q_0,l^Q_{\DC(q_0,q_1)^{-1}}(q_1))\in Hor_\DC$ and
  $Hor_\DC$ is $\SG$-invariant, we have that
  \begin{equation*}
    (l^Q_g(q_0),l^Q_{g\DC(q_0,q_1)^{-1}g^{-1}}(l^Q_g(q_1))) = 
    l^{Q\times Q}_g(q_0,l^Q_{\DC(q_0,q_1)^{-1}}(q_1)) \in Hor_\DC
  \end{equation*}
  so that $h:=g\DC(q_0,q_1)^{-1}g^{-1}$
  satisfies~\eqref{eq:condition_for_h_to_be_DC_value}. As the element
  of $\SG$ with this property is unique, we conclude
  that~\eqref{eq:G-equivariance_H_connection_form} holds.
\end{proof}

\begin{proposition}\label{prop:symmetry_group_induces_residual_action}
  Let $\SG$ be a symmetry group of
  $\mathcal{M}=(E,L_d,\IVCM)\in\Ob_{\DLPSC}$ and $H\subset \SG$ be a
  closed and normal subgroup. Choose a discrete connection $\DC$ of
  the principal $H$-bundle $\pi^{Q,H}:Q\rightarrow Q/H$ such that
  either one of the conditions in
  Lemma~\ref{le:G-equivariance_H_connection_form} holds.  Then $\SG/H$
  is a symmetry group of the DLPS
  $\mathcal{M}^H := \mathcal{M}/(H,\DC)
  =(\ti{H}_E,\check{L}_d,\check{\IVCM})$
  obtained by the reduction of $\mathcal{M}$ using $\DC$.
\end{proposition}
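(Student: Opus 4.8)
The plan is to verify the three defining properties of a symmetry group for $\mathcal{M}^H=(\ti{H}_E,\check{L}_d,\check{\IVCM})$ by routing everything through the categorical criterion of Proposition~\ref{prop:symmetry_group_and_morphisms}. That result reduces the task to checking that (i) $\SG/H$ acts on the fiber bundle $p^{M/H}:\ti{H}_E\rightarrow M/H$, and (ii) for every $\bar{g}\in\SG/H$ the diagonal map $l^{C'(\ti{H}_E)}_{\bar{g}}$ lies in $\Mor_{\DLPSC}(\mathcal{M}^H,\mathcal{M}^H)$. Point (i) is exactly Lemma~\ref{le:G_acts_on_FB_imp_G/H_acts_on_FB/H}, so the whole burden falls on (ii). Before starting I note that, $H\subset\SG$ being closed, Proposition~\ref{prop:subgroup_of_sym_group_is_sym_group} makes $H$ a symmetry group of $\mathcal{M}$; hence the reduced system $\mathcal{M}^H=\mathcal{M}/(H,\DC)$ and the morphism $\Upsilon_\DC\in\Mor_{\DLPSC}(\mathcal{M},\mathcal{M}^H)$ of Proposition~\ref{prop:Upsilon_DC_is_morphism} are at my disposal.

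The key step, and the one I expect to be the main obstacle, is the intertwining identity
\begin{equation*}
  \Upsilon_\DC\circ l^{C'(E)}_g = l^{C'(\ti{H}_E)}_{\pi^{\SG,H}(g)}\circ \Upsilon_\DC
  \stext{ for all } g\in\SG,
\end{equation*}
asserting that $\Upsilon_\DC$ is equivariant for the diagonal $\SG$-action on $C'(E)$ and the residual $\SG/H$-action on $C'(\ti{H}_E)$, taken through the quotient homomorphism $\pi^{\SG,H}$. I would check this by a direct computation from the explicit formula~\eqref{eq:Upsilon_DC-def} for $\Upsilon_\DC$ and the definition~\eqref{eq:action_of_residual_symmetry-def} of the residual actions: after using $\SG$-equivariance of $\phi$ to move $g$ past $\phi$, the only nontrivial ingredient is the relation $\DC(l^M_g(m_0),l^M_g(m_1)) = g\,\DC(m_0,m_1)\,g^{-1}$, which is precisely the compatibility~\eqref{eq:G-equivariance_H_connection_form} guaranteed by the hypothesis (and where normality of $H$ is used, so that $g\,\DC(\cdot)\,g^{-1}$ again lands in $H$). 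This is the heart of the argument: it is the single place where the chosen connection and the ambient group must mesh.

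With the intertwining identity established, the conclusion follows formally. Since $\SG$ is a symmetry group of $\mathcal{M}$, Proposition~\ref{prop:symmetry_group_and_morphisms} gives $l^{C'(E)}_g\in\Mor_{\DLPSC}(\mathcal{M},\mathcal{M})$, and composing with $\Upsilon_\DC$ (Proposition~\ref{prop:DLPSC_is_a_category}) yields $\Upsilon_\DC\circ l^{C'(E)}_g\in\Mor_{\DLPSC}(\mathcal{M},\mathcal{M}^H)$. The intertwining identity says exactly that the triangle with apex $\mathcal{M}$, carrying the two morphisms $\Upsilon_\DC$ and $\Upsilon_\DC\circ l^{C'(E)}_g$, and with remaining side the diffeomorphism $l^{C'(\ti{H}_E)}_{\pi^{\SG,H}(g)}$, commutes. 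Because both target systems coincide (both equal $\mathcal{M}^H$, so they share the Lagrangian $\check{L}_d$), Lemma~\ref{le:comm_triangle_side_function_imp_side_morph} applies and produces $l^{C'(\ti{H}_E)}_{\pi^{\SG,H}(g)}\in\Mor_{\DLPSC}(\mathcal{M}^H,\mathcal{M}^H)$. As $\pi^{\SG,H}$ is onto, this holds for every $\bar{g}\in\SG/H$, so condition (ii) is met.

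Combining (i) and (ii), Proposition~\ref{prop:symmetry_group_and_morphisms} lets me conclude that $\SG/H$ is a symmetry group of $\mathcal{M}^H$. Everything outside the intertwining identity is a bookkeeping assembly of previously proven morphism facts; the indispensable hypothesis is the $\SG$-equivariance of the $H$-discrete connection from Lemma~\ref{le:G-equivariance_H_connection_form}, without which $\Upsilon_\DC$ would fail to be $\SG/H$-equivariant and neither the invariance of $\check{L}_d$ nor the equivariance of $\check{\IVCM}$ would descend.
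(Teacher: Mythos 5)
Your proposal is correct, and it diverges from the paper's proof in how the second half is finished. Both arguments share the same skeleton: Lemma~\ref{le:G_acts_on_FB_imp_G/H_acts_on_FB/H} supplies the $\SG/H$-action on the conjugate bundle, and the indispensable input is the intertwining identity $\Upsilon_\DC\circ l^{C'(E)}_g = l^{\ti{H}_E\times(M/H)}_{\pi^{\SG,H}(g)}\circ\Upsilon_\DC$, which the paper records as~\eqref{eq:equivariance_of_Upsilon_DC} and which, exactly as you say, rests on $\SG$-equivariance of $\phi$ together with~\eqref{eq:G-equivariance_H_connection_form}. Where you part ways is afterwards: the paper verifies the two remaining symmetry-group conditions by hand --- the $\SG/H$-invariance of $\check{L}_d$ via surjectivity of $\Upsilon_\DC$, and the $\SG/H$-equivariance of $\check{\IVCM}$ via a page-long computation that differentiates the intertwining identity (equations~\eqref{eq:consequence_equivariance_Upsilon_DC-D1}--\eqref{eq:consequence_equivariance_Upsilon_DC-D2}) and unwinds the definition~\eqref{eq:PVMs_for_reduced_system-def}. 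You instead package both conditions into the single assertion that $l^{C'(\ti{H}_E)}_{\pi^{\SG,H}(g)}\in\Mor_{\DLPSC}(\mathcal{M}^H,\mathcal{M}^H)$, obtained by feeding the commuting triangle with apex $\mathcal{M}$ and sides $\Upsilon_\DC$ and $\Upsilon_\DC\circ l^{C'(E)}_g$ into Lemma~\ref{le:comm_triangle_side_function_imp_side_morph}, and then invoking the characterization of symmetry groups in Proposition~\ref{prop:symmetry_group_and_morphisms}. Your route is shorter and makes visible that the only genuinely new ingredient is the equivariance of the discrete connection; its cost is that the explicit equivariance computation for $\check{\IVCM}$ is not eliminated but relocated into Lemma~\ref{le:comm_triangle_side_function_imp_side_morph}, whose proof in the paper treats the IVCM condition only with the phrase ``the remaining conditions follow in a similar fashion.'' In that sense the paper's version serves as the detailed verification that your categorical shortcut presupposes, and a fully self-contained write-up along your lines would want Lemma~\ref{le:comm_triangle_side_function_imp_side_morph} proved in full for condition~\ref{it:DLPSC_mor-IVCM}.
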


\begin{proof}
  By Lemma~\ref{le:G_acts_on_FB_imp_G/H_acts_on_FB/H}, $\SG/H$ acts on
  the fiber bundle $(\ti{H}_E,M/H,p^{M/H},F\times H)$. Recall that
  $\Upsilon_\DC:C'(E)\rightarrow C'(\ti{H}_E)$ is defined by
  \begin{equation*}
    \Upsilon_\DC(\epsilon_0,m_1) := (\pi^{E\times
    H,H}(\epsilon_0,\DC(\phi(\epsilon_0),m_1)),\pi^{M,H}(m_1)).
  \end{equation*}
  Unraveling the definitions and
  taking~\eqref{eq:G-equivariance_H_connection_form} into account, we
  have that
  \begin{equation}\label{eq:equivariance_of_Upsilon_DC}
    \begin{split}
      \Upsilon_\DC \circ l^{C'(E)}_g  =
      l^{\ti{H}_E\times
        (M/H)}_{\pi^{\SG,H}(g)} \circ \Upsilon_\DC \stext{ for all } g\in\SG.
    \end{split}
  \end{equation}
  Then, as $\check{L}_d$ satisfies $\check{L}_d\circ \Upsilon_\DC =
  L_d$, and $L_d$ is $\SG$-invariant for the $\SG$-action $l^{E\times
    M}$, we have that
  \begin{equation*}
    \begin{split}
      \check{L}_d \circ l^{\ti{H}_E\times
        (M/H)}_{\pi^{\SG,H}(g)} \circ \Upsilon_\DC =
      \check{L}_d \circ \Upsilon_\DC \circ l^{E\times M}_g =
      L_d \circ l^{E\times M}_g = L_d =
      \check{L}_d\circ \Upsilon_\DC.
    \end{split}
  \end{equation*}
  Hence, as $\Upsilon_\DC$ is onto, $\check{L}_d$ is $\SG/H$-invariant
  for the $\SG/H$-action $l^{\ti{H}_E\times (M/H)}$.

  Differentiating the first component
  of~\eqref{eq:equivariance_of_Upsilon_DC} we see that, for
  $v_0=\Upsilon_\DC(\epsilon_0,m_1)$,
  \begin{equation}\label{eq:consequence_equivariance_Upsilon_DC-D1}
    D_1(p_1\circ \Upsilon_\DC)(l^{C'(E)}_g(\epsilon_0,m_1)) 
    dl^{E}_g(\epsilon_0)(\delta \epsilon_0) =
    d l^{\ti{H}_E}_{\pi^{\SG,H}(g)}(v_0) D_1(p_1\circ \Upsilon_\DC)(\epsilon_0,m_1) 
    (\delta \epsilon_0)
  \end{equation}
  and 
  \begin{equation}\label{eq:consequence_equivariance_Upsilon_DC-D2}
    D_2(p_1\circ \Upsilon_\DC)(l^{C'(E)}_g(\epsilon_0,m_1))
    dl^{M}_g(m_1)(\delta m_1) =
    d l^{\ti{H}_E}_{\pi^{\SG,H}(g)}(v_0) D_2(p_1\circ \Upsilon_\DC)(\epsilon_0,m_1)
    (\delta m_1).
  \end{equation}

  Now, fix $\nu=((v_0,r_1),(v_1,r_2))\in C''(\ti{H}_E)$ and take
  $\eta=((\epsilon_0,m_1),(\epsilon_1,m_2))\in C''(E)$ such that
  $\Upsilon_\DC^{(2)}(\eta) = \nu$. Then, by
  point~\ref{it:properties_Upsilon^2-isomorphism} of
  Lemma~\ref{le:properties_Upsilon^2}, any $\delta v_1\in
  T_{v_1}(\ti{H}_E)$ is of the form $\delta v_1 = D_1(p_1\circ
  \Upsilon_\DC)(\epsilon_1,m_2)(\delta \epsilon_1)$ for a unique
  $\delta \epsilon_1\in T_{\epsilon_1}E$. Then,
  using~\eqref{eq:consequence_equivariance_Upsilon_DC-D1}, we obtain
  \begin{equation*}
    \begin{split}
      D_1(p_1\circ
      \Upsilon_\DC)(l^{C'(E)}_g(\epsilon_1,m_2))dl^E_g(\epsilon_1)
      (\delta \epsilon_1) =&  dl^{\ti{H}_E}_{\pi^{\SG,H}(g)}(v_1)(\delta v_1),
    \end{split}
  \end{equation*}
  which shows that the unique element of $T_{l^E_g(\epsilon_1)}E$ that
  represents $dl^{\ti{H}_E}_{\pi^{\SG,H}(g)}(v_1)(\delta v_1) \in
  T_{l^{\ti{H}_E}_{\pi^{\SG,H}(g)}(v_1)}\ti{H}_E$ is $dl^E_g(\epsilon_1)
  (\delta \epsilon_1)$. Also, notice that
  using~\eqref{eq:equivariance_of_Upsilon_DC} we obtain
  \begin{equation*}
    \begin{split}
      \Upsilon_\DC^{(2)}(l^{C''(E)}_g(\eta))
      =&
      l^{C''(\ti{H}_E)}_{\pi^{\SG,H}(g)}(\Upsilon_\DC^{(2)}(\eta)) =
      l^{C''(\ti{H}_E)}_{\pi^{\SG,H}(g)}(\nu).
    \end{split}
  \end{equation*}
  We use this information to compute $\check{\IVCM}\circ
  l^{p_3^*T(\ti{H}_E)}_{\pi^{\SG,H}(g)}$. For any $g\in\SG$,
  \begin{equation}\label{eq:equivariance_reduced_IVCM-1}
    \begin{split}
      (\check{\IVCM}\circ
      l^{p_3^*T(\ti{H}_E)}_{\pi^{\SG,H}(g)})(\nu,&\delta v_1) =
      \check{\IVCM}(l^{C''(\ti{H}_E)}_{\pi^{\SG,H}(g)}(\nu))
      (dl^{\ti{H}_E}_{\pi^{\SG,H}(g)}(v_1)(\delta v_1)) \\=&
      D_1(p_1\circ
      \Upsilon_\DC)(l^{C'(E)}_g(\epsilon_0,m_1))(\IVCM(l^{C''(E)}_g(\eta))
      (dl^E_g(\epsilon_1) (\delta \epsilon_1))) \\
      &+ D_2(p_1\circ \Upsilon_\DC)(l^{C'(E)}_g(\epsilon_0,m_1))
      (d\phi(l^E_g(\epsilon_1))(dl^E_g(\epsilon_1) (\delta
      \epsilon_1))).
    \end{split}
  \end{equation}
  Using the $\SG$-equivariance of $\IVCM$
  and~\eqref{eq:consequence_equivariance_Upsilon_DC-D1}, we have
  \begin{equation*}
    \begin{split}
      D_1(p_1\circ \Upsilon_\DC)(l^{C'(E)}_g(\epsilon_0,m_1))&
      (\IVCM(l^{C''(E)}_g(\eta)(dl^E_g(\epsilon_1)(\delta
      \epsilon_1)))) \\=& D_1(p_1\circ
      \Upsilon_\DC)(l^{C'(E)}_g(\epsilon_0,m_1))
      (l^{TE}_g(\epsilon_0)(\IVCM(\eta)(\delta \epsilon_1)))\\=&
      D_1(p_1\circ \Upsilon_\DC)(l^{C'(E)}_g(\epsilon_0,m_1))
      (dl^{E}_g(\epsilon_0)(\IVCM(\eta)(\delta \epsilon_1)))\\=& d
      l^{\ti{H}_E}_{\pi^{\SG,H}(g)}(v_0) D_1(p_1\circ
      \Upsilon_\DC)(\epsilon_0,m_1)
      (\IVCM(\eta)(\delta \epsilon_1))
    \end{split}
  \end{equation*}
  and, using~\eqref{eq:consequence_equivariance_Upsilon_DC-D2},
  \begin{equation*}
    \begin{split}
      D_2(p_1\circ
      \Upsilon_\DC)&(l^{C'(E)}_g(\epsilon_0,m_1))(d\phi(l^E_g(\epsilon_1))
      (dl^E_g(\epsilon_1)(\delta \epsilon_1))) \\=& D_2(p_1\circ
      \Upsilon_\DC)(l^{C'(E)}_g(\epsilon_0,m_1))(dl^M_g(m_1)(d\phi(\epsilon_1)
      (\delta \epsilon_1))) \\=& d l^{\ti{H}_E}_{\pi^{\SG,H}(g)}(v_0)
      D_2(p_1\circ \Upsilon_\DC)(\epsilon_0,m_1)(d\phi(\epsilon_1)
      (\delta \epsilon_1)).
    \end{split}
  \end{equation*}
  Going back to~\eqref{eq:equivariance_reduced_IVCM-1}, we obtain
  \begin{equation*}
    \begin{split}
      \check{\IVCM}\circ
      l^{p_3^*T(\ti{H}_E)}_{\pi^{\SG,H}(g)}(\nu,\delta v_1) =&
      \check{\IVCM}(l^{C''(\ti{H}_E)}_{\pi^{\SG,H}(g)}(\nu))(
      dl^{\ti{H}_E}_{\pi^{\SG,H}(g)}(v_1)(\delta v_1)) \\=&
      dl^{\ti{H}_E}_{\pi^{\SG,H}(g)}(v_0)\big( D_1(p_1\circ
      \Upsilon_\DC)(\epsilon_0,m_1)
      (\IVCM(\eta)(\delta \epsilon_1)) \\
      & \phantom{dl^{\ti{H}_E}_{\pi^{\SG,H}(g)}(v_0)\big(} +
      D_2(p_1\circ \Upsilon_\DC)(\epsilon_0,m_1)(d\phi(\epsilon_1)
      (\delta \epsilon_1)) \big) \\=&
      dl^{\ti{H}_E}_{\pi^{\SG,H}(g)}(v_0)\big(
      \check{\IVCM}(\nu)(\delta v_1)\big),
    \end{split}
  \end{equation*}
  showing that $\check{\IVCM}$ is $\SG/H$-equivariant. Hence,
  $\SG/H$ is a symmetry group of $\mathcal{M}^H$.
\end{proof}

\begin{example}
  In Section~\ref{sec:ex_T2_system_and_symmetry_group} we introduced a
  DLPS $\DLPS$ and saw that $SE(2)$ was one of its symmetry groups.
  As $T_2\subset SE(2)$ is a closed normal subgroup, it was also a
  symmetry group of $\DLPS$. A simple verification shows that the
  discrete connection form $\DCT$ defined in~\eqref{eq:DCT-def}
  satisfies~\eqref{eq:G-equivariance_H_connection_form} for
  $\SG:=SE(2)$ and $H:=T_2$ so that, by
  Proposition~\ref{prop:symmetry_group_induces_residual_action},
  $SE(2)/T_2$ is a symmetry group of $\DLPS/(T_2,\DCT)\simeq \DLPS'$.
  As $SE(2)/T_2 \simeq U(1)$, we see that this fact is already
  suggested in Remark~\ref{rem:example_T2-U(1)_residual_action}.
\end{example}


\subsection{Comparison with reduction by the full symmetry group}
\label{sec:comparison_with_reduction_by_the_full_symmetry_group}

Here we consider a symmetry group $\SG$ of
$\mathcal{M}=(E,L_d,\IVCM)\in\Ob_{\DLPSC}$. Fixing a discrete
connection $\DCp{\SG}$ on the principal $\SG$-bundle
$\pi^{M,\SG}:M\rightarrow M/\SG$ we have the reduced system
$\mathcal{M}^\SG := \mathcal{M}/(\SG,\DCp{\SG})$. When $H\subset \SG$
is a closed and normal subgroup, by
Proposition~\ref{prop:subgroup_of_sym_group_is_sym_group}, $H$ is a
symmetry group of $\mathcal{M}$ and, when $\DCp{H}$ is a discrete
connection on the principal $H$-bundle $\pi^{M,H}:M\rightarrow M/H$ we
have the reduced system $\mathcal{M}^H :=
\mathcal{M}/(H,\DCp{H})$.
Furthermore, when $\DCp{H}$ satisfies any one of the conditions in
Lemma~\ref{le:G-equivariance_H_connection_form}, by
Proposition~\ref{prop:symmetry_group_induces_residual_action}, $\SG/H$
is a symmetry group of $\mathcal{M}^H$. Fixing a discrete connection
$\DCp{\SG/H}$ on the principal $\SG/H$-bundle
$\pi^{M/H,\SG/H}:M/H\rightarrow \frac{M/H}{\SG/H}$, we have the
reduced system
$\mathcal{M}^{\SG/H} := \mathcal{M}^H/(\SG/H,\DCp{\SG/H})$. The
following diagram depicts the relation between the different DLPSs and
morphisms.
\begin{equation*}
  \xymatrix{
    {} & {} & {\mathcal{M}} \ar[ddll]_{\Upsilon_{\DCp{\SG}}} 
    \ar[dr]^{\Upsilon_{\DCp{H}}} & {} & {}\\
    {} & {} & {} & {\mathcal{M}^H} \ar[dr]^{\Upsilon_{\DCp{\SG/H}}} & {}\\
    {\mathcal{M}^{\SG}} & {} & {} & {} & {\mathcal{M}^{\SG/H}}
  }
\end{equation*}
At the ``geometric level'', the corresponding spaces and smooth maps
are
\begin{equation*}
  \xymatrix{
    {} & {} & {C'(E)} \ar[ddll]_{\Upsilon_{\DCp{\SG}}} 
    \ar[dr]^{\Upsilon_{\DCp{H}}} & {} & {}\\
    {} & {} & {} & {C'(\ti{H}_E)} \ar[dr]^{\Upsilon_{\DCp{\SG/H}}} & {}\\
    {C'(\ti{\SG}_E)} & {} & {} & {} & 
    {C'(\ti{\SG/H}_{\ti{H}_E})}
  }
\end{equation*}
We can enlarge the previous diagram by adding the different
diffeomorphisms $\Phi_\DC$ introduced in
Proposition~\ref{prop:generalized_isomorphisms_associated_to_DC} and
by taking into account the commutative
diagram~\eqref{eq:diagram_ExM_to_reduced}. The resulting diagram
follows.
\begin{equation}\label{eq:two_step_reduction-geometric_diagram}
  \xymatrix{
    {} & {} & {C'(E)} \ar@/_/[ddll]_{\Upsilon_{\DCp{\SG}}} 
    \ar[ddl]_(.7){\pi^{C'(E),\SG}} 
    \ar[d]^{\pi^{C'(E),H}} \ar@/^/[dr]^{\Upsilon_{\DCp{H}}} & {} & {}\\
    {} & {} & {\frac{C'(E)}{H}} \ar[d]^{\pi^{\frac{C'(E)}{H},\SG/H}} 
    \ar[dl]^(.45){F_1} \ar[r]^(.4){\Phi_{\DCp{H}}}_(.4){\sim} & 
    {C'(\ti{H}_E)} \ar[d]_(.25){\pi^{C'(\ti{H}_E),\SG/H}} 
    \ar[dr]^{\Upsilon_{\DCp{\SG/H}}} & {}\\
    {C'(\ti{\SG}_E)} & 
    {\frac{C'(E)}{\SG}} \ar[l]^(.4){\Phi_{\DCp{\SG}}}_(.4)\sim & 
    {\frac{\frac{C'(E)}{H}}{\SG/H}} \ar[l]^{F_2}_{\sim} 
    \ar[r]_{\check{\Phi_{\DCp{H}}}}^(.45){\sim} & 
    {\frac{C'(\ti{H}_E)}{\SG/H}} \ar[r]_(.45){\Phi_{\DCp{\SG/H}}}^(.4)\sim & 
    {C'(\ti{\SG/H}_{\ti{H}_E})}\\
  }
\end{equation}

The following result introduces the new functions that appear in
diagram~\eqref{eq:two_step_reduction-geometric_diagram} and explores
their basic properties.

\begin{lemma}\label{le:two_step_diagram-properties}
  Under the previous conditions,
  \begin{enumerate}
  \item \label{it:two_step_diagram-Phi_check}
    $\Phi_{\DCp{H}}:\frac{C'(E)}{H}\rightarrow C'(\ti{H}_E)$ is a
    $\SG/H$-equivariant diffeomorphism. Hence it induces a smooth
    diffeomorphism
    $\check{\Phi_{\DCp{H}}}:\frac{\frac{C'(E)}{H}}{\SG/H} \rightarrow
    \frac{C'(\ti{H}_E)}{\SG/H}$.
  \item \label{it:two_step_diagram-F_1_def}
    $\pi^{C'(E),\SG}:C'(E)\rightarrow \frac{C'(E)}{\SG}$ is a smooth
    $H$-invariant map, hence it induces a smooth map
    $F_1:\frac{C'(E)}{H}\rightarrow \frac{C'(E)}{\SG}$.
  \item\label{it:two_step_diagram-F_2_def}
    $F_1:\frac{C'(E)}{H}\rightarrow \frac{C'(E)}{\SG}$ is a smooth
    $\SG/H$-invariant map, hence it induces a smooth map
    $F_2:\frac{\frac{C'(E)}{H}}{\SG/H} \rightarrow
    \frac{C'(E)}{\SG}$. Furthermore, $F_2$ is a diffeomorphism.
  \item \label{it:two_step_diagram-commutative} The
    diagram~\eqref{eq:two_step_reduction-geometric_diagram} is
    commutative.
  \end{enumerate}
\end{lemma}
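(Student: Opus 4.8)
The plan is to treat the four assertions in order, reducing each to results already established in the excerpt and isolating the diffeomorphism property of $F_2$ as the only substantive point.

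For part~\ref{it:two_step_diagram-Phi_check}, I would first observe that $\Phi_{\DCp{H}}$ is exactly the map produced by Proposition~\ref{prop:generalized_isomorphisms_associated_to_DC} specialized to the $H$-action on the fiber bundle $\phi:E\rightarrow M$ (with $C'(E) = E\times M$ and $C'(\ti{H}_E) = \ti{H}_E\times(M/H)$), so it is a diffeomorphism with no further work. To obtain $\SG/H$-equivariance, I would combine the factorization $\Upsilon_{\DCp{H}} = \Phi_{\DCp{H}}\circ \pi^{C'(E),H}$ from~\eqref{eq:Upsilon_DC-def} with the equivariance~\eqref{eq:equivariance_of_Upsilon_DC} established in the proof of Proposition~\ref{prop:symmetry_group_induces_residual_action}; cancelling the surjection $\pi^{C'(E),H}$ yields $\Phi_{\DCp{H}}\circ l^{\frac{C'(E)}{H}}_{\pi^{\SG,H}(g)} = l^{C'(\ti{H}_E)}_{\pi^{\SG,H}(g)}\circ \Phi_{\DCp{H}}$ for all $g\in\SG$. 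Since $\Phi_{\DCp{H}}$ is an equivariant diffeomorphism, Corollary~\ref{cor:quotient_maps-G_equivariant_map} provides the induced diffeomorphism $\check{\Phi_{\DCp{H}}}$ on the $\SG/H$-quotients.

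Parts~\ref{it:two_step_diagram-F_1_def} and~\ref{it:two_step_diagram-F_2_def} form a two-step descent. Because $H\subset \SG$, the $\SG$-invariant map $\pi^{C'(E),\SG}$ is in particular $H$-invariant, so it factors through $\pi^{C'(E),H}$ to give $F_1$; then $F_1\circ \pi^{C'(E),H} = \pi^{C'(E),\SG}$ is $\SG$-invariant, hence $F_1$ is $\SG/H$-invariant and factors through $\pi^{\frac{C'(E)}{H},\SG/H}$ to give $F_2$. For the diffeomorphism claim I would argue that both $\pi^{C'(E),\SG}$ and the composite $q:=\pi^{\frac{C'(E)}{H},\SG/H}\circ \pi^{C'(E),H}$ are surjective submersions (compositions of principal-bundle projections) sharing the same fibers: using normality of $H$ and the definition of the residual action~\eqref{eq:action_of_residual_symmetry-def}, $q(x)=q(y)$ forces $y = l^{C'(E)}_{hg}(x)$ for some $h\in H$, $g\in\SG$, i.e.\ $y\in \SG\cdot x$, and the converse is immediate. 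Two surjective submersions with identical fibers differ by a unique diffeomorphism of their targets, and by construction that diffeomorphism is $F_2$, since $F_2\circ q = F_1\circ \pi^{C'(E),H} = \pi^{C'(E),\SG}$.

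Finally, for part~\ref{it:two_step_diagram-commutative} I would verify commutativity face by face, all relations being definitional: the three triangles $\Upsilon_{\DCp{\SG}} = \Phi_{\DCp{\SG}}\circ \pi^{C'(E),\SG}$, $\Upsilon_{\DCp{H}} = \Phi_{\DCp{H}}\circ\pi^{C'(E),H}$ and $\Upsilon_{\DCp{\SG/H}} = \Phi_{\DCp{\SG/H}}\circ \pi^{C'(\ti{H}_E),\SG/H}$ come from~\eqref{eq:Upsilon_DC-def}; the identities $F_1\circ \pi^{C'(E),H} = \pi^{C'(E),\SG}$ and $F_2\circ \pi^{\frac{C'(E)}{H},\SG/H} = F_1$ are the defining factorizations of $F_1$ and $F_2$; and $\check{\Phi_{\DCp{H}}}\circ \pi^{\frac{C'(E)}{H},\SG/H} = \pi^{C'(\ti{H}_E),\SG/H}\circ \Phi_{\DCp{H}}$ is the defining factorization of $\check{\Phi_{\DCp{H}}}$ from part~\ref{it:two_step_diagram-Phi_check}. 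The main obstacle is the diffeomorphism statement for $F_2$: smoothness of $F_2$ is automatic from the universal property of the quotient, but smoothness of its inverse requires the iterated-quotient identification $\frac{\frac{C'(E)}{H}}{\SG/H}\cong \frac{C'(E)}{\SG}$, which is precisely where the normality of $H$ enters in an essential way.
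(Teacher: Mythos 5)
Your proposal is correct and follows essentially the same route as the paper: equivariance/invariance of the relevant maps combined with the quotient results of the Appendix for the first three parts, and definitional unwinding of the triangles and the square for part~\ref{it:two_step_diagram-commutative}. The only cosmetic differences are that you obtain the $\SG/H$-equivariance of $\Phi_{\DCp{H}}$ by cancelling the surjection $\pi^{C'(E),H}$ in~\eqref{eq:equivariance_of_Upsilon_DC} rather than unravelling the definitions directly, and that you package the invertibility of $F_2$ as an instance of the general ``same-fibered surjective submersions'' principle, whereas the paper constructs the inverse explicitly as the smooth map induced by the $\SG$-invariant composite $\pi^{\frac{C'(E)}{H},\SG/H}\circ\pi^{C'(E),H}$ --- two interchangeable formulations of the same argument.
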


\begin{proof}
  Unraveling the definitions and recalling that $\DCp{H}$
  satisfies~\eqref{eq:G-equivariance_H_connection_form}, we see that
  $\Phi_{\DCp{H}}$ is $\SG/H$-equivariant. As, by
  Proposition~\ref{prop:generalized_isomorphisms_associated_to_DC},
  $\Phi_{\DCp{H}}$ is smooth, we conclude from
  Corollary~\ref{cor:quotient_maps-G_equivariant_map} that the induced
  map $\check{\Phi_{\DCp{H}}}$ is smooth. Furthermore, as
  $\Phi_{\DCp{H}}$ is also a diffeomorphism by
  Proposition~\ref{prop:generalized_isomorphisms_associated_to_DC},
  its inverse is also $\SG/H$-equivariant, so that
  $\check{\Phi_{\DCp{H}}}$ is a diffeomorphism. Hence
  point~\ref{it:two_step_diagram-Phi_check} in the statement is
  proved. By construction the square in
  diagram~\eqref{eq:two_step_reduction-geometric_diagram} is
  commutative.

  Point~\ref{it:two_step_diagram-F_1_def} follows immediately using
  the $H$-invariance of $\pi^{C'(E),\SG}$ and
  Corollary~\ref{cor:quotient_maps-G_invariant_map}. Furthermore, it
  is immediate that $F_1$ is $\SG/H$-invariant, and the same argument
  proves that $F_2$ is a well defined smooth map. It is easy to check
  that the map $\pi^{\frac{C'(E)}{H},\SG/H} \circ \pi^{C'(E),H} :
  C'(E)\rightarrow \frac{\frac{C'(E)}{H}}{\SG/H}$ is smooth and
  $\SG$-invariant, so it induces a smooth inverse of $F_2$, showing
  that $F_2$ is a diffeomorphism. This proves
  point~\ref{it:two_step_diagram-F_2_def}. By definition, the two
  triangles involving $F_1$ in
  diagram~\eqref{eq:two_step_reduction-geometric_diagram} are
  commutative.

  The commutativity of the three remaining triangles in
  diagram~\eqref{eq:two_step_reduction-geometric_diagram} is due to
  the commutativity of diagram~\eqref{eq:diagram_ExM_to_reduced}.
\end{proof}

\begin{theorem}\label{th:isomorphism_reduced_spaces_in_stages}
  Consider the data given at the beginning of this section.  Let
  $F:C'(\ti{\SG/H}_{\ti{H}_E})\rightarrow C'(\ti{\SG}_E)$ be defined
  by the bottom row of
  diagram~\eqref{eq:two_step_reduction-geometric_diagram}, that is,
  $F:= \Phi_{\DCp{\SG}} \circ F_2 \circ (\check{\Phi_{\DCp{H}}})^{-1}
  \circ (\Phi_{\DCp{\SG/H}})^{-1}$.
  Then, the following statements are true.
  \begin{enumerate}
  \item \label{it:isomorphism_reduced_spaces_in_stages-diffeo} $F$ is
    a diffeomorphism.
  \item \label{it:isomorphism_reduced_spaces_in_stages-morphism}
    $F\in\Mor_{\DLPSC}(\mathcal{M}^\SG,\mathcal{M}^{\SG/H})$.
  \item \label{it:isomorphism_reduced_spaces_in_stages-iso} $F$ is an
    isomorphism in $\DLPSC$.
  \end{enumerate}
\end{theorem}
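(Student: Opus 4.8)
The plan is to deduce all three assertions from the machinery already assembled, namely Proposition~\ref{prop:Upsilon_DC_is_morphism}, Lemma~\ref{le:comm_triangle_side_function_imp_side_morph}, and the commutativity of diagram~\eqref{eq:two_step_reduction-geometric_diagram} recorded in Lemma~\ref{le:two_step_diagram-properties}. For part~\ref{it:isomorphism_reduced_spaces_in_stages-diffeo} I would simply observe that $F$ is a composition of diffeomorphisms: $\Phi_{\DCp{\SG}}$ and $\Phi_{\DCp{\SG/H}}$ are diffeomorphisms by Proposition~\ref{prop:generalized_isomorphisms_associated_to_DC}, while $F_2$ and $\check{\Phi_{\DCp{H}}}$ are diffeomorphisms by points~\ref{it:two_step_diagram-F_2_def} and~\ref{it:two_step_diagram-Phi_check} of Lemma~\ref{le:two_step_diagram-properties}. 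Hence $F$ is a diffeomorphism.

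For part~\ref{it:isomorphism_reduced_spaces_in_stages-morphism} I would exhibit $F$ as the side of a commutative triangle whose other two sides are known morphisms, and then invoke Lemma~\ref{le:comm_triangle_side_function_imp_side_morph}. By Proposition~\ref{prop:Upsilon_DC_is_morphism} each of $\Upsilon_{\DCp{\SG}}\in\Mor_{\DLPSC}(\mathcal{M},\mathcal{M}^\SG)$, $\Upsilon_{\DCp{H}}\in\Mor_{\DLPSC}(\mathcal{M},\mathcal{M}^H)$ and $\Upsilon_{\DCp{\SG/H}}\in\Mor_{\DLPSC}(\mathcal{M}^H,\mathcal{M}^{\SG/H})$ is a morphism, the existence of $\mathcal{M}^{\SG/H}$ being guaranteed by Proposition~\ref{prop:symmetry_group_induces_residual_action}. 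Since morphisms compose (Proposition~\ref{prop:DLPSC_is_a_category}), $\Upsilon_{\DCp{\SG/H}}\circ\Upsilon_{\DCp{H}}\in\Mor_{\DLPSC}(\mathcal{M},\mathcal{M}^{\SG/H})$. The essential point is then to verify the triangle
\begin{equation*}
  F\circ\big(\Upsilon_{\DCp{\SG/H}}\circ\Upsilon_{\DCp{H}}\big) = \Upsilon_{\DCp{\SG}}.
\end{equation*}
I would establish this by a chase through diagram~\eqref{eq:two_step_reduction-geometric_diagram}: writing $\Upsilon_{\DCp{\SG/H}} = \Phi_{\DCp{\SG/H}}\circ\pi^{C'(\ti{H}_E),\SG/H}$ and $\Upsilon_{\DCp{H}} = \Phi_{\DCp{H}}\circ\pi^{C'(E),H}$ (diagram~\eqref{eq:diagram_ExM_to_reduced}), the factor $(\Phi_{\DCp{\SG/H}})^{-1}$ cancels; the commutative square lets me replace $(\check{\Phi_{\DCp{H}}})^{-1}\circ\pi^{C'(\ti{H}_E),\SG/H}\circ\Phi_{\DCp{H}}$ by $\pi^{\frac{C'(E)}{H},\SG/H}$; and the two triangles through $F_1,F_2$ collapse $F_2\circ\pi^{\frac{C'(E)}{H},\SG/H}\circ\pi^{C'(E),H}$ to $\pi^{C'(E),\SG}$. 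Composing with $\Phi_{\DCp{\SG}}$ returns $\Upsilon_{\DCp{\SG}}$, again by diagram~\eqref{eq:diagram_ExM_to_reduced}. With the triangle in hand, Lemma~\ref{le:comm_triangle_side_function_imp_side_morph}, applied with $\Upsilon':=\Upsilon_{\DCp{\SG/H}}\circ\Upsilon_{\DCp{H}}$ and $\Upsilon'':=\Upsilon_{\DCp{\SG}}$, yields that $F$ is a morphism in $\DLPSC$ (relating $\mathcal{M}^{\SG}$ and $\mathcal{M}^{\SG/H}$).

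For part~\ref{it:isomorphism_reduced_spaces_in_stages-iso}, since $F$ is simultaneously a morphism and a diffeomorphism, the final assertion of Lemma~\ref{le:comm_triangle_side_function_imp_side_morph} (equivalently Lemma~\ref{le:morphism_DLPSC_and_diffeo_imp_iso_DLPSC}) immediately gives that $F$ is an isomorphism in $\DLPSC$, so that $F^{-1}$ is a morphism as well. I expect the only genuine obstacle to lie in the bookkeeping of the diagram chase in part~\ref{it:isomorphism_reduced_spaces_in_stages-morphism}: one must track the source and target of every arrow carefully and confirm that each replacement above is licensed by the appropriate commuting cell of Lemma~\ref{le:two_step_diagram-properties}. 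All the analytic content—smoothness, $\SG/H$-equivariance of $\Phi_{\DCp{H}}$, the submersion and $\IVCM$-compatibility conditions of Definition~\ref{def:DLPSC}—has already been absorbed into Lemma~\ref{le:two_step_diagram-properties} and Proposition~\ref{prop:Upsilon_DC_is_morphism}, so no new estimates or constructions are required, only the verification that the composite identity above holds.
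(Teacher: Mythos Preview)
Your proposal is correct and follows essentially the same approach as the paper: establish that $F$ is a composition of diffeomorphisms, then place $F$ in a commutative triangle with the known morphisms $\Upsilon_{\DCp{\SG}}$ and $\Upsilon_{\DCp{\SG/H}}\circ\Upsilon_{\DCp{H}}$ so that Lemma~\ref{le:comm_triangle_side_function_imp_side_morph} applies, and finally invoke Lemma~\ref{le:morphism_DLPSC_and_diffeo_imp_iso_DLPSC}. Your explicit diagram chase verifying the triangle identity is more detailed than the paper's (which simply asserts the identity as a consequence of the commutativity in Lemma~\ref{le:two_step_diagram-properties}), but the logical structure is identical.
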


\begin{proof}
  By Proposition~\ref{prop:generalized_isomorphisms_associated_to_DC},
  $\Phi_{\DCp{\SG}}$ and $\Phi_{\DCp{\SG/H}}$ are diffeomorphisms
  and, by Lemma~\ref{le:two_step_diagram-properties}, the same
  happens to $F_2$ and $\check{\Phi_{\DCp{H}}}$. Hence, $F$ is
  a diffeomorphism, proving
  point~\ref{it:isomorphism_reduced_spaces_in_stages-diffeo}.

  Next, as $\Upsilon_{\DCp{\SG/H}}\in
  \Mor_{\DLPSC}(\mathcal{M}^H,\mathcal{M}^{\SG/H})$ and
  $\Upsilon_{\DCp{H}}\in \Mor_{\DLPSC}(\mathcal{M},\mathcal{M}^H)$, we
  have that $\Upsilon_{\DCp{\SG/H}}\circ \Upsilon_{\DCp{H}}\in
  \Mor_{\DLPSC}(\mathcal{M},\mathcal{M}^{\SG/H})$. Also,
  $\Upsilon_{\DCp{\SG}}\in\Mor_{\DLPSC}(\mathcal{M},\mathcal{M}^\SG)$,
  $F$ is smooth and $F\circ \Upsilon_{\DCp{\SG}} =
  \Upsilon_{\DCp{\SG/H}}\circ \Upsilon_{\DCp{H}}$, so that, by
  Lemma~\ref{le:comm_triangle_side_function_imp_side_morph}
  point~\ref{it:isomorphism_reduced_spaces_in_stages-morphism} is
  true. Using
  point~\ref{it:isomorphism_reduced_spaces_in_stages-diffeo} and
  Lemma~\ref{le:morphism_DLPSC_and_diffeo_imp_iso_DLPSC},
  point~\ref{it:isomorphism_reduced_spaces_in_stages-iso} follows.
\end{proof}

\begin{theorem}\label{thm:4_pts_in_stages}
  Consider the data given at the beginning of this section. 
  \begin{enumerate}
  \item \label{it:4_pts_in_stages-trajectories} Let
    $(\epsilon_\cdot,m_\cdot) = ((\epsilon_0, m_1), \ldots,
    (\epsilon_{N-1},m_N))$ be a discrete path in $C'(E)$. For
    $k=0,\ldots,N-1$ define the discrete paths
    $(v^H_k,r^H_{k+1}):=\Upsilon_{\DCp{H}}(\epsilon_k,m_{k+1})$,
    $(v^\SG_k,r^\SG_{k+1}):=\Upsilon_{\DCp{\SG}}(\epsilon_k,m_{k+1})$
    and
    $(v^{\SG/H}_k,r^{\SG/H}_{k+1}):=\Upsilon_{\DCp{H}}(v^H_k,r^H_{k+1})$
    in $C'(\ti{H}_E)$, $C'(\ti{\SG}_E)$ and
    $C'(\ti{\SG/H}_{\ti{H}_E})$ respectively. Then, the following
    assertions are equivalent.
    \begin{enumerate}
    \item \label{it:4_pts_in_stages-1} $(\epsilon_\cdot, m_\cdot)$ is
      a trajectory of $\mathcal{M}$.
    \item \label{it:4_pts_in_stages-G} $(v^\SG_\cdot,r^\SG_\cdot)$ is
      a trajectory of $\mathcal{M}^\SG$.
    \item \label{it:4_pts_in_stages-H} $(v^H_\cdot,r^H_\cdot)$ is a
      trajectory of $\mathcal{M}^H$.
    \item \label{it:4_pts_in_stages-G/H}
      $(v^{\SG/H}_\cdot,r^{\SG/H}_\cdot)$ is a trajectory of
      $\mathcal{M}^{\SG/H}$.
    \end{enumerate}
  \item \label{it:4_pts_in_stages-trajectories_F} Let
    $F:C'(\ti{\SG/H}_{\ti{H}_E}) \rightarrow C'(\ti{\SG}_E)$ be the
    diffeomorphism defined in
    Theorem~\ref{th:isomorphism_reduced_spaces_in_stages}. Then
    $F(v^{\SG/H}_k, r^{\SG/H}_{k+1}) = (v^\SG_k,r^\SG_{k+1})$ for all
    $k$.
  \item \label{it:4_pts_in_stages-isomorphism} The DLPSs
    $\mathcal{M}^\SG$ and $\mathcal{M}^{\SG/H}$ are isomorphic in
    $\DLPSC$.
  \end{enumerate}
\end{theorem}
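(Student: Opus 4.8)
The plan is to deduce all three parts from machinery already established, since this theorem is essentially a synthesis rather than a new construction. The central tools are the trajectory–morphism correspondence of Theorem~\ref{thm:morphisms_and_trajectories}, the fact that each reduction map is a morphism in $\DLPSC$ (Proposition~\ref{prop:Upsilon_DC_is_morphism}), and the isomorphism $F$ built in Theorem~\ref{th:isomorphism_reduced_spaces_in_stages}. Before anything else I would record that all three reduction maps are morphisms: $\Upsilon_{\DCp{\SG}}\in\Mor_{\DLPSC}(\mathcal{M},\mathcal{M}^\SG)$ and $\Upsilon_{\DCp{H}}\in\Mor_{\DLPSC}(\mathcal{M},\mathcal{M}^H)$ by Proposition~\ref{prop:Upsilon_DC_is_morphism}, while $\Upsilon_{\DCp{\SG/H}}\in\Mor_{\DLPSC}(\mathcal{M}^H,\mathcal{M}^{\SG/H})$ holds because $\SG/H$ is a symmetry group of $\mathcal{M}^H$ (Proposition~\ref{prop:symmetry_group_induces_residual_action}), which is available since $\DCp{H}$ is assumed to satisfy the equivariance condition of Lemma~\ref{le:G-equivariance_H_connection_form}.

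For part~\ref{it:4_pts_in_stages-trajectories} I would simply apply Theorem~\ref{thm:morphisms_and_trajectories} to each of these three morphisms in turn. Because the reduced paths $(v^\SG_\cdot,r^\SG_\cdot)$, $(v^H_\cdot,r^H_\cdot)$ and $(v^{\SG/H}_\cdot,r^{\SG/H}_\cdot)$ are by definition the images of $(\epsilon_\cdot,m_\cdot)$ (respectively of $(v^H_\cdot,r^H_\cdot)$) under those morphisms, the theorem yields \ref{it:4_pts_in_stages-1}$\Leftrightarrow$\ref{it:4_pts_in_stages-G} via $\Upsilon_{\DCp{\SG}}$, \ref{it:4_pts_in_stages-1}$\Leftrightarrow$\ref{it:4_pts_in_stages-H} via $\Upsilon_{\DCp{H}}$, and \ref{it:4_pts_in_stages-H}$\Leftrightarrow$\ref{it:4_pts_in_stages-G/H} via $\Upsilon_{\DCp{\SG/H}}$. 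Chaining these three equivalences establishes that all four statements are equivalent.

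For part~\ref{it:4_pts_in_stages-trajectories_F} the key identity is $F\circ\Upsilon_{\DCp{\SG/H}}\circ\Upsilon_{\DCp{H}} = \Upsilon_{\DCp{\SG}}$, which I would read off diagram~\eqref{eq:two_step_reduction-geometric_diagram}. Concretely, using the factorization $\Upsilon_\DC = \Phi_\DC\circ\pi$ from~\eqref{eq:diagram_ExM_to_reduced}, I would chase a point $(\epsilon_k,m_{k+1})$ through the diagram: commutativity of the central square (point~\ref{it:two_step_diagram-Phi_check} of Lemma~\ref{le:two_step_diagram-properties}) replaces $\pi^{C'(\ti{H}_E),\SG/H}\circ\Phi_{\DCp{H}}$ by $\check{\Phi_{\DCp{H}}}\circ\pi^{\frac{C'(E)}{H},\SG/H}$, and the defining properties of $F_1,F_2$ (points~\ref{it:two_step_diagram-F_1_def} and~\ref{it:two_step_diagram-F_2_def}) collapse the successive quotients $\pi^{\frac{C'(E)}{H},\SG/H}\circ\pi^{C'(E),H}$ into $\pi^{C'(E),\SG}$; applying $\Phi_{\DCp{\SG}}$ recovers $\Upsilon_{\DCp{\SG}}$. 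Evaluating the resulting identity at $(\epsilon_k,m_{k+1})$ and substituting the definitions of the reduced paths gives $F(v^{\SG/H}_k,r^{\SG/H}_{k+1}) = (v^\SG_k,r^\SG_{k+1})$.

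Part~\ref{it:4_pts_in_stages-isomorphism} is then immediate, as it is exactly point~\ref{it:isomorphism_reduced_spaces_in_stages-iso} of Theorem~\ref{th:isomorphism_reduced_spaces_in_stages}, which asserts that $F$ is an isomorphism in $\DLPSC$ between $\mathcal{M}^\SG$ and $\mathcal{M}^{\SG/H}$. I do not expect a serious obstacle anywhere: the content of the theorem is a bookkeeping assembly of earlier results. The only point demanding genuine care is keeping the directions of the various $\Phi_\DC$, of $F_1,F_2$, and of the quotient projections mutually consistent throughout the diagram chase in part~\ref{it:4_pts_in_stages-trajectories_F}, so that the composite lands on the intended one-step reduction $\Upsilon_{\DCp{\SG}}$ rather than on its inverse.
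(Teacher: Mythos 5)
Your proposal is correct and follows essentially the same route as the paper: part (1) by applying Theorem~\ref{thm:morphisms_and_trajectories} to the three reduction morphisms, part (2) by evaluating the identity $F\circ\Upsilon_{\DCp{\SG/H}}\circ\Upsilon_{\DCp{H}}=\Upsilon_{\DCp{\SG}}$ (whose commutativity the paper delegates to Lemma~\ref{le:two_step_diagram-properties} rather than re-chasing the diagram), and part (3) by citing Theorem~\ref{th:isomorphism_reduced_spaces_in_stages}. No gaps.
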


\begin{proof}
  By Proposition~\ref{prop:Upsilon_DC_is_morphism}
  $\Upsilon_{\DCp{\SG}}$, $\Upsilon_{\DCp{H}}$ and
  $\Upsilon_{\DCp{\SG/H}}$ are morphisms in $\DLPSC$. Then,
    point~\ref{it:4_pts_in_stages-trajectories} follows from
    Theorem~\ref{thm:morphisms_and_trajectories}.

  Point~\ref{it:4_pts_in_stages-trajectories_F} is true by the
  following computation.
  \begin{equation*}
    \begin{split}
      (v^\SG_k,r^\SG_{k+1}) =&
      \Upsilon_{\DCp{\SG}}(\epsilon_k,m_{k+1}) = (F\circ
      \Upsilon_{\DCp{\SG/H}} \circ
      \Upsilon_{\DCp{H}})(\epsilon_k,m_{k+1}) \\=& (F\circ
      \Upsilon_{\DCp{\SG/H}})(v^H_k,r^H_{k+1}) =
      F(v^{\SG/H}_k,r^{\SG/H}_{k+1}).
    \end{split}
  \end{equation*}

  Point~\ref{it:4_pts_in_stages-isomorphism} is immediate from
  point~\ref{it:isomorphism_reduced_spaces_in_stages-iso} in
  Theorem~\ref{th:isomorphism_reduced_spaces_in_stages}.
\end{proof}


\subsection{Discrete connections derived from a Riemannian metric}
\label{sec:discrete_connections_derived_from_a_riemannian_metric}

The conditions stated at the beginning of
Section~\ref{sec:comparison_with_reduction_by_the_full_symmetry_group}
require the choice of three discrete connections $\DCp{H}$,
$\DCp{\SG}$ and $\DCp{\SG/H}$ on the corresponding principal
bundles. One case where such connections are known to exist is when
the total space of the corresponding principal bundle carries a
Riemannian metric and the structure group acts by isometries; this is
the content of Theorem 5.2
in~\cite{ar:fernandez_zuccalli-a_geometric_approach_to_discrete_connections_on_principal_bundles}. In
addition, $\DCp{H}$ is required to satisfy either one of the
conditions in Lemma~\ref{le:G-equivariance_H_connection_form}. In this
Section we prove that when the total space $Q$ of a principal
$\SG$-bundle $\pi^{Q,\SG}:Q\rightarrow Q/\SG$ is equipped with a
$\SG$-invariant Riemannian metric, it is possible to apply Theorem 5.2
in~\cite{ar:fernandez_zuccalli-a_geometric_approach_to_discrete_connections_on_principal_bundles}
to construct discrete connections $\DCp{H}$ satisfying the conditions
in Lemma~\ref{le:G-equivariance_H_connection_form} on the principal
$H$-bundle $\pi^{Q,H}:Q\rightarrow Q/H$ for any closed and normal
subgroup $H\subset \SG$.

The construction analyzed in Theorem 5.2
in~\cite{ar:fernandez_zuccalli-a_geometric_approach_to_discrete_connections_on_principal_bundles}
is as follows. When $Q$ is a Riemannian manifold and a Lie group $H$
acts on $Q$ by isometries, the \jdef{vertical bundle}
$\mathcal{V}^H\subset TQ$ defined by
$\mathcal{V}^H_q:=T_q(l^Q_H(q)) \subset T_qQ$ has an orthogonal
complement, the \jdef{horizontal bundle} $\mathcal{H}^H$. This
horizontal bundle determines a connection $\CCp{H}$ on the principal
$H$-bundle $\pi^{Q,H}:Q\rightarrow Q/H$. In addition, there is a
unique Riemannian metric on $Q/H$ that makes $Q/H$ a Riemannian
manifold and $\pi^{Q,H}$ a Riemannian submersion. Standard results of
Riemannian Geometry show that, for any $r\in Q/H$, there are open sets
$W_r\subset Q/H$ containing $r$ and such that any two points in $W_r$
can be joined by a unique length-minimizing geodesic that, also, is
contained in $W_r$ (see Theorem 3.6 on page 166
of~\cite{bo:kobayashi_nomizu-foundations-v1}); we call these sets
\jdef{geodesically convex}. Using such a collection
$\{W_r:r\in Q/H\}$, the open set
\begin{equation}\label{eq:domain_riemannian_connection-def}
  \mathcal{U}:=\cup_{r\in Q/H} \left((\pi^{Q,H})^{-1}(W_r) \times 
    (\pi^{Q,H})^{-1}(W_r) \right) \subset Q\times Q
\end{equation}
is defined. Then, a function $\DCp{H}:\mathcal{U}\rightarrow H$ is
constructed as follows. Given $(q_0,q_1)\in \mathcal{U}$, there is
$r\in Q/H$ such that $\pi^{Q,H}(q_0), \pi^{Q,H}(q_1)\in W_r$. Let
$\gamma:[0,1]\rightarrow Q/H$ be the unique length-minimizing geodesic
contained in $W_r$ and joining $\pi^{Q,H}(q_0)$ to
$\pi^{Q,H}(q_1)$. Let $\ti{\gamma}$ be the $\CCp{H}$-horizontal lift
of $\gamma$ to $Q$, starting at $q_0$. Finally, let
\begin{equation}\label{eq:riemannian_connection-discrete_connection_form}
  \DCp{H}(q_0,q_1) := \kappa_{q_1}(\ti{\gamma(1)},q_1),
\end{equation}
where $\kappa_{q_1}:Q_{\pi^{Q,H}(q_1)} \rightarrow H$ is the smooth
map defined by $\kappa_{q_1}(l^Q_h(q_1),q_1):=h$.  Theorem 5.2
in~\cite{ar:fernandez_zuccalli-a_geometric_approach_to_discrete_connections_on_principal_bundles}
asserts that there is a discrete connection $\DCp{H}$ on the principal
$H$-bundle $\pi^{Q,H}:Q\rightarrow Q/H$ whose domain is $\mathcal{U}$
and whose associated discrete form is given
by~\eqref{eq:riemannian_connection-discrete_connection_form}.

Below, we consider the case where $\SG$ is a Lie group and $H\subset
\SG$ is a closed normal subgroup. $\SG$ acts on the Riemannian
manifold $Q$ by isometries and in such a way such that
$\pi^{Q,\SG}:Q\rightarrow Q/\SG$ is a principal $\SG$-bundle. Then, by
restricting the $\SG$-action to an $H$-action, $H$ acts by isometries
on $Q$ and $\pi^{Q,H}:Q\rightarrow Q/H$ is a principal
$H$-bundle. But, still, $\SG/H$ acts on $Q/H$ by isometries and making
$\pi^{Q/H,\SG/H}:Q/H\rightarrow (Q/H)/(\SG/H)$ a principal
$\SG/H$-bundle. 

\begin{lemma}\label{le:existence_of_G/H_invariant_W_r}
  Under the previous conditions, there is a collection of open subsets
  $\{W_r\subset Q/H:r\in Q/H\}$ that are geodesically convex as above
  that, in addition, satisfies
  \begin{equation}\label{eq:G/H_invariant_W_r}
    l^{Q/H}_{\pi^{\SG,H}(g)}(W_{\pi^{Q,H}(q)}) = W_{\pi^{Q,H}(l^Q_g(q))} 
    \stext{ for all } q\in Q \text{ and } g\in\SG.
  \end{equation}
\end{lemma}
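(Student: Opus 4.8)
The plan is to recognize condition~\eqref{eq:G/H_invariant_W_r} as an equivariance requirement on the family $\{W_r\}$ and then to build such a family orbit by orbit. First I would observe that the set $W_{\pi^{Q,H}(q)}$ appearing in~\eqref{eq:G/H_invariant_W_r} depends on $q$ only through $r:=\pi^{Q,H}(q)$, and that the right-hand side is $W_{l^{Q/H}_{\pi^{\SG,H}(g)}(r)}$ by the very definition of the $\SG/H$-action on $Q/H$ recalled in Lemma~\ref{le:G_acts_on_FB_imp_G/H_acts_on_FB/H}. Writing $\bar g:=\pi^{\SG,H}(g)$, the lemma thus amounts to producing, for every $r\in Q/H$, a geodesically convex open set $W_r\ni r$ such that $l^{Q/H}_{\bar g}(W_r)=W_{l^{Q/H}_{\bar g}(r)}$ for all $\bar g\in\SG/H$; that is, an $\SG/H$-equivariant assignment of geodesically convex neighborhoods.

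Two standard facts drive the construction. On the one hand, since $Q/H$ carries the Riemannian metric that makes $\pi^{Q,H}$ a Riemannian submersion, every point of $Q/H$ has a geodesically convex neighborhood (Theorem 3.6, p.~166 of~\cite{bo:kobayashi_nomizu-foundations-v1}). On the other hand, $\SG/H$ acts on $Q/H$ by isometries, as noted just before the statement; since an isometry carries length-minimizing geodesics to length-minimizing geodesics and preserves both their uniqueness and their containment in a set, the image $l^{Q/H}_{\bar g}(W)$ of a geodesically convex set $W$ is again geodesically convex, and it contains $l^{Q/H}_{\bar g}(r)$ whenever $r\in W$.

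With these in hand I would build the family one orbit at a time, exploiting that the $\SG/H$-action on $Q/H$ is free (again by Lemma~\ref{le:G_acts_on_FB_imp_G/H_acts_on_FB/H}, where it is shown to be a principal bundle action). Choose a set $R\subset Q/H$ consisting of exactly one point from each $\SG/H$-orbit, and for each $r_0\in R$ fix a geodesically convex neighborhood $W_{r_0}\ni r_0$. For an arbitrary $r\in Q/H$, let $r_0\in R$ be the representative of its orbit and let $\bar g\in\SG/H$ be the unique element with $r=l^{Q/H}_{\bar g}(r_0)$ --- unique precisely because the action is free --- and set $W_r:=l^{Q/H}_{\bar g}(W_{r_0})$.

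Finally I would verify the two required properties. Each $W_r$ is geodesically convex and contains $r$ by the isometry fact above. Equivariance follows from the action axiom: for $\bar h\in\SG/H$ one has $l^{Q/H}_{\bar h}(W_r)=l^{Q/H}_{\bar h\bar g}(W_{r_0})=W_{l^{Q/H}_{\bar h\bar g}(r_0)}=W_{l^{Q/H}_{\bar h}(r)}$, the middle equality being just the definition of $W$ at the point $l^{Q/H}_{\bar h\bar g}(r_0)$ of the same orbit. The only point needing care is the well-definedness of $r\mapsto W_r$, which rests entirely on freeness guaranteeing the uniqueness of $\bar g$; notably, no continuity or smoothness of the assignment $r\mapsto W_r$ is required, since the family enters the subsequent construction only through the open set $\mathcal{U}$ of~\eqref{eq:domain_riemannian_connection-def}, which is a union of open sets regardless of how the $W_r$ are chosen. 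I expect this bookkeeping --- rather than any genuine analytic difficulty --- to be the main thing to get right.
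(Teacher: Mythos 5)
Your proposal is correct and follows essentially the same route as the paper: the paper's (possibly discontinuous) section $\ti{\sigma}$ of $\pi^{Q/H,\SG/H}$ is precisely a choice of one representative per $\SG/H$-orbit, after which both arguments fix a geodesically convex neighborhood at each representative and transport it around the orbit by the free isometric $\SG/H$-action, using that isometries preserve geodesic convexity and that freeness makes the transporting element unique. Your explicit remarks on well-definedness and on the irrelevance of continuity of $r\mapsto W_r$ are accurate and match what the paper leaves implicit.
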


\begin{proof}
  In the current context, we have the commutative diagram
  \begin{equation*}
    \xymatrix{
      {Q} \ar[r]^{\pi^{Q,H}} \ar[d]_{\pi^{Q,\SG}} & {Q/H} \ar[d]^{\pi^{Q/H,\SG/H}} \\
      {Q/\SG} \ar[r]_{\phi}^{\sim} & {\frac{Q/H}{\SG/H}}
    }
  \end{equation*}
  where all the $\pi$-mappings are principal bundles and $\phi$ is a
  diffeomorphism. Let $\ti{\sigma}$ be a section of $\pi^{Q/H,\SG/H}$
  that may be discontinuous, and define $\sigma:Q\rightarrow Q/H$ by
  $\sigma:= \ti{\sigma}\circ \phi\circ \pi^{Q,\SG}$. It is easy to see
  that $\sigma$ is $\SG$-invariant, that its image intersects each
  $\SG/H$-orbit in $Q/H$ in exactly one point and that, for each $q\in
  Q$, $\sigma(q)$ and $\pi^{Q,H}(q)$ are on the same $\SG/H$-orbit.

  For each $\sigma(q)\in Q/H$, let $W_{\sigma(q)} \subset Q/H$ be any
  geodesically convex open subset. Using the $G/H$-action $l^{Q/H}$,
  for each $q\in Q$, we define
  \begin{equation*}
    W_{\pi^{Q,H}(q)} := l^{Q/H}_{\pi^{\SG,H}(g)}(W_{\sigma(q)}) \stext{ where }
    \pi^{\SG,H}(g):= \kappa(\sigma(q),\pi^{Q,H}(q)).
  \end{equation*}
  Since $l^{Q/H}_{\pi^{\SG,H}(g)}$ is an isometry in $Q/H$, the open
  sets $W_{\pi^{Q,H}(q)}$ are also geodesically convex. A direct
  computation shows that the collection $\{W_{\pi^{Q,H}(q)}: q\in Q\}$
  satisfies~\eqref{eq:G/H_invariant_W_r}.
\end{proof}

\begin{proposition}\label{prop:existence_of_G_adapted_Ad^H}
  With the same conditions as above, let $\mathcal{U}$ be defined
  by~\eqref{eq:domain_riemannian_connection-def}, for a collection of
  geodesically convex open subsets $\{W_r\subset Q/H:r\in Q/H\}$
  satisfying~\eqref{eq:G/H_invariant_W_r}. Then
  \begin{enumerate}
  \item \label{it:existence_of_G_adapted_Ad^H-open} $\mathcal{U}$ is
    $\SG$-invariant for the diagonal $\SG$-action $l^{Q\times Q}$.
  \item \label{it:existence_of_G_adapted_Ad^H-} The discrete
    connection with domain $\mathcal{U}$ and discrete connection form
    $\DCp{H}$ defined above satisfies
    condition~\eqref{eq:G-equivariance_H_connection_form} in
    Lemma~\ref{le:G-equivariance_H_connection_form}.
  \end{enumerate}
\end{proposition}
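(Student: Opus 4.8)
The plan is to treat the two assertions separately, in each case tracking how the data entering the construction of $\DCp{H}$ behaves under the action $l^Q_g$. First I would prove the $\SG$-invariance of $\mathcal{U}$. Given $(q_0,q_1)\in\mathcal{U}$, there is $r\in Q/H$ with $\pi^{Q,H}(q_0),\pi^{Q,H}(q_1)\in W_r$; writing $r=\pi^{Q,H}(\ti{q})$ and using $\pi^{Q,H}\circ l^Q_g = l^{Q/H}_{\pi^{\SG,H}(g)}\circ\pi^{Q,H}$ together with~\eqref{eq:G/H_invariant_W_r}, one gets $\pi^{Q,H}(l^Q_g(q_i))\in l^{Q/H}_{\pi^{\SG,H}(g)}(W_r)=W_{\pi^{Q,H}(l^Q_g(\ti{q}))}$ for $i=0,1$. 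Hence $l^{Q\times Q}_g(q_0,q_1)\in\mathcal{U}$, which is the content of the first assertion and also identifies the geodesically convex set relevant to the transformed pair.

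The core of the second assertion rests on the observation that each $l^Q_g$ preserves $\CCp{H}$-horizontality. Since $H$ is normal, $l^Q_g$ carries the $H$-orbit through $q$ onto the $H$-orbit through $l^Q_g(q)$, so $dl^Q_g$ preserves the vertical bundle $\mathcal{V}^H$; because the $\SG$-action is by isometries, $l^Q_g$ then also preserves its orthogonal complement $\mathcal{H}^H$, and therefore maps $\CCp{H}$-horizontal curves to $\CCp{H}$-horizontal curves.

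With this in hand I would compute $\DCp{H}(l^Q_g(q_0),l^Q_g(q_1))$ directly from~\eqref{eq:riemannian_connection-discrete_connection_form}. Let $\gamma$ be the minimizing geodesic and $\ti{\gamma}$ its $\CCp{H}$-horizontal lift used to define $\DCp{H}(q_0,q_1)$. As $l^{Q/H}_{\pi^{\SG,H}(g)}$ is an isometry of $Q/H$, the curve $l^{Q/H}_{\pi^{\SG,H}(g)}\circ\gamma$ is again a minimizing geodesic joining $\pi^{Q,H}(l^Q_g(q_0))$ to $\pi^{Q,H}(l^Q_g(q_1))$ inside the set found above, so by uniqueness it is the geodesic used for the transformed pair. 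Its horizontal lift starting at $l^Q_g(q_0)$ is $l^Q_g\circ\ti{\gamma}$, since that curve projects to $l^{Q/H}_{\pi^{\SG,H}(g)}\circ\gamma$, is horizontal by the previous paragraph, and starts at $l^Q_g(q_0)$; thus its endpoint is $l^Q_g(\ti{\gamma}(1))$. Writing $h:=\DCp{H}(q_0,q_1)$, so that $\ti{\gamma}(1)=l^Q_h(q_1)$, and using normality in the form $l^Q_g\circ l^Q_h = l^Q_{ghg^{-1}}\circ l^Q_g$ with $ghg^{-1}\in H$, I obtain $l^Q_g(\ti{\gamma}(1))=l^Q_{ghg^{-1}}(l^Q_g(q_1))$. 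The defining property of $\kappa$ then yields $\DCp{H}(l^Q_g(q_0),l^Q_g(q_1))=ghg^{-1}=g\,\DCp{H}(q_0,q_1)\,g^{-1}$, which is exactly~\eqref{eq:G-equivariance_H_connection_form} from Lemma~\ref{le:G-equivariance_H_connection_form}.

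The hard part will be the horizontal-lift step: one must be certain that $l^Q_g$ really does preserve the connection $\CCp{H}$, and this is precisely where both hypotheses enter together---normality of $H$ keeps the vertical bundle $\mathcal{V}^H$ invariant (and ultimately produces the conjugation $ghg^{-1}$), while the isometric character of the $\SG$-action transports both the orthogonal complement $\mathcal{H}^H$ and the minimizing geodesics in $Q/H$. Once horizontality is secured, the remaining bookkeeping with $\gamma$, $\ti{\gamma}$ and $\kappa$ is routine.
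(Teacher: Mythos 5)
Your proposal is correct and follows essentially the same route as the paper's proof: the same use of~\eqref{eq:G/H_invariant_W_r} and the equivariance of $\pi^{Q,H}$ for part~(1), and for part~(2) the same chain of uniqueness arguments (isometry carries the minimizing geodesic to the minimizing geodesic, $l^Q_g$ carries the horizontal lift to the horizontal lift, and the $\kappa$-computation yields the conjugation). You in fact supply slightly more detail than the paper at the one point it leaves implicit, namely why normality plus isometry force $dl^Q_g$ to preserve $\mathcal{V}^H$ and hence $\mathcal{H}^H$.
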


\begin{proof}
  Let $(q_0,q_1)\in \mathcal{U}$ and $g\in \SG$. By definition of
  $\mathcal{U}$, there is $\pi^{Q,H}(q)\in Q/H$ such that
  $\pi^{Q,H}(q_0), \pi^{Q,H}(q_1)\in W_{\pi^{Q,H}(q)}$. Hence, for
  $j=0,1$,
  \begin{equation*}
    \begin{split}
      \pi^{Q,H}(l^Q_g(q_j)) =
      l^{Q/H}_{\pi^{\SG,H}(g)}(\underbrace{\pi^{Q,H}(q_j)}_{\in
        W_{\pi^{Q,H}(q)}}) \in
      l^{Q/H}_{\pi^{\SG,H}(g)}(W_{\pi^{Q,H}(q)}) =
      W_{\pi^{Q,H}(l^Q_g(q))},
    \end{split}
  \end{equation*}
  Hence $l^{Q\times Q}_g(q_0,q_1) = (l^Q_g(q_0),l^Q_g(q_1)) \in
  \mathcal{U}$, proving
  part~\ref{it:existence_of_G_adapted_Ad^H-open}.

  Given $(q_0,q_1)\in \mathcal{U}$ and $\pi^{Q,H}(q)$ as above, let
  $\gamma_0$ and $\gamma_1$ be the unique length-minimizing geodesics
  contained in $W_{\pi^{Q,H}(q)}$ and $W_{\pi^{Q,H}(l^Q_g(q))}$ going
  from $\pi^{Q,H}(q_0)$ to $\pi^{Q,H}(q_1)$ and from
  $\pi^{Q,H}(l^Q_g(q_0))$ to $\pi^{Q,H}(l^Q_g(q_1))$. Let
  $\ti{\gamma}_0$ and $\ti{\gamma}_1$ be the $\CCp{H}$-horizontal
  lifts starting at $q_0$ and $l^Q_g(q_0)$ respectively.

  Notice that, by the uniqueness of the length-minimizing geodesics in
  $W_{\pi^{Q,H}(l^Q_g(q))}$ and since $l^{Q/H}_{\pi^{Q,H}(g)}$ is an
  isometry in $Q/H$, we have that $\gamma_1 = l^{Q/H}_{\pi^{Q,H}(g)}
  \circ \gamma_0$. 

  Let $\rho:=l^Q_g \circ \ti{\gamma}_0$. It is easy to check that
  $\rho$ is a lift of $\gamma_1$ starting at $l^Q_g(q_0)$. It is also
  $\CCp{H}$-horizontal, a fact that follows from the $\SG$-invariance
  of $\mathcal{H}^H$, that is, from
  $dl^Q_g(q')(\mathcal{H}^H_{q'})\subset \mathcal{H}^H_{l^Q_g(q')}$
  for all $q'\in Q$.  By the uniqueness of the horizontal lifts, we
  conclude that $\ti{\gamma}_1 = \rho$.

  Finally,
  using~\eqref{eq:riemannian_connection-discrete_connection_form}, we
  have
  \begin{equation*}
    \begin{split}
      \DCp{H}(l^Q_g(q_0),l^Q_g(q_1)) =&
      \kappa_{l^Q_g(q_1)}(\ti{\gamma}_1(1),l^Q_g(q_1)) =
      \kappa_{l^Q_g(q_1)}(l^Q_g(\ti{\gamma}_0(1)),l^Q_g(q_1)) \\=& g
      \kappa_{q_1}(\ti{\gamma}_0(1),q_1) g^{-1} = g \DCp{H}(q_0,q_1)
      g^{-1},
    \end{split}
  \end{equation*}
  that is, identity~\eqref{eq:G-equivariance_H_connection_form} holds,
  concluding the proof of part~\ref{it:existence_of_G_adapted_Ad^H-}.
\end{proof}


\section{Poisson structures}
\label{sec:poisson_structures}

It is a well known and used fact that if $(Q,L_d)$ is a regular DMS,
there is a symplectic structure $\omega_{L_d}$ defined in (an open
subset containing the diagonal of) $Q\times Q$. Furthermore, the
discrete Lagrangian flow $F_{L_d}$ is symplectic for
$\omega_{L_d}$. This structure is important both for the theoretical
as well as the numerical applications of DMSs. Still, the dynamical
system obtained as the reduction of a DMS may not carry a symplectic
structure: an obvious reason could be that
$\dim(\ti{\SG}\times(Q/\SG)) = 2\dim(Q) -\dim(\SG)$ could be odd,
making it impossible for the reduced space $\ti{\SG}\times(Q/\SG)$ to
be a symplectic manifold. 

The purpose of this section is to show that when a symmetric DLPS has
a Poisson structure, in a sense to be defined below, and the symmetry
group acts by Poisson maps, then its reduction also carries a Poisson
structure and the reduction morphism is a Poisson map. In principle,
these structures could be uninteresting ---for instance, the trivial
Poisson structure is always a Poisson structure for a DLPS.  Still,
when a DLPS has an interesting structure, as is the case of those DLPSs
obtained from DMSs, the natural Poisson structure arising from the
symplectic structure is inherited by all reductions, as we see
below.

\begin{definition}
  Let $\mathcal{M}=(E,L_d,\IVCM)$ be a DLPS. We say that a Poisson
  structure $\{,\}_{C'(E)}$ on $C'(E)$ is a \jdef{Poisson structure of
    $\mathcal{M}$} if the flow map $F_\mathcal{M}$ is a Poisson map for
  $\{,\}_{C'(E)}$.
\end{definition}

\begin{proposition}\label{prop:reduced_poisson_structure}
  Let $\{,\}_{C'(E)}$ be a Poisson structure of
  $\mathcal{M}=(E,L_d,\IVCM)$. If $\SG$ is a symmetry group of
  $\mathcal{M}$ that preserves $\{,\}_{C'(E)}$ and $\DC$ is a discrete
  connection on $\pi^{M,\SG}:M\rightarrow M/\SG$, then there is a
  Poisson structure $\{,\}_{C'(\ti{\SG}_E)}$ of the reduced system
  $\mathcal{M}/(\SG,\DC)$ such that the reduction morphism
  $\Upsilon_\DC$ is a Poisson map, \ie,
  \begin{equation}\label{eq:reduced_poisson_structure}
    \Upsilon_\DC^*(\{f_1,f_2\}_{C'(\ti{\SG}_E)}) = \{\Upsilon_\DC^*(f_1), 
    \Upsilon_\DC^*(f_2)\}_{C'(E)} \stext{ for all } 
    f_1, f_2 \in C^\infty(\ti{\SG}_E).
  \end{equation}
\end{proposition}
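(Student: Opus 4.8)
The plan is to push the Poisson bracket forward along the reduction morphism $\Upsilon_\DC$ and verify that the resulting bracket has the required properties. Since $\Upsilon_\DC:C'(E)\rightarrow C'(\ti{\SG}_E)$ is a principal $\SG$-bundle (Lemma~\ref{le:Upsilon_DC_is_ppal_bundle}), a smooth function $f$ on $C'(\ti{\SG}_E)$ pulls back to a $\SG$-invariant smooth function $\Upsilon_\DC^*(f)$ on $C'(E)$, and conversely every $\SG$-invariant smooth function on $C'(E)$ arises this way; I would first record this correspondence, which identifies $C^\infty(C'(\ti{\SG}_E))$ with the algebra $C^\infty(C'(E))^\SG$ of $\SG$-invariant functions on $C'(E)$.

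The first key step is to show that $C^\infty(C'(E))^\SG$ is closed under the bracket $\{,\}_{C'(E)}$. This is exactly where the hypothesis that $\SG$ \emph{preserves} $\{,\}_{C'(E)}$ enters: for each $g\in\SG$ the map $l^{C'(E)}_g$ is a Poisson diffeomorphism, so $\{f_1\circ l^{C'(E)}_g, f_2\circ l^{C'(E)}_g\}_{C'(E)} = \{f_1,f_2\}_{C'(E)}\circ l^{C'(E)}_g$; applying this to $\SG$-invariant $f_1,f_2$ shows $\{f_1,f_2\}_{C'(E)}$ is $\SG$-invariant. Hence I can \emph{define} $\{,\}_{C'(\ti{\SG}_E)}$ by declaring
\begin{equation*}
  \Upsilon_\DC^*(\{f_1,f_2\}_{C'(\ti{\SG}_E)}) := \{\Upsilon_\DC^*(f_1),\Upsilon_\DC^*(f_2)\}_{C'(E)},
\end{equation*}
which is well defined precisely because the right-hand side is $\SG$-invariant and $\Upsilon_\DC$ is a surjective submersion. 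By construction, equation~\eqref{eq:reduced_poisson_structure} then holds and $\Upsilon_\DC$ is a Poisson map. The bracket axioms (bilinearity, antisymmetry, the Leibniz rule, and the Jacobi identity) transfer immediately from $\{,\}_{C'(E)}$ through the algebra isomorphism $\Upsilon_\DC^*$, since $\Upsilon_\DC^*$ is an injective homomorphism of $C^\infty$-algebras onto $C^\infty(C'(E))^\SG$.

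The remaining step, and the one I expect to be the main obstacle, is to verify that the reduced flow map $F_{\mathcal{M}/(\SG,\DC)}$ is a Poisson map for $\{,\}_{C'(\ti{\SG}_E)}$, so that the reduced bracket is genuinely a Poisson structure \emph{of} the reduced DLPS in the sense of the preceding definition. Here the difficulty is that the flow maps are only defined on open sets (Proposition~\ref{prop:existence_trajectories_DLPS}), so I must first establish the compatibility $\Upsilon_\DC\circ F_{\mathcal{M}} = F_{\mathcal{M}/(\SG,\DC)}\circ \Upsilon_\DC$ of the two flows over corresponding open domains. This intertwining follows from Theorem~\ref{thm:two_point_theorem-unconstrained-generalized}, which says $(\epsilon_\cdot,m_\cdot)$ is a trajectory of $\mathcal{M}$ iff its $\Upsilon_\DC$-image is a trajectory of $\mathcal{M}/(\SG,\DC)$, together with the uniqueness of the flow; the $\SG$-equivariance of $F_{\mathcal{M}}$ (needed to descend it) comes from $l^{C'(E)}_g$ being an isomorphism in $\DLPSC$ (Proposition~\ref{prop:symmetry_group_and_morphisms}), which maps trajectories to trajectories. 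Once the intertwining relation is in hand, I would compute, for $f_1,f_2\in C^\infty(C'(\ti{\SG}_E))$,
\begin{equation*}
  \Upsilon_\DC^*(F_{\mathcal{M}/(\SG,\DC)}^*\{f_1,f_2\}_{C'(\ti{\SG}_E)}) = F_{\mathcal{M}}^*\Upsilon_\DC^*(\{f_1,f_2\}_{C'(\ti{\SG}_E)}) = F_{\mathcal{M}}^*\{\Upsilon_\DC^*f_1,\Upsilon_\DC^*f_2\}_{C'(E)},
\end{equation*}
and then use that $F_{\mathcal{M}}$ is a Poisson map together with the intertwining relation once more to rewrite the right-hand side as $\Upsilon_\DC^*\{F_{\mathcal{M}/(\SG,\DC)}^*f_1, F_{\mathcal{M}/(\SG,\DC)}^*f_2\}_{C'(\ti{\SG}_E)}$; injectivity of $\Upsilon_\DC^*$ then yields that $F_{\mathcal{M}/(\SG,\DC)}$ is Poisson, completing the proof.
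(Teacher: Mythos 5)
Your proposal is correct and follows essentially the same route as the paper: construct the reduced bracket so that $\Upsilon_\DC$ is a Poisson map, then use the flow intertwining $\Upsilon_\DC\circ F_{\mathcal{M}} = F_{\mathcal{M}/(\SG,\DC)}\circ\Upsilon_\DC$ (from Theorem~\ref{thm:two_point_theorem-unconstrained-generalized}) together with surjectivity of $\Upsilon_\DC$ to descend the Poisson property of the flow. The only difference is that you prove the Poisson reduction step by hand via the identification of $C^\infty(C'(\ti{\SG}_E))$ with $\SG$-invariant functions, where the paper cites Theorem 10.5.1 of~\cite{bo:MR-mechanics_symmetry}, and you carry out inline the computation the paper isolates as Lemma~\ref{le:triangles_and_poisson_maps}.
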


\begin{proof}
  Being $\SG$ a symmetry group of $\mathcal{M}$, by
  Lemma~\ref{le:Upsilon_DC_is_ppal_bundle},
  $\Upsilon_\DC:C'(E)\rightarrow C'(\ti{\SG}_E)$ is a principal
  $\SG$-bundle. Then, the $\SG$-action on $C'(E)$ is free and proper
  and $\Upsilon_\DC$ is a surjective submersion. As $\SG$ acts on
  $C'(E)$ by Poisson maps, it follows from Theorem 10.5.1
  in~\cite{bo:MR-mechanics_symmetry} that there is a unique Poisson
  structure $\{,\}_{C'(\ti{\SG}_E)}$ on $C'(\ti{\SG}_E)$ such that
  $\Upsilon_\DC$ becomes a Poisson map,
  hence~\eqref{eq:reduced_poisson_structure} holds. By
  Theorem~\ref{thm:two_point_theorem-unconstrained-generalized}, we
  have the commutative diagram of manifolds and smooth maps:
  \begin{equation*}
    \xymatrix{
      {C'(E)} \ar[r]^{F_{\mathcal{M}}} \ar[d]_{\Upsilon_\DC} & 
      {C'(E)} \ar[d]^{\Upsilon_\DC}\\
      {C'(\ti{\SG}_E)} \ar[r]_{F_{\mathcal{M}/\SG}} & {C'(\ti{\SG}_E)}
    }
  \end{equation*}
  where $F_{\mathcal{M}/\SG}$ is the flow of the reduced system. As
  $\Upsilon_\DC$ and $F_{\mathcal{M}}$ are Poisson maps, with
  $\Upsilon_\DC$ onto, it follows from
  Lemma~\ref{le:triangles_and_poisson_maps} below that,
  $F_{\mathcal{M}/\SG}$ is a Poisson map. All together, we have seen
  that $\{,\}_{C'(\ti{\SG}_E)}$ is a Poisson structure of
  $\mathcal{M}/(\SG,\DC)$.
\end{proof}

\begin{lemma}\label{le:triangles_and_poisson_maps}
  Let $\phi_1:M\rightarrow M_1$ and $\phi_2:M\rightarrow M_2$ be
  Poisson maps and assume that $\phi_1$ is onto. If
  $f:M_1\rightarrow M_2$ is a smooth map such that
  $f\circ \phi_1 =\phi_2$, then $f$ is a Poisson map.
\end{lemma}

\begin{proof}
  As $f\circ \phi_1 =\phi_2$, a direct computation shows that, for
  $h_1,h_2\in C^\infty(M_2)$, $\phi_1^*(f^*(\{h_1,h_2\}_{M_2})) =
  \phi_1^*( \{ f^*(h_1), f^*(h_2)\}_{M_1})$. The result follows by
  noticing that, as $\phi_1$ is onto, $\phi_1^*$ is one to one.
\end{proof}

Recall that a regular DMS $(Q,L_d)$ carries a natural closed $2$-form
$\omega_{L_d}$ that is symplectic in, at least, an open subset of
$Q\times Q$ containing the diagonal $\Delta_Q$. We have the following
result.

\begin{lemma}\label{le:diagonal_actions_are_symplectic_for_DMS}
  Let $\SG$ be a symmetry group of the regular discrete mechanical
  system $(Q,L_d)$. Then, the diagonal $\SG$-action on $Q\times Q$ is
  symplectic for the symplectic form $\omega_{L_d}$.
\end{lemma}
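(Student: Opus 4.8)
The plan is to reduce the invariance of $\omega_{L_d}$ to the invariance of the discrete Lagrangian one-forms from which it is built. Recall that the symplectic form of a regular DMS arises from the discrete Lagrangian one-forms $\Theta_{L_d}^-$ and $\Theta_{L_d}^+$ on $Q\times Q$, defined on tangent vectors $(\delta q_0,\delta q_1)$ by
\[
\Theta_{L_d}^-(q_0,q_1)(\delta q_0,\delta q_1) := -D_1L_d(q_0,q_1)(\delta q_0),
\qquad
\Theta_{L_d}^+(q_0,q_1)(\delta q_0,\delta q_1) := D_2L_d(q_0,q_1)(\delta q_1).
\]
Since $dL_d = \Theta_{L_d}^+ - \Theta_{L_d}^-$, applying $d$ shows $d\Theta_{L_d}^- = d\Theta_{L_d}^+$, and this common $2$-form is exactly $\omega_{L_d}$. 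Hence it suffices to prove that one of the one-forms $\Theta_{L_d}^\pm$ is $\SG$-invariant, for then $\omega_{L_d} = d\Theta_{L_d}^-$ is invariant because exterior differentiation commutes with pullback.

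First I would verify the naturality identity $(l^{Q\times Q}_g)^* \Theta_{L_d}^- = \Theta_{L_d\circ l^{Q\times Q}_g}^-$ for every $g\in\SG$. This is a direct chain-rule computation: evaluating the pullback on $(\delta q_0,\delta q_1)$ and using $dl^{Q\times Q}_g(q_0,q_1)(\delta q_0,\delta q_1) = (dl^Q_g(q_0)(\delta q_0),dl^Q_g(q_1)(\delta q_1))$ yields $-D_1L_d(l^Q_g(q_0),l^Q_g(q_1))(dl^Q_g(q_0)(\delta q_0))$, which by the chain rule is precisely $-D_1(L_d\circ l^{Q\times Q}_g)(q_0,q_1)(\delta q_0)$, that is, $\Theta_{L_d\circ l^{Q\times Q}_g}^-(q_0,q_1)(\delta q_0,\delta q_1)$. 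The same computation applies verbatim to $\Theta_{L_d}^+$.

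Then I would invoke the hypothesis that $\SG$ is a symmetry group of $(Q,L_d)$, namely $L_d\circ l^{Q\times Q}_g = L_d$ for all $g\in\SG$. Substituting this into the naturality identity gives $(l^{Q\times Q}_g)^*\Theta_{L_d}^- = \Theta_{L_d}^-$, so $\Theta_{L_d}^-$ is $\SG$-invariant. Taking exterior derivatives,
\[
(l^{Q\times Q}_g)^*\omega_{L_d} = (l^{Q\times Q}_g)^* d\Theta_{L_d}^- = d\bigl((l^{Q\times Q}_g)^*\Theta_{L_d}^-\bigr) = d\Theta_{L_d}^- = \omega_{L_d},
\]
which is the asserted invariance.

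Since the argument uses only the $\SG$-invariance of $L_d$ together with the chain rule, there is no genuine obstacle. The one point requiring mild care is that $\omega_{L_d}$ is defined globally as a closed $2$-form while its nondegeneracy (hence the ``symplectic'' qualifier) holds only on an open neighbourhood of the diagonal $\Delta_Q$. The invariance statement, however, is an identity of $2$-forms and holds wherever $\omega_{L_d}$ is defined; restricting to such a neighbourhood, which may be taken $\SG$-invariant because the diagonal action fixes $\Delta_Q$, causes no difficulty.
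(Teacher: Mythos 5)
Your proof is correct and is essentially the argument the paper invokes: the paper's ``proof'' is just a citation to the beginning of page~375 of Marsden--West, where exactly this computation appears (the $\SG$-invariance of $L_d$ forces invariance of the discrete Lagrangian one-forms $\Theta_{L_d}^\pm$, and $\omega_{L_d}=d\Theta_{L_d}^\pm$ then inherits the invariance since pullback commutes with $d$). Your closing remark about restricting to a $\SG$-invariant neighbourhood of the diagonal is a sensible extra precaution consistent with the paper's conventions.
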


\begin{proof}
  See the argument at the beginning of page 375
  in~\cite{ar:marsden_west-discrete_mechanics_and_variational_integrators}.
\end{proof}

In particular, a DLPS $\mathcal{M}=(Q,L_d,\IVCM)$ that comes from a
DMS $(Q,L_d)$ as in
Example~\ref{ex:mechanical_system_as_generalized_mechanical_system},
carries a natural Poisson structure $\{,\}_{Q\times Q}$ arising from
the symplectic structure $\omega_{L_d}$ on $Q\times Q$. It is well
known that $F_{\mathcal{M}}^*(\omega_{L_d}) = \omega_{L_d}$ (see
Section 1.3.2
in~\cite{ar:marsden_west-discrete_mechanics_and_variational_integrators}).
Hence $F_{\mathcal{M}}$ is a Poisson map and, consequently,
$\{,\}_{Q\times Q}$ is a Poisson structure of $\mathcal{M}$.

When $\SG$ is a symmetry group of $(Q,L_d)$, it is a symmetry group of
$\mathcal{M}$ and, by
Lemma~\ref{le:diagonal_actions_are_symplectic_for_DMS}, it acts on
$C'(Q)=Q\times Q$ by Poisson maps for $\{,\}_{Q\times Q}$. Fixing a
discrete connection $\DC$ on $\pi^{Q,\SG}:Q\rightarrow Q/\SG$, by
Proposition~\ref{prop:reduced_poisson_structure}, the reduced system
$\mathcal{M}/(\SG,\DC)$ has a natural Poisson structure induced by
$\{,\}_{Q\times Q}$ and $\Upsilon_\DC$ is a Poisson map.

We conclude that all DLPSs obtained from a DMS by a finite number of
reductions have natural Poisson structures that make the corresponding
reduction morphisms Poisson maps.


\section{Appendix}
\label{sec:appendix}

The purpose of this Appendix is to review some basic definitions and
standard results, using a notation that is compatible with the rest of
the paper. Sections~\ref{sec:group_actions_on_manifolds}
and~\ref{sec:bundles} contain well known
material. Section~\ref{sec:group_actions_on_bundles} contains some
nonstandard material.


\subsection{Group actions on manifolds}
\label{sec:group_actions_on_manifolds}

A continuous map $f:X\rightarrow Y$ between topological spaces is
\jdef{proper} if $f^{-1}(K)$ is compact for every $K\subset Y$
compact. A $\SG$-action $l^M$ of a Lie group $\SG$ on a manifold $M$
is \jdef{proper} if the map $L^M:\SG\times M\rightarrow M\times M$
defined by $L^M(g,m) := (l^M_g(m),m)$ is proper. The following result
gives a characterization of properness in terms of sequences that is
very convenient in practice.

\begin{proposition}\label{prop:properness_and_sequences}
  Let $M$ be a manifold and $\SG$ be a Lie group acting on $M$ by
  $l^M$. Assume that $l^M$ has the property that for any convergent
  sequence $(m_j)_{j\in\N}$ in $M$ and sequence $(g_j)_{j\in\N}$ in
  $\SG$ such that the sequence $(l^M_{g_j}(m_j))_{j\in\N}$ is
  convergent, there exists a convergent subsequence of
  $(g_j)_{j\in\N}$. Then $l^M$ is a proper action. Conversely, if the
  action $l^M$ is proper, then the property holds.
\end{proposition}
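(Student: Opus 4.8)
The plan is to exploit that manifolds (and Lie groups) are second countable and hence metrizable, so that throughout compactness coincides with sequential compactness; the entire argument then reduces to extracting convergent subsequences. Recall that properness of $l^M$ means precisely that $L^M:\SG\times M\rightarrow M\times M$, $L^M(g,m):=(l^M_g(m),m)$, pulls compact subsets of $M\times M$ back to compact subsets of $\SG\times M$.

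First I would prove the converse implication, that properness yields the sequence property. Suppose $L^M$ is proper and take $(m_j)_{j\in\N}\to m$ in $M$ together with a sequence $(g_j)_{j\in\N}$ in $\SG$ such that $(l^M_{g_j}(m_j))_{j\in\N}\to p$ in $M$. The set $K:=\{(l^M_{g_j}(m_j),m_j):j\in\N\}\cup\{(p,m)\}$ is a convergent sequence together with its limit, hence compact in $M\times M$. Since $L^M(g_j,m_j)=(l^M_{g_j}(m_j),m_j)\in K$, all the points $(g_j,m_j)$ lie in the compact set $(L^M)^{-1}(K)$. By sequential compactness there is a subsequence $(g_{j_k},m_{j_k})$ that converges in $\SG\times M$; in particular $(g_{j_k})$ converges, which is exactly the desired subsequence.

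For the direct implication I would assume the sequence property and show that $(L^M)^{-1}(K)$ is compact for every compact $K\subset M\times M$. As $\SG\times M$ is metrizable, it suffices to verify sequential compactness. Given a sequence $(g_j,m_j)$ in $(L^M)^{-1}(K)$, its images $(l^M_{g_j}(m_j),m_j)$ lie in the (sequentially) compact set $K$, so after passing to a subsequence we may assume $m_j\to m$ and $l^M_{g_j}(m_j)\to p$ with $(p,m)\in K$. The sequence property now supplies a further subsequence along which $g_{j_k}\to g$ in $\SG$, whence $(g_{j_k},m_{j_k})\to(g,m)$. Continuity of $L^M$ gives $L^M(g,m)=(p,m)$, and since $K$ is closed this limit lies in $K$, so $(g,m)\in(L^M)^{-1}(K)$. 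Thus every sequence in $(L^M)^{-1}(K)$ has a subsequence converging inside $(L^M)^{-1}(K)$, establishing compactness.

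The only genuine subtlety—and the point where the standing manifold hypotheses are essential—is the passage between compactness and sequential compactness. I would therefore make explicit that $M$, $\SG$, and their products are second countable and metrizable, so that the two notions agree and the subsequence extractions above are legitimate; with that observation in place the remainder is routine subsequence bookkeeping.
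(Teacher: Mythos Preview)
Your proof is correct. The paper does not actually give a proof of this proposition at all: it simply refers the reader to Proposition~9.13 in Lee's \emph{Introduction to Smooth Manifolds}. Your argument is the standard one that Lee presents, so you have effectively written out what the citation points to; there is nothing to compare beyond that.
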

\begin{proof}
  See Proposition 9.13
  in~\cite{bo:lee-introduction_to_smooth_manifolds}.
\end{proof}

\begin{theorem}\label{thm:quotient_manifolds}
  Let $l^M$ be a smooth, free and proper action of the Lie group $\SG$
  on $M$. Then, the quotient space $M/\SG$ is a topological manifold
  of dimension $\dim(M)-\dim(\SG)$. In addition, $M/\SG$ has a unique
  smooth structure with the property that the quotient map
  $\pi^{M,\SG}:M\rightarrow M/\SG$ is a smooth
  submersion. Furthermore, $\pi^{M,\SG}:M\rightarrow M/\SG$ is a
  principal $\SG$-bundle (Definition~\ref{def:principal_G_bundle}).
\end{theorem}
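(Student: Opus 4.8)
The statement is the classical Quotient Manifold Theorem, so the plan is to follow the standard slice-based argument (see, for instance, Theorem 21.10 in~\cite{bo:lee-introduction_to_smooth_manifolds}), being careful to extract from the hypotheses exactly where smoothness, freeness and properness enter. First I would settle the point-set topology of $M/\SG$ with the quotient topology. The quotient map $\pi^{M,\SG}$ is open, since for open $U\subset M$ the saturation $(\pi^{M,\SG})^{-1}(\pi^{M,\SG}(U)) = \bigcup_{g\in\SG} l^M_g(U)$ is a union of open sets, each $l^M_g$ being a homeomorphism. Because $M$ is second countable and $\pi^{M,\SG}$ is open and surjective, $M/\SG$ is second countable. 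For the Hausdorff property I would invoke properness: the orbit relation $\mathcal{R}:=\{(l^M_g(m),m):g\in\SG,\ m\in M\}$ is the image of the proper map $L^M$ of Section~\ref{sec:group_actions_on_manifolds}, and a proper continuous map into a locally compact Hausdorff space is closed, so $\mathcal{R}$ is closed in $M\times M$; a quotient by an open projection whose relation is closed is Hausdorff.

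Next I would build charts from slices. Fix $m\in M$. Because $l^M$ is free and proper, each orbit $\SG\cdot m$ is an embedded submanifold of dimension $\dim(\SG)$; choose an embedded submanifold (a \emph{slice}) $S\ni m$ of dimension $\dim(M)-\dim(\SG)$ transverse to the orbit, so that $T_mM=T_m(\SG\cdot m)\oplus T_mS$. The map $\Theta:\SG\times S\to M$, $\Theta(g,s):=l^M_g(s)$, then has invertible differential at $(e_\SG,m)$ and hence is a local diffeomorphism there. The crux is to shrink $S$ to $S'$ so that $\Theta|_{\SG\times S'}$ is an embedding onto an open saturated set and $S'$ meets each orbit in at most one point; this is exactly where properness is needed, and I would establish it by contradiction using the sequential characterization of Proposition~\ref{prop:properness_and_sequences}. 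Once this is done, $\pi^{M,\SG}|_{S'}$ is a homeomorphism onto an open subset of $M/\SG$, and the inverses of these maps form an atlas whose transition maps are smooth (they factor through $\Theta$ and the projections), endowing $M/\SG$ with a smooth structure of dimension $\dim(M)-\dim(\SG)$.

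In these adapted charts $\pi^{M,\SG}$ becomes the projection $\SG\times S'\to S'$, so it is a smooth surjective submersion; uniqueness of the smooth structure follows because a surjective submersion admits local sections, which forces the identity of $M/\SG$ to be a diffeomorphism between any two such structures. Finally, the diffeomorphisms $\Theta:\SG\times S'\xrightarrow{\sim}(\pi^{M,\SG})^{-1}(\pi^{M,\SG}(S'))$ are $\SG$-equivariant for left multiplication on the first factor, so combining them with $\pi^{M,\SG}|_{S'}$ yields local trivializations exhibiting $\pi^{M,\SG}$ as a principal $\SG$-bundle in the sense of Definition~\ref{def:principal_G_bundle}.

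I expect the slice-shrinking step --- proving that $\Theta$ becomes injective with open, saturated image --- to be the main obstacle, as it is the only place requiring a genuinely global use of properness rather than a formal or infinitesimal manipulation; all remaining verifications (openness of $\pi^{M,\SG}$, smoothness of transitions, the submersion property, equivariance of the trivializations) are routine once the local product structure near each orbit is in hand.
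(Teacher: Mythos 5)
Your proposal is correct and is essentially the same argument the paper relies on: the paper proves this theorem simply by citing Theorem 9.16 of~\cite{bo:lee-introduction_to_smooth_manifolds}, and your slice-based outline (openness of $\pi^{M,\SG}$, Hausdorffness via closedness of the orbit relation from properness, the tube/slice construction with the shrinking step justified by the sequential characterization of properness, and the resulting equivariant local trivializations) is precisely the proof given there. You also correctly identify the slice-shrinking step as the only point where properness is used globally rather than formally.
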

\begin{proof}
  See Theorem 9.16 in~\cite{bo:lee-introduction_to_smooth_manifolds}.
\end{proof}

\begin{proposition}\label{prop:quotient_of_equivariant_maps}
  Let $\SG$ be a Lie group acting smoothly on the manifolds $M$ and
  $N$ in such a way that $\pi^{M,\SG}:M\rightarrow M/\SG$ and
  $\pi^{N,\SG}:N\rightarrow N/\SG$ are smooth submersions (in
  particular, $M/\SG$ and $N/\SG$ are smooth manifolds). If
  $f:M\rightarrow N$ is a smooth $\SG$-equivariant map, then there is
  a unique smooth map $\check{f}:M/\SG \rightarrow N/\SG$ such that
  $\pi^{N,\SG}\circ f = \check{f}\circ \pi^{M,\SG}$.
\end{proposition}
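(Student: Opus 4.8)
The plan is to construct $\check{f}$ first as a map of sets, verify it is well defined, and only then upgrade it to a smooth map by appealing to the characteristic property of surjective submersions. Note that the hypotheses here are exactly those produced, for instance, by Theorem~\ref{thm:quotient_manifolds} in the free and proper case, so the argument will not need any further information about the action beyond the fact that $\pi^{M,\SG}$ and $\pi^{N,\SG}$ are smooth submersions.

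First I would observe that the composite $\pi^{N,\SG}\circ f : M\rightarrow N/\SG$ is $\SG$-invariant. Indeed, by the $\SG$-equivariance of $f$ we have $f(l^M_g(m)) = l^N_g(f(m))$ for all $g\in\SG$ and $m\in M$, and since $\pi^{N,\SG}$ is constant on $\SG$-orbits, it follows that $\pi^{N,\SG}(f(l^M_g(m))) = \pi^{N,\SG}(l^N_g(f(m))) = \pi^{N,\SG}(f(m))$. Hence $\pi^{N,\SG}\circ f$ is constant on the fibers of $\pi^{M,\SG}$, which are precisely the $\SG$-orbits in $M$. Consequently there is a unique set map $\check{f}:M/\SG\rightarrow N/\SG$ with $\check{f}\circ\pi^{M,\SG} = \pi^{N,\SG}\circ f$; uniqueness is forced by the surjectivity of $\pi^{M,\SG}$.

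The remaining, and essential, point is the smoothness of $\check{f}$. Here I would invoke the standard descent property of surjective smooth submersions (see~\cite{bo:lee-introduction_to_smooth_manifolds}): if $\pi:M\rightarrow P$ is a surjective smooth submersion and $F:M\rightarrow N'$ is smooth and constant on the fibers of $\pi$, then the induced set map $P\rightarrow N'$ is automatically smooth. Applying this with $\pi=\pi^{M,\SG}$ (a surjective submersion by hypothesis), $P=M/\SG$, and $F=\pi^{N,\SG}\circ f$ (smooth as a composite of smooth maps, and fiber-constant by the previous paragraph), we conclude that $\check{f}$ is smooth. I expect this invocation to be the only substantive step; the $\SG$-invariance check, the set-theoretic factorization, and the uniqueness are all routine and formal.
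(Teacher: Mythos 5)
Your proof is correct and follows essentially the same route as the paper, whose entire proof is the remark that this is ``an application of the local description of submersions'': the descent property you invoke from~\cite{bo:lee-introduction_to_smooth_manifolds} is exactly the statement obtained from that local description (local sections of $\pi^{M,\SG}$ exhibit $\check{f}$ locally as $\pi^{N,\SG}\circ f$ composed with a smooth section, hence smooth). The preliminary steps --- $\SG$-invariance of $\pi^{N,\SG}\circ f$, the set-theoretic factorization, and uniqueness from surjectivity of $\pi^{M,\SG}$ --- are all handled correctly.
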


\begin{proof}
  An application of the local description of submersions.
\end{proof}

\begin{corollary}\label{cor:quotient_maps-G_equivariant_map}
  Let $\SG$ be a Lie group acting smoothly, freely and properly on the
  manifolds $M$ and $N$. If $f:M\rightarrow N$ is a smooth
  $\SG$-equivariant map, then there is a unique smooth map
  $\check{f}:M/\SG \rightarrow N/\SG$ such that $\pi^{N,\SG}\circ f =
  \check{f}\circ \pi^{M,\SG}$.
\end{corollary}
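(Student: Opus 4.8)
The final statement to prove is Corollary~\ref{cor:quotient_maps-G_equivariant_map}, which specializes Proposition~\ref{prop:quotient_of_equivariant_maps} to the case of free and proper actions. Let me sketch the proof.

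The corollary follows immediately from the preceding proposition once we verify that its hypotheses are met. Proposition~\ref{prop:quotient_of_equivariant_maps} requires that the quotient maps $\pi^{M,\SG}:M\rightarrow M/\SG$ and $\pi^{N,\SG}:N\rightarrow N/\SG$ be smooth submersions (with $M/\SG$ and $N/\SG$ smooth manifolds). The extra hypotheses in the corollary are that the $\SG$-actions on $M$ and $N$ are smooth, free, and proper. So the task is to bridge this gap: free and proper actions give us the submersion property automatically.

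This is exactly what Theorem~\ref{thm:quotient_manifolds} provides. The plan is to apply that theorem to each of the two actions. For the $\SG$-action on $M$: since $l^M$ is smooth, free, and proper, Theorem~\ref{thm:quotient_manifolds} guarantees that $M/\SG$ carries a unique smooth manifold structure making $\pi^{M,\SG}:M\rightarrow M/\SG$ a smooth submersion. The identical argument applied to the $\SG$-action on $N$ shows that $N/\SG$ is a smooth manifold and $\pi^{N,\SG}:N\rightarrow N/\SG$ is a smooth submersion. Having established these two facts, all the hypotheses of Proposition~\ref{prop:quotient_of_equivariant_maps} are satisfied.

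With both quotient maps now known to be smooth submersions, I would invoke Proposition~\ref{prop:quotient_of_equivariant_maps} directly on the smooth $\SG$-equivariant map $f:M\rightarrow N$. This yields the unique smooth map $\check{f}:M/\SG\rightarrow N/\SG$ satisfying $\pi^{N,\SG}\circ f = \check{f}\circ \pi^{M,\SG}$, which is precisely the conclusion of the corollary. There is no real obstacle here: this is a routine corollary whose entire content is recognizing that free plus proper implies the submersion hypothesis, so the proof is essentially a one-line deduction chaining Theorem~\ref{thm:quotient_manifolds} into Proposition~\ref{prop:quotient_of_equivariant_maps}.

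\begin{proof}
  Since the $\SG$-actions on $M$ and $N$ are smooth, free, and proper,
  Theorem~\ref{thm:quotient_manifolds} implies that $M/\SG$ and
  $N/\SG$ are smooth manifolds and that the quotient maps
  $\pi^{M,\SG}:M\rightarrow M/\SG$ and $\pi^{N,\SG}:N\rightarrow N/\SG$
  are smooth submersions. Thus the hypotheses of
  Proposition~\ref{prop:quotient_of_equivariant_maps} are satisfied
  and the result follows by applying that proposition to the smooth
  $\SG$-equivariant map $f:M\rightarrow N$.
\end{proof}
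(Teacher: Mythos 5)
Your proof is correct and is exactly the deduction the paper intends: the corollary appears immediately after Proposition~\ref{prop:quotient_of_equivariant_maps} with no written proof, precisely because Theorem~\ref{thm:quotient_manifolds} supplies the submersion hypotheses and the proposition then applies verbatim. Nothing to add.
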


\begin{corollary}\label{cor:quotient_maps-G_invariant_map}
  Let $\SG$ be a Lie group acting smoothly, freely and properly on the
  manifold $M$. If $f:M\rightarrow N$ is a smooth $\SG$-invariant map,
  then there is a unique smooth map $\check{f}:M/\SG \rightarrow N$ such
  that $f = \check{f}\circ \pi^{M,\SG}$.
\end{corollary}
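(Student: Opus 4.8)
The plan is to reduce the statement to Proposition~\ref{prop:quotient_of_equivariant_maps} by endowing the target $N$ with the trivial $\SG$-action. First I would declare $l^N_g := id_N$ for every $g\in\SG$; then the orbits in $N$ are single points, so $N/\SG$ is canonically identified with $N$ itself and the quotient map $\pi^{N,\SG}:N\rightarrow N/\SG$ becomes the identity $id_N$, which is trivially a smooth submersion. It is worth flagging immediately that this trivial action is in general neither free nor proper, so Corollary~\ref{cor:quotient_maps-G_equivariant_map} cannot be invoked directly; this is exactly why one must route the argument through the more permissive Proposition~\ref{prop:quotient_of_equivariant_maps}, whose hypotheses ask only that the two quotient maps be smooth submersions.

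Next I would check that the $\SG$-invariance of $f$ is precisely $\SG$-equivariance for these two actions. The invariance hypothesis reads $f\circ l^M_g = f$ for all $g\in\SG$, whereas equivariance for the trivial action on $N$ reads $l^N_g\circ f = f\circ l^M_g$; since $l^N_g = id_N$, the two conditions coincide. For the source side, Theorem~\ref{thm:quotient_manifolds} guarantees that, because $l^M$ is smooth, free and proper, $M/\SG$ is a smooth manifold and $\pi^{M,\SG}:M\rightarrow M/\SG$ is a smooth submersion.

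With both quotient maps being smooth submersions and $f$ being $\SG$-equivariant, Proposition~\ref{prop:quotient_of_equivariant_maps} yields a unique smooth map $\check{f}:M/\SG\rightarrow N/\SG$ satisfying $\pi^{N,\SG}\circ f = \check{f}\circ\pi^{M,\SG}$. Transporting back through the identifications $N/\SG = N$ and $\pi^{N,\SG} = id_N$, this equality becomes $f = \check{f}\circ\pi^{M,\SG}$, the desired factorization; the uniqueness of $\check{f}$ is inherited from that in Proposition~\ref{prop:quotient_of_equivariant_maps} (equivalently, it follows from the surjectivity of $\pi^{M,\SG}$). I expect no genuine obstacle here, as the content is entirely bookkeeping; the only point demanding care is the one noted above, namely that the trivial action on $N$ fails to be free, so one must appeal to Proposition~\ref{prop:quotient_of_equivariant_maps} rather than to Corollary~\ref{cor:quotient_maps-G_equivariant_map}.
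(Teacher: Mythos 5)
Your proof is correct and follows the route the paper intends: the corollary is stated without proof precisely because it is meant to follow from Proposition~\ref{prop:quotient_of_equivariant_maps} by equipping $N$ with the trivial action, and you correctly note that this is why one cannot instead cite Corollary~\ref{cor:quotient_maps-G_equivariant_map} (the trivial action being neither free nor, in general, proper). Nothing to add.
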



\subsection{Bundles}
\label{sec:bundles}

\begin{definition}\label{def:fiber_bundle}
  A \jdef{fiber bundle} is a quadruple $(E,M,\phi,F)$ where $E$, $M$
  and $F$ are smooth manifolds and $\phi:E\rightarrow M$ is a smooth
  map such that each $m\in M$ has a neighborhood $U\subset M$ and a
  diffeomorphism $\Phi_U:\phi^{-1}(U)\rightarrow U\times F$ that makes
  the following diagram commutative.
  \begin{equation}\label{eq:fiber_bundle_diagram}
    \xymatrix{ {\phi^{-1}(U)} \ar[r]^{\Phi_U} \ar[d]_{\phi} & {U\times
        F} \ar[dl]^{p_1} \\ {U} & {}}
  \end{equation}
  In this case, $E$, $M$ and $F$ are called the \jdef{total space},
  \jdef{base space} and \jdef{fiber} of the fiber bundle. A pair
  $(U,\Phi_U)$ as above is called a \jdef{trivializing chart} of the
  bundle. It is convenient to denote a fiber bundle $(E,M,\phi,F)$ by
  $E$ or $\phi$.
\end{definition}

If $(E,M,\phi,F)$ is a fiber bundle, given two of its trivializing
charts $(U_\alpha,\Phi_\alpha)$ and $(U_\beta,\Phi_\beta)$ such that
$U_{\alpha\beta} := U_\alpha\cap U_\beta\neq \emptyset$, we can write
$(\Phi_\alpha \circ \Phi_\beta^{-1})(m,f) =
(m,\Phi_{\alpha\beta}(m)(f))$ for all $m\in U_{\alpha\beta}$ and $f\in
F$, for a smooth map $\Phi_{\alpha\beta}:U_{\alpha\beta}\rightarrow
\Diff(F)$ known as a \jdef{transition function} of the bundle. The
fiber bundle is called a \jdef{$\SG$-bundle} for a Lie group $\SG$ if
there is a right $\SG$-action on $F$ denoted by $r^F$ such that all
transition functions are of the form $\Phi_{\alpha\beta}(m) =
r^F_{\chi_{\alpha\beta}(m)}$ for a family of smooth functions
$\chi_{\alpha\beta}:U_{\alpha\beta}\rightarrow \SG$ that satisfy
$\chi_{\beta\gamma}(m)\chi_{\alpha\beta}(m) = \chi_{\alpha\gamma}(m)$
for all $m\in U_\alpha\cap U_\beta \cap U_\gamma \neq \emptyset$.

\begin{definition}\label{def:principal_G_bundle}
  Let $(E,M,\phi,\SG)$ be a $\SG$-bundle such that $\SG$ acts on the
  fiber $\SG$ by right multiplication. Then, $E$ is called a
  \jdef{principal $\SG$-bundle} over $M$.
\end{definition}

\begin{theorem}\label{thm:ppal_bundles_via_embedding}
  Let $(E,M,\phi,\SG)$ be a principal $\SG$-bundle. Then $\phi$ is a
  surjective submersion, $\SG$ acts freely on the left on $E$ and the
  $\SG$-orbits for this action are of the form $\phi^{-1}(m)$ for
  $m\in M$. Conversely, if $\phi:E\rightarrow M$ is a surjective
  submersion and the Lie group $\SG$ acts freely on the left on $E$ in
  such a way that the $\SG$ orbits are of the form $\phi^{-1}(m)$ for
  $m\in M$, then $(E,M,\phi,\SG)$ is a principal $\SG$-bundle.
\end{theorem}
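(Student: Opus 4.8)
The plan is to prove the two implications separately, reducing everything in each case to the local model $U\times\SG$ on which $\SG$ acts by left multiplication on the second factor; the compatibility of left and right multiplication in $\SG$ is what glues these local pictures together. For the direct implication, assume $(E,M,\phi,\SG)$ is a principal $\SG$-bundle. That $\phi$ is a surjective submersion is read off immediately from any trivializing chart $\Phi_\alpha:\phi^{-1}(U_\alpha)\to U_\alpha\times\SG$, since $\Phi_\alpha$ intertwines $\phi$ with $p_1$. To build the left action I would set $l^E_g(\Phi_\alpha^{-1}(m,h)):=\Phi_\alpha^{-1}(m,gh)$ in each chart; this is smooth because in the model the action map is $(g,(m,h))\mapsto(m,gh)$. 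The only point needing verification is chart-independence: as the transition functions act on the fibre by right multiplication $r^\SG_{\chi_{\alpha\beta}(m)}$, and left multiplication by $g$ commutes with right multiplication in $\SG$, the local definitions agree on overlaps. Freeness is clear in the model ($gh=h\Rightarrow g=e$), the action preserves fibres, and left multiplication is transitive on each fibre $\cong\SG$, so the $\SG$-orbits are exactly the sets $\phi^{-1}(m)$.

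For the converse, assume $\phi:E\rightarrow M$ is a surjective submersion carrying a free left $\SG$-action whose orbits are the fibres $\phi^{-1}(m)$. First I would use that a submersion admits local sections: every $m\in M$ has a neighborhood $U$ with a smooth $s:U\rightarrow E$ satisfying $\phi\circ s=id_U$. Given such $s$, define $\Phi_U^{-1}:U\times\SG\rightarrow\phi^{-1}(U)$ by $\Phi_U^{-1}(m,g):=l^E_g(s(m))$. Since orbits equal fibres and the action is free, $\Phi_U^{-1}$ is a bijection onto $\phi^{-1}(U)$ intertwining $p_1$ with $\phi$; the substance is to show it is a diffeomorphism, which is the main obstacle of the whole argument.

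I would clear this obstacle as follows. Each fibre $\phi^{-1}(m)$, being a fibre of a submersion, is an embedded submanifold, and a free action has injective immersive orbit maps (the differential of $g\mapsto l^E_g(x)$ has trivial kernel because stabilizers are trivial). Hence the orbit map $\omega_x:\SG\rightarrow\phi^{-1}(m)$ is an injective immersion onto the embedded orbit $\phi^{-1}(m)$; as source and target then have the same dimension $\dim\SG$, it is a local diffeomorphism and a bijection, so a diffeomorphism. This also forces $\dim E=\dim M+\dim\SG$, so $\Phi_U^{-1}$ is equidimensional. To see $d\Phi_U^{-1}(m,g)$ is injective I would apply $d\phi$, which annihilates the base direction and forces $\delta m=0$; the residual derivative is $d\omega_{s(m)}(g)$, injective by the previous step. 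Thus $\Phi_U^{-1}$ is a local diffeomorphism and a bijection, hence a diffeomorphism, and $\Phi_U$ is a trivializing chart. Finally, for two sections $s_\alpha,s_\beta$ I would write $s_\beta(m)=l^E_{\chi_{\alpha\beta}(m)}(s_\alpha(m))$ with $\chi_{\alpha\beta}(m)=p_2(\Phi_\alpha(s_\beta(m)))$ smooth, and the identity $l^E_h\circ l^E_{\chi_{\alpha\beta}(m)}=l^E_{h\chi_{\alpha\beta}(m)}$ shows that $\Phi_\alpha\circ\Phi_\beta^{-1}$ is right multiplication by $\chi_{\alpha\beta}(m)$, with the cocycle condition following from associativity. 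This exhibits $(E,M,\phi,\SG)$ as a $\SG$-bundle with fibre $\SG$ acted on by right multiplication, i.e.\ as a principal $\SG$-bundle.
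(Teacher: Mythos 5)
Your proof is correct and follows the standard route: the forward direction is the direct computation the paper alludes to, and your converse (local sections of the submersion, orbit maps as diffeomorphisms onto the fibres, right-multiplication transition functions with the cocycle identity) is precisely the argument behind Michor's Lemma 18.3, which is all the paper cites for this step. The only point worth making explicit is the equality $\dim\SG=\dim E-\dim M$: it is not automatic from the preceding sentence but follows because the orbit map $\omega_x$ is an immersion (so $\dim\SG\le\dim\phi^{-1}(m)$) and is surjective onto the fibre (so, by Sard's theorem, $\dim\SG\ge\dim\phi^{-1}(m)$).
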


\begin{proof}
  The first part is direct computation. See Lemma 18.3
  in~\cite{bo:michor-topics_in_differential_geometry} for the converse
  (in the right action case).
\end{proof}

\begin{remark}
  All principal $\SG$-bundles are left $\SG$-spaces by
  Theorem~\ref{thm:ppal_bundles_via_embedding}. This follows,
  eventually, from the fact that our $\SG$-spaces have right
  $\SG$-actions on the fibers. The opposite choices are common in most
  of the fiber bundle literature
  (see~\cite{bo:husemoller-fibre_bundles}). Our choice is the standard
  one in Geometric Mechanics
  (see~\cite{bo:cendra_marsden_ratiu-lagrangian_reduction_by_stages}).
\end{remark}

\begin{corollary}\label{cor:quotient_manifolds-fiber_bundle}
  In the context of Theorem~\ref{thm:quotient_manifolds},
  $\pi^{M,\SG}:M\rightarrow M/\SG$ is a principal $\SG$-bundle.
\end{corollary}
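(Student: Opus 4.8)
The plan is to deduce the corollary directly from the converse direction of Theorem~\ref{thm:ppal_bundles_via_embedding}, which characterizes principal $\SG$-bundles as surjective submersions whose fibers are exactly the orbits of a free left $\SG$-action. Concretely, I would check that the three hypotheses of that converse are satisfied by the quotient map $\pi^{M,\SG}:M\rightarrow M/\SG$ under the standing assumption that $l^M$ is smooth, free and proper.

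First, I would invoke Theorem~\ref{thm:quotient_manifolds} to guarantee that $M/\SG$ carries a smooth structure for which $\pi^{M,\SG}$ is a smooth submersion; surjectivity is automatic, since $\pi^{M,\SG}$ is the canonical projection onto the orbit space. Second, the action $l^M$ is free by hypothesis and is a left action, so it provides precisely the required free left $\SG$-action on $M$. Third, I would observe that, by the very construction of the orbit space, the fiber $(\pi^{M,\SG})^{-1}(\bar m)$ over a point $\bar m = \pi^{M,\SG}(m)$ coincides with the $\SG$-orbit of $m$; hence the orbits of $l^M$ are exactly the fibers of $\pi^{M,\SG}$.

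With these three facts established, the converse part of Theorem~\ref{thm:ppal_bundles_via_embedding} applies verbatim and yields that $(M,M/\SG,\pi^{M,\SG},\SG)$ is a principal $\SG$-bundle, which is the assertion of the corollary. I expect no genuine obstacle here: the substantive content has already been isolated in Theorems~\ref{thm:quotient_manifolds} and~\ref{thm:ppal_bundles_via_embedding}, and the only point deserving a word of care is the bookkeeping that the fibers of the quotient map are the orbits---a tautology once one recalls that $M/\SG$ is, by definition, the set of $\SG$-orbits in $M$.
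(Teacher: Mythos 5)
Your argument is correct. Note, though, that the paper itself offers no proof of this corollary: the final sentence of Theorem~\ref{thm:quotient_manifolds} already asserts that $\pi^{M,\SG}:M\rightarrow M/\SG$ is a principal $\SG$-bundle, so the corollary is recorded as an immediate restatement (for ease of reference elsewhere in the text). What you do instead is re-derive that final clause from the earlier clauses of Theorem~\ref{thm:quotient_manifolds} (existence of the smooth structure on $M/\SG$ making $\pi^{M,\SG}$ a submersion) together with the converse direction of Theorem~\ref{thm:ppal_bundles_via_embedding}. Your three verifications are all sound: surjectivity of the canonical projection, freeness of $l^M$ by hypothesis, and the tautological identification of fibers of $\pi^{M,\SG}$ with $\SG$-orbits. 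The only thing your route buys is self-containedness --- it makes explicit which characterization of principal bundles is being invoked and shows that the properness hypothesis enters only through the quotient-manifold theorem, not through the bundle criterion itself. The cost is nil; both readings are one-liners.
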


\begin{lemma}\label{le:G_actions_on_ppal_bundles_are_proper}
  Let $\psi:M\rightarrow R$ be a principal $\SG$-bundle. Then the
  $\SG$-action on $M$ is proper. If, furthermore, $(E,M,\phi,F)$ is a
  fiber bundle and $\SG$ acts on $E$ by $l^E$ making $\phi$
  equivariant, then $l^E$ is proper.
\end{lemma}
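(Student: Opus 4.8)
The plan is to prove both assertions by means of the sequential characterization of properness furnished by Proposition~\ref{prop:properness_and_sequences}; this reduces each statement to a concrete claim about convergence of sequences in \SG and, in particular, lets me avoid working directly with the defining map $L^M(g,m)=(l^M_g(m),m)$. Throughout I will use that, by Theorem~\ref{thm:ppal_bundles_via_embedding}, the left \SG-action associated to the principal bundle $\psi:M\rightarrow R$ is free, that $\psi$ is a surjective submersion, and that the orbits of this action are exactly the fibers $\psi^{-1}(r)$; in other words $\psi$ is \SG-invariant, so $\psi(l^M_g(m))=\psi(m)$ for all $g\in\SG$ and $m\in M$.

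For the first assertion, suppose $m_j\rightarrow m$ in $M$ and $g_j\in\SG$ are such that $l^M_{g_j}(m_j)\rightarrow m'$; I must produce a convergent subsequence of $(g_j)$. Applying $\psi$ and using its invariance and continuity, $\psi(m')=\lim_j \psi(l^M_{g_j}(m_j))=\lim_j \psi(m_j)=\psi(m)=:r$, so $m$ and $m'$ lie over the same base point. I will then localize: pick a smooth local section $s:U\rightarrow M$ of $\psi$ on an open set $U\ni r$, which exists because $\psi$ is a surjective submersion. Since the action is free and its orbits are the fibers, the map $U\times\SG\rightarrow\psi^{-1}(U)$, $(u,g)\mapsto l^M_g(s(u))$, is a diffeomorphism, and in the resulting coordinates the \SG-action becomes left translation on the \SG-factor. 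For $j$ large, both $m_j$ and $l^M_{g_j}(m_j)$ lie in $\psi^{-1}(U)$ (their images under $\psi$ converge to $r\in U$), so I may write $m_j=l^M_{h_j}(s(r_j))$ with $r_j:=\psi(m_j)\rightarrow r$ and $h_j\rightarrow h$, whence $l^M_{g_j}(m_j)=l^M_{g_j h_j}(s(r_j))\rightarrow m'=l^M_{h'}(s(r))$. Reading off the \SG-coordinate through the (continuous) inverse trivialization gives $g_j h_j\rightarrow h'$, and continuity of the group operations yields $g_j=(g_j h_j)h_j^{-1}\rightarrow h'h^{-1}$. Thus the whole sequence $(g_j)$ converges, and by Proposition~\ref{prop:properness_and_sequences} the action $l^M$ is proper.

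The second assertion follows quickly from the first together with the equivariance of $\phi$. Let $\epsilon_j\rightarrow\epsilon$ in $E$ and $g_j\in\SG$ with $l^E_{g_j}(\epsilon_j)\rightarrow\epsilon'$. Since $\phi$ is continuous and \SG-equivariant, $\phi(\epsilon_j)\rightarrow\phi(\epsilon)$ in $M$ and $l^M_{g_j}(\phi(\epsilon_j))=\phi(l^E_{g_j}(\epsilon_j))\rightarrow\phi(\epsilon')$. Applying the properness of $l^M$ just established to the convergent sequence $\phi(\epsilon_j)$ in $M$ and the sequence $(g_j)$, I obtain a convergent subsequence of $(g_j)$, which is precisely the condition required by Proposition~\ref{prop:properness_and_sequences} for $l^E$ to be proper.

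I expect the only real work to be in the first assertion, and specifically in the step of the previous paragraph where the left \SG-action is identified with left translation in a section-induced trivialization so that the group elements $g_j$ can be recovered continuously; once that local model is in place, reading off convergence is routine, and the second assertion is then an immediate transfer along the equivariant map $\phi$ (indeed it uses only that $\phi$ is continuous and equivariant, the fiber-bundle hypothesis on $(E,M,\phi,F)$ being inessential for this argument).
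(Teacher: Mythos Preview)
Your proof is correct and follows essentially the same approach as the paper, which simply says the first statement follows from Proposition~\ref{prop:properness_and_sequences} using local triviality, and the second ``repeats the same argument building on the properness of the $\SG$-action on $M$.'' You have filled in the details the paper leaves implicit: constructing the section-induced trivialization to read off the $\SG$-coordinate continuously, and for the second part pushing the sequences down to $M$ via the equivariant map $\phi$ to invoke the first part. Your observation that the fiber-bundle structure on $E$ is inessential for the second assertion (only continuity and equivariance of $\phi$ are used) is a nice sharpening of the statement.
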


\begin{proof}
  The first statement follows from
  Proposition~\ref{prop:properness_and_sequences}, using the local
  triviality of the bundles. The second repeats the same argument
  building on the properness of the $\SG$-action on $M$.
\end{proof}

\begin{lemma}\label{le:quotient_action_properties}
  Let $\SG$ be a Lie group acting smoothly, properly and freely on the
  manifold $Q$. Let $H\subset \SG$ be a closed and normal Lie
  subgroup. The $\SG$-action on $Q$ induces an $H$-action on $Q$. Then
  \begin{enumerate}
  \item \label{it:quotient_action-well_defined} $\SG/H$ acts on $Q/H$
    by the induced action
    $l^{Q/H}_{\pi^{\SG,H}(g)}(\pi^{Q,H}(q)):=\pi^{Q,H}(l^Q_g(q))$.
  \item \label{it:quotient_action-free_and_proper} The $\SG/H$-action
    $l^{Q/H}$ is free and proper.
  \end{enumerate}
\end{lemma}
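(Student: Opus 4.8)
The plan is to dispatch item (1) --- well-definedness, smoothness and the action axioms --- and then item (2), splitting it into freeness (easy) and properness, which is the substantive point.

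First I would record the structural facts underlying the statement. Restricting $l^Q$ to $H$ gives a smooth, free $H$-action; since $H\subset\SG$ is closed and $l^Q$ is proper, the sequential criterion of Proposition~\ref{prop:properness_and_sequences} shows this restricted $H$-action is again proper. Hence, by Theorem~\ref{thm:quotient_manifolds} and Corollary~\ref{cor:quotient_manifolds-fiber_bundle}, $\pi^{Q,H}:Q\rightarrow Q/H$ is a principal $H$-bundle, and, $H$ being closed and normal, $\SG/H$ is a Lie group with smooth submersion $\pi^{\SG,H}$. To prove item (1) I would first check that $l^{Q/H}$ is well defined: if $\pi^{\SG,H}(g)=\pi^{\SG,H}(g')$ and $\pi^{Q,H}(q)=\pi^{Q,H}(q')$, write $g'=gh_1$ and $q'=l^Q_{h_2}(q)$ with $h_1,h_2\in H$; then $l^Q_{g'}(q')=l^Q_{g h_1 h_2}(q)=l^Q_{(g h_1 h_2 g^{-1})}(l^Q_g(q))$, and $g h_1 h_2 g^{-1}\in H$ by normality, so $\pi^{Q,H}(l^Q_{g'}(q'))=\pi^{Q,H}(l^Q_g(q))$. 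This is precisely where normality of $H$ enters.

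Next I would verify smoothness and the action axioms. The composite $\SG\times Q\rightarrow Q\rightarrow Q/H$, $(g,q)\mapsto \pi^{Q,H}(l^Q_g(q))$, is smooth and, by the previous computation, constant on the fibers of the surjective submersion $\pi^{\SG,H}\times\pi^{Q,H}:\SG\times Q\rightarrow(\SG/H)\times(Q/H)$ (a product of two surjective submersions). By the universal property of that submersion it descends to a unique smooth map $(\SG/H)\times(Q/H)\rightarrow Q/H$, which is $l^{Q/H}$. The identity axiom and the compatibility $l^{Q/H}_{\bar{g}}\circ l^{Q/H}_{\bar{g}'}=l^{Q/H}_{\bar{g}\bar{g}'}$ then follow at once by lifting representatives and using the corresponding properties of $l^Q$. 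For freeness in item (2): if $l^{Q/H}_{\pi^{\SG,H}(g)}(\pi^{Q,H}(q))=\pi^{Q,H}(q)$, then $l^Q_g(q)=l^Q_h(q)$ for some $h\in H$, so $l^Q_{h^{-1}g}(q)=q$; freeness of $l^Q$ forces $h^{-1}g=e$, whence $g\in H$ and $\pi^{\SG,H}(g)$ is the identity of $\SG/H$.

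The hard part will be properness, which I would establish through the sequential criterion of Proposition~\ref{prop:properness_and_sequences}; the strategy is to replace all quotient-level data by lifts to $Q$ and $\SG$ and then invoke properness of $l^Q$. Given $\bar{q}_j\to\bar{q}$ in $Q/H$ and $\bar{g}_j\in\SG/H$ with $l^{Q/H}_{\bar{g}_j}(\bar{q}_j)\to\bar{p}$, I would use that $\pi^{Q,H}$ admits local sections to lift $\bar{q}_j\to\bar{q}$ to a convergent sequence $q_j\to q$ in $Q$ with $\pi^{Q,H}(q_j)=\bar{q}_j$, and likewise lift the image sequence to $p_j\to p$ with $\pi^{Q,H}(p_j)=l^{Q/H}_{\bar{g}_j}(\bar{q}_j)$; choosing $g_j\in\SG$ over $\bar{g}_j$, the identity $\pi^{Q,H}(l^Q_{g_j}(q_j))=l^{Q/H}_{\bar{g}_j}(\bar{q}_j)=\pi^{Q,H}(p_j)$ produces $h_j\in H$ with $l^Q_{h_j^{-1}g_j}(q_j)=p_j$. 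Since $l^Q_{h_j^{-1}g_j}(q_j)\to p$ while $q_j\to q$, properness of $l^Q$ yields a subsequence along which $h_j^{-1}g_j$ converges in $\SG$, say to $a$; applying the continuous $\pi^{\SG,H}$ and using $\pi^{\SG,H}(h_j^{-1}g_j)=\pi^{\SG,H}(g_j)=\bar{g}_j$ (once more by normality) shows $\bar{g}_j$ has a subsequence converging to $\pi^{\SG,H}(a)$, which is the required properness. No individual step is hard; the delicate bookkeeping is the careful passage between orbits and representatives and the repeated use of normality to identify the relevant cosets, so the lifts must be chosen so that $h_j^{-1}g_j$ both lands in $\SG$ and carries a controlled image $\bar{g}_j$ in $\SG/H$.
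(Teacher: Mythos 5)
Your argument is correct and complete. Note, however, that the paper itself gives no proof of Lemma~\ref{le:quotient_action_properties}: it is stated in the Appendix among results the authors describe as standard, so there is no in-paper argument to compare against. What you write is the natural proof using exactly the toolkit the paper assembles for this purpose --- the sequential characterization of properness (Proposition~\ref{prop:properness_and_sequences}) applied in both directions, local sections of the principal $H$-bundle $\pi^{Q,H}$ to lift convergent sequences, and normality of $H$ to control the cosets $\pi^{\SG,H}(h_j^{-1}g_j)=\bar{g}_j$ --- and all the steps, including the well-definedness computation and the descent of smoothness through the surjective submersion $\pi^{\SG,H}\times\pi^{Q,H}$, check out.
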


\begin{definition}\label{def:bundle_map}
  Let $(E_j,M_j,\phi_j,F_j)$ be fiber bundles for $j=1,2$. A
  \jdef{bundle map} from $E_1$ to $E_2$ is a pair $(\Psi,\psi)$ of
  smooth maps $\Psi:E_1\rightarrow E_2$ and $\psi:M_1\rightarrow M_2$
  such that the following diagram is commutative.
  \begin{equation*}
    \xymatrix{{E_1} \ar[r]^\Psi \ar[d]_{\phi_1} & {E_2}
      \ar[d]^{\phi_2} \\ {M_1} \ar[r]_{\psi} & {M_2}}
  \end{equation*}
\end{definition}


\subsection{Group actions on bundles}
\label{sec:group_actions_on_bundles}

The following definition introduces what we mean by the action of a
Lie group on a fiber bundle. We warn the reader that it may not be
completely standard.

\begin{definition}\label{def:group_acts_on_fiber_bundle}
  Let $\SG$ be a Lie group and $(E,M,\phi,F)$ a fiber bundle. We say
  that $\SG$ \jdef{acts on the fiber bundle $E$} if there are free
  left $\SG$-actions $l^E$ and $l^M$ and a right $\SG$-action $r^F$ on
  $F$ such that
  \begin{enumerate}
  \item $l^M$ induces a principal $\SG$-bundle structure
    $\pi^{M,\SG}:M\rightarrow M/\SG$,
  \item $\phi$ is a $\SG$-equivariant map for the given actions,
  \item \label{it:group_acts_on_FB-trivializing_chart} for every
    $m\in M$ there is a trivializing chart $(U,\Phi_U)$ of $E$ such
    that $U$ is $\SG$-invariant, $m\in U$ and, when considering the
    left $\SG$-action $l^{U\times F}$ on $U\times F$ given by
    $l^{U\times F}_g(m,f) := (l^M_g(m),r^F_{g^{-1}}(f))$, the map
    $\Phi_U$ is $\SG$-equivariant.
  \end{enumerate}
\end{definition}

\begin{example}\label{ex:extended_associated_bundle-G_acts}
  Let $\SG$ act on the fiber bundle $(E,M,\phi,F_1)$ by the left
  actions $l^E$ and $l^M$ and the right action $r^{F_1}$ on $F_1$, and
  let $F_2$ be a right $\SG$-manifold for the action
  $r^{F_2}$. Consider the left $\SG$-action $l^{E\times F_2}$ on the
  fiber bundle $(E\times F_2,M,\phi\circ p_1,F_1\times F_2)$ defined
  by $l^{E\times F_2}_g(\epsilon,f_2) :=
  (l^E_g(\epsilon),r^{F_2}_{g^{-1}}(f_2))$. Then $\SG$ acts on the
  fiber bundle $E\times F_2$. The only part of the verification that
  requires some work is the existence of local $\SG$-equivariant
  trivializations. This is done by taking the (Cartesian) product of
  $\SG$-equivariant trivialization of $E$ and the identity mapping on
  $F_2$. Using the right $\SG$-action $r^{F_1\times F_2}_g :=
  r^{F_1}_g \times r^{F_2}_g$ on $F_1\times F_2$ makes the resulting
  mapping $\SG$-equivariant, in the sense of
  point~\ref{it:group_acts_on_FB-trivializing_chart} of
  Definition~\ref{def:group_acts_on_fiber_bundle}.
\end{example}

\begin{proposition}\label{prop:quotient_of_fiber_bundles}
  Let $\SG$ be a Lie group that acts on the fiber bundle
  $(E,M,\phi,F)$. Then $\phi$ induces a smooth map
  $\check{\phi}:E/\SG\rightarrow M/\SG$ such that
  $(E/\SG,M/\SG,\check{\phi},F)$ is a fiber bundle.
\end{proposition}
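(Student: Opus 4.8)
The plan is to produce the quotient bundle in three moves: make $E/\SG$ into a manifold, descend $\phi$ to a map $\check{\phi}$, and then manufacture local trivializations of $\check{\phi}$ over $M/\SG$ out of the $\SG$-equivariant trivializations of $E$. First I would verify that the action $l^E$ is smooth, free and proper. Smoothness and freeness are part of Definition~\ref{def:group_acts_on_fiber_bundle}, while properness follows from Lemma~\ref{le:G_actions_on_ppal_bundles_are_proper}, since $\pi^{M,\SG}:M\rightarrow M/\SG$ is a principal $\SG$-bundle and $\phi$ is $\SG$-equivariant. Theorem~\ref{thm:quotient_manifolds} then makes $E/\SG$ a smooth manifold with $\pi^{E,\SG}:E\rightarrow E/\SG$ a smooth submersion (in fact a principal $\SG$-bundle), and Corollary~\ref{cor:quotient_maps-G_equivariant_map}, applied to the equivariant map $\phi$, yields a unique smooth $\check{\phi}:E/\SG\rightarrow M/\SG$ with $\pi^{M,\SG}\circ\phi=\check{\phi}\circ\pi^{E,\SG}$.

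For the local triviality I would fix $\bar m\in M/\SG$ and a point $m$ over it, and choose a $\SG$-invariant trivializing chart $(U,\Phi_U)$ of $E$ with $m\in U$, as provided by point~\ref{it:group_acts_on_FB-trivializing_chart} of Definition~\ref{def:group_acts_on_fiber_bundle}. Since $U$ is $\SG$-invariant and $\phi$ is equivariant, $\phi^{-1}(U)$ is $\SG$-invariant, $\bar U:=\pi^{M,\SG}(U)$ is open (submersions being open maps), and $\check{\phi}^{-1}(\bar U)=\pi^{E,\SG}(\phi^{-1}(U))$. The equivariant diffeomorphism $\Phi_U:\phi^{-1}(U)\rightarrow U\times F$ for the twisted action $l^{U\times F}_g(m,f)=(l^M_g(m),r^F_{g^{-1}}(f))$ descends, again by Corollary~\ref{cor:quotient_maps-G_equivariant_map}, to a diffeomorphism $\check{\Phi}_U:\check{\phi}^{-1}(\bar U)\rightarrow (U\times F)/\SG$.

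The main obstacle is the identification of $(U\times F)/\SG$ with a genuine product over the base: the $F$-factor is twisted by the action, so untwisting it is exactly the assertion that the associated bundle is locally trivial, and it requires local triviality of $\pi^{M,\SG}$. Thus I would shrink, choosing $\bar W\ni\bar m$ with $\bar W\subset\bar U$ over which $\pi^{M,\SG}$ admits a smooth section $s:\bar W\rightarrow M$, setting $W:=(\pi^{M,\SG})^{-1}(\bar W)\subset U$, and letting $\tau:W\rightarrow\SG$ be the unique smooth map with $l^M_{\tau(w)}(s(\pi^{M,\SG}(w)))=w$; freeness gives the cocycle identity $\tau(l^M_g(w))=g\,\tau(w)$. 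Then $\Xi(w,f):=(\pi^{M,\SG}(w),r^F_{\tau(w)}(f))$ is $\SG$-invariant, the decisive cancellation being $r^F_{g\tau(w)}\circ r^F_{g^{-1}}=r^F_{\tau(w)}$ (valid because $r^F$ is a \emph{right} action, so $r^F_b\circ r^F_a=r^F_{ab}$). Hence Corollary~\ref{cor:quotient_maps-G_invariant_map} produces a smooth $\check{\Xi}:(W\times F)/\SG\rightarrow\bar W\times F$, and a short check shows it is inverse to $(\bar w,f)\mapsto[s(\bar w),f]$ (using $\tau(s(\bar w))=e$), so $\check{\Xi}$ is a diffeomorphism.

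Finally I would set $\Psi_{\bar W}:=\check{\Xi}\circ\check{\Phi}_U:\check{\phi}^{-1}(\bar W)\rightarrow\bar W\times F$, a composition of diffeomorphisms. Since $p_1\circ\Phi_U=\phi$, tracing a point $[\epsilon]$ through the construction gives $p_1\circ\Psi_{\bar W}([\epsilon])=\pi^{M,\SG}(\phi(\epsilon))=\check{\phi}([\epsilon])$, so $(\bar W,\Psi_{\bar W})$ is a trivializing chart in the sense of Definition~\ref{def:fiber_bundle}. As $\bar m$ was arbitrary, these charts cover $M/\SG$ and exhibit $(E/\SG,M/\SG,\check{\phi},F)$ as a fiber bundle. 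I expect the only delicate point to be the untwisting step of the previous paragraph; everything else is an assembly of the quotient results recalled in the Appendix.
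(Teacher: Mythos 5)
Your proposal is correct and takes essentially the same approach as the paper: the first part is the same assembly of quotient-manifold facts, and your untwisting map $\Xi(w,f)=(\pi^{M,\SG}(w),r^F_{\tau(w)}(f))$ is exactly the paper's $\sigma(r,g,f)=(r,r^F_g(f))$, written with an explicit local section and transition map $\tau$ instead of first identifying $M$ locally with $(M/\SG)\times\SG$. The difference is purely presentational.
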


\begin{proof}
  Since the $\SG$-actions on $E$ and $M$ are free and proper and
  $\phi$ is equivariant, by Theorem~\ref{thm:quotient_manifolds} and
  Corollary~\ref{cor:quotient_maps-G_equivariant_map}, we have that
  $E/\SG$ and $M/\SG$ are manifolds, the quotient mappings
  $\pi^{E,\SG}:E\rightarrow E/\SG$ and $\pi^{M,\SG}:M\rightarrow
  M/\SG$ are smooth submersions and $\check{\phi}$ is smooth.

  An outline of the proof of the local triviality of
  $(E/\SG,M/\SG,\check{\phi},F)$ goes as follows.  Since the existence
  of local trivializations is a local matter, we can assume that
  $\pi^{M,\SG}:M\rightarrow M/\SG$ is a trivial $\SG$-principal
  bundle, that is, it is $p_1:R\times \SG\rightarrow R$ for some
  manifold $R$ and the $\SG$-action on $M$ is $l^{R\times
    \SG}_g(r,g'):=(r,g g')$. Similarly, we can assume that
  $\phi:E\rightarrow M$ is $p_1:(R\times\SG)\times F\rightarrow
  R\times\SG$ and the $\SG$-action on $E$ is $l^{(R\times\SG)\times
    F}_g((r,g'),f) := ((r,g g'),r^F_{g^{-1}}(f))$.

  Using Corollary~\ref{cor:quotient_maps-G_equivariant_map}, $p_1$
  induces a map $\check{p_1}:((R\times\SG)\times F)/\SG \rightarrow
  (R\times \SG)/\SG = R$. In addition, define
  $\sigma:(R\times\SG)\times F\rightarrow R\times F$ by
  $\sigma(r,g,f):=(r,r^F_g(h))$. As $\sigma$ is smooth and
  $\SG$-invariant, it induces a smooth map $\check{\sigma}:((R\times
  \SG)\times F)/\SG\rightarrow R\times F$. In fact, $\check{\sigma}$
  is a diffeomorphism and satisfies $p_1\circ \check{\sigma} =
  \check{p_1}$. Thus, we have the following commutative diagram
  \begin{equation*}
    \xymatrix{
      {E/\SG = ((R\times\SG)\times F)} \ar[r]^(.7){\check{\sigma}} 
      \ar[d]_{\check{\phi} = \check{p_1}} & {R\times F} \ar[dl]^{p_1}\\
      {M/\SG = R} & {}
    }
  \end{equation*}
  showing the (local) triviality of the bundle
  $(E/\SG,M/\SG,\check{\phi},F)$, ending the proof.
\end{proof}

\begin{example}\label{ex:extended_associated_bundle-fiber_bundle}
  Applying Proposition~\ref{prop:quotient_of_fiber_bundles} to the
  setting of Example~\ref{ex:extended_associated_bundle-G_acts} we
  conclude that if $\SG$ acts on the fiber bundle $(E,M,\phi,F_1)$ and
  $F_2$ is a right $\SG$-manifold, then $((E\times
  F_2)/\SG,M/\SG,\check{\phi\circ p_1},F_1\times F_2)$ is a fiber
  bundle that we call the \jdef{associated bundle} and denote by
  $\ti{F_2}_E$.  A special case of this construction is the so called
  \jdef{conjugate bundle}, denoted by $\ti{\SG}_E$, that corresponds to
  the case when $F_2 = \SG$ and the right action is $r^{F_2}_g(h) :=
  l^{\SG}_{g^{-1}}(h) = g^{-1} h g$. For the conjugate bundle, we define
  $p^{M/\SG} := \check{\phi \circ p_1}$.

  When the Lie group $\SG$ acts on a manifold $Q$ in such a way that
  $\pi^{Q,\SG}:Q\rightarrow Q/\SG$ is a principal $\SG$-bundle,
  $(Q,Q,id_Q,\{pt\})$ is a fiber bundle with a $\SG$-action. The
  conjugate bundle in this case, $\ti{\SG}_Q$ coincides with the
  conjugate bundle $p^{Q/\SG}:\ti{\SG}\rightarrow Q/\SG$ considered in
  Section~\ref{sec:reduced_system_associated_to_a_symmetric_mechanical_system}
  and
  in~\cite{ar:fernandez_tori_zuccalli-lagrangian_reduction_of_discrete_mechanical_systems}.
\end{example}




\def\cprime{$'$} \def\polhk#1{\setbox0=\hbox{#1}{\ooalign{\hidewidth
  \lower1.5ex\hbox{`}\hidewidth\crcr\unhbox0}}} \def\cprime{$'$}
  \def\cprime{$'$}
\providecommand{\bysame}{\leavevmode\hbox to3em{\hrulefill}\thinspace}
\providecommand{\MR}{\relax\ifhmode\unskip\space\fi MR }
\providecommand{\MRhref}[2]{%
  \href{http://www.ams.org/mathscinet-getitem?mr=#1}{#2}
}
\providecommand{\href}[2]{#2}


\end{document}